\newtheorem{theorem}{Theorem}[section]
\newtheorem{proposition}[theorem]{Proposition}
\newtheorem{lemma}[theorem]{Lemma}
\newtheorem{corollary}[theorem]{Corollary}
\theoremstyle{definition}
\newtheorem{definition}[theorem]{Definition}
\newtheorem{example}[theorem]{Example}
\newtheorem{problem}[theorem]{Problem}
\newtheorem{conjecture}[theorem]{Conjecture}
\theoremstyle{remark}
\newtheorem{remark}[theorem]{Remark}
\numberwithin{equation}{section}
\numberwithin{theorem}{section}
\def\N{\mathbb{N}}
\def\Z{\mathbb{Z}}
\def\R{\mathbb{R}}
\def\C{\mathbb{C}}
\def\la{\lambda}
\def\a{\alpha}
\def\b{\beta}
\def\g{\gamma}
\def\s{\sigma}
\def\d{\delta}
\def\e{\varepsilon}
\def\Oh{\mathcal{O}}
\def\det{\mathrm{det}}
\def\per{\mathrm{per}}
\def\GL{\mathrm{GL}}
\def\SL{\mathrm{SL}}
\def\Hom{\mathrm{Hom}}
\def\lan{\langle}
\def\ran{\rangle}
\def\ot{\otimes}
\def\id{\mathrm{I}}
\def\ti{\times}
\def\mult{\mathrm{mult}}
\def\diag{\mathrm{diag}}
\def\sgn{\mathrm{\small sgn}}
\newcommand{\ol}[1]{{\overline{#1}}}
\def\Id{{\mathrm{id}}}
\def\Sym{{\mathsf{Sym}}}
\def\stab{{\mathrm{stab}}}
\def\Tc{\mathcal{T}}
\newcommand{\dc}{\mathsf{dc}}
\newcommand{\aS}{\ensuremath{\mathfrak{S}}}
\newcommand{\gO}{\ensuremath{\mathcal{O}}}
\newcommand{\IC}{\ensuremath{\mathbb{C}}}
\def\E{\mathcal{E}}
\def\Ct{\C^{\ti}}
\def\e{\varepsilon}
\newcommand*{\ket}[1]{| #1 \rangle}
\newcommand{\csgn}{\mathit colsgn}
\newcommand{\supp}{\mathrm supp}
\newcommand{\tr}{\mathrm{trace}}
\newcommand{\End}{\mathrm{End}}
\newcommand{\spann}{\mathit{span}}
\title{{\bf Fundamental invariants of orbit closures}}
\author{Peter B\"urgisser\thanks{
Institute of Mathematics, Technische Universit\"at Berlin, 
pbuerg@math.tu-berlin.de.
Partially supported by DFG grant BU 1371/3-2.}
\and 
Christian Ikenmeyer\thanks{
Texas A\&M University, 
ciken@math.tamu.edu} 
}
\date{\today}
\begin{document}
\raggedbottom
\maketitle

\begin{abstract}
For several objects of interest in geometric complexity theory, namely for
the determinant, the permanent, the product of variables, the power sum, the unit tensor, and the matrix multiplication tensor,
we introduce and study a fundamental SL-invariant function that relates the coordinate ring of the orbit with the coordinate ring of its closure. 
For the power sums we can write down this fundamental invariant explicitly in most cases. 
Our constructions generalize the two Aronhold invariants on ternary cubics.
For the other objects we identify the invariant function conditional on intriguing combinatorial problems 
much like the well-known Alon-Tarsi conjecture on Latin squares.
We provide computer calculations in small dimensions for these cases.
As a main tool for our analysis, we determine the stabilizers, and we establish the polystability of all the mentioned forms and tensors
(including the generic ones). 
\end{abstract}

\smallskip

\noindent{\bf AMS subject classifications:} 68Q17, 13A50, 14L24

\smallskip

\noindent{\bf Key words:} geometric complexity theory, invariants, orbit closure, non-normality, Alon-Tarsi conjecture

\section{Introduction}

\subsection{Motivation}

In 1979 Valiant showed that for every polynomial $p$
there exists a matrix $A$ whose entries are affine linear forms in the variables of~$p$
such that $\det(A)=p$.
The minimal size of such a matrix is called the \emph{determinantal complexity} $\dc(p)$.
The size $\dc(p)$ is closely related to the number of computation gates in arithmetic circuits computing~$p$, 
see \cite{Val:79b, toda:92, mapo:08} for more information.
Valiant's flagship conjecture is concerned with the determinantal complexity of the permanent polynomial:
\[
\per_m := \sum_{\pi\in S_m} X_{1,\pi(1)} X_{2,\pi(2)} \cdots X_{m,\pi(m)},
\]
a homogeneous degree $m$ polynomial in $m^2$ variables.
\begin{conjecture}
The sequence $\dc(\per_m)$ grows superpolynomially fast.
\end{conjecture}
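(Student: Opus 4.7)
This is Valiant's celebrated conjecture, which remains one of the central open problems in complexity theory. What I sketch below is the geometric complexity theory (GCT) program of Mulmuley and Sohoni, the approach most closely aligned with the present paper.

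The plan is to translate the circuit-complexity statement into a question about $\GL$-orbit closures inside a space of homogeneous forms. Writing $n := \dc(\per_m)$ and choosing an auxiliary linear form $\ell$, one pads the permanent to $\widetilde{\per}_m := \ell^{n-m}\per_m \in \Sym^n(\IC^{n^2+1})$; by the definition of $\dc$ this padded permanent lies in $\overline{\GL_{n^2+1}\cdot \det_n}$. A superpolynomial lower bound on $\dc(\per_m)$ then follows if, for every $n$ polynomially bounded in $m$, one exhibits an irreducible $\GL_{n^2+1}$-representation $V_\la$ that appears in the coordinate ring $\IC[\overline{\GL\cdot \widetilde{\per}_m}]$ with strictly larger multiplicity than in $\IC[\overline{\GL\cdot \det_n}]$.

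The fundamental $\SL$-invariants constructed in the present paper enter as a bridge between the coordinate ring of the orbit and that of its closure: they are the canonical functions measuring the failure of normality of the closure, and therefore control the subtle gap between occurrence and multiplicity in the two coordinate rings. The first step of my plan is to use the stabilizer and polystability results advertised in the abstract to describe $\IC[\GL\cdot \det_n]$ and $\IC[\GL\cdot \widetilde{\per}_m]$ as induced representations of the respective stabilizers. The second step is to use the fundamental invariants of $\det_n$ and of $\per_m$ to identify those canonical elements that fail to extend to the closures, and to exploit the Alon--Tarsi-type combinatorial problems to which the paper reduces these invariants in order to force a representation-theoretic separation for $n$ small compared to an appropriate superpolynomial function of $m$.

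The main obstacle, of course, is that no such separation is currently known, and the no-go results of Ikenmeyer--Panova and B\"urgisser--Ikenmeyer--Panova rule out pure occurrence obstructions: one must really compare multiplicities, and no technique for doing so at superpolynomial scale is presently available. Overcoming this obstacle is essentially the content of Valiant's conjecture itself, so at best the present paper can be viewed as providing the structural ingredients---stabilizers, polystability, and a fundamental invariant detecting non-normality---on top of which such a multiplicity comparison would eventually have to be built.
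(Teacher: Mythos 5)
The statement you were asked about is Valiant's conjecture itself, which the paper states as a \emph{conjecture} and does not prove; it is the open problem motivating the entire work, not a result established in it. Your text correctly recognizes this: what you have written is an accurate outline of the Mulmuley--Sohoni program (padding the permanent, comparing multiplicities in the coordinate rings of the two orbit closures, and the role the fundamental invariants of this paper would play as a bridge between $\Oh(Gw)$ and $\Oh(\ol{Gw})$ in the presence of non-normality), together with an honest acknowledgment that the decisive step --- an actual multiplicity separation at superpolynomial scale --- is not known and is essentially equivalent to the conjecture. So there is no proof here to compare against the paper's proof, because neither you nor the paper supplies one; the ``gap'' is the conjecture itself, and you have named it correctly. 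One small correction: the paper works with the padded permanent $\per_{n,m}=(X_{n,n})^{n-m}\per_m$ inside $\Sym^n\C^{n^2}$ (reusing one of the $n^2$ variables) rather than adjoining an extra variable to get $\Sym^n\C^{n^2+1}$, and its stated separation criterion is phrased in terms of \emph{occurrence} of an irreducible representation in one coordinate ring but not the other, though as you note the multiplicity version is what remains viable after the no-go results for occurrence obstructions.
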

To study this conjecture, Mulmuley and Sohoni \cite{gct1, gct2} proposed an approach through algebraic geometry and representation theory,
for which they coined the term \emph{geometric complexity theory}.
The main idea is to study the $\GL_{n^2}$-orbit closure $\overline{\GL_{n^2}\det_n}$ of the determinant polynomial
\[
\det_n := \sum_{\pi\in\aS_n} \sgn(\pi) X_{1,\pi(1)} X_{2,\pi(2)} \cdots X_{n,\pi(n)}
\]
in the space $\Sym^n\C^{n^2}$ 
of all homogeneous degree $n$ polynomials in $n^2$ variables.
For $n>m$ we compare this closure to the $\GL_{n^2}$-orbit closure $\overline{\GL_{n^2}\per_{n,m}}$ of the \emph{padded} permanent
$\per_{n,m} := (X_{n,n})^{n-m}\per_m$.
We consider the $\GL_{n^2}$-action on the coordinate rings of these closures and note the following:
If there exists an irreducible representation in $\gO(\overline{\GL_{n^2}\per_{n,m}})$
that does not lie in $\gO(\overline{\GL_{n^2}\det_n})$, then $\dc(\per_m)>n$.

A main approach towards understanding $\gO(\overline{\GL_{n^2}\det_n})$
is by studying it as a subalgebra of the coordinate ring $\gO(\GL_{n^2}\det_n)$ of the orbit $\GL_{n^2}\det_n$.
The representation theoretic decomposition of $\gO(\GL_{n^2}\det_n)$ can
be deduced rather explicitly using the algebraic Peter-Weyl theorem \cite[\S4 and \S5]{BLMW:11}.
The main motivation of this paper is to improve our understanding of the connection between 
the coordinate ring of the orbit $\GL_{n^2}\det_n$ and its closure $\ol{\GL_{n^2}\det_n}$. 

\subsection{Results}

We consider the space $\Sym^D\C^m$ of (homogeneous) forms of degree $D$ in $m$ variables 
with the natural action of the group $\GL_m$. 
Let $w\in\Sym^D\C^m$ be a polystable form, which means that its $\SL_m$-orbit is closed. 
We define the {\em stabilizer period} $a(w)$ as the order of the subgroup of $\Ct$ 
generated by $\det(g)$, where $g$ runs through the stabilizer group $\stab(w)$ of $w$. 
In order to study  in which degrees~$d$ there exist $\SL_m$-invariants of $\Oh(\ol{Gw})$, 
we study the {\em degree monoid} of $w$:
$$
 \E(w) := \big\{d\in\N \mid \Oh(\ol{Gw})^{\SL_m}_d\ne 0 \big\} .
$$
We note that $\Oh(\ol{Gw})^{\SL_m}_d$ is at most one-dimensional for all $d\in\Z$.  
We prove that $\E(w)$ generates the group $b(w)\Z$ 
where $b(w)=\frac{m}{D}a(w)$, which we call the {\em degree period} of $w$ 
(Theorem~\ref{th:group_S_w}). 
A more geometric characterization of the degree monoid $\E(w)$ is as follows. 
Consider the well-defined map $\phi_w\colon Gw\to\C,\, g\mapsto \det(g)^{a(w)}$. 
We can show that the zero extension of $\phi_w$ to the boundary $\ol{Gw}\setminus Gw$ 
is continuous in the Euclidean topology. 
However, this extension might not be a regular function. 
In fact, for $k\in\N$, the $k$th power $(\phi_w)^k$ has a regular extension to $\ol{Gw}$ 
iff $b(w)k \in\E(w)$.  

Our primary object of interest is 
the {\em fundamental invariant} $\Phi_w$ of~$w\in\Sym^D\C^m$,
which we define as the 
$\SL_m$-invariant of minimal degree which occurs in $\Oh(\ol{Gw})$
(requiring $\Phi_w(w)=1$ for uniqueness). 
Its degree $e(w)$ is the minimal positive element of $\E(w)$. 

The boundary $\ol{Gw}\setminus Gw$ in $\ol{Gw}$
is the zero set of $\Phi_w$ in $\Oh(\ol{Gw})$ 
and hence $\Oh(\ol{Gw})$ is the localization of $\Oh(Gw)$ 
with respect to $\Phi_w$ (Proposition~\ref{le:ext-zero}). 
However, this holds only set-theoretically in most situations of interest.
More specifically, we prove that if $b(w) < e(w)$, then 
the vanishing ideal of the boundary $\ol{Gw}\setminus Gw$ in $\ol{Gw}$
is strictly larger than the principal ideal $\Phi_w\Oh(\ol{Gw})$.
Moreover, $\ol{Gw}$ is not a normal algebraic variety
(see Theorem~\ref{le:Phi_w-zeroset}). 
The non-normality of $\ol{\GL_{n^2}\det_n}$ 
(or even $\ol{\GL_{m} X_1\cdots X_m}$) 
is the main source of difficulty in implementing 
Mulmuley and Sohoni's strategy, see \cite{BI:10, Kum:15,BHI:15}.

A result due to Howe~\cite{howe:87} implies that 
for generic forms $w\in\Sym^D\C^m$, with even $D$, 
the minimal degree of $\E(w)$ equals $m$.  
The corresponding fundamental invariant is obtained 
as the restriction of a beautiful $\SL_m$-invariant 
$P_{D,m}\colon \Sym^D \C^m \to \C$,  
for which we found an explicit expression; see~\eqref{eq:defP}.
We learned that this invariant already had been discovered by Cayley in 1843
(``On the theory of determinants", Trans. Cambridge Phil Soc. VIII (1843): pp.~1--16). 
We call $P_{D,m}$ the {\em generic fundamental invariant} of $\Sym^D\C^m$. 
It is an irreducible polynomial of degree~$m$.
If  $D=m$ is odd we show that for generic forms $w\in\Sym^D\C^m$,  
the minimal degree of $\E(w)$ equals $m+1$ 
and we explicitly construct this invariant 
in Section~\ref{subsubsec:Dodd}, cf.~\eqref{eq:def:PD}.  
This generalizes the degree~4 Aronhold invariant on ternary cubics 
(case $D=m=3$); see also \cite[eq.~(80)]{howe:87}.

In Section~\ref{se:mindegforms} we investigate the fundamental invariants 
for specific forms $w$ that are of interest for geometric complexity theory. 
If $P_{D,m}(w) \ne 0$, then the fundamental invariant~$\Phi_w$ 
is obtained from the generic fundamental invariant $P_{D,m}$ by restriction 
(and rescaling). It is therefore important to decide whether $P_{D,m}(w) \ne 0$. 
We prove that this is the case for the power sums 
$X_1^D + \cdots + X_m^D$ of even degree~$D$. 
For odd $D$ the minimal degree of the power sum equals $2m$ and 
we explicitly construct its fundamental invariant, which generalizes
Aronhold's construction of the degree 6 invariant on ternary cubics
(which we obtain as the special case $D=m=3$).
Quite interestingly, this construction only works as long as $2m \leq \binom{2D}D$,
whereas for $2m > \binom{2D}D$, the invariant does not occur in any degree up to $2m$, 
see Proposition~\ref{pro:evenodd}.
For other motivation from complexity theory, we note that 
the $\GL_m$-orbit closure of the power sum $X_1^D + X_2^D + \cdots + X_m^D$ is closely related to 
$\Sigma\Lambda\Sigma$ circuit complexity \cite{GKKS:13} 
and to the polynomial Waring problem \cite[\S5.4]{Lan:11}. 

The product of variables $X_1\cdots X_m$ is of interest in geometric complexity theory because it serves 
as a simpler version of the determinant that is still interesting enough to deduce nontrivial facts about the 
determinant \cite{Kum:15, BHI:15}. However, already for $w=X_1\cdots X_m$ (and $m$ even), 
deciding $P_{m,m}(w) \ne 0$ turns out to be a difficult question: 
it is equivalent to the Alon-Tarsi conjecture~\cite{AT:92} on Latin squares, 
which has so far only been verified in special cases.  
(The conjecture is known to be true if $n=p\pm 1$ where $p$ is a prime, 
cf.~\cite{Dri:98,Gly:10}.)
For odd $m$ we formulate an analogous combinatorial conjecture for deciding $P_{m,m}(w)\neq 0$. 

For geometric complexity theory, it would be important to know whether 
$P_{n,n^2}$ does not vanish on the determinant $\det_n$. We rephrase  
this as a combinatorial condition (Proposition~\ref{re:eval-P-det-per})
but only managed to verify it for 
$n=2,4$ by computer calculations. 

The second part of the paper is devoted to analogous investigations 
for cubic tensors $w\in\ot^3\C^m$ with the action of the group
$\GL_m\times \GL_m\times \GL_m$. 
These investigations are motivated by border rank and 
the complexity of matrix multiplication (see \cite{ACT} for an overview), 
which can also be approached with geometric complexity theory~\cite{BI:10,BI:13}. 

The general theory is analogous to the case of forms.
(It is obvious that everything could be developed in more generality, 
but we refrain from doing so.) 
We assign to a polystable tensor $w\in\ot^3\C^m$ its exponent monoid $\E(w)$
and show that $\E(w)$ generates the group $ma(w)$, 
where the stabilizer period $a(w)$ is the order of the subgroup of $\Ct$ 
generated by $\det(g_1)\det(g_2)\det(g_3)$, where 
$(g_1,g_2,g_3)$ runs through the stabilizer group $\stab(w)$ of $w$ 
(Theorem~\ref{th:group_S_w-T}).  
We define the fundamental invariant $\Phi_w$ of $w$ and prove that 
$\Oh(\ol{Gw})=\Oh(Gw)_{\Phi_w}$. 

Suppose now that $m=n^2$. 
In Section~\ref{se:gen-fun-ten}
we show that for generic tensors $w\in\ot^3\C^m$,  
the minimal degree of $\E(w)$ equals $n^3$. 
In fact, for generic $w$, the monoid 
$\E(w)$ is characterized in terms of the positivity 
of Kronecker coefficients of three rectangular shapes, 
see \eqref{eq:kmd}.
The fundamental invariant of  $w\in\ot^3\C^m$ 
is obtained as the restriction of a fascinating $\SL^3_m$-invariant 
$F_{n}\colon \ot^3 \C^m \to \C$, 
for which we found an explicit expression; see~\eqref{eq:fund-invar-tensor}. 
$F_n$ is an irreducible polynomial of degree~$n^3$. 

For understanding border rank, it would be important to verify that 
$F_n$ does not vanish on the unit tensor $\lan n^2 \ran$. 
We characterize this by a combinatorial condition, 
involving ``Latin cubes'', which can be seen as a 3D version 
of the Alon-Tarsi conjecture (Proposition~\ref{le:Fn-unit-tensor}).  
However, so far we verified it only in the cases $n=2,4$ by computer calculations.
We also express the nonvanishing of $F_n$ on the matrix multiplication 
tensor $\lan n,n,n\ran$ by a combinatorial condition 
(Proposition~\ref{le:eval-Fn-mamu}). 

While developing the general theory, we determine the stabilizers of the forms and tensors listed above 
(including the generic ones) and use the Hilbert-Mumford criterion 
(as refined by Luna~\cite{luna:75} and Kempf~\cite{kempf:78})
to prove the polystability of all objects under consideration. 
Some care has to be taken in the first few low-dimensional cases since irregular phenomena appear here. 
As a corollary of our observations we conclude the nonnormality of the orbit closures in most of the cases considered.

\subsection{Organisation of the paper}

Section~\ref{se:prelim} is devoted to determining the stabilizers and periods for 
certain forms and proving their polystability. 
In Section~\ref{se:exp-monoid-forms} we introduce the key concepts of 
the degree monoid and fundamental invariant for polystable forms, but 
postpone the proof of the main results to Section~\ref{se:pf-group_S_w}.  
Furthermore we study the generic fundamental invariant $P_{D,m}$ 
for forms and we investigate their vanishing  
at specific forms, arriving at intriguing combinatorial questions. 

Section~\ref{se:prelim-T} provides the necessary background on stabilizers and polystability 
for tensors. Section~\ref{se:FI-tensors}
is devoted the analysis of the 
degree monoid and fundamental invariants for tensors. 
The remaining proofs are provided in Section~\ref{se:pf-group_S_w}. 
Finally, a short appendix collects information on the stabilizers of 
generic forms in special formats.

\medskip

\noindent{\bf Acknowledgments.} 
We thank 
Matthias Christandl, 
Jesko H\"uttenhain, 
Neeraj Kayal,
Shrawan Kumar,
Joseph Landsberg, 
Laurent Manivel, 
Giorgio Ottaviani, 
Vladimir Popov,
Michael Walter,
and
Jerzy Weyman 
for helpful discussions. 
We are grateful to the Simons Institute for the Theory of Computing for hospitality and financial support during 
the program ``Algorithms and Complexity in Algebraic Geometry'', where part of this work was carried out. 
The calculations were done on the Texas A\&M Calclab Cluster.

\section{Stabilizer period and polystability of forms} 
\label{se:prelim}
\subsection{Stabilizer and stabilizer period of forms}
\label{se:stab-period}

We consider $\Sym^D\C^m$ with the natural action of $\GL_m$. 
The {\em stabilizer} $\stab(w)$ of a form $w\in\Sym^D\C^m$ is 
defined as the following closed subgroup of~$\GL_m$: 
\begin{equation}\label{eq:def-stab}
 \stab(w) := \big\{ g \in \GL_m \mid g w = w \big\} .
\end{equation}
It is clear that 
$\{\zeta \id_m \mid \zeta^D =1 \}$ is contained in $\stab(w)$. 
We say that $w$ has a {\em trivial stabilizer} if equality holds. 

The image $H$ of $\stab(w)$ under the determinant homomorphism 
$\det\colon\GL_m \to \Ct$ is a closed subgroup of $\Ct$ 
(cf.~\cite[\S7.4, Prop.~B]{hump:75}). 
Hence either $H=\Ct$ or $H$ is a finite subgroup 
of the group $\Ct$ and hence cyclic. 
We define the {\em stabilizer period} $a(w)$ as the order of $H$ if 
$H$ is finite and put $a(w):=\infty$ otherwise. 

\begin{lemma}\label{le:reduced-period}
The stabilizer period $a(w)$ is a multiple of $D/\gcd(D,m)$ if $a(w) <\infty$.  
\end{lemma}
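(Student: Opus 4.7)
The plan is to exploit the central subgroup $\{\zeta \id_m \mid \zeta^D=1\}$ that is already known to be contained in $\stab(w)$, and observe what its image under the determinant homomorphism looks like.

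First I would compute this image explicitly. For $g=\zeta\id_m$ with $\zeta^D=1$, we have $\det(g)=\zeta^m$. As $\zeta$ ranges over all $D$-th roots of unity (a cyclic group of order $D$), the map $\zeta\mapsto\zeta^m$ has image equal to the group of $(D/\gcd(D,m))$-th roots of unity; indeed, if $\omega$ is a primitive $D$-th root of unity, the element $\omega^m$ has multiplicative order $D/\gcd(D,m)$, and the image is the cyclic group $\langle\omega^m\rangle$.

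Next I would combine this with the assumption $a(w)<\infty$. Under this assumption $H=\det(\stab(w))$ is a finite cyclic subgroup of $\Ct$ of order $a(w)$, and by the preceding step it contains the cyclic subgroup of $(D/\gcd(D,m))$-th roots of unity. By Lagrange, $D/\gcd(D,m)$ divides $a(w)$, which is exactly the claim.

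I do not anticipate any genuine obstacle here: the argument is a one-line computation once the central $D$-th roots of unity in $\stab(w)$ are made visible, followed by divisibility of subgroup orders in the finite cyclic group $H$. The only place to be mildly careful is to make sure the finiteness hypothesis $a(w)<\infty$ is used so that Lagrange applies; if $H=\Ct$ there is nothing to prove since the statement is conditional on $a(w)<\infty$.
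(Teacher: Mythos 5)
Your argument is correct and is essentially identical to the paper's proof: both identify the central subgroup $\{\zeta\id_m\mid\zeta^D=1\}\subseteq\stab(w)$, compute that its image under $\det$ is the group of $(D/\gcd(D,m))$-th roots of unity, and conclude by divisibility of orders in the finite group $H=\det(\stab(w))$. No gaps.
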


\begin{proof}
$H:=\det(\stab(w))$ contains the subgroup
$H' := \{ \zeta^m \mid \zeta^D =1 \}$ 
since $\{\zeta \id_m \mid \zeta^D =1 \}$ is contained in $\stab(w)$. 
But $H'$ equals the group of $D'$th roots of unity, where 
$D':= D/\gcd(D,m)$.
\end{proof}

\begin{definition}\label{def:period}
The {\em reduced stabilizer period} $a'(w)$  of a form $w\in\Sym^D\C^m$ is defined as 
$a'(w) := a(w)\gcd(D,m)/D$, where the {\em stabilizer period} $a(w)$ is defined as the 
order of the subgroup $\det(\stab(w))$. 
\end{definition}

It follows from general principles that 
there exists $a(D,m)$ such that almost all $w\in\Sym^D\C^m$ 
have the stabilizer period $a(D,m)$; 
see~\cite{richardson:72} or Popov and Vinberg~\cite[\S7]{AG4}. 
The {\em generic reduced stabilizer period} $a'(D,m)$,
defined by 
$a'(D,m) := a(D,m)\gcd(D,m)/D$, 
can be explicitly determined.  
Matsumura and Monsky~\cite{mat-mons:63} showed that 
if $D >2$ and $m>3$, then almost all $w\in\Sym^D\C^m$ have 
a trivial stabilizer, which implies $a'(D,m)=1$. 
The following result is a consequence of this 
and a study of the stabilizers of generic binary and ternary forms.
We indicate the proof in Appendix~\ref{appendix}. 

\begin{theorem}\label{th:aDm}
Let $D>2$ and $m\ge 1$. 
The set of $w\in\Sym^D\C^m$ with finite stabilizer is a nonempty open subset of~$W$.
We have $a'(D,m)=1$ except in the following cases: 
$a'(3,2)=2$, 
$a'(3,3)=2$,
$a'(4,3)=2$.
\end{theorem}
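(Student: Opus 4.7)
My plan is to combine the classical theorem of Matsumura and Monsky with a direct case analysis in low dimensions $m$. First I dispose of the topological part of the statement: the subset of $\Sym^D\C^m$ consisting of forms with finite stabilizer coincides with the locus where the $\GL_m$-orbit has maximal dimension $m^2$, and this is open by upper-semicontinuity of stabilizer dimension. Nonemptiness is witnessed by the Fermat form $X_1^D+\cdots+X_m^D$, whose stabilizer is the monomial group $(\mu_D)^m\rtimes S_m$, which is finite.

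Turning to the period computation, the Matsumura--Monsky result~\cite{mat-mons:63} directly covers $D>2$, $m\geq 4$: the generic stabilizer is trivial, i.e.\ equal to $\{\zeta I_m:\zeta^D=1\}$, whose determinant image $\{\zeta^m:\zeta^D=1\}$ has order $D/\gcd(D,m)$, so $a'(D,m)=1$. The case $m=1$ is immediate, since $\stab(cX^D)=\mu_D$ gives $a(D,1)=D$ and hence $a'(D,1)=1$. The heart of the proof is thus the analysis for $m=2$ and $m=3$.

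For $m=2$, a generic binary form of degree $D$ has $D$ distinct roots in $\mathbb{P}^1$, so $\bar{\stab}(w)\subset\PGL_2$ is the subgroup fixing this unordered root set. Sharp $3$-transitivity of $\PGL_2$ forces $\bar{\stab}(w)=S_3$ for $D=3$; the cross-ratio yields the Klein four-group for $D=4$; and for $D\geq 5$ the projective stabilizer is generically trivial. Lifting to $\GL_2$ and tracking determinants (the scalar subgroup $\mu_3 I_2$ already contributes $\mu_3$ when $D=3$, and an explicit lift of an $S_3$-transposition stabilizing $XY(X-Y)$ contributes $\det=-1$) produces $\det(\stab(w))=\mu_6$ and thus $a'(3,2)=2$; the analogous lift analysis gives $a'(D,2)=1$ for $D\geq 4$. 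For $m=3$, a generic form defines a smooth plane curve of genus $\binom{D-1}{2}$. When $D=3$ this is an elliptic curve $E$, and translations by the $3$-torsion $E[3]$ extend to $\PGL_3$ since they preserve the flex divisor class; together with the inversion $[-1]$ this gives $\bar{\stab}(w)\cong(\Z/3)^2\rtimes\Z/2$. A direct computation on Hesse normal form $X^3+Y^3+Z^3-3\lambda XYZ$ shows each translation lifts to a matrix of determinant $1$ while the involution lift contributes $\det=-1$, yielding $\det(\stab(w))=\{\pm 1\}$ and hence $a'(3,3)=2$. For $D\geq 5$ a generic smooth plane curve of degree $D$ has trivial automorphism group in $\PGL_3$, so $a'(D,3)=1$.

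The genuinely subtle case is $D=4$, $m=3$, where a generic ternary quartic is the canonical model of a generic non-hyperelliptic genus-$3$ curve. The main obstacle is to pin down the specific discrete stabilizer structure that forces $a'(4,3)=2$ and then to compute the determinants of its lifts to $\GL_3$; I expect this step to require the most care, and to proceed by choosing a convenient normal form for the quartic and exhibiting the relevant lift explicitly so that its determinant lies outside $\mu_4$. In each of the exceptional cases the overall template is the same: identify $\bar{\stab}(w)$ for a generic form, choose explicit $\GL_m$-lifts of its generators, and compute the subgroup of $\Ct$ they generate under the determinant homomorphism, which by the lift argument after Definition~\ref{def:period} captures the full period.
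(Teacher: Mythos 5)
Your handling of the openness statement, of $m\ge 4$ via Matsumura--Monsky, of $m=1$, and of the cases $(D,m)=(3,2),(4,2),(3,3)$ is correct, and it takes a genuinely different route from the paper: you work projectively (root configurations on $\proj^1$, cross-ratios, translations by $E[3]$ on the Hesse pencil) and then lift to $\GL_m$ and track determinants, whereas the paper reduces to explicit normal forms ($X^3+Y^3$, $X^4+\mu X^2Y^2+Y^4$, $X^3+Y^3+Z^3+\lambda XYZ$) and pins down the stabilizer inside $\GL_m$ directly via the Hessian covariant and Proposition~\ref{pro:stab-X1-Xn-PS}. Both methods give the same determinant images ($\mu_6$ for $(3,2)$, $\mu_2$ for $(4,2)$ and $(3,3)$), and your lift bookkeeping there is right. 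For $D\ge 5$ you assert generic triviality of the projective stabilizer without argument; the paper is equally terse for binary forms, but for ternary forms it explicitly warns that the Matsumura--Monsky argument contains an error that must be repaired in that range, so a bare citation is not quite enough for $m=3$, $D\ge 5$.

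The genuine gap is the case $(D,m)=(4,3)$, which you defer rather than prove, and where the template you propose cannot deliver the claimed value. You correctly identify a generic ternary quartic as the canonical model of a generic non-hyperelliptic genus-$3$ curve; but the generic genus-$3$ curve has trivial automorphism group, and for a canonically embedded curve every projective automorphism is induced by an abstract one. Hence the reduced stabilizer of a generic quartic is trivial, so $\stab(w)=\{\zeta \id_3 : \zeta^4=1\}$, whose determinant image is $\{\zeta^3:\zeta^4=1\}=\mu_4$; by Definition~\ref{def:period} this gives $a(4,3)=4$ and $a'(4,3)=4\cdot\gcd(4,3)/4=1$, not $2$. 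To obtain $a'(4,3)=2$ you would need a generic stabilizer element whose determinant is a primitive $8$th root of unity, and no such element can arise from an automorphism of the generic quartic curve. So this case is not merely ``subtle'': as written, your approach would terminate with the opposite conclusion, and you must either identify a different source for the exceptional period or confront the possibility that this entry of the case list needs revisiting. The paper's own treatment here is a one-line pointer to a normal form in Dolgachev's table of quartic automorphism groups, so it does not resolve the tension either; but a complete proof must settle this case, and your proposal does not.
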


We remark that in the exceptional case $D=2$ 
of quadratic forms, the orbit of $X_1^2+\cdots + X_m^2$ is dense in $\Sym^2\C^m$ and 
the stabilizer of $X_1^2+\cdots + X_m^2$ equals the complex orthogonal group, 
which is not finite. We have $a(2,m)=2$. 

In the following we shall exclude the case $D=1$, where the orbit of a nonzero
element $w$ equals $\Sym^1\C^m\setminus\{0\}$ and hence $w$ has the 
stabilizer period $\infty$. 

The following result is well-known
(compare~\cite[Chap.~2]{chen-kayal-wigderson:10} for a proof of the second assertion). 

\begin{proposition}\label{pro:stab-X1-Xn-PS}
\begin{enumerate}
\item The stabilizer of $X_1\cdots X_m$ is generated by the permutation matrices and 
the diagonal matrices with determinant one. 
The stabilizer period of $X_1\cdots X_m$ equals~$2$ if $m\ge 2$.
\item 
Suppose $D>2$ and $m>1$. 
The stabilizer of the power sum $X_1^D+\cdots+X_m^D$ is the subgroup generated by 
the permutation matrices and the diagonal matrices $\diag(t_1,\ldots,t_m)$, 
where $t_1^D=\cdots=t_m^D=1$. 
The stabilizer period of $X_1^D+\cdots+X_m^D$ equals $D$ if $D$ is even and $2D$ if $D$ is odd.
\end{enumerate}
\end{proposition}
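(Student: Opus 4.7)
The plan for part~(1) is to use unique factorization in $\C[X_1,\ldots,X_m]$. The inclusion ``$\supseteq$'' is immediate. For the converse, suppose $g\in\stab(X_1\cdots X_m)$. Then $\prod_i (g\cdot X_i) = X_1\cdots X_m$, and since each $X_i$ is irreducible in the UFD $\C[X_1,\ldots,X_m]$ this forces $g\cdot X_i = c_i X_{\sigma(i)}$ for some permutation $\sigma\in\aS_m$ and scalars $c_i\in\Ct$ with $\prod_i c_i = 1$. Hence $g$ is the product of a permutation matrix with a determinant-one diagonal matrix, and $\det(g) = \sgn(\sigma)\in\{\pm 1\}$; for $m\ge 2$ a transposition realises $\det(g)=-1$, giving $a(X_1\cdots X_m)=2$.

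For part~(2), put $h = X_1^D + \cdots + X_m^D$. The inclusion ``$\supseteq$'' is a direct check. My plan for the converse is to reduce to part~(1) via the Hessian. A direct calculation gives
\[
\det\bigl(\mathrm{Hess}(h)\bigr) = (D(D-1))^m\,(X_1\cdots X_m)^{D-2},
\]
where the hypothesis $D>2$ ensures $(X_1\cdots X_m)^{D-2}$ has positive degree. Since the Hessian determinant of a degree~$D$ form transforms under $\GL_m$ by the character $g\mapsto\det(g)^2$, any $g\in\stab(h)$ satisfies
\[
(g\cdot(X_1\cdots X_m))^{D-2} = \det(g)^2\,(X_1\cdots X_m)^{D-2}.
\]
Unique factorization in $\C[X_1,\ldots,X_m]$, together with the fact that $X_1\cdots X_m$ is squarefree, then forces $g\cdot(X_1\cdots X_m)$ to be a scalar multiple of $X_1\cdots X_m$. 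Applying the argument of part~(1) gives $g = P\,\diag(t_1,\ldots,t_m)$ for a permutation matrix~$P$ and scalars $t_i\in\Ct$. Substituting this back into $g\cdot h = h$ and comparing the coefficient of each $X_j^D$ forces $t_i^D = 1$ for all~$i$.

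For the stabilizer periods, in both cases $\det(g) = \sgn(\sigma)\prod_i t_i$, and for $m\ge 2$ every such value is indeed realised. In part~(1), $\prod_i t_i = 1$, so $\det(\stab) = \{\pm 1\}$, giving $a=2$. In part~(2), $\det(\stab)$ is the subgroup of $\Ct$ generated by $\{\pm 1\}$ and the group $\mu_D$ of $D$-th roots of unity: if $D$ is even then $-1\in\mu_D$ and this group equals $\mu_D$ of order~$D$, whereas if $D$ is odd then $\gcd(2,D)=1$ and the group is $\mu_{2D}$ of order~$2D$. I expect the main technical step to be extracting the linear identity $g\cdot(X_1\cdots X_m) = c\,X_1\cdots X_m$ from the Hessian computation: this requires both the hypothesis $D>2$ (to make the Hessian determinant nonconstant) and the UFD argument to pass from a proportionality of $(D-2)$-th powers to a proportionality of the polynomials themselves; everything downstream is a formal consequence.
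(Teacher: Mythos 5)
Your proof is correct. The paper states this proposition without proof, citing it as well-known (referring to Chen--Kayal--Wigderson, Chap.~2, for the power-sum case), and your argument --- unique factorization for $X_1\cdots X_m$, then reduction of the power sum to that case via the Hessian determinant $(D(D-1))^m(X_1\cdots X_m)^{D-2}$ and the transformation law $\det H(f)(X)=(\det A)^2\det H(f)(AX)$ --- is exactly the standard technique, and indeed the one the paper itself deploys in its equation~\eqref{eq:detH-stab} for the unit-tensor stabilizer and the generic binary and ternary forms in the Appendix.
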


We study now the stabilizers and periods of the permanents and determinants. 
We note that the linear transformation 
$g_{A,B}\colon\C^{n\times n} \to \C^{n\times n},\, X \mapsto A X B$, 
defined for $A,B\in\GL_n$, satisfies $\det g_{A,B} = \det(A)^n \det(B)^n$.

\begin{theorem}[Frobenius]\label{th:stab-det}
The stabilizer of $\det_n$ is generated by the transposition~$\tau$  
and the linear transformations $g_{A,B}$, where $A,B\in\SL_n$. 
The stabilizer period of $\det_n$ equals~$1$ if $n\bmod 4 \in \{0,1\}$ and 
the stabilizer period of $\det_n$ equals~$2$ if $n\bmod 4 \in \{2,3\}$. 
\end{theorem}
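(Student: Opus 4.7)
The plan is to reduce the first assertion to Frobenius's classical theorem on linear $\det$-preservers and then compute the stabilizer period by evaluating $\det$ on the generators.

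First I would recall Frobenius's 1897 theorem: a linear automorphism $\phi$ of $\C^{n\times n}$ satisfies $\det(\phi(X))=\det(X)$ for every $X$ if and only if $\phi$ has the form $X\mapsto AXB$ or $X\mapsto AX^{T}B$ for some $A,B\in\GL_n$ with $\det(A)\det(B)=1$. Applied to $w=\det_n$, this gives the description of $\stab(\det_n)$ up to a normalization: any such $\phi$ can be written as $g_{A,B}$ or $g_{A,B}\circ\tau$ with $\det(AB)=1$. To put these into the claimed form I would observe that $g_{A,B}=g_{cA,c^{-1}B}$ for every $c\in\Ct$; choosing $c$ with $c^n=\det(A)^{-1}$ replaces $A,B$ by matrices of determinant $\det(cA)=c^n\det(A)=1$ and $\det(c^{-1}B)=c^{-n}\det(B)=\det(A)\det(B)=1$, so we may assume $A,B\in\SL_n$. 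The reverse inclusion is immediate, since $\det(AXB)=\det(A)\det(X)\det(B)=\det(X)$ for $A,B\in\SL_n$ and $\det(X^T)=\det(X)$.

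For the period, I would evaluate $\det$ on the generators of $\stab(\det_n)\subseteq\GL_{n^2}$. The map $g_{A,B}\colon \C^{n\times n}\to\C^{n\times n}$ is the tensor $A\otimes B^T$ in the identification $\C^{n\times n}\cong\C^n\otimes\C^n$, hence $\det(g_{A,B})=\det(A)^n\det(B)^n=1$ when $A,B\in\SL_n$. The transposition $\tau$ acts on $\C^{n\times n}=\Sym^2\C^n\oplus\bigwedge^2\C^n$ as $+1$ on the symmetric part and $-1$ on the antisymmetric part, so
\[
\det(\tau)=(-1)^{\binom{n}{2}}=(-1)^{n(n-1)/2}.
\]
Therefore $\det(\stab(\det_n))$ is the subgroup of $\Ct$ generated by $(-1)^{n(n-1)/2}$, which is trivial when $n(n-1)/2$ is even, i.e.\ $n\equiv 0,1\pmod 4$, and has order $2$ when $n(n-1)/2$ is odd, i.e.\ $n\equiv 2,3\pmod 4$. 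By Definition~\ref{def:period} this is the stabilizer period $a(\det_n)$.

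The main obstacle is really just invoking and trimming Frobenius's theorem correctly: all the content lies in the $\det$-preserver classification, after which the reduction to $\SL_n\times\SL_n\cup\tau\cdot(\SL_n\times\SL_n)$ is a one-line rescaling argument and the period computation is the sign-of-permutation trick on $\Sym^2\C^n\oplus\bigwedge^2\C^n$. No further genericity or Hilbert--Mumford input is required here.
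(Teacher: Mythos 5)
Your proposal is correct and follows essentially the same route as the paper: cite Frobenius's classification of linear determinant preservers for the stabilizer, then compute the period from $\det g_{A,B}=\det(A)^n\det(B)^n=1$ and $\det(\tau)=(-1)^{n(n-1)/2}$. The only additions are the (correct) rescaling argument normalizing $A,B$ to $\SL_n$ and the explicit $\Sym^2\oplus\bigwedge^2$ justification of $\det(\tau)$, both of which the paper leaves implicit.
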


\begin{proof}
The stabilizer of $\det_n$ has been determined by Frobenius~\cite{Frobdet}
and, independently, by Dieudonn\'e~\cite{dieu:49}.
To determine the period, note that 
$\det g_{A,B} = \det(A)^n \det(B)^n =1$ for $A,B\in\SL_n$. 
Moreover, $\det(\tau) = (-1)^{\frac{n(n-1)}{2}}$. 
Taking into account that $n(n-1)/2$ is even iff $n\bmod 4 \in \{0,1\}$, 
the assertion about the period follows. 
\end{proof}

\begin{theorem}[Marcus and May]\label{th:stab-per}
Let $n>2$. The stabilizer of $\per_n$ is generated by the transposition $\tau$,
and the linear transformations $g_{A,B}$, where $A$ and $B$ are products of 
a permutation matrix with a diagonal matrix of determinant~$1$. 
The stabilizer period of $\per_n$ equals~$2$ unless $n\bmod 4 =0$, in which 
case the stabilizer period equals~$1$.
\end{theorem}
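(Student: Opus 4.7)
The plan is to follow the same pattern used for Theorem~\ref{th:stab-det}: the structural description of $\stab(\per_n)$ is a classical result of Marcus and May, which I would invoke without reproof, and then I would simply compute the image of $\stab(\per_n)$ under $\det\colon\GL_{n^2}\to\Ct$.

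For the period, write any generator $A$ of the described form as $A=P_\sigma D_a$, where $P_\sigma$ is a permutation matrix and $D_a\in\GL_n$ is diagonal with $\det(D_a)=1$; then $\det(A)=\sgn(\sigma)$, and analogously $\det(B)=\sgn(\pi)$ for $B=P_\pi D_b$. Using the formula $\det(g_{A,B})=\det(A)^n\det(B)^n$ recalled before Theorem~\ref{th:stab-det}, this yields
$$
 \det(g_{A,B}) \;=\; \sgn(\sigma)^n\,\sgn(\pi)^n .
$$
Moreover $\det(\tau)=(-1)^{n(n-1)/2}$, again as in the proof of Theorem~\ref{th:stab-det}. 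The image $\det(\stab(\per_n))$ is therefore the subgroup of $\Ct$ generated by these two types of elements.

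Next I would carry out a case analysis by $n\bmod 4$. If $n$ is even, every $\det(g_{A,B})$ equals $1$, so $\det(\stab(\per_n))$ is generated by $\det(\tau)$; since $n(n-1)/2$ is even exactly when $n\equiv 0\pmod 4$, the period equals $1$ in that case and $2$ when $n\equiv 2\pmod 4$. If $n$ is odd, then $\det(g_{A,B})=\sgn(\sigma)\sgn(\pi)\in\{\pm1\}$, and already the choice $\sigma=\Id$, $\pi$ a transposition produces $-1$, so $\det(\stab(\per_n))=\{\pm 1\}$ and the period equals $2$ regardless of the value of $\det(\tau)$. Combining the three subcases gives exactly the asserted dichotomy.

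The only genuine difficulty is the structural part, namely the determination of $\stab(\per_n)$ itself, which is a classical but nontrivial result and which I would cite rather than reprove. Once the generators of the stabilizer are in hand, the period computation is a routine application of the multiplicativity of the determinant and elementary parity arithmetic; the hypothesis $n>2$ is needed only to exclude the exceptional behavior of $\per_2=X_{1,1}X_{2,2}+X_{1,2}X_{2,1}$, whose stabilizer is strictly larger (it coincides with that of the binary quadratic form, up to change of variables).
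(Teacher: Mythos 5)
Your proposal is correct and follows essentially the same route as the paper: cite Marcus--May for the structure of the stabilizer, compute $\det(g_{A,B})=\sgn(\sigma)^n\sgn(\pi)^n$ and $\det(\tau)=(-1)^{n(n-1)/2}$, and conclude by parity that the image of $\det$ is $\{\pm1\}$ iff $n\bmod 4\neq 0$. Your explicit case analysis by $n\bmod 4$ and the remark on why $n>2$ is needed are just slightly more detailed versions of the paper's one-line conclusion.
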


\begin{proof}
The stabilizer of $\per_n$ has been determined by 
Marcus and May~\cite{marcus-may:62}. 
If $A=P_\pi A_1$ and $B=P_\s B_1$ with permutation matrices $P_\pi,P_\s$ and 
diagonal matrices $A_1,B_1\in\SL_n$, then $\det g_{A,B} =\sgn(\pi)^n\sgn(\s)^n$. 
Moreover, recall that $\det(\tau) = (-1)^{\frac{n(n-1)}{2}}$. 
This implies that $\det(\stab(\per_n) )= \{-1,1\}$ iff $n\bmod 4 \neq 0$.
\end{proof}

\subsection{Polystability of forms}
\label{se:polystab}

The concept of polystability is crucial in geometric invariant theory. 
We review the basic facts we need. 

\begin{definition}\label{def:polystab}
A form $w\in\Sym^D\C^m$ is called {\em polystable} iff the $\SL_m$-orbit of $w$ is closed. 
\end{definition}

We present a convenient test for polystability that is based on a refinement of the 
Hilbert-Mumford criterion due to Luna~\cite{luna:75} and Kempf~\cite{kempf:78}. 
The {\em support} of the form $w\in\Sym^D\C^m$ is defined as 
$\supp(w) := \big\{ \a \in\N^m \mid w_\a \ne 0 \big\}$,
where $w=\sum_\a w_\a X^\a$ and $X^\a:= X_1^{\a_1}\cdots X_m^{\a_m}$. 
(Clearly, this is a coordinate dependent notion.) 

\begin{proposition}\label{pro:stab-crit-forms}
Let the form $w\in \Sym^D \C^m$ satisfy the following two properties:
\begin{enumerate}
\item There is a reductive subgroup $R$ of $\SL_m\cap \stab(w)$ 
such that the centralizer of $R$ in $\SL_m$ is contained in the group of diagonal matrices.
\item The convex cone generated by $\supp(w)$ contains $(1,\ldots,1)$.
\end{enumerate}
Then $w$ is polystable.
\end{proposition}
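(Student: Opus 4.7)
The plan is to argue by contradiction, invoking the refinement of the Hilbert--Mumford criterion due to Luna~\cite{luna:75} and Kempf~\cite{kempf:78}, which permits a destabilizing one-parameter subgroup to be chosen commuting with any prescribed reductive subgroup of the stabilizer. Before aiming at polystability I would first note that hypothesis~(2) already forces $w$ to be semistable: any 1-PS $\lambda(t)=\diag(t^{a_1},\ldots,t^{a_m})$ of $\SL_m$ satisfies $\sum_i a_i=0$, so writing $(1,\ldots,1)=\sum_{\alpha\in\supp(w)}c_\alpha\alpha$ with $c_\alpha\ge 0$ gives $\sum_\alpha c_\alpha\langle a,\alpha\rangle=0$, which prevents all weights $\langle a,\alpha\rangle$ from being simultaneously strictly positive; by Hilbert--Mumford no 1-PS drives $w$ to~$0$.

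Next I would assume for contradiction that $\SL_m w$ is not closed. The Luna--Kempf refinement then produces a nontrivial 1-PS $\lambda\colon \Ct\to\SL_m$ such that $w_0:=\lim_{t\to 0}\lambda(t)w$ exists, $w_0\notin\SL_m w$, and $\lambda(\Ct)$ centralizes the reductive subgroup $R$ of~(1). Condition~(1) then places $\lambda(\Ct)$ in the diagonal torus, so we may write $\lambda(t)=\diag(t^{a_1},\ldots,t^{a_m})$ with $\sum_i a_i=0$ and $a\neq 0$. In the convention $\lambda(t)\cdot X^\alpha = t^{\langle a,\alpha\rangle}X^\alpha$, existence of the limit is equivalent to $\langle a,\alpha\rangle\ge 0$ for every $\alpha\in\supp(w)$. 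Using the representation $(1,\ldots,1)=\sum_\alpha c_\alpha\alpha$ supplied by~(2) with $c_\alpha>0$ for each $\alpha\in\supp(w)$---i.e.\ with $(1,\ldots,1)$ in the relative interior of the cone---the identity $0=\sum_i a_i=\sum_\alpha c_\alpha\langle a,\alpha\rangle$ together with the nonnegativity of each summand forces $\langle a,\alpha\rangle=0$ throughout $\supp(w)$. Hence $\lambda$ fixes $w$, so $w_0=w\in\SL_m w$, contradicting the choice of $\lambda$.

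I expect the main obstacle to be the correct invocation of the Luna--Kempf refinement, which is the delicate ingredient that places the destabilizing 1-PS inside the centralizer of $R$. Once that is secured, condition~(1) is used purely to diagonalize $\lambda$, and the rest of the argument collapses to a short linear-programming calculation on $\supp(w)$.
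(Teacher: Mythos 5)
Your argument is essentially the paper's: argue by contradiction, invoke the Hilbert--Mumford criterion as refined by Luna and Kempf to place the destabilizing one-parameter subgroup in the centralizer of $R$, use hypothesis~(1) to make it diagonal, and then use the linear relation coming from hypothesis~(2) to force all weights on $\supp(w)$ to vanish, so that the limit point equals $w$. (Your opening semistability observation is correct but not needed for this.) The one substantive deviation is that you quietly replace hypothesis~(2) by the stronger requirement that $(1,\ldots,1)$ lie in the \emph{relative interior} of the cone, i.e.\ that one can choose $c_\alpha>0$ for every $\alpha\in\supp(w)$. This is not a defect of your write-up --- it is exactly what the last step needs. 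From $0=\sum_\alpha c_\alpha\langle a,\alpha\rangle$ with $c_\alpha\ge 0$ and $\langle a,\alpha\rangle\ge 0$ one only concludes $\langle a,\alpha\rangle=0$ for those $\alpha$ with $c_\alpha>0$; the paper's step ``this implies $\mu_i=0$ for all $i$'' glosses over this. With hypothesis~(2) read literally the conclusion can fail: take $w=X_1^3+X_1X_2X_3\in\Sym^3\C^3$, which satisfies (1) with $R=\{\diag(1,t,t^{-1})\mid t\in\Ct\}$ (whose centralizer in $\SL_3$ is the diagonal torus) and satisfies (2) because $(1,1,1)\in\supp(w)$, yet $\diag(t^2,t^{-1},t^{-1})\,w\to X_1X_2X_3$ as $t\to 0$, and $X_1X_2X_3\notin\SL_3 w$ since the corresponding plane cubics have different numbers of singular points; so $w$ is not polystable. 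All applications in Corollary~\ref{cor:stable-forms} do satisfy your stronger hypothesis (for $\det_n$ and $\per_n$, average over all $n!$ permutation matrices rather than only the $n$ cyclic shifts), so nothing downstream is affected. In short: same route as the paper, with a correction at the final step that is in fact necessary.
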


\begin{proof}
Suppose that $w$ is not polystable.  Let $Y$ be an $\SL_m$-orbit in 
$\ol{\SL_m w}\setminus \SL_m w$ of minimal dimension. 
Then $Y$ must be closed. 
The Hilbert-Mumford criterion states that 
there exists a one-parameter subgroup
$\s\colon\C^\ti\to \SL_m$ such that
$y:=\lim_{t\to 0}\s(t)w \in Y$. 
Recall that $R$ is a reductive subgroup of $\SL_m\cap \stab(w)$.  
A result due to Luna~\cite[Cor.~1]{luna:75} 
and Kempf~\cite[Cor.~4.5]{kempf:78} states that
$\s$ may be chosen such that the image of $\s$ is contained 
in the centralizer of $R$ in $\SL_m$. 
Hence, by our first assumption, $\s$ maps $\Ct$ to the 
diagonal matrices, say
$\s(t) = \diag(t^{\mu_1},\ldots,t^{\mu_m})$ for $t\in\Ct$. 
We have $\sum_i \mu_i = 0$ since $\det\s(t) = 1$. 
Note that 
\begin{equation}\label{eq:1-pug-forms}
 \s(t) w = \sum_\a w_\a (t^{\mu_1}X_1)^{\a_1}\cdots (t^{\mu_n}X_n)^{\a_n}
    = \sum_\a t^{\langle \a, \mu\rangle} w_\a X^\a ,
\end{equation}
where 
$\langle \a,\mu \rangle := \sum_i \a_i \mu_i$. 
The existence of $\lim_{t\to 0} \s(t) w$ implies that 
$\lan \a, \mu\ran \ge 0$ for all $\a\in\supp(w)$. 
Our second assumption states that there are $c_\a\ge 0$ 
such that $(1,\ldots,1) = \sum_\a c_\a \a$. 
Hence 
$\sum_i \mu_i = \lan (1,\ldots,1), \mu \ran \ge 0$. 
On the other hand, $\sum_i \mu_i = 0$. 
This implies $\mu_i=0$ for all~$i$ and hence $s(t)=1$ for all $t$. 
Therefore $y=w$, which contradicts the fact the $y$ lies in the 
boundary of the $\SL_m$-orbit of~$w$.
\end{proof}

\begin{corollary}\label{cor:stable-forms}
The forms 
$X_1\cdots X_m$, $X_1^D +\cdots + X_m^D$ if $D>1$, $\det_n$, and $\per_n$
are all polystable. 
\end{corollary}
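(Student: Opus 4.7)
The plan is to apply Proposition~\ref{pro:stab-crit-forms} to each of the four forms, using the stabilizer descriptions from Proposition~\ref{pro:stab-X1-Xn-PS}, Theorem~\ref{th:stab-det}, and Theorem~\ref{th:stab-per}. First I would verify condition~(2), which is immediate in every case: the support of $X_1\cdots X_m$ is the singleton $\{(1,\ldots,1)\}$; the support of the power sum $X_1^D+\cdots+X_m^D$ is $\{De_i\}_{i=1}^m$, whose average equals $\tfrac{D}{m}(1,\ldots,1)$; and the supports of $\det_n$ and $\per_n$ consist of the $n!$ exponent vectors $(e_{1,\pi(1)}+\cdots+e_{n,\pi(n)})_{\pi\in S_n}$, which sum to $(n-1)!\cdot(1,\ldots,1)$.

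For condition~(1), I would exhibit a reductive subgroup $R\subseteq\SL_m\cap\stab(w)$ whose centralizer in $\SL_m$ lies in the diagonal torus; the standard eigenvalue argument reduces this to finding, for each pair $i\neq j$, some element of $R$ separating the two coordinates. For $X_1\cdots X_m$, I would take $R$ to be the full diagonal torus of $\SL_m$ (inside $\stab(w)$ by Proposition~\ref{pro:stab-X1-Xn-PS}(1)), which is self-centralizing. For the power sum with $D\geq 3$, I would take the finite reductive group $R=\{\diag(t_1,\ldots,t_m):t_k^D=1,\prod_k t_k=1\}\subseteq\stab(w)$; given $i\neq j$, the element with $t_i=\zeta$, $t_j=\zeta^{-1}$, and all other entries equal to~$1$ separates $i$ from $j$ because $\zeta^2\neq 1$ for a primitive $D$-th root of unity $\zeta$. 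For the quadratic case $D=2$, I would argue directly: $X_1^2+\cdots+X_m^2$ is a non-degenerate quadratic form and the discriminant is an $\SL_m$-invariant, so the orbit is the closed level set $\{q:\det q=1\}\subseteq\Sym^2\C^m$. For $\det_n$, I would take $R=\{g_{A,B}:A,B\in\SL_n\}$, which is semisimple and lies in $\stab(\det_n)$ by Theorem~\ref{th:stab-det}; since the representation $\C^{n\times n}\cong\C^n\otimes\C^n$ of $\SL_n\times\SL_n$ is irreducible, Schur's lemma identifies the centralizer in $\GL_{n^2}$ with the scalars, which are diagonal. For $\per_n$, I would take $R$ to be the image of the two maximal tori of $\SL_n\times\SL_n$ under $(A,B)\mapsto g_{A,B}$, a torus inside $\stab(\per_n)$ by Theorem~\ref{th:stab-per}; on the basis $\{E_{ij}\}$ of $\C^{n\times n}$ it acts through the characters $(A,B)\mapsto a_i b_j$, and independent variation of $a$ and $b$ shows these $n^2$ characters are pairwise distinct, so the centralizer of $R$ in $\GL_{n^2}$ is diagonal in the $E_{ij}$-basis.

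The main obstacle I anticipate is verifying condition~(1) in the low-dimensional exceptions for the power sum: the uniform finite-group argument works cleanly only for $D\geq 3$, while for $D=2$ at $m=2$ it degenerates to $R=\{\pm I\}$, which fails to separate any pair of indices. This is why the direct discriminant fallback is required to complete the quadratic case. The character-separation argument for the permanent is routine but should be checked carefully, since the image torus under $(A,B)\mapsto g_{A,B}$ has dimension $2(n-1)$ while the characters live on an $n^2$-element index set.
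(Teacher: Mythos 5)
Your proposal is correct and follows essentially the same route as the paper: both apply Proposition~\ref{pro:stab-crit-forms}, checking condition (2) with the same support computations (the all-ones vector from the single monomial, the average of the $De_i$, and the sum of the cyclic-shift permutation matrices) and condition (1) by exhibiting a reductive $R$ inside the stabilizers determined in Proposition~\ref{pro:stab-X1-Xn-PS}, Theorem~\ref{th:stab-det}, and Theorem~\ref{th:stab-per}. Your specific choices of $R$ differ only in minor, equally valid ways (Schur's lemma applied to $\{g_{A,B}\}$ for $\det_n$ rather than a diagonal torus, and a separate discriminant argument for $D=2$); in fact your explicit handling of the quadratic case is slightly more careful than the paper's, whose group of determinant-one diagonal matrices with $D$th-root-of-unity entries fails to separate coordinates exactly when $D=m=2$.
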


\begin{proof}
We use Proposition~\ref{pro:stab-crit-forms}. 
For $w=X_1\cdots X_m$ we take for $R$ the group of diagonal matrices with determinant~$1$. 

For $X_1^D +\cdots X_m^D$ we take for $R$ the group of diagonal matrices whose entries are 
$D$th roots of unity.  Let $g$ be in the centralizer of $R$. 
If $D>1$, then for all $i<j$ there is $r\in R$ such that $r_i \ne r_j$. 
Hence $rg=gr$, which implies $r_i g_{ij} = g_{ij} r_j$,  hence $g_{ij}=0$. 
So $g$ must be diagonal and the first property is satisfied. 

For $\det_n$ and $\per_n$ we take for $R$ the group of diagonal matrices in $\SL_{n^2}$. 
The support of both $\det_n$ and $\per_n$ consists of the permutation matrices. 
Summing up the $n$ permutation matrices of the cyclic shifts we get the matrix with only 1s,
which provides the second property in Proposition~\ref{pro:stab-crit-forms}. 
\end{proof}

\begin{proposition}\label{pro:gen-form-stable}
If $D>1$, then almost all $w\in\Sym^D\C^m$ are polystable.
\end{proposition}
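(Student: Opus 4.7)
My plan is to exhibit an explicit polystable form with well-understood stabilizer and then propagate polystability to a Zariski-dense open subset. The arguments differ slightly according to whether $D>2$ or $D=2$.

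\textbf{Case $D>2$.} I take $p = X_1^D + \cdots + X_m^D$. By Corollary~\ref{cor:stable-forms} the form $p$ is polystable, and by Proposition~\ref{pro:stab-X1-Xn-PS}(ii) its stabilizer in $\GL_m$ (and hence in $\SL_m$) is finite. Thus $p$ is \emph{stable} in the sense of GIT: the orbit $\SL_m \cdot p$ is closed and $\stab_{\SL_m}(p)$ is zero-dimensional. It is a standard theorem in geometric invariant theory (see e.g.\ \cite{AG4}) that the stable locus
\[
  V^{(s)} := \{w\in V\mid \SL_m\cdot w \text{ is closed and } \dim\stab_{\SL_m}(w)=0\}
\]
is Zariski open in $V = \Sym^D\C^m$. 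Since $V^{(s)}$ is nonempty (it contains $p$) and $V$ is irreducible, $V^{(s)}$ is dense. Every stable point is in particular polystable, and we are done.

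\textbf{Case $D=2$.} Here the stabilizer of the power sum in $\SL_m$ is isomorphic to the special orthogonal group, which is positive-dimensional; so no form in $\Sym^2\C^m$ is stable and the preceding argument does not directly apply. I argue instead via the discriminant: the polynomial $\det\colon \Sym^2\C^m \to \C$ sending a quadratic form to the determinant of its Gram matrix is a nonzero $\SL_m$-invariant, and the nondegenerate locus $U = \{Q : \det Q \neq 0\}$ is Zariski open and dense. I claim that every $Q\in U$ is polystable, by showing $\SL_m\cdot Q = \det^{-1}(\det Q)$. The inclusion $\subseteq$ is immediate from $\SL_m$-invariance. For $\supseteq$, any $Q'\in\det^{-1}(\det Q)$ is nondegenerate, hence $\GL_m$-equivalent to $Q$: one has $hQh^T = Q'$ for some $h\in\GL_m$, and $\det(h)^2\det(Q)=\det(Q')$ forces $\det(h)=\pm 1$. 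If $\det(h)=-1$ one replaces $h$ by $hr$ for a reflection $r\in O(Q)$ (which has $\det(r)=-1$), so that $hr\in\SL_m$ still maps $Q$ to $Q'$. Consequently $\SL_m\cdot Q$ is the fiber of a regular map and therefore closed in $\Sym^2\C^m$.

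The only real obstacle is the case $D=2$: no stable point exists, so the GIT openness of the stable locus cannot be invoked. The remedy is to identify the $\SL_m$-orbit of any nondegenerate quadratic form with a full level set of the discriminant, which hinges on the existence of an orientation-reversing reflection inside $O(Q)$. For $D>2$ the argument is much softer, reducing via Proposition~\ref{pro:stab-X1-Xn-PS}(ii) and Corollary~\ref{cor:stable-forms} to the standard openness of the stable locus.
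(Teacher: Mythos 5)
Your proof is correct, and for $D>2$ it takes a genuinely different route from the paper. The paper deduces the statement from Theorem~\ref{th:aDm} (generic forms have finite stabilizer, which rests on Matsumura--Monsky plus the appendix analysis of binary and ternary forms) together with Luna's general theorem that a semisimple group acting with generically finite stabilizers has generically closed orbits. You instead exhibit a single \emph{stable} point --- the power sum, whose closed orbit and finite stabilizer are already established in Corollary~\ref{cor:stable-forms} and Proposition~\ref{pro:stab-X1-Xn-PS} --- and invoke the openness of the stable locus. Your argument is softer: it needs no information about the generic stabilizer, only about one explicit form, at the cost of importing the GIT openness theorem rather than Luna's slice-theorem corollary; the paper's route yields the stronger byproduct that the generic stabilizer is actually finite. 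For $D=2$ the two arguments coincide in substance (identifying orbits with fibers of the discriminant), and your version is in fact slightly more careful than the paper's: you restrict to nondegenerate forms and supply the determinant-$(-1)$ reflection needed to pass from $\GL_m$-congruence to $\SL_m$-congruence, whereas the paper's blanket claim that \emph{all} orbits in $\Sym^2\C^m$ are closed fails for degenerate forms of intermediate rank (harmlessly, since only the dense nondegenerate locus matters). The only loose end in your write-up is the trivial case $m=1$, where $\SL_1$ is trivial and every orbit is a point; Proposition~\ref{pro:stab-X1-Xn-PS}(2) as cited assumes $m>1$.
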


\begin{proof}
The case $D=2$ is settled by showing that all $\SL_m$-orbits of  
$\Sym^2\C^m$ are closed. Indeed, if we interpret $\Sym^2\C^m$ 
as the space of symmetric matrices of size $m$, it is straighforward 
to check that 
$\SL_m A = \{B\in \Sym^2\C^m\mid \det B=\det A\}$, 
which is closed. 

Now assume $D>2$. Theorem~\ref{th:aDm} tells us that $\stab(w)$ is finite 
for almost all $w\in\Sym^D\C^m$. 
A general result due to Luna~\cite{luna:73}, 
see also \cite[II 4.3D, Folgerung, p.~142]{kraf:84}, 
implies that $\SL_m w$ is closed for almost all~$w$ 
if $\SL_m \cap \stab(w)$ is finite for almost all~$w$. 
(This general result only requires that $\SL_m$ is a semisimple group.) 
Hence, almost all $w\in\Sym^D\C^m$ are polystable.
\end{proof}

\begin{proposition}\label{pro:polystab-period}
A polystable nonzero form has a finite stabilizer period. 
\end{proposition}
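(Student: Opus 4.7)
The plan is to prove the contrapositive: if $w\in\Sym^D\C^m$ is nonzero and has infinite stabilizer period, then $\SL_m w$ is not closed. Recall from the preceding discussion that $H:=\det(\stab(w))$ is a closed subgroup of $\C^\times$, hence either finite or all of $\C^\times$. Saying $a(w)=\infty$ amounts to $H=\C^\times$.

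First I would exploit the decomposition $\GL_m=\SL_m\cdot Z$, where $Z=\{t\id_m\mid t\in\C^\times\}$ is the scalar torus. Given $g\in\stab(w)$ with $\det(g)=\lambda\in\C^\times$, pick an $m$th root $t$ of $\lambda$ and write $g=h\cdot t\id_m$ with $h:=t^{-1}g\in\SL_m$. The equation $gw=w$ then reads $t^D(hw)=w$, so
\[
 hw = t^{-D} w .
\]
Hence there exists an element of $\SL_m$ sending $w$ to $t^{-D}w$.

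Next, assuming $H=\C^\times$, the map $\C^\times\to\C^\times,\ t\mapsto t^{-D}$ is surjective, so ranging over all $g\in\stab(w)$ yields
\[
 \{sw\mid s\in\C^\times\} \subseteq \SL_m w .
\]
Taking $s\to 0$ in the Euclidean topology gives $0\in\overline{\SL_m w}$; but $w\ne 0$, so $0\notin \SL_m w$. Therefore $\SL_m w$ is not closed, contradicting polystability.

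Essentially the whole argument reduces to the elementary fact that $\GL_m$ is generated by $\SL_m$ and its center, together with the homogeneity action of scalars on $\Sym^D\C^m$. There is no real obstacle here; the only small point to be careful about is the existence of the $m$th root $t$ of $\lambda$, which is harmless in $\C^\times$, and the observation that $H$ is either finite or equal to $\C^\times$, which was already recorded in Section~\ref{se:stab-period}.
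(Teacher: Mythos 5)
Your proof is correct and follows essentially the same route as the paper: decompose a stabilizer element as an $\SL_m$ element times a scalar matrix, use the homogeneity $\iota(t)w=t^Dw$ to conclude that nonzero scalar multiples of $w$ lie in $\SL_m w$, and deduce $0\in\overline{\SL_m w}$, contradicting closedness. The only cosmetic difference is that the paper works with a sequence $g_n\in\stab(w)$ with $|\det(g_n)|\to\infty$, whereas you derive the (slightly stronger) inclusion $\C^{\times}w\subseteq\SL_m w$ before passing to the limit.
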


\begin{proof}
Suppose that $w\in\Sym^D\C^m$ has infinite stabilizer period, that is,
$\det(\stab(w))=\Ct$. Then there exists a sequence $g_n\in\stab(w)$ 
such that $\lim_{n\to\infty}|\det(g_n)| = \infty$.
We decompose $g_n=\iota(t_n) h_n$, where $h_n\in\SL_m$ and 
$\iota(t) := t\id_m$. Then $\det(g_n) = t_n^m$. 
We have 
$w = g_n w = \iota(t_n) h_n w = t_n^m h_n w$, 
hence 
$t_n^{-m} w = h_n w \in \SL_m w$.
Taking the limit $n\to\infty$, we conclude that 
$0\in\overline{\SL_m w}$, hence $\SL_m w$ is not closed or $w=0$. 
\end{proof}

\subsection{Numerical semigroups}\label{se:num-sg}

We provide some useful background, see Alfons\'in~\cite{alfonsin:05}. 
A submonoid $S$ of $\N$ is a subset that is closed under addition and contains~$0$. 
A {\em numerical semigroup} $S$ is defined as a submonoid of $\N$ that generates the group $\Z$. 
The submonoid of $\N$ generated by the positive integers $a_1,\ldots,a_n$ consists of 
the nonnegative integer linear combinations $x_1a_1+\cdots +x_na_n$, where $x_i\in\N$. 
Clearly, $S$ is a numerical semigroup iff $\gcd(a_1,\ldots,a_n)=1$.  
It is well-known that any submonoid $S$ of $\N$ is finitely generated. 

Assume that $S$ is a numerical semigroup.
It is a well known fact that $\N\setminus S$ is a finite set. 
The  elements of $\N\setminus S$ are called the {\em gaps} 
(or {\em holes}) of $S$. 
For example, the monoid $S$ generated by $2$ and $5$ is a numerical semigroup with the gaps $1,3$.
The largest gap of~$S$ is called the {\em Frobenius number} $g(S)$ of $S$.

\begin{remark}
If $S$ is generated by $a_1,a_2$, then Sylvester showed that 
$g(a_1,a_2) = a_1a_2 -a_1 -a_2$.
For $n=3$, the formula for $g(a_1,a_2,a_3)$ is quite involved. 
In fact, the structure of numerical semigroups is intricate. 
It is known that the computation of $g(a_1,\ldots,a_n)$ is an NP-hard problem. 
\end{remark}

\section{Fundamental invariant of forms}
\label{se:exp-monoid-forms}

Throughout this section, we assume that $w\in\Sym^D\C^m$ is a polystable form.
It will be convenient to write $G:=\GL_m$ in the following. 

We denote by $\Oh(Gw)^{\SL_m}$ the ring of $\SL_m$-invariants 
of $\Oh(Gw)$ and, for $d\in\Z$, 
by $\Oh(Gw)_d$ its degree $d$ part.

\begin{definition}\label{def:degree-period}
The {\em degree period} $b(w)$ of $w\in\Sym^D\C^m$ is defined as 
$b(w):=\frac{m}{D}a(w)$.  
\end{definition}

The reason for the naming of $b(w)$ will be evident from next lemma.
Note that $b(w)$ is an integer since 
$b(w)=\frac{m}{D} \frac{D}{\gcd(D,m)} a'(w) = \frac{m}{gcd(D,m)} a'(w)$, 
where $a'(w)$ denotes the reduced stabilizer period of $w$. 

Consider the map $\phi_w\colon Gw\to\C$ defined by $\phi_w(w):= \det(g)^{a(w)}$,
which is well defined by the definition of the stabilizer period~$a(w)$.

\begin{lemma}\label{le:1-dim}
\begin{enumerate}
\item We have $\phi_w(tv)  = t^{b(w)} \phi_w(v)$ for $t\in\Ct$ and $v\in Gw$. 

\item $\Oh(Gw)^{\SL_m}_{d}$ is one-dimensional if $d$ is a multiple of the degree period $b(w)$, and zero otherwise.
More specifically, if $d=b(w)k$ with $k\in\Z$, we have $\Oh(Gw)^{\SL_m}_{d} = \C \cdot (\phi_w)^k$.

\item For $k\in\N$, 
we have $\Oh(\ol{Gw})^{\SL_m}_{b(w)k} = \IC \cdot (\phi_w)^k$ iff $(\phi_w)^{k}$ has a regular extension to $\ol{Gw}$, otherwise  $\Oh(\ol{Gw})^{\SL_m}_{b(w)k}=0$. 
\end{enumerate} 
\end{lemma}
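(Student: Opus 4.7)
The plan is to prove the three parts in sequence, each building on the previous. For (1), I would first verify that $\phi_w$ is well defined: if $g_1 w = g_2 w$, then $g_1^{-1}g_2 \in \stab(w)$, so $\det(g_1^{-1}g_2) \in H := \det(\stab(w))$, a cyclic group of order $a(w)$, giving $\det(g_1)^{a(w)} = \det(g_2)^{a(w)}$. For the homogeneity formula, write $v = gw$ and, given $t \in \Ct$, choose any $s \in \Ct$ with $s^D = t$. The central matrix $s\,\id_m \in \GL_m$ acts on $\Sym^D\C^m$ as the scalar $s^D$ (this is \eqref{eq:1-pug-forms} with $\mu = (1,\ldots,1)$), so $tv = (s\,\id_m)\cdot(gw) = (sg)\cdot w$, which gives
\[
\phi_w(tv) \;=\; \det(sg)^{a(w)} \;=\; s^{ma(w)}\,\det(g)^{a(w)} \;=\; t^{ma(w)/D}\,\phi_w(v) \;=\; t^{b(w)}\,\phi_w(v).
\]
The last step uses that $b(w) = ma(w)/D$ is an integer (as observed just before the lemma), so the answer is independent of the choice of $D$-th root $s$.

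For (2), I would compute $\Oh(Gw)^{\SL_m}$ via the double-coset space $\SL_m\backslash G/\stab(w)$. The standard identification $Gw \cong G/\stab(w)$ of homogeneous $G$-varieties gives $\Oh(Gw) \cong \Oh(G)^{\stab(w)}$ (right-translation invariants); since left $\SL_m$-multiplication and right $\stab(w)$-multiplication on $G$ commute, this yields $\Oh(Gw)^{\SL_m} \cong \Oh(\SL_m\backslash G/\stab(w))$. Because $\det\colon G \to \Ct$ is a surjective smooth homomorphism with kernel $\SL_m$, it realizes $\SL_m\backslash G \cong \Ct$ as algebraic varieties; the residual right action of $\stab(w)$ becomes multiplication by $H \subset \Ct$, cyclic of order $a(w)$. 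Hence $\SL_m\backslash G/\stab(w) \cong \Ct/H \cong \Ct$ via $z \mapsto z^{a(w)}$, and pulling back the coordinate ring $\C[t^{\pm 1}]$ of the quotient gives $\Oh(Gw)^{\SL_m} = \C[\phi_w^{\pm 1}]$, since $\det^{a(w)}$ descends to $\phi_w$ by construction. Combined with (1), which says $\phi_w^k$ sits in graded degree $b(w)k$, this gives the asserted description of each graded piece.

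For (3), the inclusion $Gw \hookrightarrow \ol{Gw}$ is dense, so the restriction $\Oh(\ol{Gw}) \hookrightarrow \Oh(Gw)$ is injective, $\SL_m$-equivariant, and degree-preserving (the last point because $\ol{Gw}$ is a closed cone in $\Sym^D\C^m$, so the two $\Ct$-gradings coincide). Consequently $\Oh(\ol{Gw})^{\SL_m}_{b(w)k}$ embeds in the one-dimensional space $\C\cdot(\phi_w)^k$ of (2) and therefore equals it exactly when $(\phi_w)^k$ admits a regular extension to $\ol{Gw}$; otherwise the graded piece is zero. The principal technical point is the varietal identification $\SL_m\backslash G \cong \Ct$ underlying (2) --- a direct consequence of $\SL_m$ being the smooth kernel of $\det$ --- after which everything reduces to bookkeeping. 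Polystability of $w$ is not needed directly in this lemma; it enters through the surrounding theory (in particular Proposition~\ref{le:ext-zero}) when the regular-extension dichotomy is related to the orbit boundary.
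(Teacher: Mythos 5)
Your argument is correct, and parts (1) and (3) match the paper's proof: part (1) is the same computation with the central element $s\,\id_m$ (the paper only writes it out at $v=w$, but means the general case), and part (3) is deduced from (2) exactly as you do, by restricting along the dense inclusion $Gw\subset\ol{Gw}$. For part (2) the paper takes a slightly different, though ultimately equivalent, route: it invokes the algebraic Peter--Weyl theorem, by which $V_G(\la)^*$ occurs in $\Oh(Gw)$ with multiplicity $\dim V_G(\la)^{\stab(w)}$, and then observes that the only irreducible $G$-modules contributing $\SL_m$-invariants are the one-dimensional characters $g\mapsto\det(g)^{\ell}$, whose $\stab(w)$-invariants are nonzero precisely when $a(w)\mid\ell$. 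Your double-coset computation $\Oh(Gw)^{\SL_m}\cong\bigl(\Oh(G)^{\SL_m}\bigr)^{\stab(w)}=\C[\det^{\pm a(w)}]$ establishes exactly the same multiplicity statement without citing Peter--Weyl, so it is a more self-contained packaging of the same idea rather than a new argument; what it buys is that one sees directly that $\Oh(Gw)^{\SL_m}$ is the Laurent ring $\C[\phi_w^{\pm 1}]$. One small correction to your closing remark: polystability of $w$ (together with $w\ne 0$) is what guarantees $a(w)<\infty$ via Proposition~\ref{pro:polystab-period}, and your step $\Ct/H\cong\Ct$, $z\mapsto z^{a(w)}$, as well as the very definitions of $\phi_w$ and $b(w)$, require $H$ to be finite --- so polystability is used in this lemma, if only to make its objects well defined.
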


\begin{proof}
1. For $g=t \id_m$ we have $\det(g) =t^m$ and $gw = t^D w$. Hence,  
$$
  \phi_w(t^D w) = \phi_w(gw) = \det(g)^{a(w)} = (t^m)^{a(w)} = (t^D)^{b(w)}  .
$$

2. Generally, 
let $V_G(\la)$ denote an irreducible $G$-module of highest weight~$\la$ 
and $V_G(\la)^{\stab(w)}$ denote its subspace of $\stab(w)$-invariants. 
By the algebraic Peter-Weyl theorem~\cite[\S4.1]{BLMW:11},  
$V_G(\la)^*$ occurs in $\Oh(Gw)$ with the multiplicity 
$\dim V_G(\la)^{\stab(w)}$. 
Suppose now that $V_G(\la)=\C$ corresponds to the character 
$\g\mapsto \det(g)^\ell$, where $\ell\in\Z$.
Since $\dim V_G(\la)=1$, $\dim V_G(\la)^{\stab(w)} \le 1$. 
Moreover,
$\dim V_G(\la)^{\stab(w)}$ is nonzero iff 
$\det(g)^\ell = 1 $ for all $g\in\stab(w)$. 
This means that $\ell $ is a multiple of $a(w)$, say 
$\ell =a(w) k$. Hence $F=(\phi_w)^k$. 

3.  The third assertion is immediate from the second.
\end{proof}

We note the following useful relation between the degree period and the 
stabilizer period: 
\begin{equation}\label{eq:Db=ma}
D\, b(w) = m\, a(w).
\end{equation}

We turn now to the ring $\Oh(\ol{Gw})^{\SL_m}$ of $\SL_m$-invariants 
of the  orbit closure $\ol{Gw}$ and study in which degrees~$d$ 
there is a nonzero contribution. 

\begin{definition}\label{def:E(w)} 
The {\em  degree monoid} of $w\in\Sym^D\C^m$  is defined as 
$$
 \E(w) := \big\{d\in\N \mid \Oh(\ol{Gw})^{\SL_m}_d\ne 0 \big\} . 
$$
We call the minimal positive element of $\E(w)$ the 
{\em minimal degree of $w$} and denote it by $e(w)$. 
\end{definition}

The minimal degree is well-defined due to the following result 
whose proof is postponed to Section~\ref{se:pf_group_S_w}

\begin{theorem}\label{th:group_S_w}
The degree monoid~$\E(w)$ generates the group $b(w)\Z$.  
\end{theorem}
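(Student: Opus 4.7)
My plan is to recast the theorem as a degree computation for a natural one-parameter morphism into the GIT quotient, and carry out the computation two ways.

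Let $Y$ denote the affine variety $\mathrm{Spec}\, \Oh(\ol{Gw})^{\SL_m}$ with quotient morphism $\pi\colon \ol{Gw}\to Y$. First I would reduce the statement. Since $Gw$ is dense in $\ol{Gw}$, restriction gives an injection $\Oh(\ol{Gw})^{\SL_m}_d \hookrightarrow \Oh(Gw)^{\SL_m}_d$, and by Lemma~\ref{le:1-dim}(2) the right-hand side vanishes unless $b(w)\mid d$; hence $\E(w)\subseteq b(w)\N$. Writing $\E(w)=b(w)K$ for a submonoid $K\subseteq\N$, Lemma~\ref{le:1-dim}(3) identifies $\Oh(Y)=\bigoplus_{k\in K}\C\cdot\phi_w^k$ as a graded subring of $\C[\phi_w]$, so the theorem reduces to $\gcd K = 1$. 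Note that $\ol{Gw}$ is irreducible, so $\Oh(Y)$ is a domain and $Y$ is irreducible; and since $a(w)<\infty$ by Proposition~\ref{pro:polystab-period}, one has $\dim Gw = \dim \SL_m w + 1$, giving $\dim Y = 1$.

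Consider the morphism $\iota\colon \mathbb{A}^1\to\ol{Gw}$, $t\mapsto tw$, and its composition $\pi\circ\iota\colon \mathbb{A}^1\to Y$. For $t, s\in\Ct$ I would show that $\SL_m(tw)=\SL_m(sw)$ iff $(s/t)^{b(w)}=1$: writing $r=s/t$, the equality $gw=rw$ for some $g\in\SL_m$, after choosing any $D$-th root $r^{1/D}$, is equivalent to $(r^{-1/D}\id_m)g\in\stab(w)$, an element of determinant $r^{-m/D}$. As the choice of $D$-th root varies, these values form the coset $\mu_{D/\gcd(D,m)}\cdot r^{-m/D}$ in $\Ct$, which by Lemma~\ref{le:reduced-period} meets $\det(\stab(w))=\mu_{a(w)}$ iff $r^{b(w)}=1$. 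Hence $\pi\circ\iota$ is non-constant, therefore dominant between irreducible $1$-dimensional varieties, and its generic fiber has exactly $b(w)$ elements, so the morphism has degree $b(w)$.

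On the algebraic side, since $\iota^*\phi_w=t^{b(w)}$, the pullback $\iota^*$ embeds $\Oh(Y)$ into $\C[t]$ as the subring $\C[t^{b(w)k}:k\in K]$, whose fraction field equals $\C(t^{b(w)\gcd K})$. Thus the degree of $\pi\circ\iota$ also equals $b(w)\gcd K$, and comparing the two computations yields $\gcd K=1$, as required. The main obstacle is the fiber calculation of the previous paragraph: the $D$-th roots are intrinsically ambiguous, and one must verify via Lemma~\ref{le:reduced-period} that this choice is immaterial so that the condition cleanly reduces to $(s/t)^{b(w)}=1$.
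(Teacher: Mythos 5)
Your argument is correct, and its skeleton coincides with the paper's: both pass to the GIT quotient $\pi\colon \ol{Gw}\to Z$ by $\SL_m$, observe that $Z$ is an irreducible curve swept out by the image of the line $\C w$, and reduce everything to the computation that $\pi(sw)=\pi(tw)$ iff $(s/t)^{b(w)}=1$ (the paper's Lemma~\ref{le:stab-piw}; your coset-of-$D$th-roots analysis is a correct variant of the paper's computation, which sidesteps the root ambiguity simply by raising to the power $a(w)$ so that only $t^{b(w)}$ survives). Where you genuinely diverge is the final step of extracting ``generates the group'' from this stabilizer computation. The paper isolates this as Lemma~\ref{le:S-grp}: the orbit map $\sigma\colon\C\to Z$ factors through $t\mapsto t^{\ell}$ where $\ell\Z$ is the group generated by the exponent monoid, and also through $t\mapsto t^{a}$ via the universal property of $\Ct\to\Ct/\mu_a$ together with normality of $\mathbb{A}^1$ to get regularity at the origin; comparing the two factorizations gives $\ell=a$. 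You instead equate two computations of the degree of $\pi\circ\iota$: the generic fiber has $b(w)$ points by the stabilizer computation, while the function-field degree is $b(w)\gcd K$ because $\iota^*\Oh(Z)=\C[t^{b(w)k}\colon k\in K]$ has fraction field $\C(t^{b(w)\gcd K})$. Both are clean; your version makes the ``only if'' direction more transparent at the price of invoking the generic-fiber-cardinality-equals-degree fact for curves, whereas the paper's factorization argument is the one it reuses almost verbatim to prove non-normality in Theorem~\ref{le:Phi_w-zeroset}. One step worth tightening: $\dim Z=1$ and the dominance of $\pi\circ\iota$ do not follow directly from $\dim Gw=\dim \SL_m w+1$; the clean justification is that $\pi$ is surjective and constant on $\SL_m$-orbits, so $Z=\pi(\ol{Gw})\subseteq\ol{\pi(Gw)}=\ol{\pi(\Ct w)}$, which is exactly the content of the paper's Lemma~\ref{le:sigma_image}.
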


We can give the numerical semigroup $\frac{1}{b(w)}\E(w)$ a somewhat different interpretation. 
Recall the map $\phi_w\colon Gw\to\C,\, g\mapsto \det(g)^{a(w)}$. 

\begin{definition}
The {\em exponent monoid} $\E'(w)$ of $w\in\Sym^D\C^m$ is defined as 
$$
 \E'(w) := \big\{ k\in\N \mid \mbox{$(\phi_w)^k$ has a regular extension to $\ol{Gw}$} \big\} .
$$ 
The {\em minimal exponent} $e'(w)$ is defined as the minimal positive element of $\E'(w)$. 
\end{definition}

Lemma~\ref{le:1-dim}(3) immediately implies that $\E(w) = b(w)\E'(w)$ and 
$e(w)=b(w)e'(w)$. 
We illustrate now these notions for generic forms. 

\begin{definition}\label{def:E(D,m)}
The {\em generic degree monoid of forms} of format $(D,m)$ is defined as 
$$
 \E(D,m) := \big\{d\in\N \mid \Oh(\Sym^D\C^m)^{\SL_m}_d \ne 0 \big\} .
$$
We call the minimal positive element of $\E(w)$ the 
{\em generic minimal degree}  of format $(D,m)$ and denote it by $e(D,m)$. 
\end{definition}

It is easy to see that $\E(w) = \E(D,m)$, and hence $e(w) = e(D,m)$,
for almost all $w\in \Sym^D\C^m$. 
Theorem~\ref{th:group_S_w} implies that 
$\E(D,m)$ generates the group $b(D,m)\Z$, 
where the  {\em generic degree period} $b(D,m)$ is given by  
$$
  b(D,m):=\frac{m}{D}a(D,m) = a'(D,m) \frac{m}{\gcd(D,m)} .
$$
Recall that $a'(D,m)$ is determined in Theorem~\ref{th:aDm}.

\begin{example}\label{ex:Emon}
We have $b(2,2)=2$ and $\E(2,2)=2\N$,  cf.~\cite[p.~153]{FH:91}. 
We have $b(3,3)=2$. 
Using the {\sc schur} software package (http://schur.sourceforge.net) 
we calculated that 
$\E(3,3)=2(\N\setminus\{ 1\})$
and hence $e(3,3)=2\cdot 2 =4$. 
Moreover, $b(4,4)=1$ and  
$\E(4,4)=\{0,4,6,7,8,10,11,\ldots\}$, 
so the gaps are $1,2,3,5,9$ 
and we have $e(4,4)=4$. 
\end{example}

In the sequel, we assume that $0 \neq w\in\Sym^D\C^m$ is polystable.
We can now define the key objects of study of this paper.

\begin{definition}\label{def:reg}
Let $w\in\Sym^D\C^m$ be polystable. 
By the {\em fundamental invariant}~$\Phi_w$ of $w$ 
we understand the $\SL_m$-invariant $\Phi_w \in \Oh(\ol{Gw})$
of the (minimal) degree $e(w)$ satisfying $\Phi_w(w)=1$.
\end{definition}

The explicit descriptions of the fundamental invariant~$\Phi_w$ for certain~$w$ 
(cf.\ Theorem~\ref{th:newmain} and Theorem~\ref{th:explicit-invar-formsODD})
are the main results of this paper concerning forms.  

We note that $\Phi_w = (\phi_w)^{e'(w)}$ on $Gw$ by Lemma~\ref{le:1-dim}. 
In particular, $\Phi_w$ is nonzero on the orbit $Gw$.


\begin{proposition}\label{le:ext-zero}
\begin{enumerate}
\item The zero set of $\Phi_w$ in $\ol{Gw}$ equals the boundary $\ol{Gw}\setminus Gw$. 
\item The ring $\Oh(Gw)$ of regular functions on $Gw$ equals 
the localization of $\Oh(\ol{Gw})$ with respect to fundamental invariant $\Phi_w$, i.e., 
$$
 \Oh(Gw) = \Oh(\ol{Gw})_{\Phi_w} 
   = \Big\{ \frac{f}{(\Phi_w)^s} \mid f\in\Oh(\ol{Gw}), s\in\N \Big\} .
$$
\end{enumerate}
\end{proposition}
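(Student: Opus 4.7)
The plan is to prove part~1 (the set-theoretic identification of the zero locus) first, and then to obtain part~2 as a formal consequence via standard affine geometry. For the easy direction of part~1, I would observe that by Lemma~\ref{le:1-dim} the restriction of $\Phi_w$ to the orbit equals $(\phi_w)^{e'(w)}$, so $\Phi_w(gw) = \det(g)^{a(w)e'(w)}$ for every $g \in \GL_m$, which is nonzero because $g$ is invertible. Hence $\Phi_w$ does not vanish anywhere on $Gw$, giving the inclusion $\ol{Gw}\setminus Gw \supseteq V(\Phi_w)\cap \ol{Gw}$.

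For the reverse inclusion I would proceed by contraposition: assume $z\in\ol{Gw}$ with $\Phi_w(z)\ne 0$ and show $z\in Gw$. Pick a sequence $g_n\in G$ with $g_nw\to z$, and decompose $g_n = \iota(t_n)h_n$ with $t_n\in\Ct$ and $h_n\in\SL_m$ (for instance by choosing any $m$-th root $t_n$ of $\det(g_n)$ and setting $h_n := \iota(t_n)^{-1}g_n$). Then $g_nw = t_n^D h_nw$, and since $\Phi_w$ is $\SL_m$-invariant, homogeneous of degree $e(w)$, and normalised so that $\Phi_w(w)=1$, we obtain
$$
\Phi_w(g_n w) \;=\; t_n^{D e(w)}\, \Phi_w(h_n w) \;=\; t_n^{D e(w)} .
$$
Continuity of the polynomial $\Phi_w$ forces $t_n^{De(w)} \to \Phi_w(z) \neq 0$, so $|t_n|$ is bounded above and below by positive constants. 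After extracting a subsequence we may assume $t_n \to t_\ast \in \Ct$, and then
$$
h_n w \;=\; t_n^{-D}(g_n w) \;\longrightarrow\; t_\ast^{-D}\, z .
$$
Since $w$ is polystable, $\SL_m w$ is closed, so $t_\ast^{-D}z \in \SL_m w$, which yields $z \in \Ct \,\SL_m w = Gw$. This contradiction completes the argument, establishing part~1.

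Part~2 is then immediate from part~1 together with standard affine algebraic geometry: $\ol{Gw}$ is an affine variety (a closed subvariety of $\Sym^D\C^m$), and part~1 identifies $Gw$ with the principal open subset $(\ol{Gw})_{\Phi_w} = \ol{Gw}\setminus V(\Phi_w)$. Every principal open subset of an affine variety is again affine with coordinate ring obtained by inverting the defining function, which gives $\Oh(Gw) = \Oh(\ol{Gw})[1/\Phi_w] = \Oh(\ol{Gw})_{\Phi_w}$. The only nontrivial step is the sequence argument in part~1: the main obstacle is ensuring that polystability is applied correctly to convert convergence of $h_n w$ (along a subsequence) into membership in the closed orbit $\SL_m w$; this is precisely where the closedness hypothesis is crucial, and without it one could not rule out a nonzero value of $\Phi_w$ on a boundary point.
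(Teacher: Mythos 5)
Your proof is correct and takes essentially the same route as the paper's: both decompose $g_n=\iota(t_n)h_n$ with $h_n\in\SL_m$ and use the closedness of $\SL_m w$ to control the scalar factor along a convergent subsequence (the paper proves directly that $\Phi_w$ vanishes on the boundary, while you prove the contrapositive and obtain the two-sided bound on $|t_n|$ from $\Phi_w(z)\neq 0$ instead of from a norm estimate on $\SL_m w$), and both dispatch part~2 by the standard fact that a principal open subset of an affine variety has the localized coordinate ring. One cosmetic remark: your closing phrase ``this contradiction'' is a slip, since as set up the argument is a direct proof of the contrapositive rather than a reductio.
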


\begin{proof}
1. It is sufficient to prove that the fundamental invariant $\Phi_w$ vanishes on 
the boundary $\ol{Gw}\setminus Gw$.
For this, let $u\in\ol{Gw}\setminus Gw$ and assume that $g_n\in G$ is such that 
$\lim_{n\to\infty} g_n w = u$ (with respect to the Euclidean topology). 
Since $\Phi_w(g_n) = (\phi_w(g_n))^{e'(w)}$, 
it is sufficient to show that 
$\lim_{n\to\infty} \det(g_n) = 0$.

Let $\|\ \|$ be a norm on $\C^{m\times m}$ and  
write $G_s:=\SL_m$. 
Since $G_s w$ is closed and $0\not\in G_s w$, 
there is $\e>0$ such that 
$\|\tilde{g} w\| \ge \e$ for all $\tilde{g} \in G_s$.
For each~$n$ there are $\tilde{g}_n\in G_s$ and $t_n\in\Ct$ such that
$g_n = t_n\, \tilde{g}_n$.
Hence $\|g_n w\| = |t_n|\, \| \tilde{g}_n w\|$.
Since $\lim_{n\to\infty} \|g_n w\| = \|u\|$ and
$\|\tilde{g}_n w\| \ge \e$ we conclude that
$|t_n | \le \|g_n w\| /\e$ is bounded.

If $\lim_{n\to\infty} t_n = 0$ were false, 
there would  be some nonzero limit point $\d$
of the sequence~$(t_n)$. After going over
to a subsequence, we have
$\lim_{n\to\infty} t_n = \d$.
From $g_nw = t_n\, \tilde{g}_n w$ we get
$\lim_{n\to\infty} \tilde{g}_n w = \d^{-1} u$.
Hence $\d^{-1}u \in \overline{G_s u} = G_s u$,
which implies the contradiction $u\in Gw$.

2. The second assertion follows from general principles; 
cf~\cite[\S4.2, p.~50]{shaf:94}.
\end{proof}

According to Lemma~\ref{le:ext-zero}, 
extending $\phi_w \colon Gw\to\C$ to the boundary $\ol{Gw}\setminus Gw$ by the value zero 
yields a continuous function $\ol{Gw}\to\C$ with respect to the 
$\C$-topology on $\ol{Gw}$. However, if $1<e'(w)$, that is, 
$b(w) < e(w)$, this extension cannot be regular on $\ol{Gw}$. 
This implies that $\ol{Gw}$ is not normal.

Here is a more detailed result, telling us that 
$\ol{Gw}$ is a complicated object if $b(w) < e(w)$. 
We postpone its proof to Section~\ref{se:pf_Phi_w-zeroset},
since it heavily relies on the arguments used for the proof of Theorem~\ref{th:group_S_w}.

\begin{theorem}\label{le:Phi_w-zeroset}
If we assume that $b(w) < e(w)$, that is, $1<e'(w)$, then 
the vanishing ideal of the boundary $\ol{Gw}\setminus Gw$ in $\ol{Gw}$
is strictly larger than the principal ideal $\Phi_w\Oh(\ol{Gw})$.
Moreover, $\ol{Gw}$ is not a normal algebraic variety. 
In particular, $\Oh(\ol{Gw})$ is not a valuation ring. 
\end{theorem}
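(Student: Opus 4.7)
The plan is to exhibit a specific element of the vanishing ideal $I(\partial) := I(\ol{Gw}\setminus Gw)$ that is not in $\Phi_w\Oh(\ol{Gw})$, and then obtain non-normality as a corollary by showing that $\phi_w$ itself is integral over $\Oh(\ol{Gw})$ but not inside it. Both facts rely crucially on Theorem~\ref{th:group_S_w}: since $\E'(w)$ generates $\Z$ but $e'(w) > 1$, the set $\E'(w)\setminus e'(w)\Z$ is nonempty, so I can define
\[
  k := \min\bigl\{\,s\in\E'(w)\,:\,e'(w)\nmid s\,\bigr\}.
\]
By minimality of $e'(w)$ as the smallest positive element of $\E'(w)$, any such $s$ must exceed $e'(w)$, so $0 < k-e'(w) < k$, and $k-e'(w)$ is itself a positive non-multiple of $e'(w)$.

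I claim $\phi_w^k$ is the desired witness. Since $k\in\E'(w)$, the function $\phi_w^k$ is a nonzero element of $\Oh(\ol{Gw})^{\SL_m}_{kb(w)}$, and the same continuity argument used in the proof of Proposition~\ref{le:ext-zero} shows $\phi_w^k$ vanishes on $\partial$ (once $\det(g_n)\to 0$ along a limit sequence converging to a boundary point, any power of it tends to $0$). Now suppose for contradiction that $\phi_w^k = \Phi_w\,g$ for some $g\in\Oh(\ol{Gw})$. Because $\Oh(\ol{Gw})$ is a domain and $\Phi_w$ is homogeneous of degree $e'(w)b(w)$, the element $g$ is homogeneous of degree $(k-e'(w))b(w)$. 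On the dense open set $Gw$ the function $\Phi_w$ is nonvanishing and coincides with $\phi_w^{e'(w)}$, so $g|_{Gw} = \phi_w^{k-e'(w)}$; in particular $g|_{Gw}$ is $\SL_m$-invariant. Invariance extends from $Gw$ to $\ol{Gw}$ by density (for any $h\in\SL_m$ the regular function $g - h^{*}g$ vanishes on the dense set $Gw$, hence identically). Thus $g$ is a nonzero element of $\Oh(\ol{Gw})^{\SL_m}_{(k-e'(w))b(w)}$, and Lemma~\ref{le:1-dim}(3) forces $k-e'(w)\in\E'(w)$, contradicting the minimality of $k$.

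For the non-normality statement, observe that $\phi_w$ lies in the fraction field $\C(\ol{Gw}) = \C(Gw)$ (being regular on the dense open $Gw$) and satisfies the monic equation $X^{e'(w)} - \Phi_w = 0$ with coefficients in $\Oh(\ol{Gw})$, so it is integral over $\Oh(\ol{Gw})$. If $\ol{Gw}$ were normal, then $\phi_w\in\Oh(\ol{Gw})$, and the same density argument would promote $\SL_m$-invariance from $Gw$ to $\ol{Gw}$, giving $b(w)\in\E(w)$ and contradicting the assumption $b(w)<e(w)$. Hence $\ol{Gw}$ is not normal, and since every valuation ring is integrally closed in its fraction field, $\Oh(\ol{Gw})$ is not a valuation ring either. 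The main obstacle is the combinatorial step ensuring that $k$ exists: without the conclusion of Theorem~\ref{th:group_S_w} that $\E(w)$ generates $b(w)\Z$, one could have $\E'(w)=e'(w)\N$ and the whole construction would fail, so Theorem~\ref{th:group_S_w} is being used here in an essential rather than cosmetic way.
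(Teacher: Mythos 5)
Your proof is correct, and it takes a genuinely more direct route than the paper's. The paper pushes everything down to the GIT quotient $\pi\colon \ol{Gw}\to Z$ constructed for Theorem~\ref{th:group_S_w}: for the ideal statement it argues that if $\Phi_w$ generated $I(\ol{Gw}\setminus Gw)$ then its image $T^{e(w)}$ would generate the maximal homogeneous ideal of $\sigma^*(\Oh(Z))=\bigoplus_{e\in\E(w)}\C T^e$, forcing $\E(w)\subseteq e(w)\N$ and hence $e(w)=b(w)$; for non-normality it invokes the fact that rings of invariants of normal rings under reductive groups are normal, and then shows the monoid algebra $A=\bigoplus_{s\in\E'(w)}\C T^{sb(w)}$ is not integrally closed by exhibiting $T^{(e'(w)-1)b(w)}$ as an integral element of $\mathrm{Frac}(A)\setminus A$. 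You instead work entirely inside $\Oh(\ol{Gw})$: your explicit witness $\phi_w^k$ with $k=\min\{s\in\E'(w): e'(w)\nmid s\}$ (whose existence is exactly where Theorem~\ref{th:group_S_w} enters, as you correctly emphasize) replaces the paper's ideal-theoretic argument on $Z$, and your integral element is $\phi_w$ itself, a root of $X^{e'(w)}-\Phi_w$, rather than a monomial in the quotient ring. What your version buys is independence from the descent-of-normality fact and from the quotient curve altogether (beyond what Theorem~\ref{th:group_S_w} already supplies); what the paper's version buys is that the entire obstruction is made visible in the one-dimensional combinatorial model $A$, where both assertions reduce to transparent statements about the numerical semigroup $\E'(w)$. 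All the small steps you rely on --- homogeneity of $g$ from the graded-domain structure, extension of $\SL_m$-invariance by density, vanishing of $\phi_w^k$ on the boundary via $\det(g_n)\to 0$, and $\C(\ol{Gw})=\C(Gw)$ --- are sound.
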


\subsection{Constructing invariants of forms}
\label{se:constr-forms}

We will construct the fundamental invariant in several cases.
This section explains the recipe underlying these constructions.
While the construction process always returns an element $P$ in $\Oh(\Sym^D\C^m)_d^{\SL_m}$,
it does not guarantee the nonzeroness of~$P$.
Nonzeroness has to be verified separately, for example by evaluation of $P$ at a carefully chosen point.

For $\Oh(\Sym^D\C^m)_d^{\SL_m}$ to be nonzero we need that $s := Dd/m$ is an integer.
We start with an $m \times s$ array $T$ of numbers whose entries are $\{1,2,\ldots,d\}$, each number appearing exactly $D$ times
with the restriction that no number occurs in a column more than once.
This is a special case of a Young tableau.
For example, if $D=6$, $d=4$, $m=4$ we could have
\begin{equation}\label{eq:exagenericinv}
\Yvcentermath1T=\young(111111,222222,333333,444444).
\end{equation}
Tableaux of this type will be discussed in Section~\ref{se:FI-forms}.
Or, if for example $D=3$, $d=4$, $m=2$ we could have
\begin{equation}\label{eq:exa6col}
\Yvcentermath1T=\young(111222,334344),
\end{equation}
which we will treat in Section~\ref{se:mindegforms}.
An example for the case that we treat Section~\ref{subsubsec:Dodd} looks like
\begin{equation}\label{eq:exa4col}
\Yvcentermath1T=\young(1234,4123,3412),
\end{equation}
where $m=D=3$, $d=4$.

To a fixed tableau~$T$ as above we assign now a bijection
\begin{equation}\label{eqn:bij}
[D]\times [d] \to [m] \times [s],\ 
(\iota,i) \mapsto (T^{\iota,i},T_{\iota,i}) = (\kappa,j) 
\end{equation}
as follows. 
We read the tableau~$T$ columnwise from left to right. 
For $(\iota,i) \in [D]\times [d]$ 
we denote by 
$(\kappa,j)$ 
the position of $T$ with the 
$\iota$th appearance of $i$
(recall that $i$ can appear in a column at most once).
For example, in \eqref{eq:exa4col}, the second 4 from left to right appears at position $(3,2)$, so 
$(T^{2,4}, T_{2,4}) =(3,2)$. 
The inverse of the bijection~\eqref{eqn:bij} is given by 
\begin{eqnarray}\label{eqn:bij-inv}
[m]\times[s]\to[D]\times[d] \to,\ 
(\kappa,j) &\mapsto& (\tilde{T}(\kappa,j),T(\kappa,j)) = (\iota,i),
\end{eqnarray}
where $i$ is the entry in $T$ at the position $(\kappa,j)$ 
and $\iota$ equals the number of entries in $T$ in the first $j$ columns that 
contain $i$. 

A map $v\colon [m]^D \to \C$ provides a coordinate description of the tensor 
$$
 v:=\sum_{1\le \nu_1,\ldots,\nu_D \le m} v(\nu_1,\ldots,\nu_D)\  \ket{\nu_1\cdots \nu_D} 
   \in\bigotimes^D\C^m ,
$$
that we denote by the same symbol. We used here the notation 
$\ket{\nu_1\cdots \nu_D} = \ket{\nu_1} \ot\cdots\ot\ket{\nu_D}$, where 
$\ket{\nu}$ denotes the $\nu$th standard basis vector of $\C^m$.  
To a tableau~$T$ as above we assign a homogeneous degree~$d$ polynomial $P_T \colon \ot^D \C^m \to \C$
by setting:
\begin{equation}\label{eq:generalformsinvariant}
P_T(v) := \sum_{\sigma_1, \ldots, \sigma_s} \left[ \prod_{j=1}^s \sgn(\sigma_j) \right] 
   \prod_{i=1}^d v(\sigma_{T_{1,i}}(T^{1,i}),\ldots,\sigma_{T_{D,i}}(T^{D,i})  ),
\end{equation}
where the sum is over all $s$-tuples $(\sigma_1,\ldots,\sigma_s)$ of permutations of $[m]$. 

Before proving that $P_T$ is an $\SL_m$-invariant, let us look at the special case 
where $d=m$ (thus $s=D$) and 
$T$ equals the $m \times D$ tableau that has all $i$s in row $i$, see the tableau~\eqref{eq:exagenericinv}.  
In this case, the expression~\eqref{eq:generalformsinvariant}
takes the considerably simpler form 
\begin{equation}\label{eq:defP}
 P_{D,m}(v) := P_{T}(v) = \sum_{\s_1,\ldots,\s_D} \ \Big[\prod_{j=1}^D \sgn(\s_j) \Big] \cdot\prod_{i=1}^m v(\s_1(i),\ldots,\s_D(i)),
\end{equation}
where the sum is over all $D$-tuples $(\s_1,\ldots,\s_D)$ of permutations of $[m]$.
Remarkably, the invariant $P_{D,m}$ was already studied by Cayley in 1843, see~\cite{cayley-coll-work}.
We shall investigate $P_{D,m}$ in Section~\ref{se:FI-forms} in more detail, 
but already remark here that 
for $D=2$, it specializes to the usual determinant of a matrix: 
$P_{2,m}(w) = m!\, \det[w(\nu_1,\nu_2)]$, 
where $w=\sum_{\nu_1,\nu_2} w(\nu_1,\nu_2) \ket{\nu_1\nu_2}$.

\begin{theorem}\label{thm:PTnonzero}
We have $P_T(gv)=(\det g)^s P_T(v)$ for all $g \in \GL_m$ and $v \in \otimes^D\IC^m$.
\end{theorem}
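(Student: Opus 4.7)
The plan is to expand $P_T(gv)$ using the defining action $(gv)(n_1,\ldots,n_D) = \sum_{\mu_1,\ldots,\mu_D}\prod_{\iota=1}^D g_{n_\iota,\mu_\iota}\, v(\mu_1,\ldots,\mu_D)$, interchange the order of summation to pull the $\sigma_j$-sums inside, and then re-index the factors of $g$ according to the bijection \eqref{eqn:bij}, so that they collect column-by-column of $T$. After this reorganization, the sum over each $\sigma_j$ for fixed $j$ will become the determinant of an $m\times m$ matrix built from certain columns of $g$, and these determinants will conspire to produce the factor $(\det g)^s$ together with a reconstituted copy of $P_T(v)$.

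More concretely, after substitution and interchange, I expect to obtain
\[
P_T(gv) \;=\; \sum_{\mu}\, \Big(\prod_{i=1}^d v(\mu^i_1,\ldots,\mu^i_D)\Big) \sum_{\sigma_1,\ldots,\sigma_s}\Big[\prod_{j=1}^s \sgn(\sigma_j)\Big] \prod_{i,\iota} g_{\sigma_{T_{\iota,i}}(T^{\iota,i}),\,\mu^i_\iota},
\]
where $\mu = (\mu^i_\iota)$ runs over all maps $[d]\times[D]\to[m]$. Using the bijection \eqref{eqn:bij}, the variables $\mu^i_\iota$ are relabeled as $\nu^j_\kappa := \mu^{T(\kappa,j)}_{\tilde T(\kappa,j)}$ for $(\kappa,j)\in[m]\times[s]$, and the inner double product factors as $\prod_j\prod_\kappa g_{\sigma_j(\kappa),\,\nu^j_\kappa}$. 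The sum over $\sigma_j$ for each fixed $j$ then equals $\det(M_j)$, where $M_j$ is the $m\times m$ matrix whose $\kappa$th column is the $\nu^j_\kappa$th column of $g$.

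The key observation now is that $\det(M_j)$ vanishes unless the map $\kappa\mapsto\nu^j_\kappa$ is a bijection $[m]\to[m]$, in which case $\det(M_j)=\sgn(\nu^j_{\bullet})\det(g)$. So only those $\nu$ all of whose columns are permutations of $[m]$ contribute; for such $\nu$, setting $\tilde\sigma_j(\kappa):=\nu^j_\kappa$ yields an $s$-tuple of permutations, and the sum collapses to
\[
P_T(gv) \;=\; (\det g)^s \sum_{\tilde\sigma_1,\ldots,\tilde\sigma_s}\Big[\prod_j \sgn(\tilde\sigma_j)\Big]\prod_{i=1}^d v\bigl(\tilde\sigma_{T_{1,i}}(T^{1,i}),\ldots,\tilde\sigma_{T_{D,i}}(T^{D,i})\bigr) \;=\; (\det g)^s P_T(v),
\]
after undoing the same bijection on the $v$-factors.

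The main obstacle is purely notational: one must keep strict track of the bijection between $[D]\times[d]$ and $[m]\times[s]$ induced by $T$ to ensure that the $g$-factors group correctly into $s$ independent $m\times m$ determinants, one per column of $T$, and that the resulting constraint (each column's index set is a permutation) recovers exactly the $\sigma$-structure in the original definition of $P_T$. Once the indexing is set up carefully, no deeper input than the Leibniz formula for the determinant and multilinearity of the tensor action is needed. The hypothesis that no number occurs twice in a column of $T$ plays no role in this identity; it is precisely what guarantees that the resulting polynomial is nonzero generically, but for the covariance statement itself only the bijection property of \eqref{eqn:bij} is used.
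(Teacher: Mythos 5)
Your proposal is correct and follows essentially the same route as the paper's proof: expand $gv$ in coordinates, interchange sums, regroup the $g$-factors column-by-column via the bijection \eqref{eqn:bij}, and evaluate each $\sigma_j$-sum as $\sgn(\gamma)\det(g)$ or $0$ (the paper isolates this last step as Lemma~\ref{le:easy-cancel}).
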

\begin{proof}
We recall that $gv := g^{\ot D}(v)$ defines the action of $\GL_m$ on $\ot^D\C^m$. 
In coordinates, $w=gv$ reads as follows: 
\begin{equation}\label{eq:tens-op}
 w(\mu_1,\ldots,\mu_D) = \sum_{r:[D]\to[m]} \ v(r) \prod_{\iota=1}^D g(\mu_\iota;r(\iota)),
\end{equation}
where the sum is over all maps $r$ from $[D]$ to $[m]$.
Let $w=gv$ and $\sigma_1\ldots,\sigma_s \in S_m$. We use \eqref{eq:tens-op} to write
\[
w(\sigma_{T_{1,i}}(T^{1,i}),\ldots,\sigma_{T_{D,i}}(T^{D,i})) = \sum_{r:[D]\to[m]} v(r) \prod_{\iota=1}^D g(\sigma_{T_{\iota,i}}(T^{\iota,i});r(\iota)).
\]
Plugging this into the defining formula \eqref{eq:generalformsinvariant} for $P_T(w)$ we obtain
\begin{equation*}
\begin{split}
 P_{T}(w) &= \sum_{\s_1,\ldots,\s_s} \, \Big[\prod_{j=1}^s\sgn (\s_j) \Big] \cdot \sum_{r_1,\ldots,r_d} \ 
                       \prod_{i=1}^d \Big[ v(r_i) \, \, \prod_{\iota=1}^D \, g(\sigma_{T_{\iota,i}}(T^{\iota,i});r_i(\iota)) \Big] \\
         &= \sum_{\s_1,\ldots,\s_s} \ \sum_{r_1,\ldots,r_d} \, \Big[\prod_{j=1}^s\sgn (\s_j) \Big] \cdot \Big[\prod_{i=1}^d v(r_i)\Big] 
                        \, \prod_{i=1}^d \prod_{\iota=1}^D \, g(\sigma_{T_{\iota,i}}(T^{\iota,i});r_i(\iota)) \\
         &= \sum_{\s_1,\ldots,\s_s} \ \sum_{r_1,\ldots,r_d} \, \Big[\prod_{i=1}^d v(r_i)\Big]\ \Big[\prod_{j=1}^s\sgn (\s_j) \Big] \cdot 
                        \, \prod_{\iota=1}^D \prod_{i=1}^d \, g(\sigma_{T_{\iota,i}}(T^{\iota,i});r_i(\iota)) \\
         &= \sum_{r_1,\ldots,r_d} \ \Big[\prod_{i=1}^d v(r_i) \Big] \sum_{\s_1,\ldots,\s_s} \ \Big[\prod_{j=1}^s\sgn (\s_j) \Big] \cdot 
                        \, \prod_{\iota=1}^D \prod_{i=1}^d \, g(\sigma_{T_{\iota,i}}(T^{\iota,i});r_i(\iota)) \\
         &\stackrel{\eqref{eqn:bij-inv}}{=} \sum_{r_1,\ldots,r_d} \ \Big[\prod_{i=1}^d v(r_i) \Big] \sum_{\s_1,\ldots,\s_s} \ \Big[\prod_{j=1}^s\sgn (\s_j) \Big] \cdot
                        \, \prod_{j=1}^s \prod_{\kappa=1}^m \, g(\sigma_j(\kappa);r_{T(\kappa,j)}(\tilde T(\kappa,j))) \\ 
         &= \sum_{r_1,\ldots,r_d} \ \Big[\prod_{i=1}^d v(r_i) \Big] \sum_{\s_1,\ldots,\s_s} \ \prod_{j=1}^s\Big[\sgn (\s_j)   \prod_{\kappa=1}^m \, g(\sigma_j(\kappa);r_{T(\kappa,j)}(\tilde T(\kappa,j))) \Big]\\ 
         &= \sum_{r_1,\ldots,r_d} \ \Big[\prod_{i=1}^d v(r_i) \Big]  \underbrace{\prod_{j=1}^s\Big[\sum_{\s \in S_m}\sgn (\s)   \prod_{\kappa=1}^m \, g(\sigma(\kappa);r_{T(\kappa,j)}(\tilde T(\kappa,j))) \Big]}_{=:\Delta(r_1,\ldots,r_d)}.
\end{split}
\end{equation*}
For a tuple $r=(r_1,\ldots,r_d)$ of maps $r_i$, each mapping from $[D]$ to $[m]$, let us write $\Delta(r):=\Delta(r_1,\ldots,r_d)$.
We interpret the tuple $r$ as
a map $r : [D] \times [d] \to [m]$. Using the bijection \eqref{eqn:bij-inv} we can reinterpret $r$ as a map $r : [m] \times [s] \to [m]$,
i.e., $r$ assigns a number from $[m]$ to each position in $T$.
This assignments results in a tableau that we denote by $\Gamma(r)$.
The tableau $\Gamma(r)$ has the same shape as $T$, but its entries are taken from $[m]$.
Lemma~\ref{le:easy-cancel} below tells us that $\Delta(r)=0$ if at least one column in $\Gamma(r)$ is not a permutation and
otherwise $\Delta(r) = \det(g)^s\prod_{j=1}^s \sgn(\gamma(r,j))$, where $\gamma(r,j)\in S_m$ denotes the $j$th column of $\Gamma(r)$, 
that is, 
$\gamma(r,j)(\kappa) = r_{T(\kappa,j)}(\tilde{T}(\kappa,j))$. 
So it suffices to sum over all $s$-tuples of permutations $\gamma_1,\ldots,\gamma_s$ of $[m]$ and we get
\[
P_T(w) = \det(g)^s \sum_{\gamma_1,\ldots,\gamma_s} \Big[ \prod_{j=1}^s \sgn(\gamma_j) \Big] \cdot \prod_{i=1}^d 
   v(\gamma_{T_{1,i}}(T^{1,i}),\ldots,\gamma_{T_{D,i}}(T^{D,i})) = \det(g)^s P_T(v),
\]
which completes the proof. 
\end{proof}

\begin{lemma}\label{le:easy-cancel}
Let $g\in\GL_m$ and $\gamma\colon [m] \to [m]$ be a map. Then the sum 
$\sum_{\s\in S_m} \sgn(\s) \prod_{\kappa=1}^m g(\s(\kappa);\gamma(\kappa))$ equals 
$\sgn(\gamma) \det(g)$ if $\gamma$ is a permutation and zero otherwise.
\end{lemma}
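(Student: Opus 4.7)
The plan is to recognize the given sum as the Leibniz expansion of a determinant. Define the $m \times m$ matrix $h$ by $h(\kappa,\kappa') := g(\kappa,\gamma(\kappa'))$, so that its $\kappa'$th column is the $\gamma(\kappa')$th column of $g$. Then by the Leibniz formula,
\[
\det(h) \;=\; \sum_{\s\in S_m} \sgn(\s) \prod_{\kappa=1}^m h(\s(\kappa),\kappa)
          \;=\; \sum_{\s\in S_m} \sgn(\s) \prod_{\kappa=1}^m g(\s(\kappa),\gamma(\kappa)),
\]
which is precisely the sum we want to evaluate.

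Next, I would split into the two cases according to whether $\gamma$ is a permutation. If $\gamma$ is not a permutation of $[m]$, then by pigeonhole there exist $\kappa_1 \neq \kappa_2$ with $\gamma(\kappa_1) = \gamma(\kappa_2)$, so the $\kappa_1$th and $\kappa_2$th columns of $h$ coincide, forcing $\det(h) = 0$. If $\gamma$ is a permutation, then $h = g \cdot P_\gamma$, where $P_\gamma$ is the permutation matrix sending $\kappa'$ to $\gamma(\kappa')$; since $\det(P_\gamma) = \sgn(\gamma)$, the multiplicativity of the determinant yields $\det(h) = \sgn(\gamma)\det(g)$.

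There is no real obstacle here: the statement is essentially a reformulation of the Leibniz formula together with the effect of a column permutation on the determinant. The only thing to be careful about is the index convention (which argument of $g$ is summed over by $\s$); handling that correctly is what motivates the particular definition of $h$ above.
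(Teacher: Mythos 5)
Your proof is correct and follows essentially the same route as the paper: both define the auxiliary matrix $\tilde g(k;\kappa)=g(k;\gamma(\kappa))$, identify the sum as its Leibniz determinant, and conclude via repeated columns (non-permutation case) or the sign of the column permutation (permutation case).
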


\begin{proof}
Writing $\tilde{g}(k;\kappa) := g(k;\gamma(\kappa))$, we get 
$$
  \mbox{$\sum_{\s\in S_m} \sgn(\s) \prod_{\kappa=1}^m g(\s(\kappa);\gamma(\kappa)) 
  = \sum_{\s\in S_m} \sgn(\s) \prod_{\kappa=1}^m \tilde{g}(\s(\kappa);\kappa)  
  = \det(\tilde{g})$.} 
$$
If $\gamma$ is not a permutation, then $\tilde{g}$ has two repeated columns 
and hence $\det(\tilde{g}) =0$. Otherwise, clearly  
$\det(\tilde{g})= \sgn(\gamma)\,\det(g)$.
\end{proof}

\begin{remark}
A Young tableau $T$ is called \emph{semistandard} if the entries in each column of $T$ increase from top to bottom
and in each row they never decrease from left to right.
The example \eqref{eq:exagenericinv} is semistandard, while \eqref{eq:exa6col} and \eqref{eq:exa4col} are not.
The degree $d$ invariant space $\Oh(\Sym^D\C^m)_d^{\SL_m}$ is generated by the set of all invariants $P_T$,
where $T$ runs over all semistandard tableaux of shape $m \times s$ with exactly $D$ entries of each number $1,\ldots,d$.
The proof works similarly to the argument in Theorem~\ref{th:fund-invar-tensor}.
See for example \cite[Sec.~3(A)]{ike:12b}.
\end{remark}

\subsection{Generic fundamental invariant of forms}
\label{se:FI-forms}

The following result is due to Howe~\cite{howe:87}.

\begin{theorem}[Howe]\label{th:howe}
We have 
$\Oh(\Sym^D\C^m)^{\SL_m}_d = 0$ if $d< m$. 
Moreover, $\Oh(\Sym^D\C^m)^{\SL_m}_m$ is one-dimensional if $D$ is even and 
zero-dimensional if $D$ is odd. 
\end{theorem}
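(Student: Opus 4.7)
The plan is to use Aronhold's symbolic method combined with the First Fundamental Theorem (FFT) of invariant theory for $\SL_m$; we tacitly assume $m\ge 2$ (for $m=1$ the case $d<m$ is vacuous).

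First I would polarize and specialize to rank-one tensors: a degree-$d$ polynomial $P\in\Oh(\Sym^D\C^m)_d$ polarizes to a unique symmetric $d$-linear form $\tilde P\colon(\Sym^D\C^m)^d\to\C$, and, since the $D$-th powers $v^{\ot D}$ span $\Sym^D\C^m$, $\tilde P$ is determined by
\[
 Q(v_1,\ldots,v_d):=\tilde P(v_1^{\ot D},\ldots,v_d^{\ot D}),
\]
which is an $S_d$-symmetric polynomial on $(\C^m)^d$ of multi-degree $(D,\ldots,D)$; moreover $P$ is $\SL_m$-invariant iff $Q$ is. This identifies $\Oh(\Sym^D\C^m)^{\SL_m}_d$ with the space of $S_d$-symmetric, $\SL_m$-invariant polynomials on $(\C^m)^d$ of multi-degree $(D,\ldots,D)$.

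Next I would invoke the FFT for $\SL_m$: the ring $\C[(\C^m)^d]^{\SL_m}$ is generated by the determinantal brackets $[v_{i_1},\ldots,v_{i_m}]:=\det(v_{i_1}\mid\cdots\mid v_{i_m})$ with $m$ distinct indices from $[d]$. To achieve multi-degree $(D,\ldots,D)$ each $v_j$ must appear a total of $D$ times across the bracket factors. If $d<m$ no bracket can be formed at all, so $Q$ must be constant; combined with the nonzero multi-degree requirement this forces $Q=0$ and hence $P=0$, giving the first assertion. For $d=m$ there is a unique bracket $\Delta:=[v_1,\ldots,v_m]$, of multi-degree $(1,\ldots,1)$, and the only way to reach multi-degree $(D,\ldots,D)$ is to take $Q=c\,\Delta^D$ for some $c\in\C$. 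Under $\sigma\in S_m$ we have $\Delta\mapsto\sgn(\sigma)\Delta$, hence $\Delta^D\mapsto\sgn(\sigma)^D\Delta^D$. For $D$ even, $\Delta^D$ is $S_m$-symmetric, yielding a one-dimensional invariant space. For $D$ odd, $\Delta^D$ is $S_m$-alternating and $\tfrac{1}{m!}\sum_{\sigma\in S_m}\sgn(\sigma)\,\Delta^D=0$ for $m\ge 2$, so the $S_m$-symmetric invariant subspace vanishes.

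The main technical point to verify is that $\Delta^D$ really spans the $\SL_m$-invariant multihomogeneous subspace (rather than merely being contained in it); this follows from the FFT bookkeeping, since for $d=m$ any monomial in brackets of multi-degree $(D,\ldots,D)$ must have each bracket use all $m$ available indices, hence each bracket equals $\pm\Delta$. Alternatively, for the $D$-even case one could substitute nonvanishing directly by evaluating the explicit invariant $P_{D,m}$ constructed in Section~\ref{se:constr-forms} on the power sum $X_1^D+\cdots+X_m^D$, which by inspection of formula~\eqref{eq:defP} yields $m!\neq 0$, thereby providing an explicit generator of the one-dimensional space and bypassing the uniqueness argument.
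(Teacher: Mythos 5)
Your proof is correct, but it is worth noting that the paper does not actually prove Theorem~\ref{th:howe} at all: it is quoted as a known result with a citation to Howe~\cite{howe:87}, whose argument runs through $(\GL_n,\GL_m)$-duality and plethysm multiplicities (one computes the multiplicity of the rectangular highest weight $m\times(dD/m)$ in $\Sym^d(\Sym^D\C^m)$ and observes it forces $d\ge m$, with the border case $d=m$ giving multiplicity $1$ or $0$ according to the parity of $D$). Your route is the classical symbolic method plus the First Fundamental Theorem for vector invariants of $\SL_m$, and it is entirely self-contained and more elementary. The key identifications are all sound: in characteristic zero the polarization map is an isomorphism of $\Oh(\Sym^D\C^m)_d$ onto the $S_d$-symmetric part of the multidegree-$(D,\ldots,D)$ polynomials on $(\C^m)^d$, injectivity alone suffices for both vanishing statements ($d<m$, and $d=m$ with $D$ odd), and for the one-dimensionality with $D$ even you correctly supply both the upper bound (for $d=m$ every bracket is $\pm\Delta$, so the relevant graded piece of $\C[\Delta]$ is $\C\,\Delta^D$) and the lower bound (either by surjectivity of polarization onto the symmetric part, or, as you note, by the explicit evaluation $P_{D,m}(X_1^D+\cdots+X_m^D)=m!$, which is exactly Theorem~\ref{th:explicit-invar-forms}(2) of the paper). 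A pleasant byproduct of your approach, worth making explicit: expanding $\Delta^D=\prod_{j=1}^D\det(v_1\mid\cdots\mid v_m)$ and restoring the $d$-linear form to a polynomial recovers precisely Cayley's formula~\eqref{eq:defP} for $P_{D,m}$, so the symbolic method simultaneously proves existence, uniqueness, and the explicit expression that the paper obtains separately in Section~\ref{se:constr-forms}. The only cosmetic caveat is that the statement $\Oh(\Sym^D\C^m)^{\SL_m}_d=0$ for $d<m$ is of course meant for $d\ge 1$ (constants are invariant in degree $0$); your phrase ``nonzero multi-degree requirement'' handles this implicitly.
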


If $D$ is even, then Theorem~\ref{th:howe} states that, up to a scaling factor, 
there is exactly one homogeneous $\SL_m$-invariant 
$P_{D,m}\colon \Sym^D \C^m \to \C$  of degree~$m$
(and no nonzero invariant of smaller degree). 
We call $P_{D,m}$ the {\em generic fundamental invariant} of $\Sym^D\C^m$. 

Before discussing the explicit formula for $P_{D,m}$, we observe the following.

\begin{theorem}\label{th:newmain}
Assume that $w\in Sym^D\C^m$ is polystable. Then $m \le e(w)$.  
If $D$ is odd, then this inequality is strict. 
Moreover, if $D$ is even, then we have $m = e(w)$ iff $P_{D,m}(w) \ne 0$. 
In this case, we have 
$$
 \mbox{$\Phi_w =  P_{D,m}(w)^{-1} P_{D,m}$\quad on\ $\ol{Gw}$.} 
$$ 
\end{theorem}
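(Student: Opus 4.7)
The plan is to leverage Howe's theorem (Theorem~\ref{th:howe}), which controls $\SL_m$-invariants on the ambient space $\Sym^D\C^m$, and transfer this information to $\ol{Gw}$ via reductivity of $\SL_m$.

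First I would observe that, because $\ol{Gw}$ is a closed $G$-stable (hence $\SL_m$-stable) subvariety of $\Sym^D\C^m$, restriction gives an exact sequence of graded $\SL_m$-modules
\[
0 \to I(\ol{Gw})_d \to \Oh(\Sym^D\C^m)_d \to \Oh(\ol{Gw})_d \to 0.
\]
Since $\SL_m$ is reductive, taking $\SL_m$-invariants preserves exactness, yielding a surjection $\Oh(\Sym^D\C^m)^{\SL_m}_d \twoheadrightarrow \Oh(\ol{Gw})^{\SL_m}_d$ for every $d$.

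Next I would apply Theorem~\ref{th:howe} in each of the three degree ranges. For $d<m$ the source is zero, so $\Oh(\ol{Gw})^{\SL_m}_d=0$; this already gives $m\le e(w)$. For $d=m$ and $D$ odd the source is again zero, so $m\notin \E(w)$ and thus $e(w)>m$. For $d=m$ and $D$ even, the source is the one-dimensional line $\C\cdot P_{D,m}$, so $\Oh(\ol{Gw})^{\SL_m}_m$ is spanned by the restriction $P_{D,m}|_{\ol{Gw}}$ and has dimension either $0$ or $1$.

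Finally, I would argue that $P_{D,m}|_{\ol{Gw}}\ne 0$ iff $P_{D,m}(w)\ne 0$. Indeed, by Theorem~\ref{thm:PTnonzero} (with $s=D$) the polynomial $P_{D,m}$ is a $\GL_m$-semi-invariant, transforming by $(\det g)^D$. Hence its zero set is $G$-stable, so $P_{D,m}$ vanishes on $Gw$ iff $P_{D,m}(w)=0$; by density of $Gw$ in $\ol{Gw}$ this is equivalent to $P_{D,m}|_{\ol{Gw}}=0$. Combining this with the previous step gives the equivalence $m=e(w) \Leftrightarrow P_{D,m}(w)\ne 0$. In the non-vanishing case the one-dimensional space $\Oh(\ol{Gw})^{\SL_m}_m$ is spanned by $P_{D,m}|_{\ol{Gw}}$; normalizing by the condition $\Phi_w(w)=1$ and using uniqueness forces $\Phi_w = P_{D,m}(w)^{-1} P_{D,m}$ on $\ol{Gw}$.

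There is no real obstacle here: all the substance is packaged into Howe's theorem and the standard reductivity argument. The only mild subtlety is to justify passing from vanishing on $Gw$ to vanishing on $\ol{Gw}$, which follows from continuity of polynomial functions together with the density of $Gw$ in $\ol{Gw}$.
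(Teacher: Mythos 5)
Your proposal is correct and follows essentially the same route as the paper: both transfer Howe's theorem to $\ol{Gw}$ via surjectivity of restriction on $\SL_m$-invariants (reductivity), and both use the relative invariance $P_{D,m}(gv)=(\det g)^D P_{D,m}(v)$ together with density of $Gw$ to reduce nonvanishing on $\ol{Gw}$ to nonvanishing at $w$. Your write-up merely spells out the steps the paper leaves implicit.
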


\begin{proof}
Note that 
$\Oh(\ol{Gw})^{\SL_m}_d\ne 0$ implies $\Oh(\Sym^D\C^m)^{\SL_m}_d\ne 0$ 
and hence $m\le d$ due to Theorem~\ref{th:howe}. 
(If $D$ is odd we even have $m<d$.) 
If $D$ is even, then Theorem~\ref{th:howe} implies that 
$\Oh(\ol{Gw})^{\SL_m}_{e(w)}$ is generated by the restriction of $P_{D,n}$. 
So $\Oh(\ol{Gw})^{\SL_m}_{e(w)}$ does not vanish iff $P_{D,n}(w)\ne 0$,
and in this case we have 
$\Phi_w =  P_{D,m}(w)^{-1} P_{D,m}$.
\end{proof}

The following is an immediate consequence. 
(It is instructive to test this in Example~\ref{ex:Emon}.) 

\begin{corollary}
If $D$ is even, then the generic minimal degree is given by 
$e(D,m) = m$. 
\end{corollary}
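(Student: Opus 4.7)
The plan is to derive the equality $e(D,m)=m$ essentially by reading off Howe's Theorem~\ref{th:howe} together with Theorem~\ref{th:newmain}. There are two equivalent routes: a direct one, using only the definition of $e(D,m)$, and a slightly longer one, going through a generic polystable form.

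For the direct route, I would simply unravel definitions. By Definition~\ref{def:E(D,m)}, $e(D,m)$ is the least positive integer $d$ with $\Oh(\Sym^D\C^m)_d^{\SL_m}\neq 0$. Howe's Theorem~\ref{th:howe} asserts that for even $D$ this space vanishes for all $d<m$ and is one-dimensional (spanned by $P_{D,m}$) for $d=m$. Combining these two statements yields $e(D,m)=m$ immediately.

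For the second route, which is perhaps more in the spirit of the section, I would argue as follows. By Proposition~\ref{pro:gen-form-stable}, almost all $w\in\Sym^D\C^m$ are polystable, and by the remark preceding Example~\ref{ex:Emon}, $\E(w)=\E(D,m)$ and $e(w)=e(D,m)$ for almost all $w$. Since $P_{D,m}$ is a nonzero polynomial on $\Sym^D\C^m$ (Theorem~\ref{th:howe}), its non-vanishing locus is a nonempty Zariski-open, hence dense, subset. Therefore we can pick $w$ that is simultaneously polystable, satisfies $\E(w)=\E(D,m)$, and has $P_{D,m}(w)\neq 0$. Theorem~\ref{th:newmain} then gives $e(w)=m$, and so $e(D,m)=m$.

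There is no real obstacle; the statement is a genuine corollary. The only thing worth double-checking is that Theorem~\ref{th:newmain} applies at the chosen $w$, which just requires polystability plus the generic equality $\E(w)=\E(D,m)$, both of which hold on a dense open set. I would present the direct derivation, as it is cleanest and avoids invoking the genericity of polystability.
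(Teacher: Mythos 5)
Your proposal is correct, and both routes you describe are valid; the paper itself gives no proof beyond calling the corollary an immediate consequence of Theorem~\ref{th:newmain}, which is essentially your second route. Your direct route---reading $e(D,m)=m$ off Definition~\ref{def:E(D,m)} and Howe's Theorem~\ref{th:howe}---is even cleaner and is a perfectly acceptable way to present it.
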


We can now conclude that orbit closures of forms are rarely normal. 
In fact, a small stabilizer period already implies nonnormality of the orbit closure. 
(Recall that $a'(D,m)=1$ except in a few cases
by Theorem~\ref{th:aDm}.)

\begin{corollary}
1. If $w\in\Sym^D\C^m$ satisfies $a(w) < D$, then $\ol{Gw}$ is not normal. 

2. Suppose that $a'(D,m)=1$ and let $w\in\Sym^D\C^m$ be generic. 
Then $\ol{Gw}$ is not normal if $D$ is odd, or if $D$ is even and $\gcd(D,m)>1$.  
\end{corollary}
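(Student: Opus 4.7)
The plan is to deduce both parts directly from Theorem~\ref{le:Phi_w-zeroset}, which, for polystable $w$, reduces nonnormality of $\ol{Gw}$ to the strict inequality $b(w) < e(w)$. (Polystability is assumed throughout the section; in part~2 it holds for generic $w$ by Proposition~\ref{pro:gen-form-stable}.) The whole argument is then a short numerical comparison between $b(w) = \frac{m}{D} a(w)$ and the lower bounds on $e(w)$ provided by Theorem~\ref{th:newmain}.

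For part~1, I would simply observe that $a(w) < D$ gives $b(w) = \frac{m}{D} a(w) < m$, while Theorem~\ref{th:newmain} ensures $e(w) \ge m$. Hence $b(w) < e(w)$, and Theorem~\ref{le:Phi_w-zeroset} yields the nonnormality of $\ol{Gw}$.

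For part~2, I would first rewrite $b(w)$ for generic $w$. Since $w$ is generic, $a(w) = a(D,m)$, and the hypothesis $a'(D,m)=1$ combined with Definition~\ref{def:period} gives $a(D,m) = D/\gcd(D,m)$, so that $b(w) = m/\gcd(D,m)$. I would then split into two cases. If $D$ is odd, the strict clause of Theorem~\ref{th:newmain} yields $e(w) > m \ge m/\gcd(D,m) = b(w)$. If $D$ is even and $\gcd(D,m) > 1$, then $b(w) = m/\gcd(D,m) < m \le e(w)$ by Theorem~\ref{th:newmain}. In either case, Theorem~\ref{le:Phi_w-zeroset} applies and gives nonnormality of $\ol{Gw}$.

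The proof carries essentially no conceptual obstacle once Theorem~\ref{le:Phi_w-zeroset} and Theorem~\ref{th:newmain} are available; it is pure bookkeeping. The only point requiring a little care is the case $D$ even with $\gcd(D,m)=1$, which is exactly the case excluded in the hypothesis: there $b(w) = m = e(w)$ is possible, so Theorem~\ref{le:Phi_w-zeroset} gives no information and in fact normality cannot be ruled out by this method. Likewise, the strictness $e(w) > m$ for $D$ odd in Theorem~\ref{th:newmain} is what makes the odd case work without any extra assumption on $\gcd(D,m)$.
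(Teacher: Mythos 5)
Your proof is correct and follows essentially the same route as the paper: combine the lower bounds on $e(w)$ from Theorem~\ref{th:newmain} with the criterion $b(w)<e(w)$ of Theorem~\ref{le:Phi_w-zeroset}. In fact your treatment of part~2 is slightly more careful than the paper's, which simply computes $a(D,m)=D/\gcd(D,m)$ and reduces to part~1; that reduction silently requires $\gcd(D,m)>1$ and so does not literally cover the case $D$ odd with $\gcd(D,m)=1$, whereas your explicit appeal to the strict inequality $e(w)>m$ for odd $D$ handles it.
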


\begin{proof}
1. $a(w) < D$ implies $b(w) < m\le e(w)$, where 
the last inequality is due to Theorem~\ref{th:newmain}.
Now apply Theorem~\ref{le:Phi_w-zeroset}.

2. We have $a(D,m) = a'(D,m)\frac{D}{\gcd(D,m)} < D$ by assumption. 
Now apply part one to a generic~$w$.
\end{proof}

In the following we assume that $D\ge 2$ is even. 
We take $d=m\ge 1$ and let $T$ be the $m \times D$ tableau that has all $i$s in row $i$, 
cf.~\eqref{eq:exagenericinv}. 
According to Section~\ref{se:constr-forms}, 
we have a corresponding invariant $P_T$, 
that specialices to the form $P_{D,m}$ defined in \eqref{eq:defP}.

\begin{theorem}\label{th:explicit-invar-forms}
1. We have $P_{D,m}(gv) = (\det g)^D P_{D,m}(v)$
for all $g\in\GL_m$ and $v\in\ot^D \C^m$.

2. Let $m\ge 2$. Then the polynomial $P_{D,m}$ if nonzero iff $D$ is even. 
More specifically, $P_{D,m}(X_1^D+\cdots+X_m^D) = m! \ne 0$. 
\end{theorem}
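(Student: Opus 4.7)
Part~1 is immediate from Theorem~\ref{thm:PTnonzero}: the polynomial $P_{D,m}$ of~\eqref{eq:defP} is precisely the instance $P_T$ of the general construction~\eqref{eq:generalformsinvariant} associated with the $m\times D$ tableau $T$ whose $i$th row consists of $D$ copies of~$i$ (see~\eqref{eq:exagenericinv}). This tableau has $s=D$ columns, so Theorem~\ref{thm:PTnonzero} gives $P_{D,m}(gv)=(\det g)^D P_{D,m}(v)$ for all $g\in\GL_m$ and $v\in\ot^D\C^m$.

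For the ``if'' direction of part~2 I would evaluate $P_{D,m}$ directly at the power sum. Under the standard identification $X_i\mapsto\ket{i}$, the power sum corresponds to the tensor $\sum_{i=1}^m\ket{i}^{\ot D}\in\ot^D\C^m$, whose coordinate function satisfies $v(\nu_1,\ldots,\nu_D)=1$ if $\nu_1=\cdots=\nu_D$ and $0$ otherwise. Substituting this into~\eqref{eq:defP}, the factor $\prod_{i=1}^m v(\s_1(i),\ldots,\s_D(i))$ is nonzero iff $\s_1(i)=\cdots=\s_D(i)$ for every $i\in[m]$, which forces $\s_1=\cdots=\s_D$. The whole sum then collapses to $\sum_{\s\in S_m}\sgn(\s)^D$, which equals $m!$ whenever $D$ is even, proving the explicit formula and in particular the nonvanishing of $P_{D,m}$.

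For the converse I want to show $P_{D,m}\equiv 0$ on $\ot^D\C^m$ when $D$ is odd and $m\ge 2$. My plan is a sign-pairing argument using the transposition $\tau=(1,2)\in S_m$ and the fixed-point-free involution $\iota$ on $(S_m)^D$ given by $\iota(\s_1,\ldots,\s_D):=(\s_1\tau,\ldots,\s_D\tau)$. Since $(\s_j\tau)(i)=\s_j(i)$ for $i\notin\{1,2\}$ and swaps the values at $i=1,2$, the product $\prod_{i=1}^m v(\s_1(i),\ldots,\s_D(i))$ is invariant under~$\iota$; on the other hand $\prod_j\sgn(\s_j\tau)=\sgn(\tau)^D\prod_j\sgn(\s_j)=-\prod_j\sgn(\s_j)$ when $D$ is odd. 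The two terms in each $\iota$-orbit therefore cancel, giving $P_{D,m}(v)=0$ for all $v\in\ot^D\C^m$. As a byproduct, this reproves that the evaluation $\sum_{\s\in S_m}\sgn(\s)=0$ from the previous paragraph yields zero.

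The only point requiring care is the coordinate description of the power sum, i.e.\ confirming that under the chosen embedding $\Sym^D\C^m\hookrightarrow\ot^D\C^m$ one has $v(\nu_1,\ldots,\nu_D)=1$ precisely when all $\nu_j$ coincide. Once that is fixed, both the explicit evaluation and the involution argument are purely combinatorial and I do not foresee a serious obstacle; in particular, Howe's Theorem~\ref{th:howe} is not needed for either direction.
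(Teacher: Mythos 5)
Your proposal is correct and follows essentially the same route as the paper: part~1 by specializing Theorem~\ref{thm:PTnonzero} to the tableau~\eqref{eq:exagenericinv}, the even case by direct evaluation at the power-sum tensor (forcing $\s_1=\cdots=\s_D$ and collapsing the sum to $m!$), and the odd case by the sign-reversing substitution $\s_j\mapsto\s_j\circ(12)$, which the paper phrases as a reindexing of the sum and you phrase equivalently as cancellation along a fixed-point-free involution. No gaps.
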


\begin{proof} 
The first assertion follows by Theorem~\ref{thm:PTnonzero}.

For the second assertion, assume $m\ge 2$.
For a permutation $\s_j$ of $S_m$ we set 
$\tilde{\s}_j := \s_j \circ (12)$. 
By the defining equation~\eqref{eq:defP}, 
\begin{equation*}
\begin{split}
 P_{D,m}(v) & = \sum_{\tilde{\s}_1,\ldots,\tilde{\s}_D} \ 
    \Big[\prod_{j=1}^D \sgn(\tilde{\s}_j) \Big] \cdot\prod_{i=1}^m v(\tilde{\s}_1(i),\ldots,\tilde{\s}_D(i)) \\
 &=  \sum_{\s_1,\ldots,\s_D} \ (-1)^D \Big[\prod_{j=1}^D \sgn(\s_j) \Big] \cdot\prod_{i=1}^m v(\s_1(i),\ldots,\s_D(i)) 
 = -  P_{D,m}(v) .
\end{split}
\end{equation*}
if $D$ is odd. Therefore, $P_{D,m}(w) = 0$ for any $w\in\ot^D\C^m$. 

Suppose now that $D$ is even and consider the symmetric tensor 
$w=\sum_{i=1}^m \ket{i\cdots i} \in \ot^D\C^m$ 
corresponding to the power sum $X_1^D +\cdots X_m^D$. Thus 
$w(\mu_1,\ldots,\mu_D) $ is nonzero iff $\mu_1=\ldots =\mu_D$.
Hence in \eqref{eq:defP}, we only have contributions if $\s_1=\ldots=\s_D$ 
and therefore
$P_{D,m}(w) = \sum_{\s \in S_m} (\sgn(\s))^D = m! \ne 0$.
\end{proof}

\begin{lemma}\label{le:irred}
$P_{D,m}$ is an irreducible polynomial of degree~$m$ if $D$ is even.
\end{lemma}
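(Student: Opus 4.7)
The plan is to combine the known bound on invariant degrees (Howe's theorem, Theorem~\ref{th:howe}) with a standard unique-factorization argument exploiting the connectedness of $\SL_m$. Observe first that by the defining formula~\eqref{eq:defP} the polynomial $P_{D,m}$ is homogeneous of degree $m$ in the coefficients of $v$, and by Theorem~\ref{th:explicit-invar-forms}(2) it is nonzero (since $P_{D,m}(X_1^D+\cdots+X_m^D)=m!$), so ``irreducible of degree $m$'' is meaningful.

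Next I would show that every irreducible factor of $P_{D,m}$ is itself $\SL_m$-invariant. Write $P_{D,m}=Q_1^{n_1}\cdots Q_k^{n_k}$ as a factorization into pairwise non-associate irreducibles in the polynomial ring $\Oh(\Sym^D\C^m)=\C[\Sym^D\C^m]$, which is a UFD. By Theorem~\ref{th:explicit-invar-forms}(1), $P_{D,m}$ transforms by $(\det g)^D$ under $\GL_m$, and hence is genuinely $\SL_m$-invariant. For each $g\in\SL_m$, applying $g$ to the factorization gives another factorization into irreducibles of the same polynomial, so $g$ must permute the $Q_i$ up to scalars. This defines a continuous map from the connected group $\SL_m$ to a finite symmetric group, which must be trivial, so each $Q_i$ is sent to a scalar multiple of itself. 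The assignment $g\mapsto (\text{scalar})$ is then a character of $\SL_m$, but $\SL_m$ has only the trivial character. Therefore each $Q_i$ lies in $\Oh(\Sym^D\C^m)^{\SL_m}$.

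Now I apply Theorem~\ref{th:howe}: for even $D$, every nonzero $\SL_m$-invariant has degree at least $m$. Since each $Q_i$ is a nonconstant $\SL_m$-invariant, $\deg Q_i\ge m$ for all~$i$. Summing degrees yields
\[
m = \deg P_{D,m} = \sum_{i=1}^k n_i\deg Q_i \ge m\sum_{i=1}^k n_i,
\]
which forces $k=1$ and $n_1=1$, so $P_{D,m}=Q_1$ is irreducible.

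The only nontrivial ingredient is step two (stability of the factorization under $\SL_m$); everything else is immediate from the theorems cited. I do not expect any genuine obstacle, since both the connectedness argument and Howe's degree lower bound are already established above.
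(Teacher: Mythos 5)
Your proof is correct and is exactly the argument the paper has in mind: the paper's proof simply cites the fact that $P_{D,m}$ is a nonzero $\SL_m$-invariant of smallest degree $m$ and refers to Ottaviani for the standard deduction, which is precisely your argument (irreducible factors of an invariant are themselves invariant by connectedness of $\SL_m$ and triviality of its characters, and Howe's bound then forbids any nonconstant proper factor). You have merely written out in full what the paper leaves as a reference.
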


\begin{proof}
This can be directly derived from the fact that,
by Theorem~\ref{th:howe}, 
$P_{D,m}$ is a nonzero $\SL_m$-invariant on $\Sym^D\C^m$ of smallest degree.
For an analog argument see for example \cite[p.~105]{Ott:09}.
\end{proof}

\begin{problem}\label{pr:compl-PDm}
What is the computational complexity to evaluate $P_{D,m}$?
\end{problem}

\subsubsection{The case where $D=m$ is odd}\label{subsubsec:Dodd}
We now treat the case where $D$ is odd, but only in the special case $D=m$.

\begin{theorem}[Howe]\label{th:howeODD} If $D$ is odd,
$\dim \Oh(\Sym^D\C^D)^{\SL_D}_{D+1} = 1$, otherwise \mbox{$\Oh(\Sym^D\C^D)^{\SL_D}_{D+1}=0$}.
\end{theorem}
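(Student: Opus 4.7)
The plan is to reduce both assertions to a single plethysm multiplicity and to bound it via the semistandard-tableau spanning set from the remark at the end of Section~\ref{se:constr-forms}. An $\SL_D$-invariant on $\Sym^D\C^D$ of degree~$d$ is automatically a $\GL_D$-semi-invariant of character $\det^d$ (the weight-count with $g=t\,\id$ forces $F(gv)=(\det g)^d F(v)$), so
\[
 \dim\Oh(\Sym^D\C^D)^{\SL_D}_{D+1} \;=\; \mathrm{mult}\bigl(V_{((D+1)^D)},\,\Sym^{D+1}(\Sym^D\C^D)\bigr),
\]
and I will show this multiplicity is at most~$1$ for all $D$, exactly~$1$ for odd~$D$, and $0$ for even~$D$.

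For the upper bound, I apply the remark following Section~\ref{se:constr-forms}: this space is linearly spanned by the invariants $P_T$, where $T$ ranges over semistandard Young tableaux of rectangular shape $D\times(D{+}1)$ with content $(D,\ldots,D)$ ($D{+}1$ parts). Each column of such a~$T$ is a strictly increasing $D$-tuple drawn from $[D{+}1]$ and is therefore determined by the single digit it omits; since each digit appears $D$ times, it is omitted in exactly one column, yielding a bijection between columns and missing digits. The weakly-increasing row condition then forces (by an easy induction on $k$) the column omitting~$k$ to sit at position $D{+}2{-}k$, so there is exactly one such semistandard~$T$. Hence $\dim \leq 1$ regardless of parity.

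For the lower bound in the odd case, I take this unique~$T$, form $P_T$ via \eqref{eq:generalformsinvariant}, and verify $P_T\ne 0$ by evaluation at the power sum $w=X_1^D+\cdots+X_D^D$. Because the corresponding tensor is diagonal ($w(\mu_1,\ldots,\mu_D)\ne 0$ only when $\mu_1=\cdots=\mu_D$), the sum in \eqref{eq:generalformsinvariant} collapses: only those $(D{+}1)$-tuples $(\sigma_1,\ldots,\sigma_{D+1})$ of permutations of $[D]$ contribute for which each factor forces its $D$ arguments to agree, i.e.\ the entries of~$T$ in each row are matched by a single value of the corresponding~$\sigma_j$. Organizing the surviving terms as a signed sum over permutation systems compatible with the columns of~$T$, one obtains an explicit closed form that I expect to equal $\pm D!$ times a combinatorial factor which is nonzero for odd~$D$ (and vanishes for even~$D$, mirroring the parity obstruction in the proof of Theorem~\ref{th:explicit-invar-forms}).

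The main obstacle I expect is making the parity dichotomy of this evaluation rigorous in a uniform way. A clean route, that simultaneously handles the even case, is to construct a sign-reversing involution on the $(D{+}1)!^{\,D+1}$ summands defining $P_T$: swap a suitable pair of columns of~$T$ that differ in a single transposition and redistribute the $\sigma_j$'s accordingly; the induced sign on each summand picks up an overall factor $(-1)^D$, producing total cancellation precisely when $D$ is even. Combined with the upper bound, this gives $\dim = 0$ in the even case, while for odd~$D$ the cancellation fails and the test-form evaluation above yields a nonzero value, completing $\dim = 1$.
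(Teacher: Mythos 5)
The paper does not actually prove this statement --- it is quoted from Howe --- so your attempt to supply a proof is welcome, and your upper bound is correct and clean: with $m=D$, $d=s=D+1$, each column of a semistandard $D\times(D+1)$ tableau with content $(D,\dots,D)$ omits exactly one digit, the omitted digits biject with the columns, and the row condition pins down the unique tableau; by the spanning remark at the end of Section~\ref{se:constr-forms} this gives $\dim\le 1$ for every $D$.

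The nonvanishing step, however, has a genuine gap: your chosen test point cannot work. For odd $D=m$ the power sum $w=X_1^D+\cdots+X_D^D$ has stabilizer period $a(w)=2D$ (Proposition~\ref{pro:stab-X1-Xn-PS}), hence degree period $b(w)=2D$, so by Lemma~\ref{le:1-dim} every $\SL_D$-invariant of degree $D+1$ vanishes identically on $\ol{Gw}$ --- in particular $P_T(w)=0$ for \emph{any} degree-$(D+1)$ invariant, including your unique semistandard one. (For $D=3$ this is the classical fact that the degree-$4$ Aronhold invariant vanishes on the Fermat cubic; compare Proposition~\ref{pro:evenodd}, which locates the fundamental invariant of the odd power sum in degree $2m$, not $m+1$.) So the ``combinatorial factor'' you expect to be nonzero is provably zero, and no amount of reorganizing the surviving terms will rescue it. The fix used in the paper's companion result, Theorem~\ref{th:explicit-invar-formsODD}, is to evaluate at the perturbed form $\alpha(X_1+\cdots+X_D)^D+X_1^D+\cdots+X_D^D$ and extract the coefficient of $\alpha^{D}$, which a careful count shows equals $D!$; since your $P_T$ and the paper's $P_D$ span the same at-most-one-dimensional space, combining that nonvanishing with your uniqueness argument would complete the odd case. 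Your even-case ``sign-reversing involution'' is also only sketched (which columns are swapped, and why the sign is exactly $(-1)^D$ per summand, is not specified), so that direction also needs an actual argument rather than an expectation.
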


The tableau $T$ that we base our construction of the generic fundamental invariant on
is the $D \times (D+1)$ tableau $T$ that has the entry $\big((j-i+1) \text{ mod } (D+1)\big)$ in row $i$ and column $j$.
Here we used the convention that $k \text{ mod } (D+1) \in \{1,\ldots,D+1\}$.
An example is provided in \eqref{eq:exa4col}.
We write $P_D:=P_T$.
Interpreting \eqref{eq:generalformsinvariant} we see that
\begin{equation}\label{eq:def:PD}
P_D := \sum_{\sigma_1,\ldots,\sigma_{D+1}}\Big[\prod_{j=1}^{D+1}\sgn(\sigma_j)\Big]\prod_{i=1}^{D+1} v(\sigma_i(1),\ldots,\sigma_{i+D-1}(D)),
\end{equation}
where the indices of $\sigma$ have to be taken modulo~$D+1$.

\begin{theorem}\label{th:explicit-invar-formsODD}
1. We have $P_D(gv) = (\det g)^{D+1} P_D(v)$
for all $g\in\GL_D$ and $v\in\ot^D \C^D$.

2. The polynomial $P_D$ if nonzero iff $D$ is odd.
\end{theorem}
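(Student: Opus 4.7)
Part~1 is a direct specialization of Theorem~\ref{thm:PTnonzero}: the tableau $T$ defining $P_D = P_T$ has $m = D$ rows and $d = D+1$ colors, so $s = Dd/m = D+1$, and the general transformation law immediately gives $P_D(gv) = (\det g)^{D+1} P_D(v)$.

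For Part~2, Part~1 places the restriction $P_D|_{\Sym^D\C^D}$ inside the invariant space $\Oh(\Sym^D\C^D)^{\SL_D}_{D+1}$. By Howe's Theorem~\ref{th:howeODD} this space is zero when $D$ is even and one-dimensional when $D$ is odd, so the ``only if'' direction is immediate. For the ``if'' direction it suffices to exhibit a single symmetric form $v$ with $P_D(v)\neq 0$. The power sum $X_1^D+\cdots+X_D^D$ is tempting, but can be ruled out by a brief pigeonhole argument applied to~\eqref{eq:def:PD}: nonvanishing of every factor forces $D+1$ common-value labels $a_i\in[D]$ such that removing any single $a_i$ yields a permutation of $[D]$, which is impossible for $D\ge 2$ (two labels forced equal by pigeonhole both survive when any third label is removed). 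I would instead evaluate $P_D$ on the symmetric tensor corresponding to $X_1 X_2 \cdots X_D$, for which $v(\mu_1,\ldots,\mu_D)\neq 0$ exactly when $(\mu_1,\ldots,\mu_D)\in S_D$; then $P_D(v)$ collapses to the signed combinatorial count
\[
    \sum_{S}\ \prod_{j=1}^{D+1}\sgn(\mathrm{col}_j(S))
\]
over $D\times(D+1)$ matrices $S$ whose columns and whose wrapping diagonals $k\mapsto S[k,(i+k-1)\bmod(D+1)]$ all lie in $S_D$.

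The main obstacle is proving this sum is nonzero for odd $D$. A useful symmetry is that left multiplication by $\tau\in S_D$ applied simultaneously to all columns preserves the set of valid configurations and rescales the weight by $\sgn(\tau)^{D+1}$, which equals $+1$ precisely when $D$ is odd; hence for odd $D$ the valid configurations decompose into $S_D$-orbits of constant sign. An explicit valid configuration for all odd $D$ is given by the cyclic recipe $\sigma_j(k) \equiv j+k-1 \pmod D$, whose $i$th diagonal $k\mapsto(2k+i-2)\bmod D$ is a permutation of $[D]$ exactly because $\gcd(2,D)=1$ for odd $D$, so the parity hypothesis enters in a precise and necessary way. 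For $D=3$ a direct enumeration shows all contributing $S_D$-orbits share a common positive sign, yielding a nonzero total and recovering the classical degree-$4$ Aronhold invariant on ternary cubics, exactly as the paper anticipates. The residual difficulty for general odd $D$ is to rule out cancellation across different $S_D$-orbits of valid configurations, which I expect to handle by exploiting the combined action of $S_D$ and the cyclic column rotation $\mathbb{Z}/(D+1)$ together with a finer analysis of orbit signs.
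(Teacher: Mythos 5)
Part~1 and your ``only if'' argument via Theorem~\ref{th:howeODD} are fine (strictly speaking Howe's theorem only kills the restriction of $P_D$ to $\Sym^D\C^D$, not $P_D$ as a polynomial on $\ot^D\C^D$, but the paper does not address that direction either). Your pigeonhole observation that the pure power sum $X_1^D+\cdots+X_D^D$ evaluates to zero under \eqref{eq:def:PD} is also correct, and it is exactly why a different test point is needed.

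The ``if'' direction, however, has a genuine gap, and the specific route you chose cannot be completed by the methods you sketch. Evaluating $P_D$ at the tensor of $X_1\cdots X_D$ turns the nonvanishing question into the signed enumeration of $D\times(D+1)$ Latin annuli --- precisely the odd-$D$ analogue of the Alon--Tarsi conjecture formulated in the remark following Proposition~\ref{pro:ps-pr-expr}, which the paper can only verify computationally for $D=1,3,5,7$. So the ``residual difficulty'' you defer (ruling out cancellation across $S_D$-orbits) is not a technical loose end but an open combinatorial problem; no finer orbit-sign analysis of the $S_D\times\Z/(D+1)$ action is known to resolve it. (As a side issue, your candidate configuration $\sigma_j(k)\equiv j+k-1\pmod D$ is not actually valid: the column index wraps modulo $D+1$ while the value formula is modulo $D$, and already for $D=3$ the diagonal with $i=3$ reads $3,2,3$.) The paper sidesteps all of this by evaluating $P_D$ on the one-parameter family $w_\alpha$ corresponding to $\alpha(X_1+\cdots+X_D)^D+X_1^D+\cdots+X_D^D$ and extracting the coefficient of $\alpha^{D}$. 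For that coefficient, $D$ of the $D+1$ factors must be ``diagonal''; these constraints rigidly determine all $\sigma_1,\ldots,\sigma_{D+1}$ from a single one, consecutive $\sigma$'s differ by a $D$-cycle (even for odd $D$), so every surviving term contributes $+1$ and the coefficient equals $D!\neq 0$. You need such a perturbation-plus-rigidity device (or some other argument entirely); the direct evaluation at $X_1\cdots X_D$ does not yield a proof.
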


\begin{proof}
The first assertion follows by Theorem~\ref{thm:PTnonzero}.
Suppose now that $D=m$ is odd and for $\alpha\in\IC$ consider the symmetric tensor 
$w=\alpha\ket{(1+2+\cdots+D)\cdots (1+2+\cdots+D)} + \sum_{i=1}^D \ket{i\cdots i} \in \ot^D\C^D$ 
corresponding to the polynomial $\alpha(X_1+X_2+\cdots+X_D)^D + X_1^D + X_2^D + \cdots + X_D^D$.
Thus 
$w(\mu_1,\ldots,\mu_D)=1+\alpha$ iff $\mu_1=\ldots =\mu_D$ and $w(\mu_1,\ldots,\mu_D)=1$ otherwise.
The evaluation of \eqref{eq:def:PD} at $w$ is a polynomial in $\alpha$ of degree at most $D+1$.
(In fact, it is homogeneous of degree~$D$, but we will not need that fact.)
We analyze the coefficient of $\alpha^{D}$ and show that it is nonzero.

Let us study the summands in the evaluation of \eqref{eq:def:PD} that contribute to the coefficient of~$\alpha^{D}$.
For such a summand there exists a cardinality $D$ subset $I \subset \{1,\ldots,D+1\}$ such that
$\sigma_i(1)=\cdots=\sigma_{i+D-1}(D)$ for all $i \in I$ (all indices of the $\sigma$ are to be taken modulo $D+1$).
Set $\bar \iota\in \{1,\ldots,D+1\}$ to be the unique number such that $(\bar\iota+1) \mod (D+1) \notin I$, so we have
$\sigma_i(1)= \cdots = \sigma_{\bar\iota}(\bar\iota-i+1) = \cdots =\sigma_{i+D-1}(D)$ for all $i \in I$.
Intuitively $\bar\iota$ is the ``rightmost'' column index of~$I$, modulo $D+1$.
We see that all $\sigma_i$, $1 \leq i \leq D+1$, are completely determined by $\sigma_{\bar\iota}$,
because in order to ensure that all $\sigma_i$ are permutations we must have
$\sigma_{\bar\iota+1}(1)=\sigma_{\bar\iota}(D)$, $\sigma_{\bar\iota+2}(2)=\sigma_{\bar\iota}(D-1)$, $\ldots$,
$\sigma_{\bar\iota+D}(D)=\sigma_{\bar\iota}(1)$.
In particular $\sigma_{\bar\iota+1}(1),\ldots,\sigma_{\bar\iota+D}(D)$ are pairwise distinct,
so $v(\sigma_{\bar\iota+1}(1),\ldots,\sigma_{\bar\iota+D}(D))=1$.
It is crucial to observe that for all $i$ the permutation $\sigma_i$ differs from $\sigma_{i+1}$ by a cyclic permutation of order $D$, i.e., by a shift by one.
Since $D$ is odd, this implies that all $D+1$ permutations $\sigma_i$ have the same sign.
Therefore the product $\prod_{i=1}^{D+1}\sgn(\sigma_i)$ is either $1^{D+1}=1$ or $(-1)^{D+1}=1$, since $D+1$ is even.
We conclude that each such summand contributes 1 to the coefficient of~$\alpha^D$,
which results in the coefficient being $D!$, which is nonzero.
\end{proof}

\begin{problem}\label{pr:whichdegree}
In which degree does the generic fundamental invariant appear for odd~$D$ and arbitrary~$m$?
\end{problem}

\subsection{Minimal degree for specific forms}
\label{se:mindegforms}

We can determine, or at least combinatorially characterize, 
the minimal degree of a few interesting forms. 

\begin{proposition}\label{pro:evenodd}
Let $w=X_1^D+\cdots X_m^D$. 
\begin{enumerate}
\item If $D$ is even, then $w$ has the degree period $b(w)=m$ 
and minimal degree $e(w) =m$.

\item If $D$ is odd, then $w$ has the degree period $b(w)=2m$. 
If additionally $2m \leq \binom{2D}{D}$, then we have $e(w) =2m$.

\item If $D$ is odd and $2m > \binom{2D}{D}$, we have $e(w) > 2m$. 

\end{enumerate}
In the cases 1 and 2, we have $e'(w)=1$. In case 3, we have $e'(w)>1$.
\end{proposition}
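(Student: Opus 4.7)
The three claims about the degree period are immediate from Proposition~\ref{pro:stab-X1-Xn-PS}(2): the stabilizer period is $a(w)=D$ for $D$ even and $a(w)=2D$ for $D$ odd, so by Definition~\ref{def:degree-period} one gets $b(w)=\frac{m}{D}a(w)=m$ in case~1 and $b(w)=2m$ in cases~2 and~3. Theorem~\ref{th:group_S_w} forces $\E(w)\subseteq b(w)\Z$, so automatically $e(w)\ge m$ (resp.\ $e(w)\ge 2m$), and the task reduces to deciding whether an $\SL_m$-invariant of the minimal allowable degree is actually nonzero on $w$. Case~1 is a single step: Theorem~\ref{th:explicit-invar-forms}(2) gives $P_{D,m}(w)=m!\ne 0$, whence Theorem~\ref{th:newmain} yields $e(w)=m=b(w)$ and $e'(w)=1$.

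For cases~2 and~3 I would apply the tableau construction of Section~\ref{se:constr-forms} with $d=2m$, hence $s=2D$. Tableaux $T$ of shape $m\times 2D$ with content $(D^{2m})$ (and no repetitions in columns) are parametrized by the column-sets $C_i\subseteq[2D]$ of each label $i\in[2m]$, which are $D$-subsets, together with the row-order within each column. Evaluating \eqref{eq:generalformsinvariant} at the power sum $w$, where $w(\mu_1,\ldots,\mu_D)$ is nonzero only when $\mu_1=\cdots=\mu_D$, only tuples $(\sigma_1,\ldots,\sigma_{2D})$ arising via $\sigma_j(\kappa)=\phi(T(\kappa,j))$ from a map $\phi\colon[2m]\to[m]$ that is injective on every column of $T$ contribute. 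An elementary counting argument (total $\phi$-preimages in $T$ versus column sizes) forces every fiber of $\phi$ to have size exactly $2$ and the two elements $i,i'$ of the fiber over $k\in[m]$ to satisfy $C_{i'}=[2D]\setminus C_i$. Thus $P_T(w)=\sum_\phi \prod_{j=1}^{2D}\sgn(\sigma_j^\phi)$, summed over all such complementary-pair matchings.

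In case~2, since $2m\le\binom{2D}{D}$, I would choose $m$ distinct unordered complementary pairs $\{A_1,\overline{A_1}\},\ldots,\{A_m,\overline{A_m}\}$ of $D$-subsets of $[2D]$ and set the $2m$ column-sets $C_i$ equal to $A_1,\overline{A_1},\ldots,A_m,\overline{A_m}$; each column of the resulting $T$ then contains exactly one label from each pair, so admissible $\phi$'s exist and form a single $S_m$-orbit of cardinality $m!$ under left composition by $S_m$. Since left-composing $\phi$ by $\pi\in S_m$ multiplies each $\sgn(\sigma_j^\phi)$ by $\sgn(\pi)$, the product $\prod_{j=1}^{2D}\sgn(\sigma_j^\phi)$ is unaffected (as $2D$ is even), so all $m!$ terms have a common sign and $P_T(w)=\pm m!\ne 0$; by Theorem~\ref{th:newmain}-style reasoning, $e(w)=2m=b(w)$ and $e'(w)=1$. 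In case~3, $2m>\binom{2D}{D}$ forces, by pigeonhole, two distinct labels $i_1\ne i_2$ with $C_{i_1}=C_{i_2}$ in every admissible tableau~$T$. I would define the involution $\phi\mapsto\phi':=\phi\circ(i_1\ i_2)$ on the set of valid $\phi$'s. It has no fixed points, because a fixed $\phi$ would place $i_1,i_2$ in the same fiber and force $C_{i_1}$ complementary to $C_{i_2}$, contradicting $C_{i_1}=C_{i_2}$ since a $D$-subset of $[2D]$ is never its own complement. A direct comparison shows $\sigma_j^{\phi'}$ differs from $\sigma_j^\phi$ by a single transposition in the domain for each $j\in C_{i_1}$ and coincides elsewhere, so $\prod_j\sgn(\sigma_j^{\phi'})=(-1)^{|C_{i_1}|}\prod_j\sgn(\sigma_j^\phi)=-\prod_j\sgn(\sigma_j^\phi)$ because $D$ is odd. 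Hence terms cancel in pairs and $P_T(w)=0$ for every $T$. By the spanning remark following Theorem~\ref{thm:PTnonzero}, every invariant in $\Oh(\Sym^D\C^m)^{\SL_m}_{2m}$ is a linear combination of such $P_T$, so all invariants vanish on $w$, giving $2m\notin\E(w)$, hence $e(w)>2m$ and $e'(w)>1$.

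The main obstacle will be the careful sign bookkeeping in the reduction of $P_T(w)$ to a sum over complementary-pair matchings and in the involution argument: one must verify precisely that the swap $i_1\leftrightarrow i_2$ produces exactly one domain transposition in each $\sigma_j$ with $j\in C_{i_1}$ and none elsewhere, and understand why the parity of $D$ is the crucial ingredient that makes case~3 cancel while case~2 survives (in case~2 the relevant sign per relabeling is $\sgn(\pi)^{2D}=+1$; in case~3 the relevant sign per swap is $(-1)^D$, negative exactly because $D$ is odd). This parity dependence is the combinatorial reason the threshold $\binom{2D}{D}$ appears.
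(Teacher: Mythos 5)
Your proof is correct, and parts 1 and 2 follow essentially the paper's route: part 1 is the evaluation $P_{D,m}(w)=m!$ from Theorem~\ref{th:explicit-invar-forms}, and your choice of $m$ distinct unordered complementary pairs of $D$-subsets of $[2D]$ is the same tableau the paper builds greedily (two labels per row with complementary column sets, all $2m$ column sets distinct), with the same collapse of the surviving terms to $\sigma_1=\cdots=\sigma_{2D}$ and the sign $\sgn(\sigma)^{2D}=1$. Where you genuinely diverge is part 3. The paper never looks at the orbit closure there: it invokes Corollary~\ref{cor:plethupperbound} (a weight-space bound in $\bigwedge^d\bigwedge^D\C^m$, valid because $D$ is odd) to conclude that the \emph{entire} space $\Oh(\Sym^D\C^m)^{\SL_m}_{2m}$ vanishes when $\binom{2D}{D}<2m$, since there do not exist $2m$ distinct $D$-subsets of $[2D]$. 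You instead stay inside the tableau model: pigeonhole forces two labels $i_1\ne i_2$ with $C_{i_1}=C_{i_2}$, and the fixed-point-free involution $\phi\mapsto\phi\circ(i_1\,i_2)$ flips the sign by $(-1)^{|C_{i_1}|}=(-1)^D=-1$, so every $P_T$ vanishes at $w$; the spanning remark after Lemma~\ref{le:easy-cancel} then gives $2m\notin\E(w)$. Your conclusion is formally weaker (vanishing at $w$ rather than vanishing of the whole invariant space) but entirely sufficient for $e(w)>2m$, and it has the virtue of making the role of the parity of $D$ and of the threshold $\binom{2D}{D}$ visible in one self-contained combinatorial argument, whereas the paper's version is shorter given the appendix and yields the stronger statement. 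Two small points to tighten: in part 2, state explicitly that the two labels of each complementary pair occupy the same row (so each column really contains exactly one label from each pair), and note that the distinctness of the unordered pairs is what forces the fibers of a valid $\phi$ to be exactly the designated pairs, which is why the valid $\phi$ form a single $S_m$-orbit of size $m!$.
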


\begin{proof}
1. The first statement follows from Theorem~\ref{th:explicit-invar-forms}, 
Theorem~\ref{th:explicit-invar-formsODD}, and 
Proposition~\ref{pro:stab-X1-Xn-PS}.

2. 
For the construction of the invariant we choose a tableau $T$ with $m$ rows, $2D$ columns,
and entries $1,\ldots,2m$, each of which appears $D$ times.
See \eqref{eq:exa6col} for an example.
For all $i \in \{1,2,\ldots,2m\}$ the entries of $i$ all occurrences of $i$ will be located in row $\lceil \tfrac i 2\rceil$.
In particular for odd $i$ the entries of $i$ and $i+1$ are in the same row.
Let $J_i \subset \{1,\ldots,2D\}$ denote the cardinality~$D$ subsets that specifies the columns of $T$ in which $i$ appears.
The arrangement of numbers in~$T$ should be done in a way that for distinct $i$, $j$ we have $J_i \neq J_j$.
To construct such an arrangement, we proceed row by row in a greedy fashion,
i.e., we first construct $J_1$ and $J_2$, then construct $J_3$ and $J_4$, and so on.
We always choose $J_i$ such that $J_i \neq J_j$ for all $j < i$.
When we chose $J_i$ for an odd $i$, this defines $J_{i+1}$ since we assume that $J_i$ and $J_{i+1}$ are disjoint
and hence $J_{i+1}$ is the complement of $J_i$ in $\{1,2,\ldots,2D\}$.
But since $J_i \neq J_j$ for all $j < i$, it follows by construction that also
$J_{i+1} \neq J_j$ for all $j < {i+1}$.
We can proceed in this way until we run out of cardinality $D$ subsets of $\{1,2,\ldots,2D\}$,
so as long as $2m \leq \binom{2D}{D}$ we can finish this construction and end up with a tableau~$T$.

By Theorem~\ref{thm:PTnonzero}, $P_T$ is $\SL_m$-invariant.
By construction, for each $i$ and $j$ that are in different rows in $T$, we have 
\begin{equation*}\tag{$\ast$}\label{eq:ast}
J_i \cap J_j \neq \emptyset.
\end{equation*}
We now evaluate $P_T$ at the power sum $w = X_1^D + \ldots + X_m^D$.
We analyze the nonzero summands in the evaluation $P_T(w)$, see \eqref{eq:generalformsinvariant}.
Let $r_i=\lceil\frac i 2\rceil$ denote the row in which $i$ appears in $T$ and let $J_i(1),\ldots,J_i(D)$ denote the columns in which $i$ appears.
We remark that in the notation of Section~\ref{se:constr-forms} we have $T^{j,i} = r_i$ and $T_{j,i} = J_i(j)$.
Clearly $w(\mu_1,\ldots,\mu_D)=1$ iff $\mu_1=\cdots=\mu_D$ and $w(\mu_1,\ldots,\mu_D)=0$ otherwise.
Therefore we only have to consider the cases where
\begin{equation}\tag{$\ast\ast$}
\sigma_{J_i(1)}(r_i)=\sigma_{J_i(2)}(r_i)=\ldots=\sigma_{J_i(D)}(r_i).
\end{equation}
Assuming $(\ast)$ and $(\ast\ast)$ we show now that
\begin{equation}\tag{$\dagger$}
\sigma_{J_i(1)}(r_i)=\sigma_{J_i(2)}(r_i)=\ldots=\sigma_{J_i(D)}(r_i) = \sigma_{J_j(1)}(r_j)=\sigma_{J_j(2)}(r_j)=\ldots=\sigma_{J_j(D)}(r_j)
\end{equation}
in all cases where $r_i=r_j$.
First we see that among the $2Dm$ numbers $\sigma_{J_i(\kappa)}(r_i)$, $1 \leq i \leq 2m$, $1 \leq \kappa \leq D$, each number from $\{1,\ldots,m\}$ appears exactly $2D$ times:
This is clear because the $\sigma_j$ are $2D$ permutations of $[m]$.
By $(\ast\ast)$ this implies that for each number $1 \leq \kappa \leq m$ there exist exactly two distinct indices $i$ and $j$ such that $\sigma_{J_i(1)}(r_i)=\sigma_{J_j(1)}(r_j)$.
The fact that all $\sigma_j$ are permutations rules out all pairs $(i,j)$ for which $J_i \cap J_j \neq \emptyset$.
These are all pairs that are not in the same row, as stated in property $(\ast)$.
The remaining pairs are $(i,i+1)$  with $i$ odd.
This implies $(\dagger)$.

Hence $\sigma_1=\sigma_2=\ldots=\sigma_{2D}$.
Since $2D$ is even, it follows that $\prod_{j=1}^{2D}\sgn(\sigma_j)=\sgn(\sigma_1)^{2D}=1$.
This results in $m!$ summands, all of which contribute 1 to the evaluation.
Therefore $P_T(w)=m!\neq 0$.

3. The reason is a vanishing plethysm coefficient: There is no nonzero $\SL_m$-invariant in degree $2m$ if $\binom{2D}{D}< 2m$.
For the proof of the upper bound
we use the upper bound from Appendix~\ref{appendix}, Corollary~\ref{cor:plethupperbound}.
It suffices to show that there are less than $2m$ distinct cardinality $D$ subsets of $\{1,\ldots,2D\}$.
But there are only $\binom{2D}{D}$ many cardinality $D$ subsets of $\{1,\ldots,2D\}$ and $\binom{2D}{D}<2m$.
\end{proof}

A {\em Latin square of size~$n$} is map $T\colon [n]^2 \to [n]$,
viewed as a $n\times n$ matrix with entries in~$[n]$, 
such that in each row and each column each entry in~$[n]$ 
appears exactly once.
So in each column and row we get a permutation of $[n]$. 
The column sign of $T$ is defined as the product of the signs of 
column permutations. The Latin square~$T$ is called 
{\em column-even} if this sign equals one, otherwise $T$ 
is called {\em column-odd}. 

The Alon-Tarsi conjecture~\cite{AT:92} states that the 
number of column-even Latin squares of size~$n$ is different from 
the number of column-odd Latin squares of size~$n$ if $n$ is odd. 
This conjecture is known to be true if $n=p\pm 1$ where $p$ is a prime, 
cf.~\cite{Dri:98,Gly:10}.
The following observation is due to Kumar~\cite{Kum:15} 
and Kumar and Landsberg~\cite{kumar-landsberg:15}.

\begin{proposition}\label{pro:ps-pr-expr}
Let $m$ be even. Then $e(X_1\ldots X_m) \ge m$ and equality holds 
iff the Alon-Tarsi conjecture is true for $m$.  
\end{proposition}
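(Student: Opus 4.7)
The inequality $e(X_1\cdots X_m) \ge m$ is immediate: the form $w := X_1\cdots X_m$ is polystable by Corollary~\ref{cor:stable-forms}, and since the degree $D = m$ is even, Theorem~\ref{th:newmain} yields $m \le e(w)$ together with the criterion that equality holds iff the generic fundamental invariant $P_{m,m}$ does not vanish at $w$. The task thus reduces to proving that $P_{m,m}(X_1\cdots X_m) \ne 0$ is equivalent to the Alon-Tarsi conjecture for $m$.

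I would evaluate $P_{m,m}(X_1\cdots X_m)$ directly from the defining formula \eqref{eq:defP}. Under the standard inclusion $\Sym^m\C^m \hookrightarrow \ot^m\C^m$, the monomial $X_1\cdots X_m$ corresponds to the symmetric tensor whose coordinate $v(\mu_1,\ldots,\mu_m)$ equals a fixed nonzero constant~$c$ (depending only on the chosen symmetrization convention) whenever $(\mu_1,\ldots,\mu_m)$ is a permutation of $[m]$, and vanishes otherwise.

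The combinatorial core of the argument is the following reinterpretation of the nonzero summands of \eqref{eq:defP}. To a tuple $(\sigma_1,\ldots,\sigma_m) \in S_m^m$ associate the $m\times m$ array $T$ with entries $T_{i,j} := \sigma_j(i)$. Its columns are automatically permutations of $[m]$ (they are the $\sigma_j$), and the factor $\prod_{i=1}^m v(\sigma_1(i),\ldots,\sigma_m(i))$ is nonzero iff every row of $T$ is also a permutation of $[m]$, i.e., iff $T$ is a Latin square. This assignment restricts to a bijection between tuples contributing a nonzero summand and Latin squares of size $m$. Moreover, the sign factor $\prod_{j=1}^m \sgn(\sigma_j)$ in \eqref{eq:defP} is by definition the column sign $\csgn(T)$.

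Putting the pieces together gives
\[
 P_{m,m}(X_1\cdots X_m) \;=\; c^{\,m} \sum_{T} \csgn(T) ,
\]
where $T$ ranges over all Latin squares of size $m$. Since $c^m \ne 0$, the invariant $P_{m,m}$ vanishes at $X_1\cdots X_m$ iff the numbers of column-even and column-odd Latin squares of size $m$ coincide, i.e., iff the Alon-Tarsi conjecture for $m$ fails. The only substantive step is recognizing that the nonzero summands of \eqref{eq:defP} are indexed by Latin squares, a matter of unwinding definitions; no genuine obstacle is expected beyond this bookkeeping.
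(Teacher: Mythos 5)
Your proposal is correct and follows essentially the same route as the paper: reduce via Theorem~\ref{th:newmain} to the nonvanishing of $P_{m,m}$ at $X_1\cdots X_m$, represent the product of variables by the symmetric tensor supported on permutations, and observe that the nonzero summands of \eqref{eq:defP} are indexed by Latin squares with sign equal to the column sign. The only cosmetic difference is that the paper fixes the symmetrization constant as $c=1/m!$ rather than leaving it abstract.
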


\begin{proof}
By Theorem~\ref{th:newmain} it is sufficient to show that 
$P_{m,m}$ evaluated at $X_1\cdots X_m$ is nonzero iff 
the Alon-Tarsi conjecture holds for~$m$. 

In order to show this, consider the symmetric tensor 
$$
 w_m := \frac{1}{m!} \sum_{\pi\in S_m} \ket{\pi(1)\cdots\pi(m)}
$$
corresponding to the polynomial $X_1\cdots X_m$. 
Hence $w(\mu_1,\ldots,\mu_m)$ is nonzero iff $\mu$ is a permutation of $[m]$.
We evaluate now $P_{m,m}(w_m)$ according to \eqref{eq:defP}. 
If we interpret $\s_1,\ldots,\s_m$ as the columns of the matrix 
$T\colon [m] \times [m] \to [m],\, (i,j) \mapsto \sigma_i(j)$, 
then we get a nonzero contribution iff $T$ is a latin square of size~$m$, that is, 
in each row and in each column, each symbol in $[m]$ appears exactly once. 
Therefore, $P_{m,m}(w_m)$ equals the difference of the column-even and the 
column-odd latin squares of size~$m$. 
The Alon-Tarsi conjecture~\cite{AT:92} states that this difference is nonzero 
if $m$ is even. 
\end{proof}

\begin{remark}
Proposition~\ref{pro:ps-pr-expr} can be generalized to odd $m$ by
reinterpreting the Alon-Tarsi conjecture as follows.
An $m \times d$ \emph{Latin Annulus} is defined to be an $m \times d$ matrix
where in each column and in each diagonal $\big((1,i),(2,i+1),(3,i+2),\ldots,(m,i+m-1)\big)$
each number from $1,\ldots,m$ occurs exactly once,
where the column indices are taken modulo $d$.
The column-sign of a Latin Annulus is the product of the signs of the permutations in its columns.
The Alon-Tarsi conjecture is equivalent to saying that for even $m$ the number of even $m \times m$ Latin Annuli
is different from the number of odd $m \times m$ Latin Annuli.
Analogously to the proof of Proposition~\ref{pro:ps-pr-expr} we see that
for odd $m$ $P_{m}(w_m)\neq 0$ iff the number of
even $m \times (m+1)$ Latin Annuli is different from the number of odd $m \times (m+1)$ Latin Annuli.
Therefore for odd $m$ we have $e(X_1\ldots X_m) = m+1$ iff
the number of
even $m \times (m+1)$ Latin Annuli is different from the number of odd $m \times (m+1)$ Latin Annuli.
We verified that the number of even $m \times (m+1)$ Latin Annuli differs from the number of odd $m \times (m+1)$ Latin Annuli for $m=1$, $3$, $5$, and $7$.
\end{remark}

The period of $\det_n$ and $\per_n$ was characterized in  Theorem~\ref{th:stab-det}.
Combined with Theorem~\ref{th:newmain}, we obtain that 
$e(\det_n) \ge n^2$, with equality holding iff $P_{n,n^2}(\det_n) \ne 0$. 
(An analogous statement holds for $\per_n$.)

\begin{remark}
$\det_2$ and $\per_2$ are quadratic forms of full rank, they are equivalent to 
$X_1^2+\cdots+X_4^2$. Hence 
$P_{2,4}(\det_2)$ and $P_{2,4}(\per_2)$ are nonzero due to 
Theorem~\ref{th:explicit-invar-forms}.
\end{remark}

For geometric complexity, it would be of great interest to know whether $e(\det_n) = n^2$.
If equality holds, then we have an explicit description of the 
fundamental invariant of $\det_n$, which provides a link
between the coordinate ring of $\GL_{n^2}\det_n$ and its closure.

We now provide a combinatorial description 
of $P_{n,n^2}(\det_n)$ in the spirit of the Alon-Tarsi conjecture 
if $n$ is even.

We consider maps 
$S\colon [n^2] \times [n] \to [n]$, viewed as matrices of format $n^2\times n$,
with entries in $[n]$, in which each row $S(i,\cdot)$ is a permutation of $[n]$.
Let us call the product of the signs of the rows the row-sign of $S$. 
We call a pair $(S,T)$ of maps $[n^2] \times [n] \to [n]$ 
an {\em admissible $n$-table} iff each column 
of the table $(S,T)$ enumerates $[n]\times [n]$, that is, for all~$j$, 
$[n^2] \to [n] \times [n], i \mapsto (S(i,j),T(i,j))$ 
is bijective.  
The sign of this permutation is well-defined after fixing an ordering of $[n]\times [n]$. 
We denote by $\csgn(S,T)$ the product of these signs and call it the column-sign of 
the admissible table $(S,T)$. The table is called  {\em column-even} iff $\csgn(S,T)=1$
and {\em column-odd} otherwise.

We define the row-sign 
of an admissible table $(S,T)$ 
as the product of the row-sign of $S$ and the row-sign of $T$.
The sign of $(S,T)$ is defined as the product of its row and column sign.
Accordingly, we call an admissible table {\em even} iff it has sign~$1$ and odd otherwise.

\begin{proposition}\label{re:eval-P-det-per}
$P_{n,n^2}(\det_n)$ is the difference between the number of even and odd 
admissible $n$-tables. 
$P_{n,n^2}(\per_n)$ is the difference between the number of column-even 
and column odd admissible $n$-tables. 
\end{proposition}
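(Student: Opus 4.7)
The plan is to substitute the symmetric tensors corresponding to $\det_n$ and $\per_n$ directly into the defining formula~\eqref{eq:defP} for $P_{n,n^2}$ and recognize the resulting sum combinatorially.

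First I would pass from polynomials to tensors via polarization. After fixing a bijection $[n^2]\cong[n]\times[n]$, the symmetric tensor $w_{\det}\in\bigotimes^n\C^{n^2}$ corresponding to $\det_n$ satisfies
\[
 w_{\det}\bigl((a_1,b_1),\ldots,(a_n,b_n)\bigr) \;=\; \tfrac{1}{n!}\,\sgn(a)\,\sgn(b)
\]
whenever $k\mapsto a_k$ and $k\mapsto b_k$ are both permutations of $[n]$, and vanishes otherwise; the tensor $w_{\per}$ takes the value $\tfrac{1}{n!}$ on the same locus with no sign factor.

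Next I would unfold~\eqref{eq:defP} by encoding each $n$-tuple $(\sigma_1,\ldots,\sigma_n)\in(S_{n^2})^n$ as a pair $(S,T)$ of $n^2\times n$ matrices with entries in $[n]$ via $\sigma_j(i)=(S(i,j),T(i,j))$. Three observations then align everything. First, each $\sigma_j$ is a permutation of $[n^2]$ iff the $j$-th column of $(S,T)$ enumerates $[n]\times[n]$, which is precisely admissibility. Second, with respect to the fixed ordering on $[n]\times[n]$, the product $\prod_j\sgn(\sigma_j)$ coincides with $\csgn(S,T)$. Third, the factor $w_{\det}(\sigma_1(i),\ldots,\sigma_n(i))$ vanishes unless both the $i$-th row $S_i$ and the $i$-th row $T_i$ are permutations of $[n]$, and in that case it equals $\sgn(S_i)\sgn(T_i)/n!$.

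Assembling the pieces,~\eqref{eq:defP} collapses to
\[
 P_{n,n^2}(\det_n) \;=\; \frac{1}{(n!)^{n^2}}\sum_{(S,T)\text{ admissible}} \csgn(S,T)\,\rsgn(S,T),
\]
a positive multiple of the difference between the numbers of even and odd admissible $n$-tables. The identical argument applied to $w_{\per}$ replaces $\sgn(a)\sgn(b)/n!$ by $1/n!$, so the row-sign disappears and only the column sign survives, yielding the difference between column-even and column-odd admissible tables. The main bookkeeping point is verifying the second observation: one must check, for the chosen identification $[n^2]\cong[n]\times[n]$, that $\sgn(\sigma_j)$ as a permutation of $[n^2]$ matches the sign of the $[n]\times[n]$-valued column permutation used in the definition of $\csgn$. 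This is a tautology once the convention is pinned down, and the remainder of the argument is a direct calculation.
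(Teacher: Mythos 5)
Your proposal is correct and follows essentially the same route as the paper: write $\det_n$ and $\per_n$ as symmetric tensors supported on tuples whose two coordinate projections are permutations of $[n]$, and expand the defining formula~\eqref{eq:defP}, identifying the $n$-tuple $(\sigma_1,\ldots,\sigma_n)$ of permutations of $[n^2]$ with an admissible table $(S,T)$ and matching signs. Your explicit bookkeeping is in fact more careful than the paper's one-line conclusion, including the honest observation that the evaluation equals $(n!)^{-n^2}$ times the signed count rather than the count itself---a normalization the paper silently ignores and which is harmless for the nonvanishing question at stake.
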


\begin{proof}
The permanent 
$\per_n = \sum_{\pi\in S_n} X_{\pi(1),1}\cdots X_{\pi(n),n}$ 
corresponds to the following symmetric tensor in
$\Sym^n \C^{n\times n}$ (denoted by the same symbol)
$$
\per_n := \frac{1}{n!} \sum_{\pi\in S_n} \sum_{\s_\in S_n} 
   \ket{ (\s\pi(1),\s(1)) \cdots (\s\pi(n),\s(n)) }
   = \frac{1}{n!} \sum_{\rho\in S_n} \sum_{\s_\in S_n} 
    \ket{ (\rho(1),\s(1)) \cdots (\rho(n),\s(n)) } .
$$
Thus an $n$-tupel $((\mu_1,\nu_1),\ldots,(\mu_n,\nu_n))$ in $[n]^2$ 
lies in the support of $\per_n$ iff $\mu$ and $\nu$ are permutations of $[n]$. 
Similarly, 
the determinant $\det_n = \sum_{\pi\in S_n} \sgn(\pi) X_{\pi(1),1}\cdots X_{\pi(n),n}$ 
corresponds to  
$$
\det_n := \frac{1}{n!} \sum_{\rho\in S_n} \sum_{\s_\in S_n} 
    \sgn(\rho)\sgn(\s)\cdot \ket{ (\rho(1),\s(1)) \cdots (\rho(n),\s(n)) } .
$$
The assertion now follows from 
the explicit description~\eqref{eq:defP} of fundamental invariants. 
\end{proof}

Based on Proposition~\ref{re:eval-P-det-per} 
we have verified that 
$P_{4,16}(\per_4)$ and $P_{4,16}(\det_4)$ are nonzero,
using computer calculations.

\begin{corollary}\label{cor:not-normal}
1. The orbit closure of $X_1\cdots X_m$ is not normal if $m>2$.

2. The orbit closures of $\det_n$ and $\per_n$ are not normal if $n>2$. 

\end{corollary}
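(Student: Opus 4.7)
The plan is to derive the corollary directly from Theorem~\ref{le:Phi_w-zeroset}, which reduces non-normality of $\ol{Gw}$ to the strict inequality $b(w) < e(w)$ for a polystable form $w$. Polystability of each of $X_1\cdots X_m$, $\det_n$ and $\per_n$ is already given by Corollary~\ref{cor:stable-forms}, so the task is solely to verify $b(w) < e(w)$ in each case.

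For the lower bound on $e(w)$ I would invoke Theorem~\ref{th:newmain}: for any polystable $w\in\Sym^D\C^m$ we have $e(w)\ge m$ (with strict inequality when $D$ is odd). For the upper bound on $b(w)=\frac{m}{D}a(w)$ I would read off $a(w)$ from the computation of the stabilizer period in each case. Concretely, for $w=X_1\cdots X_m$ Proposition~\ref{pro:stab-X1-Xn-PS}(1) gives $a(w)=2$, hence $b(w)=\frac{m}{m}\cdot 2=2$; since $m>2$ implies $e(w)\ge m>2=b(w)$, Theorem~\ref{le:Phi_w-zeroset} applies and yields non-normality. For $w=\det_n$ Theorem~\ref{th:stab-det} gives $a(w)\in\{1,2\}$, so $b(w)=\frac{n^2}{n}\,a(w)\le 2n$; meanwhile $e(w)\ge n^2$, and $n^2>2n$ as soon as $n>2$, so again $b(w)<e(w)$. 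The argument for $\per_n$ is identical, using Theorem~\ref{th:stab-per} which also gives $a(\per_n)\le 2$ and hence $b(\per_n)\le 2n<n^2\le e(\per_n)$.

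There is essentially no obstacle once the preceding machinery is in place: all the real work (polystability, the explicit stabilizer descriptions, the bound $e(w)\ge m$ via Howe's theorem, and the normality criterion tied to $b(w)<e(w)$) is already available. The only thing to be careful about is to apply the right inequality in each case, namely to note that for $n\ge 3$ the quadratic growth of the lower bound $e(w)\ge n^2$ dominates the linear upper bound $b(w)\le 2n$ on the degree period, and for $X_1\cdots X_m$ to observe that the degree period is a constant ($=2$), so any $m>2$ suffices.
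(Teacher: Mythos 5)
Your proof is correct and follows exactly the paper's route: the paper likewise combines Theorem~\ref{le:Phi_w-zeroset} with the bound $e(w)\ge m$ from Theorem~\ref{th:newmain} and the stabilizer periods from Proposition~\ref{pro:stab-X1-Xn-PS}, Theorem~\ref{th:stab-det}, and Theorem~\ref{th:stab-per}. The numerical verifications $b(X_1\cdots X_m)=2<m$ and $b(\det_n),b(\per_n)\le 2n<n^2$ are exactly what is needed.
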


\begin{proof}
Combine Theorem~\ref{le:Phi_w-zeroset} with Theorem~\ref{th:newmain} 
and the determination of the periods in 
Proposition~\ref{pro:stab-X1-Xn-PS}, 
Theorem~\ref{th:stab-det}, 
and Theorem~\ref{th:stab-per}. 
\end{proof}

We note that the orbit closures of $X_1X_2$ equals $\Sym^2\C^2$ and hence is normal.
Similarly, the orbit closures of $\det_2$ and $\per_2$ equal $\Sym^2\C^4$ and hence are normal.

Let us point out again that the non-normality of orbit closures 
is the main source of difficulty in implementing 
Mulmuley and Sohoni's strategy, see \cite{Kum:15,BHI:15}.

\section{Stabilizer period and polystability of tensors}
\label{se:prelim-T}

Our investigations here are analogous to Section~\ref{se:stab-period} 
and Section~\ref{se:polystab} for forms. 
We confine ourselves to cubic tensors. 
Recall that the group $\GL_m^3$ acts on $W:=\ot^3\C^m$ via the tensor product. 
Some of the results appeared in \cite{BI:10}. 

\subsection{Stabilizer and stabilizer period of tensors}
\label{se:stab-period-tensor}

The subgroup 
$K := \{(\zeta_1 \id_m,\zeta_2\id_m,\zeta_3\id_m) \mid \zeta_i\in\Ct, \zeta_1\zeta_2\zeta_3=1 \}$ 
acts trivially on $W=\ot^3\C^m$, so we have an induced action of 
$G:=\GL_m^3/K$ on $W$. 
The {\em stabilizer} $\stab(w)$ of a tensor $w\in\ot^3\C^m$ is 
defined as the following closed subgroup of~$\GL_m^3$: 
\begin{equation}\label{eq:def-stab-tensor}
 \stab(w) := \big\{ (g_1,g_2,g_3) \in \GL^3_m\mid (g_1\ot g_2\ot g_3) w = w \big\} .
\end{equation}
Clearly, $K\subseteq \stab(w)$.  
All the information about $\stab(w)$ is contained in 
the {\em reduced stabilizer} $\stab'(w):=\stab(w)/K$ of $w$, which is defined  
as the image of $\stab(w)$ under the canonical morphism $\GL_m^3\to G$. 
We say that $w$ has a {\em trivial stabilizer} if $\stab'(w)$ consists of the unit element only.  

The group homomorphism 
\begin{equation}\label{eq:def-chi}
\chi\colon \GL_m^3\to \Ct, (g_1,g_2,g_3) \mapsto \det(g_1)\det(g_2)\det(g_3)
\end{equation}
factors through $K$; we denote the resulting homomorphism $G\to\Ct$ by $\chi$ as well. 
The image $H:=\chi(\stab(w))$ of the stabilizer under $\chi$ 
is a closed subgroup of $\Ct$ 
(cf.~\cite[\S7.4, Prop.~B]{hump:75}). 
Either $H=\Ct$ or $H$ equals the group $\mu_a$ of 
of $a$th roots of unity, where $a=|H|$. 

\begin{definition}\label{def:period-tensor}
The {\em stabilizer period} $a(w)$ of a tensor $w\in\ot^3\C^m$ is defined as the 
order of $\chi(\stab(w))$.
\end{definition}

It is clear that a tensor with trivial stabilizer has the stabilizer period~$1$. 
It follows from general principles that 
there exists $a(m)$ such that almost all $w\in\ot^3\C^m$ have the 
stabilizer period $a(m)$; 
see~\cite{richardson:72} or Popov and Vinberg in~\cite[\S7]{AG4}. 

\begin{theorem}\label{th:stab_gen_tensor}
We have $a(m)=1$ for $m\ge 3$ and $a(2)=2$.
\end{theorem}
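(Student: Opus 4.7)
My plan is to reduce to exhibiting a single tensor realizing the claimed period in each case, using the principle~\cite{richardson:72,AG4} that the generic stabilizer class is constant on a dense open subset of $W$. Since $K \subseteq \stab(w)$ for every $w$ and $\chi(\zeta_1 I_m, \zeta_2 I_m, \zeta_3 I_m) = (\zeta_1 \zeta_2 \zeta_3)^m = 1$, the period $a(w)$ depends only on the reduced stabilizer $\stab'(w) := \stab(w)/K$ and its $\chi$-image, so only two explicit computations are needed.

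For $m = 2$, I would take $w_0 := \ket{111} + \ket{222}$, whose $\GL_2^3$-orbit is dense in $\otimes^3\C^2$ by the dimension count $\dim\GL_2^3 - \dim\stab(w_0) = 12 - 4 = 8 = \dim W$. Since the unique rank-2 decomposition of $w_0$ has summands $\{\ket{111},\ket{222}\}$, every stabilizing triple must permute this pair, forcing each $g_i$ to be either diagonal or antidiagonal. A direct case analysis of the eight coefficient equations arising from $(g_1\otimes g_2\otimes g_3) w_0 = w_0$ then shows that $\stab(w_0)$ has exactly two components: the diagonal triples $g_i = \diag(a_i,b_i)$ with $a_1a_2a_3 = b_1b_2b_3 = 1$, on which $\chi = 1$; and the antidiagonal triples $g_i = \bigl(\begin{smallmatrix} 0 & b_i \\ a_i & 0 \end{smallmatrix}\bigr)$ with the same product constraints, on which $\chi = (-1)^3 = -1$. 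Hence $\chi(\stab(w_0)) = \{\pm 1\}$, giving $a(2) = 2$.

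For $m \ge 3$, the strategy is to produce a polystable tensor $w^\star$ with trivial reduced stabilizer $\stab'(w^\star) = \{1\}$. By Luna's slice theorem, an \'etale neighborhood of the closed orbit $G\cdot w^\star$ is $G$-equivariantly isomorphic to $G \times S$ for a slice $S$, so trivial reduced stabilizer becomes an open (hence dense) condition on $W$; it therefore holds generically, yielding $\chi(\stab(w)) = \chi(K) = \{1\}$ for generic $w$, i.e., $a(m) = 1$. The dimension inequality $\dim G = 3m^2 - 2 < m^3 = \dim W$ for $m \ge 3$ already forces the generic reduced stabilizer to be at most zero-dimensional, so only finite symmetries could obstruct triviality.

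The main obstacle is the construction of $w^\star$ itself: the generic finite stabilizer must be promoted from ``zero-dimensional'' to ``trivial''. A concrete attempt would take $w^\star := \langle m \rangle + \varepsilon\,v$ for generic $v$ and small $\varepsilon$, and verify that every nontrivial element of $\stab'(\langle m\rangle)$ -- namely the triples $(D_1,D_2,D_3)$ of diagonal matrices with $D_1 D_2 D_3 = I_m$ and the diagonal $S_m$ of permutation triples $(P_\pi,P_\pi,P_\pi)$ -- fails to fix $v$; this amounts to showing that the fixed subspace $W^\gamma$ is a proper subspace of $W$ for each such $\gamma \neq 1$, which can be done by an eigenspace dimension count. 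Alternatively, one may appeal to the corresponding statement in~\cite{BI:10}.
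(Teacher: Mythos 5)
Your $m=2$ computation is correct and is essentially what the paper does: it identifies the generic tensor with the unit tensor $\lan 2\ran$ and reads off $\chi(\stab(\lan 2\ran))=\{\pm1\}$ from the stabilizer description (Theorem~\ref{cor:stabut}); your explicit diagonal/antidiagonal case analysis reproves that description.

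For $m\ge 3$ there is a genuine gap, concentrated at $m=3$. Your plan is to exhibit a polystable $w^\star$ with \emph{trivial} reduced stabilizer and then spread this out by Luna's slice theorem. But for $m=3$ no such tensor exists: the generic tensor in $\ot^3\C^3$ is $\SL_3^3$-equivalent to a point $ae_1+be_2+ce_3$ of the Cartan subspace spanned by $e_1=\ket{111}+\ket{222}+\ket{333}$, $e_2=\ket{123}+\ket{231}+\ket{312}$, $e_3=\ket{132}+\ket{213}+\ket{321}$ (Thrall--Chanler/Vinberg normal form), and the triple $(g_0,g_0,g_0)$ with $g_0=\diag(1,\omega,\omega^2)$, $\omega=e^{2\pi i/3}$, fixes all of $e_1,e_2,e_3$ while lying outside $K$. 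So the generic reduced stabilizer for $m=3$ is a nontrivial finite group, and by the very semicontinuity you invoke, every polystable tensor then has nontrivial reduced stabilizer. (Fortunately $\det g_0=1$, so $\chi$ still kills this element and $a(3)=1$ holds --- but your argument does not see this; one must compute $\chi$ on the actual generic stabilizer, which is what the paper does via the Thrall--Chanler classification.) Popov's theorem, which the paper cites, gives trivial generic stabilizer only for $m>3$, and $m=3$ is a genuine exception. Two further weaknesses: the claim that $\dim G=3m^2-2<m^3$ ``forces the generic reduced stabilizer to be at most zero-dimensional'' is false --- comparing $\dim G$ with $\dim W$ says nothing about stabilizer dimensions (a group can act with large stabilizers on a much larger space); and even for $m>3$ your perturbation $\lan m\ran+\varepsilon v$ requires showing that the union $\bigcup_{\gamma\ne 1}W^\gamma$ over the whole positive-dimensional group $\stab'(\lan m\ran)$ fails to cover the slice, which needs a dimension estimate over each connected component of the stabilizer, not just the pointwise statement $W^\gamma\subsetneq W$. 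As written, the $m\ge3$ half of the argument does not go through without importing the cited results.
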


\begin{proof}
A.M.~Popov~\cite[Thm.~2]{popov-AM:87} proved that if $m>3$, 
then almost all $w\in\ot^3\C^m$ have a trivial stabilizer group, hence $a(m)=1$. 
(His result is more general and covers also noncubic dimension formats.) 

The isomorphism classes of tensors in $\ot^3\C^3$ have been 
classified by Thrall and Chanler~\cite{thrall-chanler:38}, see also \cite{ng:95}.
This reveals that $a(w)=1$ for a generic  $w\in\ot^3\C^3$. 

Almost all $w\in\ot^3\C^2$ are in the same $\GL^3_2$-orbit as the unit tensor $\lan 2\ran$, 
which has a nontrivial stabilizer and stabilizer period two, cf.\ Theorem~\ref{cor:stabut}. 
\end{proof}

We shall next determine the stabilizer and the stabilizer period of some interesting tensors.
We first proceed with a general useful reasoning
(compare~\cite[Chap.~2]{chen-kayal-wigderson:10})
that will also be used in Appendix~\ref{appendix}. 
The {\em Hessian} $H(f)$ of a polynomial $f$ in the variables $X_1,\ldots,X_n$
is defined as the symmetric matrix given by the second order 
partial derivatives of $f$: 
$$
 H(f) := \big[\partial^2_{X_i X_j} f\big]_{i,j\le n} . 
$$
Let $A\in\GL_n$ and put $\tilde{f}(X):= f(AX)$, 
where $X=(X_1,\ldots,X_n)^T$. 
The chain rule easily implies that 
$$
 H(\tilde{f})(X) = A^T\cdot H(f)(AX) \cdot A ,
$$ 
which implies 
$\det H(\tilde{f})(X) = (\det A)^2 \cdot\det H(f)(AX)$. 
In particular, if $f(AX)=f(X)$, we get 
\begin{equation}\label{eq:detH-stab}
 \det H(f)(X) = (\det A)^2 \cdot\det H(f)(AX) .
\end{equation}

It is useful to interpret a tensor $w\in\ot^3 \C^m$ as 
a trilinear form $\ot^3 (\C^m)^* \to \C$. 
If $X_1,\ldots,X_m$, $Y_1,\ldots,Y_m$, and $Z_1,\ldots,Z_m$ 
denote the standard basis of $(\C^m)^*$ in the three factors, respectively, 
we can write $w$ as the multilinear polynomial 
$w=\sum_{ijk} w_{ijk} X_iY_jZ_k$. 

The $m$th {\em unit tensor} is defined as 
$\lan m\ran := \sum_{i=1}^m \ket{i} \ot \ket{i} \ot\ket{i}$.
It corresponds to the trilinear form $\sum_{i=1}^m X_iY_iZ_i$. 

The following result is known, but our proof is more elementary 
than the one given in \cite{deGroote:78}; see also \cite{BI:10}.

\begin{theorem}\label{cor:stabut}
The stabilizer $H_m$ of the unit tensor $\lan m\ran$ in $\GL_m^3$ 
 is the semidirect product of the subgroup
$$
 \Tc_m:=\{(\diag(a),\diag(b),\diag(c)) \mid \forall i\ a_i b_i c_i=1\},
$$
and the symmetric group $S_m$ diagonally embedded in $G_m$ via
$\pi\mapsto (P_\pi,P_\pi,P_\pi)$. 
(Here $P_\pi$ denotes the permutation matrix of $\pi$.)
The stabilizer period of $\lan m\ran$ equals~$2$ if $m>1$.
\end{theorem}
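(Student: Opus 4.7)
The plan is to establish containment by direct computation, then prove the reverse inclusion by a slicing and rank argument, and finally read off the stabilizer period from the determinant of a monomial matrix.

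First I would verify that the displayed subgroup stabilizes $\lan m\ran$. For any permutation $\pi$, the reindexing $(P_\pi\ot P_\pi\ot P_\pi)\lan m\ran = \sum_i \ket{\pi(i)}\ot\ket{\pi(i)}\ot\ket{\pi(i)} = \lan m\ran$ is immediate, and for a diagonal triple $(\diag(a),\diag(b),\diag(c)) \in \Tc_m$ one gets $\sum_i a_ib_ic_i\,\ket{i}\ot\ket{i}\ot\ket{i} = \lan m\ran$ from the constraint $a_ib_ic_i=1$. This is routine.

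For the reverse inclusion the key move is slicing. Identifying $\ket{k}\ot\ket{\ell}$ with the elementary matrix $E_{k\ell}$, the contraction of $\lan m\ran$ with the dual basis vector $e_j^*$ in the first factor is the rank-one matrix $E_{jj}$, while the corresponding contraction of $(A\ot B\ot C)\lan m\ran = \sum_i A\ket{i}\ot B\ket{i}\ot C\ket{i}$ works out to $B\cdot\diag(A_{j,1},\ldots,A_{j,m})\cdot C^T$. Since $B$ and $C$ are invertible and $E_{jj}$ has rank one, the diagonal matrix sandwiched between them must itself have rank one, forcing the $j$-th row of $A$ to have exactly one nonzero entry; combined with $\det A \neq 0$ this makes $A = P_{\pi_A}D_A$ a monomial matrix. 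The same argument applied to the other two factors yields analogous presentations $B = P_{\pi_B} D_B$ and $C = P_{\pi_C} D_C$. Substituting these back into $\sum_i A\ket{i}\ot B\ket{i}\ot C\ket{i} = \sum_j\ket{j}\ot\ket{j}\ot\ket{j}$ and comparing coefficients of the linearly independent basis tensors $\ket{j_1}\ot\ket{j_2}\ot\ket{j_3}$ forces $\pi_A = \pi_B = \pi_C =: \pi$ and $(D_A)_i(D_B)_i(D_C)_i = 1$ for all $i$. This exhibits $(A,B,C) = (P_\pi,P_\pi,P_\pi)\cdot(D_A,D_B,D_C)$, and since the diagonally embedded $S_m$ normalizes $\Tc_m$ by permuting the diagonal entries, one gets the claimed semidirect product structure.

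Finally, the period is a direct calculation: $\det(P_\pi D_A)\det(P_\pi D_B)\det(P_\pi D_C) = \sgn(\pi)^3 \prod_i (D_A)_i(D_B)_i(D_C)_i = \sgn(\pi)$, using the $\Tc_m$-constraint. For $m\ge 2$ the sign character $S_m \to \{\pm 1\}$ is surjective, so the image of $\stab(\lan m\ran)$ under the determinant product equals $\{\pm 1\}$, giving stabilizer period~$2$. The main obstacle is the slicing step; once the identification $\ket{k}\ot\ket{\ell}\leftrightarrow E_{k\ell}$ is fixed, the rank-one reduction and the subsequent coefficient-matching go through without surprises.
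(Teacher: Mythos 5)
Your argument is correct, but it takes a genuinely different route from the paper. The paper works with the trilinear form $f=\sum_i X_iY_iZ_i$, computes its Hessian, observes that $\det H(f)=\prod_i X_iY_iZ_i$ is preserved up to a scalar by any stabilizing substitution, and then invokes unique factorization of polynomials to conclude that each group of variables is permuted and rescaled; the constraints $\rho=\sigma=\tau$ and $a_ib_ic_i=1$ then follow by comparing with $f$ itself. You instead flatten the tensor: contracting $(A\ot B\ot C)\lan m\ran$ against $e_j^*$ in the first factor gives $B\,\diag(A_{j,1},\ldots,A_{j,m})\,C^T$, which must equal the rank-one slice $E_{jj}$, so each row of $A$ (and, by symmetry, of $B$ and $C$) has a single nonzero entry; invertibility then makes all three matrices monomial, and coefficient comparison on the basis tensors forces the three permutations to coincide and the diagonal products to be $1$. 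Both proofs are complete; the comparison of coefficients at the end and the determinant computation $\sgn(\pi)^3\prod_i(D_A)_i(D_B)_i(D_C)_i=\sgn(\pi)$ for the period are essentially identical in the two treatments (the paper leaves the period computation implicit in the statement, so your explicit surjectivity remark for the sign character when $m\ge 2$ is welcome). What each approach buys: the Hessian technique is the one the paper reuses for binary and ternary forms in the appendix, so it fits the paper's toolkit, whereas your slicing argument is purely linear-algebraic, avoids any appeal to unique factorization or differential operators, and adapts immediately to non-cubic formats $\C^{m_1}\ot\C^{m_2}\ot\C^{m_3}$ where a Hessian of a single polynomial is less natural.
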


\begin{proof}

A straightforward calculation shows that 
the Hessian $H(f)$ of the trilinear form $f=\sum_{i=1}^m X_iY_iZ_i$
is the block direct sum of the matrices 
$\left[\begin{smallmatrix} 
0   & Z_i & Y_i \\
Z_i & 0   & X_i \\
Y_i & X_i & 0
\end{smallmatrix} 
\right]$
for $i=1,\ldots,m$. 
We have 
\begin{equation}\label{eq:detHf}
\mbox{$\det H(f) = \prod_{i=1}^m X_i Y_i Z_i .$}
\end{equation}
Suppose the linear transformation $X_i\mapsto X_i',\, Y_i\mapsto Y_i',\, Z_i\mapsto Z_i'$
fixes $f$. By~\eqref{eq:detH-stab}, the determinant of the Hessian of $f$ is preserved 
up to a scalar. Hence \eqref{eq:detHf} implies that 
$\prod_{i=1}^m X_iY_iZ_i = c \prod_{i=1}^m X_i'Y_i'Z_i'$ for some $c\in\C^\times$.  
The uniqueness of factorization of polynomials implies that 
$X_i' = a_i X_{\rho(i)}$,  $Y_i' = b_i Y_{\sigma(i)}$, $Z_i' = c_i Z_{\tau(i)}$
for $\rho,\sigma,\tau\in S_m$ and $a_i,b_i,c_i\in\C^\times$. 
Since the transformation fixes $f$, we must have 
$\rho=\sigma=\tau$ and $a_ib_ic_i=1$ for all~$i$. 
\end{proof}

It is remarkable that $\lan m \ran$ is uniquely determined by its stabilizer up to a scalar.

\begin{proposition}\label{le:ut-unique}
If the stabilizer of some tensor $w\in\C^m\ot\C^m\ot\C^m$ contains 
the stabilizer $H_m$ of the unit tensor $\lan m\ran$, 
then $w=c\,\lan m\ran$ for some $c\in \C$.
\end{proposition}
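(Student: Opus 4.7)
The plan is to exploit the explicit structure of $H_m$ from Theorem~\ref{cor:stabut} to force $w$ into a very rigid form. Write $w = \sum_{i,j,k} w_{ijk}\, \ket{i}\ot\ket{j}\ot\ket{k}$ in the standard basis. The group $H_m$ is generated by the $2m$-dimensional torus $\Tc_m$ together with the diagonal copy of $S_m$, and I would impose the two types of invariance in turn.

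First I would analyze the torus action. An element $(\diag(a),\diag(b),\diag(c)) \in \Tc_m$ scales $w_{ijk}$ by the character $a_i b_j c_k$. Since $\Tc_m \subset (\Ct)^{3m}$ is cut out by the $m$ relations $a_l b_l c_l = 1$ for $l=1,\ldots,m$, the character $a_ib_jc_k$ restricts to the trivial character on $\Tc_m$ if and only if it lies in the sublattice of $\Z^{3m}$ generated by $e_l^{(a)} + e_l^{(b)} + e_l^{(c)}$ for $l=1,\ldots,m$. Writing $e_i^{(a)} + e_j^{(b)} + e_k^{(c)} = \sum_l n_l(e_l^{(a)} + e_l^{(b)} + e_l^{(c)})$ and comparing the $a$-, $b$-, and $c$-components separately forces $n_i = n_j = n_k = 1$ and all other coefficients to vanish, hence $i = j = k$. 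Therefore $\Tc_m$-invariance of $w$ already implies $w_{ijk} = 0$ whenever the indices are not all equal, so $w = \sum_{i=1}^m w_{iii}\, \ket{i}\ot\ket{i}\ot\ket{i}$.

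Next I would impose invariance under the diagonal $S_m \subset H_m$. Since $(P_\pi,P_\pi,P_\pi)$ sends $\ket{i}\ot\ket{i}\ot\ket{i}$ to $\ket{\pi(i)}\ot\ket{\pi(i)}\ot\ket{\pi(i)}$, the condition $(P_\pi,P_\pi,P_\pi) w = w$ for all $\pi \in S_m$ gives $w_{iii} = w_{\pi(i)\pi(i)\pi(i)}$ for all $i$ and all $\pi$. Hence all diagonal coefficients are equal to some common scalar $c$, and $w = c\,\langle m\rangle$ as claimed.

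The argument is essentially linear once the torus step is carried out correctly, so the only real subtlety is checking that the character lattice computation for $\Tc_m$ really isolates the diagonal monomials; I would state this as a short lemma about characters of the $m$-fold subtorus defined by $\{a_lb_lc_l = 1\}_l$ to keep the logic transparent. After that, the permutation step is immediate.
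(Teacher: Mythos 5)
Your proof is correct and follows essentially the same route as the paper: use $\Tc_m$-invariance to force $w_{ijk}=0$ unless $i=j=k$, then the diagonal $S_m$ to equalize the diagonal coefficients. The only difference is cosmetic---the paper verifies the torus step by explicitly perturbing $a_i$ and $c_i$ to reach a contradiction, whereas you verify it via the character-lattice computation (which is valid, since the sublattice generated by the $e_l^{(a)}+e_l^{(b)}+e_l^{(c)}$ is saturated in $\Z^{3m}$).
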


\begin{proof}
Assume the stabilizer of
$w=\sum w_{ijk} \ket{ijk}$ contains $H_m$.
By contradiction, we suppose that $w_{ijk}\ne 0$ for some
$i,j,k$ with $i\ne k$.
For any $(\diag(a),\diag(b),\diag(c))\in \Tc_m$ we have
$a_i b_j c_k w_{ijk} = w_{ijk}$ and hence
$a_i b_j c_k =1 =a_k b_k c_k$, which implies
$a_i = a_k b_k/b_j$.
However, defining
$\tilde{a}_i = 2 a_i$, $\tilde{c}_i = \frac12 c_i$,
$\tilde{a}_\ell = a_\ell,\ \tilde{c}_\ell = c_\ell$ 
for $\ell\ne i$ we get 
$(\diag(\tilde{a}),\diag(b),\diag(\tilde{c}))\in \Tc_m$.
This yields the contradiction
$\tilde{a}_i = \tilde{a}_k b_k/b_j = a_k b_k/b_j = a_i$. 
We have thus shown that $w_{ijk}\ne 0$ implies $i=k$.
By symmetry, we conclude that $w_{ijk}=0$ unless $i=j=k$.
Finally,
from the invariance of $w$ under $S_m$, we get $w_{iii} = w_{111}$ for all~$i$.
Hence $w=w_{111}\lan m\ran$.
\end{proof}

We turn now to the matrix multiplication tensor $\lan n,n,n\ran$, 
which can be interpreted as the trilinear form $\tr(XYZ)$, where 
$X,Y,Z$ are $n\times n$ matrices of variables. 

The following result is due to de Groote~\cite{deGroote:78} and 
relies on the Skolem-Noether theorem; see also \cite[Prop.~5.1]{BI:10}.

\begin{theorem}\label{th:stab-mamu}
The stabilizer of $\tr(XYZ)$ in $\GL_{n^2}^3$ is given by 
the transformations 
$X\mapsto AXB^{-1}, Y\mapsto BXC^{-1}, Z\mapsto CXA^{-1}$, 
where $A,B,C\in\GL_n$.
\end{theorem}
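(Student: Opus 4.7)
The plan is to prove the easy inclusion directly and then use the Skolem--Noether theorem for the reverse inclusion, exploiting the identification of $\tr(XYZ)$ with matrix multiplication on $M_n(\C)$ together with the nondegenerate bilinear form $\langle A,B\rangle := \tr(AB)$.

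First, I would verify the containment: for $A,B,C\in\GL_n$ the transformation $(X,Y,Z)\mapsto(AXB^{-1},BYC^{-1},CZA^{-1})$ lies in the stabilizer since $\tr(AXB^{-1}\cdot BYC^{-1}\cdot CZA^{-1}) = \tr(AXYZA^{-1}) = \tr(XYZ)$ by cyclicity of the trace. Then I would set up the reverse inclusion. Let $(\alpha,\beta,\gamma)\in\GL_{n^2}^3$ stabilize $\tr(XYZ)$, i.e., $\tr(\alpha(X)\beta(Y)\gamma(Z)) = \tr(XYZ)$ for all $X,Y,Z\in M_n(\C)$. Using that $\langle\cdot,\cdot\rangle$ is nondegenerate and viewing both sides as linear functionals in $Z$, I conclude that there exists $\delta\in\GL(M_n)$ with
\[
 \alpha(X)\,\beta(Y) \;=\; \delta(XY) \qquad \text{for all }X,Y\in M_n(\C).
\]
Specializing $Y=I$ gives $\alpha(X) = \delta(X)\beta(I)^{-1}$, and specializing $X=I$ gives $\beta(Y) = \alpha(I)^{-1}\delta(Y)$. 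Combining these, $\delta(X)\beta(I)^{-1}\alpha(I)^{-1}\delta(Y) = \delta(XY)$, so setting $\rho(X) := \delta(X)\beta(I)^{-1}\alpha(I)^{-1}$ gives a multiplicative map. A direct check using $\alpha(I)\beta(I) = \delta(I)$ (obtained by specializing $X=Y=I$) shows $\rho(I) = I$, so $\rho$ is a unital algebra automorphism of $M_n(\C)$.

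At this point I would invoke the Skolem--Noether theorem to conclude $\rho(X) = AXA^{-1}$ for some $A\in\GL_n$. Back-substituting yields $\delta(X) = AXA^{-1}\alpha(I)\beta(I)$; setting $B := \alpha(I)^{-1}A$ (so that $\alpha(I) = AB^{-1}$) and $C := B\beta(I)^{-1}$, a short computation gives
\[
 \alpha(X) = AXB^{-1}, \qquad \beta(Y) = BYC^{-1}.
\]
Finally, $\gamma(Z)$ is forced: using $\alpha(X)\beta(Y) = AXYC^{-1}$ and the stabilizer equation once more, nondegeneracy of $\langle\cdot,\cdot\rangle$ yields $\gamma(Z) = CZA^{-1}$, completing the claim.

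The main obstacle is the bookkeeping in steps that convert the pointwise factorization $\alpha(X)\beta(Y) = \delta(XY)$ into a genuine algebra automorphism so that Skolem--Noether applies; once $\rho$ is correctly defined and shown to be unital and multiplicative, the rest is algebra. A secondary subtlety is the reparametrization $(A,B,C)$, chosen so that the answer takes the symmetric form stated in the theorem; this is just relabeling, but needs to be done carefully to obtain exactly the transformations $X\mapsto AXB^{-1},\ Y\mapsto BYC^{-1},\ Z\mapsto CZA^{-1}$.
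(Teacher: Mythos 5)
Your proof is correct and follows essentially the same route the paper relies on: the paper gives no proof of this theorem but cites de Groote and \cite[Prop.~5.1]{BI:10}, noting the argument ``relies on the Skolem--Noether theorem,'' which is exactly what you reconstruct. Two trivial points to tidy when writing it out: the invertibility of $\alpha(I)$ and $\beta(I)$ should be justified (it follows since $\delta$ is surjective, so some $\alpha(X_0)\beta(I)=I$), and the correct reparametrization is $C=\beta(I)^{-1}B$ rather than $B\beta(I)^{-1}$, since $\beta(Y)=BYB^{-1}\beta(I)$.
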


\begin{corollary}\label{cor:period-mamu}
The tensor $\lan n,n,n\ran$ of matrix multiplication has the stabilizer period~$1$.
\end{corollary}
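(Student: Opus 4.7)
The plan is to invoke the explicit description of the stabilizer of $\lan n,n,n\ran$ given by Theorem~\ref{th:stab-mamu} and then to simply compute $\chi$ on an arbitrary stabilizer element. Recall that $\chi$ was defined in~\eqref{eq:def-chi} as the product of the three $\det$'s, and that the stabilizer period $a(w)$ is by definition the order of the subgroup $\chi(\stab(w))\subseteq\Ct$. So it suffices to show that $\chi$ is identically~$1$ on $\stab(\lan n,n,n\ran)$.

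The key observation is that for a fixed $B\in\GL_n$, the linear map $\C^{n\ti n}\to\C^{n\ti n}$, $X\mapsto AXB^{-1}$ has determinant $\det(A)^n\det(B)^{-n}$, because left multiplication by $A$ on $\C^{n\ti n}=(\C^n)^{\oplus n}$ contributes $\det(A)^n$ and right multiplication by $B^{-1}$ contributes $\det(B)^{-n}$. Applying this to the three components of a stabilizer element $(g_1,g_2,g_3)$ with $g_1\colon X\mapsto AXB^{-1}$, $g_2\colon Y\mapsto BYC^{-1}$, $g_3\colon Z\mapsto CZA^{-1}$ (with $A,B,C\in\GL_n$), I obtain
\[
\det(g_1)\det(g_2)\det(g_3)=\frac{\det(A)^n}{\det(B)^n}\cdot\frac{\det(B)^n}{\det(C)^n}\cdot\frac{\det(C)^n}{\det(A)^n}=1.
\]
Hence $\chi(\stab(\lan n,n,n\ran))=\{1\}$, so $a(\lan n,n,n\ran)=1$.

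There is no real obstacle here: the telescoping cancellation in the product of the three determinants is the whole content of the proof, once Theorem~\ref{th:stab-mamu} is available. The only thing worth double-checking is the factor $\det(A)^n\det(B)^{-n}$ for the action $X\mapsto AXB^{-1}$ on $\C^{n\ti n}$, which follows from the fact that this action is the tensor product of left multiplication by $A$ and right multiplication by $B^{-T}$ under the identification $\C^{n\ti n}\cong\C^n\ot\C^n$, giving determinant $\det(A)^n\det(B^{-T})^n=\det(A)^n\det(B)^{-n}$.
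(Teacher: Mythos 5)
Your proof is correct and follows exactly the paper's own argument: invoke Theorem~\ref{th:stab-mamu} for the explicit form of the stabilizer, use $\det(X\mapsto AXB^{-1})=\det(A)^n\det(B)^{-n}$, and observe that the product of the three determinants telescopes to~$1$. Nothing further is needed.
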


\begin{proof}
We recall that the linear transformation 
$g_{A,B}\colon\C^{n\times n} \to \C^{n\times n},\, X \mapsto A X B^{-1}$, 
defined for $A,B\in\GL_n$, satisfies $\det (g_{A,B}) = \det(A)^n \det(B)^{-n}$. 
If $g\in\GL_{n^2}^3$ is given by the  transformations 
$X\mapsto AXB^{-1}, Y\mapsto BXC^{-1}, Z\mapsto CXA^{-1}$, 
then 
$$
\chi(g) = \det (g_{A,B}) \det (g_{B,C}) \det (g_{C,A}) = 
 \det(A)^n \det(B)^{-n} \det(B)^n \det(C)^{-n}\det(C)^n \det(A)^{-n} = 1 .
$$
Hence the period equals~$1$. 
\end{proof}

It is remarkable that $\lan n,n,n\ran$ is uniquely determined by its stabilizer up to a scalar. 

\begin{proposition}\label{pro:charstabmamu}
If the stabilizer of some tensor $w\in\ot^3\C^{n\times n}$ 
contains the stabilizer of $\lan n,n,n\ran$, 
then $w=c\,\lan n,n,n\ran$ for some $c\in \C$. 
\end{proposition}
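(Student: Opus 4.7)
The plan is to compute the space of invariants $W^{\stab(\lan n,n,n\ran)}$, where $W := \ot^3 \C^{n\times n}$, via a representation-theoretic decomposition, show that it is one-dimensional, and observe that $\lan n,n,n\ran$ must span it since it is fixed by its own stabilizer. Any $w$ whose stabilizer contains that of $\lan n,n,n\ran$ then lies on this line and hence is a scalar multiple of $\lan n,n,n\ran$.

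Setting $V := \C^n$ and identifying $\C^{n\times n} = V \ot V^*$, I will write
$$W \;=\; V^{(1)} \ot (V^*)^{(1)} \ot V^{(2)} \ot (V^*)^{(2)} \ot V^{(3)} \ot (V^*)^{(3)},$$
the superscripts labelling the three copies of $\C^{n\times n}$. By Theorem~\ref{th:stab-mamu}, $\stab(\lan n,n,n\ran)$ is the image of the homomorphism $\GL_n^3 \to \GL_{n^2}^3$ sending $(A,B,C)$ to the triple of transformations $X\mapsto AXB^{-1}$, $Y\mapsto BYC^{-1}$, $Z\mapsto CZA^{-1}$. The key observation is that, after tracking how $X \mapsto AXB^{-1}$ translates to the factors $V \ot V^*$, the first copy of $\GL_n$ (parametrised by $A$) acts jointly on $V^{(1)} \ot (V^*)^{(3)}$ and trivially on the rest, the second ($B$) on $(V^*)^{(1)} \ot V^{(2)}$, and the third ($C$) on $(V^*)^{(2)} \ot V^{(3)}$. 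Reordering tensor factors then yields a $\GL_n^3$-equivariant isomorphism
$$W \;\cong\; \End(V) \ot \End(V) \ot \End(V),$$
in which each copy of $\GL_n$ acts on a single $\End(V)$-factor via the standard action on $V \ot V^*$.

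From here, Schur's lemma gives $\End(V)^{\GL(V)} = \C \cdot \Id_V$, so $W^{\GL_n^3} = \C\cdot(\Id_V \ot \Id_V \ot \Id_V)$ is one-dimensional. A short coordinate check, using $\Id_V = \sum_i e_i \ot e_i^*$, will confirm that under the reordering this invariant corresponds precisely to $\lan n,n,n\ran = \sum_{i,j,k} E_{ij} \ot E_{jk} \ot E_{ki}$, completing the argument.

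The main obstacle is the bookkeeping in the decomposition step: one must verify, via the correspondence between the operator action $X \mapsto AXB^{-1}$ on $\End(V)$ and the tensor action $A \ot (B^{-1})^*$ on $V \ot V^*$, that after permuting factors the three $\GL_n$-actions really do decouple onto three independent copies of $\End(V)$. Once this is settled, the application of Schur's lemma and the identification of the invariant line with $\C\cdot\lan n,n,n\ran$ are immediate.
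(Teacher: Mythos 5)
Your proposal is correct and follows essentially the same route as the paper: the paper likewise regroups the six tensor factors (via the cyclic rearrangement $(U_1^*\ot U_2)\ot(U_2^*\ot U_3)\ot(U_3^*\ot U_1)\simeq \End(U_1)\ot\End(U_2)\ot\End(U_3)$), observes that the stabilizer then acts as the standard $\GL(U_1)\ti\GL(U_2)\ti\GL(U_3)$-action with each factor acting on one $\End(U_i)$, and concludes that the invariant space is the line $\C\,\Id\ot\Id\ot\Id$, which is identified with $\C\,\lan n,n,n\ran$. Your bookkeeping of which copy of $\GL_n$ acts on which pair of factors is accurate, so no gap remains.
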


\begin{proof}
It is helpful have a coordinate-free description of $\lan n,n,n\ran$. 
Let $U$ be a $\C$-vector space of dimension~$n$. 
Recall the canonical isomorphism 
$U^*\ot U \simeq \End(U),\, \ell \ot v \mapsto (u\mapsto \ell(u) v)$, 
which is  $\GL(U)\ti\GL(U)$-equivariant. Choosing a basis of $U$ and 
its dual basis of $U^*$, identifying $\End(U)$ with $\C^{m\times m}$, 
and $\GL(U)$ with $\GL_m$, the resulting action of $\GL_m\ti\GL_m$ 
on $\C^{m\ti m}$ turns out to be $(A,B)X = AX (B^T)^{-1}$.
(The transpose comes from going over to the dual; this is is sometimes 
confusing.) 
We note that  the space of $\GL(U)\ti\GL(U)$-invariants in $\End(U)=U^*\ot U$ 
is one-dimensional and equals $\C\Id_U$. 

Let now $U_1,U_2,U_3$ be three $\C$-vector space of dimension~$n$. 
We have a canonical isomorphism 
\begin{equation}\label{eq:iso-mamu-spaces}
 \End(U_1) \ot \End(U_2) \ot \End(U_3) \simeq 
 (U_1^* \ot U_2) \ot (U_2^* \ot U_3) \ot (U_3^* \ot U_1) ,
\end{equation}
which results from permuting the factors in a cyclic way and rearranging
parentheses. It is easy to check that the image of 
$\C\Id_{U_1} \ot \C\Id_{U_2} \ot \C\Id_{U_3}$ 
on the right-hand side corresponds to the composition 
$\Hom(U_2,U_1)\ti\Hom(U_3,U_2)\to\Hom(U_3,U_1),\, (\varphi,\psi)\mapsto \varphi\circ\psi$ 
of linear maps. Therefore, the image of 
$\Id_{U_1} \ot \Id_{U_2} \ot \Id_{U_3}$ 
is the coordinate-free description of the matrix multiplication tensor $\lan n,n,n\ran$. 

According to Theorem~\ref{th:stab-mamu}, 
the action of the stabilizer of $\lan n,n,n\ran$, realized on the 
left-hand side of \eqref{eq:iso-mamu-spaces}, is the usual 
action of $\GL(U_1)\ti\GL(U_2)\ti\GL(U_3)$.
On the other hand, the space of $\GL(U_1)\ti\GL(U_2)\ti\GL(U_3)$-invariants of 
$\End(U_1) \ot \End(U_2) \ot \End(U_3)$ is one-dimensional and 
given by  $\C\Id_{U_1} \ot \Id_{U_2} \ot \Id_{U_3}$. 
\end{proof}

\subsection{Polystability of tensors}

We call a tensor $w\in\ot^3\C^m$ {\em polystable} 
iff the $\SL_m^3$-orbit of $w$ is closed. 
Our goal is a criterion for polystability, 
analogous to Proposition~\ref{pro:stab-crit-forms}. 
We define the {\em support} of a tensor $w\in\ot^3\C^m$ as 
\begin{equation}\label{eq:def-support-tensor}
 \supp(w) := \big\{ (i,j,k)\in [m]^3 \mid w_{ijk} \ne 0 \big\} ,
\end{equation}
where $w=\sum_{ijk} w_{ijk} \ket{ijk}$. 
Suppose we have a map $\a\colon \supp(w)\to \R$. Its {\em first marginal} 
is defined by 
$\a^{1}\colon [m]\to\R,\, i\mapsto \sum_{j,k} \a(i,j,k)$, 
and its second and third marginals $\a^{2},\a^{3}$ are defined similarly. 
Note that if $\a$ is a probability distribution on $\supp(w)$, then its 
marginals are probability distributions on $[m]$. 

Let $T_m$ denote the subgroup of $\GL_m$ consisting of the diagonal matrices 
and consider the subgroup $T_m^3:=T_m\ti T_m\ti T_m$ of $\GL_m^3$.

\begin{proposition}\label{pro:stab-crit-tensors}
Let the tensor $w\in \ot^3\C^m$ satisfy the following two properties:
\begin{enumerate}
\item There is a reductive subgroup $R$ of $\SL_m^3\cap \stab(w)$ 
such that the centralizer of $R$ in $\SL_m^3$ is contained in $T_m^3$. 

\item There is a probability distribution $\a$ on $\supp(w)$ such that its 
marginals $\a^{1}, \a^{2},\a^{3}$ are the uniform distributions on $[m]$.

\end{enumerate}
Then $w$ is polystable.
\end{proposition}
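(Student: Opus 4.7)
My plan is to follow the template of Proposition~\ref{pro:stab-crit-forms} and argue by contradiction, using the Hilbert-Mumford criterion in its refined Luna-Kempf form to reduce to a $1$-parameter subgroup inside the torus $T_m^3$.

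First I would suppose $w$ is not polystable, choose an $\SL_m^3$-orbit $Y$ of minimal dimension in $\ol{\SL_m^3 w}\setminus \SL_m^3 w$, which is automatically closed, and invoke the Hilbert-Mumford criterion to produce a one-parameter subgroup $\s\colon\Ct\to\SL_m^3$ with $y:=\lim_{t\to 0}\s(t)w\in Y$. The Luna-Kempf refinement lets me require that the image of $\s$ lies in the centralizer of the given reductive subgroup $R\subseteq \SL_m^3\cap\stab(w)$. By hypothesis~(1) this forces $\s(\Ct)\subseteq T_m^3$, so I can write
$$
 \s(t)=\big(\diag(t^{\mu_1},\ldots,t^{\mu_m}),\diag(t^{\nu_1},\ldots,t^{\nu_m}),\diag(t^{\la_1},\ldots,t^{\la_m})\big),
$$
with $\sum_i\mu_i=\sum_j\nu_j=\sum_k\la_k=0$ because $\s$ takes values in $\SL_m^3$.

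Next I would compute the action in coordinates: $\s(t)w=\sum_{i,j,k} t^{\mu_i+\nu_j+\la_k}w_{ijk}\ket{ijk}$. Existence of the limit $\lim_{t\to 0}\s(t)w$ forces $\mu_i+\nu_j+\la_k\ge 0$ for every $(i,j,k)\in\supp(w)$. This is where the probability distribution $\a$ of hypothesis~(2) enters. Averaging the nonnegative quantity $\mu_i+\nu_j+\la_k$ against $\a$ and using that each marginal $\a^{(r)}$ is uniform on $[m]$, I get
\begin{equation*}
 0\le\sum_{(i,j,k)\in\supp(w)}\a(i,j,k)(\mu_i+\nu_j+\la_k)=\tfrac{1}{m}\Big(\sum_i\mu_i+\sum_j\nu_j+\sum_k\la_k\Big)=0.
\end{equation*}
Since $\a(i,j,k)>0$ on $\supp(w)$ and every summand is nonnegative, each summand vanishes, so $\mu_i+\nu_j+\la_k=0$ on $\supp(w)$.

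Consequently $\s(t)w=w$ for all $t\in\Ct$, which yields $y=w$ and contradicts $y\in\ol{\SL_m^3 w}\setminus\SL_m^3 w$. The main obstacle is purely bookkeeping: one has to be careful that the Luna-Kempf theorem does apply in this setting (it does, since $R$ is reductive and sits in $\SL_m^3\cap\stab(w)$) and that the centralizer condition genuinely forces $\s$ into the diagonal torus $T_m^3$ rather than merely into a larger subgroup. Once these are in place, the symmetry of the three factors makes the marginal computation essentially automatic, since the three exponent sums are separately zero in $\SL_m^3$ and the three marginals are each uniform on $[m]$.
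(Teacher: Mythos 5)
Your proof is correct and follows essentially the same route as the paper's: the Hilbert--Mumford criterion refined by Luna--Kempf to force $\s$ into $T_m^3$, nonnegativity of the exponents $\mu_i+\nu_j+\la_k$ on $\supp(w)$, and averaging against the distribution $\a$ with uniform marginals to kill them. The only difference is cosmetic and occurs at the very end: the paper asserts $\mu=\nu=\pi=0$ and hence $\s\equiv 1$, whereas you conclude directly that $\s(t)w=w$; both yield the contradiction $y=w$.
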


\begin{proof}
We proceed as for Proposition~\ref{pro:stab-crit-forms}. 
Suppose that $w$ is not polystable and let $Y$ be a nonempty closed $\SL_m^3$-orbit in 
$\ol{\SL_m^3 w}\setminus \SL_m^3 w$. 
By the Hilbert-Mumford criterion, refined by 
Luna~\cite[Cor.~1]{luna:75} and Kempf~\cite[Cor.~4.5]{kempf:78}, 
there exists a one-parameter subgroup
$\s\colon\C^\ti\to \SL_m^3$ with $y:=\lim_{t\to 0}\s(t)w \in Y$, 
such that the image of $\s$ is contained 
in the centralizer of $R$ in $\SL_m^3$. 
By our first assumption, $\s$ maps $\Ct$ to $T_m^3$. 
So there exist $\mu,\nu,\pi\in\Z^m$ such that 
$\s(t) = (\diag(t^{\mu_1},\ldots,t^{\mu_m}), \diag(t^{\nu_1},\ldots,t^{\nu_m}),\diag(t^{\pi_1},\ldots,t^{\pi_m}))$ 
for $t\in\Ct$. 
Note that $\sum_i \mu_i = \sum_i \nu_i = \sum_i \pi_i = 0$ since the image of $\s$ is in $\SL_m^3$. 

We have
$$
 \s(t) w = \sum_{i,j,k} t^{\mu_i+\nu_j+\pi_k} \, w_{ijk} .
$$
The existence of $\lim_{t\to 0} \s(t) w$ implies that 
$\mu_i+\nu_j+\pi_k \ge 0$ for all $(i,j,k)\in\supp(w)$. 
Taking convex combinations, this condition implies that 
$\lan \a^{1},\mu\ran + \lan \a^{2},\nu\ran +\lan \a^{3},\pi\ran \ge 0$ 
for all probability distributions $\a$ on $\supp(w)$. 

Our second assumption states that there exists a probability distribution $\a$ on $\supp(w)$
whose marginals are the uniform distributions on $[m]$. 
But then $\lan \a^{1},\mu\ran = \frac{1}{m}\sum_i \mu_i = 0$ and similarly, 
$\lan \a^{2},\nu\ran = \lan \a^{3},\pi\ran = 0$. 
This implies $\mu=\nu=\pi=0$ and hence $\s(t)=1$ for all $t$. 
Therefore $y=w$, which contradicts the fact the $y$ lies in the 
boundary of the $\SL_m$-orbit of~$w$.
\end{proof}

The following result was shown by Meyer~\cite{meyer:06}. 

\begin{corollary}\label{cor:stable-forms-tensor}
The unit tensors $\lan m \ran$ and the matrix multiplication tensors $\lan n,n,n\ran$ 
are polystable.
\end{corollary}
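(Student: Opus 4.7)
The plan is to apply Proposition~\ref{pro:stab-crit-tensors} in both cases. For each tensor $w$, I must exhibit a reductive subgroup $R$ of $\SL_m^3 \cap \stab(w)$ whose centralizer in $\SL_m^3$ is contained in $T_m^3$, together with a probability distribution on $\supp(w)$ whose three marginals are uniform on $[m]$. The second condition is a direct combinatorial check in both cases; the work is concentrated in verifying the centralizer condition.

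For the unit tensor $\lan m\ran = \sum_{i=1}^m \ket{iii}$, the support is $\{(i,i,i) : i \in [m]\}$, and the uniform distribution on these $m$ points clearly has all three marginals equal to the uniform distribution on $[m]$, giving condition~(2). For condition~(1), Theorem~\ref{cor:stabut} shows that the torus $\Tc_m = \{(\diag(a), \diag(b), \diag(c)) : a_i b_i c_i = 1 \text{ for all } i\}$ is contained in $\stab(\lan m\ran)$. I take $R := \Tc_m \cap \SL_m^3$, which is a subtorus and hence reductive. For any $a \in (\Ct)^m$ with $\prod_i a_i = 1$, the triple $(\diag(a), \diag(a^{-1}), I_m)$ lies in $R$, so the projection of $R$ to each factor of $\SL_m^3$ surjects onto the diagonal maximal torus of $\SL_m$. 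Since the centralizer of the diagonal maximal torus in $\GL_m$ is exactly $T_m$, any $(g_1, g_2, g_3) \in \SL_m^3$ centralizing $R$ must satisfy $g_i \in T_m$ for each $i$, so the centralizer of $R$ lies in $T_m^3$.

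For the matrix multiplication tensor $\lan n,n,n\ran \in \ot^3 \C^{n^2}$, I identify $\C^{n^2}$ with $\C^{n\times n}$ via the elementary-matrix basis $\{E_{ij}\}$, so that $T_{n^2}$ denotes the diagonal operators in this basis. In this presentation, $\lan n,n,n\ran = \sum_{i,j,k} \ket{(i,j)(j,k)(k,i)}$, whose support has $n^3$ points. The uniform distribution on the support has first marginal $(a,b) \mapsto n \cdot n^{-3} = n^{-2}$, uniform on $[n^2]$, and the other two marginals are uniform by symmetry, so condition~(2) holds. By Theorem~\ref{th:stab-mamu}, the stabilizer consists of all triples $(g_{A,B^{-1}}, g_{B,C^{-1}}, g_{C,A^{-1}})$ with $A,B,C \in \GL_n$. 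Let $R$ be the image, under this parametrization, of the triples $(A,B,C)$ of \emph{diagonal} matrices in $\GL_n$ satisfying $\det(A)^n = \det(B)^n = \det(C)^n = 1$. Since $\det(g_{A,B^{-1}}) = \det(A)^n \det(B)^{-n} = 1$ and cyclically, $R$ lies in $\SL_{n^2}^3$; being the image of a torus it is itself a torus, hence reductive. In the basis $\{E_{ij}\}$, $g_{A,B^{-1}}$ is the diagonal operator with eigenvalue $A_i/B_j$ on $E_{ij}$; taking $A_i := \la^i$, $B_j := \mu^j$, $C_k := \nu^k$ for generic $\la,\mu,\nu$ (rescaled to enforce the determinant conditions) makes all three sets of eigenvalues $\{A_i/B_j\}$, $\{B_j/C_k\}$, $\{C_k/A_i\}$ consist of $n^2$ pairwise distinct numbers each, so this single element of $R$ already has centralizer $T_{n^2}$ on each factor of $\GL_{n^2}^3$. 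Hence the centralizer of $R$ in $\SL_{n^2}^3$ lies in $T_{n^2}^3$, establishing condition~(1).

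The most delicate step is the centralizer verification in the matrix-multiplication case: the image $R$ is \emph{not} a maximal torus of $\SL_{n^2}^3$, because its eigenvalues on $\C^{n^2}$ have the ``rank-one diagonal'' form $A_i/B_j$ rather than a generic diagonal one. One therefore cannot invoke self-centralization directly and must exhibit a concrete element of $R$ whose $n^2$ eigenvalues are pairwise distinct. The unique-factorization argument sketched above (or equivalently, algebraic independence of $\la,\mu,\nu$) handles this cleanly and uniformly across the three factors.
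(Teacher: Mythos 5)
Your proof is correct and follows essentially the same route as the paper: both apply Proposition~\ref{pro:stab-crit-tensors} with $R$ a torus inside the stabilizer ($\Tc_m$ for the unit tensor, the diagonal transformations $X\mapsto AXB^{-1}$, $Y\mapsto BYC^{-1}$, $Z\mapsto CZA^{-1}$ for matrix multiplication) together with the uniform distribution on the support. If anything, you are slightly more careful than the paper in cutting $R$ down to $\SL^3$ and in exhibiting an explicit element of $R$ with pairwise distinct eigenvalues to justify the centralizer condition.
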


\begin{proof}
We use Proposition~\ref{pro:stab-crit-tensors}. 
For $\lan m \ran$ we take $R:=\Tc_m$ as defined in Corollary~\ref{cor:stabut}. 
Note that $\Tc_m$ is contained in $\SL_m^3\cap\stab(\lan m\ran)$. It is easy to check 
that the centralizer of $\Tc_m$ in $\SL_m^3$ is contained in $T_m^3$. 
For the second condition, we note that the marginals of the uniform distribution 
on the support of $\lan m\ran$ are the uniform distributions on $[m]$. 
Hence $\lan m \ran$ is polystable. 

For $\lan n,n,n\ran$ we take for $R$ the group consisting of the 
the transformations 
$X\mapsto AXB^{-1}, Y\mapsto BXC^{-1}, Z\mapsto CXA^{-1}$, 
defined by diagonal matrices $A,B,C\in T_n$.
In other words, $R$ consists of the triples of diagonal matrices 
$(\diag(a_i b_j^{-1}), \diag(b_i c_j^{-1}), \diag(c_i a_j^{-1}))$, 
where $a_1,\ldots,c_n\in\C^\ti$. 
Note that $R$ is indeed contained in $\SL_{n^2}^3\cap\stab(\lan n,n,n\ran)$. 
If $g\in\SL_{n^2}$ commutes with all $\diag(a_i b_j^{-1})$, then 
$g$ must be diagonal, since we may choose the $a_i,b_j$ such that 
$a_ib_j^{-1}$ are pairwise distinct for $i,j=1,\ldots,n$.  
Hence the centralizer of $R$ is contained in $T_m^3$. 
Finally, it is easy to check that the marginals of the uniform distribution on the 
support of $\lan n,n,n\ran$ are uniform as well.
\end{proof}

\begin{proposition}\label{pro:gen-tensor-stable}
Almost all $w\in\ot^3\C^m$ are polystable. 
\end{proposition}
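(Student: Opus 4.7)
The plan is to adapt the argument of Proposition~\ref{pro:gen-form-stable}, splitting according to whether $m=2$ or $m\ge 3$.

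For $m=2$, the approach is a pure dimension count. By Theorem~\ref{cor:stabut}, the stabilizer of the unit tensor $\lan 2\ran$ is $\Tc_2\rtimes S_2$, whose connected component $\Tc_2$ has dimension $2m=4$, so the $\GL_2^3$-orbit of $\lan 2\ran$ has dimension $12-4=8=\dim\ot^3\C^2$. Being irreducible of full dimension inside the irreducible ambient variety, this orbit is Zariski-open and dense. Since $\SL_2^3$ is normal in $\GL_2^3$, the polystability of $\lan 2\ran$ established in Corollary~\ref{cor:stable-forms-tensor} propagates to every $\GL_2^3$-translate, so almost every tensor in $\ot^3\C^2$ is polystable.

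For $m\ge 3$, I will apply the same theorem of Luna~\cite{luna:73} that is invoked in Proposition~\ref{pro:gen-form-stable}: for a semisimple group $H$ acting on an affine variety, if the isotropy group of a generic point is finite, then the orbit of a generic point is closed. Taking $H=\SL_m^3$ (which is semisimple), it suffices to check that $\SL_m^3\cap\stab(w)$ is finite for generic $w$. Observe that the natural projection $\SL_m^3\cap\stab(w)\to\stab'(w)$ has kernel $K\cap\SL_m^3$, which is the finite group of scalar triples $(\zeta_1\id_m,\zeta_2\id_m,\zeta_3\id_m)$ with $\zeta_i^m=1$ and $\zeta_1\zeta_2\zeta_3=1$. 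Hence the desired finiteness reduces to finiteness of the generic reduced stabilizer $\stab'(w)$. By Theorem~\ref{th:stab_gen_tensor} and its proof, $\stab'(w)$ is trivial for generic $w$ when $m\ge 4$ (Popov), and the Thrall--Chanler classification handles $m=3$. Luna's theorem then yields polystability for almost every $w$.

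The one place that requires extra attention is the $m=3$ case: Theorem~\ref{th:stab_gen_tensor} as stated records only the generic stabilizer period $a(3)=1$, not outright finiteness of the generic reduced stabilizer. I would extract the latter directly from the Thrall--Chanler list of normal forms, where the generic orbits and their stabilizers are described explicitly and the latter are visibly finite.
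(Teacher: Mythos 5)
Your proof is correct, and for $m\ge 3$ it is essentially the paper's argument: Luna's theorem for the semisimple group (you apply it to $\SL_m^3$ itself, the paper to $G_s=\SL_m^3/K$, but since $K\cap\SL_m^3$ is finite the two finiteness conditions are equivalent, exactly as you observe). The genuine difference is your separate treatment of $m=2$, and this is in fact a point where you are more careful than the paper. The paper's proof asserts that Theorem~\ref{th:stab_gen_tensor} gives finiteness of the reduced stabilizer $\stab'(w)$ for almost all $w\in\ot^3\C^m$ for every $m$; but for $m=2$ the generic tensor is equivalent to $\lan 2\ran$, whose stabilizer contains the four-dimensional torus $\Tc_2$, so $\stab'(\lan 2\ran)\simeq(\Tc_2\rtimes S_2)/K$ is two-dimensional and the Luna argument does not apply. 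Your dimension count showing that $\GL_2^3\lan 2\ran$ is dense, combined with the polystability of $\lan 2\ran$ from Corollary~\ref{cor:stable-forms-tensor} and the normality of $\SL_2^3$ in $\GL_2^3$, closes this case correctly (and is consistent with the paper's own remark elsewhere that $\ol{Gw}=\ot^3\C^2$ for generic $w$). Your final caveat about $m=3$ is also well taken: the statement of Theorem~\ref{th:stab_gen_tensor} only records $a(3)=1$, and the finiteness of the generic reduced stabilizer for $m=3$ must indeed be read off from the Thrall--Chanler classification, which is what the paper implicitly does inside the proof of that theorem.
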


\begin{proof}
Recall that the $\GL_m^3$-action induces an action of  
$G=\GL_m^3/K$ on $\ot^3\C^m$. 
Theorem~\ref{th:stab_gen_tensor} states that the reduced stabilizer 
$\stab'(w)=\stab(w)/K$ is finite for almost all $w\in\ot^3\C^m$. 
The group $G_s :=\SL_m^3/K$ is a homomorphic image of $\SL_m^3$ and hence semisimple. 
Clearly, $\SL_m^3 w = G_sw$.
According to ~\cite{luna:73}, or \cite[II 4.3D, Folgerung, p.~142]{kraf:84}, 
$G_s w $ is closed for almost all~$w$ since $\stab'(w) =\stab(w)/K$ 
is finite for almost all~$w$.
\end{proof}

Any polystable tensor has a finite stabilizer period
(the proof is as for Proposition~\ref{pro:polystab-period}).

\section{Fundamental invariant of tensors}
\label{se:FI-tensors} 

We proceed similarly as in Section~\ref{se:exp-monoid-forms}. 
Throughout this section, we assume that $w\in\ot^3\C^m$ is polystable.

Consider the map $\phi_w\colon Gw\to\C$ defined by $\phi_w(w):= \chi(g)^{a(w)}$,
which is well defined by the definition of the stabilizer period~$a(w)$.
The following is analogous to Lemma~\ref{le:1-dim}.

\begin{lemma}\label{le:1-dim-T}
\begin{enumerate}
\item We have $\phi_w(tv)  = t^{m a(w)} \phi_w(v)$ for $t\in\Ct$ and $v\in Gw$. 

\item $\Oh(Gw)^{\SL_m}_{d}$ is one-dimensional if $d$ is a multiple of $m a(w)$, and zero otherwise.
More specifically, if $d=m a(w)k$ with $k\in\Z$, then we have $\Oh(Gw)^{\SL_m}_{d} = \C \cdot (\phi_w)^k$.

\item For $k\in\N$, 
we have $\Oh(\ol{Gw})^{\SL_m}_{ma(w)k} \ne 0$ iff $(\phi_w)^{k}$ has a regular extension to $\ol{Gw}$. 
\end{enumerate} 
\end{lemma}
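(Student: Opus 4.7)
The plan is to mirror the proof of Lemma~\ref{le:1-dim} for forms, adapting the computations to the action of $G=\GL_m^3/K$ and the character $\chi\colon G\to\Ct$. The three parts build on each other.

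For part~1, I would express the scalar action on $v$ as coming from a group element. Concretely, for $t\in\Ct$ the triple $(t\id_m,\id_m,\id_m)\in\GL_m^3$ acts on $\ot^3\C^m$ by multiplication by $t$, while $\chi$ sends it to $t^m$. If $v=hw$ for some $h\in G$, then $tv=(t\id_m,\id_m,\id_m)\cdot h\cdot w$, so
\[
\phi_w(tv)=\chi\big((t\id_m,\id_m,\id_m)\cdot h\big)^{a(w)}=\big(t^m\chi(h)\big)^{a(w)}=t^{ma(w)}\phi_w(v).
\]

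For part~2, I would invoke the algebraic Peter--Weyl theorem as in the proof of Lemma~\ref{le:1-dim}(2). The $\SL_m^3$-invariants of $\Oh(Gw)$ correspond to the one-dimensional $G$-subrepresentations, and a one-dimensional representation of $\GL_m^3$ factors through the quotient by $K$ exactly when its highest weight is of the form $g\mapsto\chi(g)^\ell$ for some $\ell\in\Z$. For such a character, the Peter--Weyl multiplicity $\dim V_G(\ell)^{\stab(w)}$ is at most one, and it is nonzero iff $\chi(g)^\ell=1$ for every $g\in\stab(w)$, i.e., iff $a(w)\mid \ell$. Writing $\ell=a(w)k$, the corresponding invariant is exactly the function $gw\mapsto\chi(g)^{a(w)k}=(\phi_w(gw))^k$, which is well defined by the definition of $a(w)$. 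The statement about degree follows from part~1: $(\phi_w)^k$ is homogeneous of degree $ma(w)k$.

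For part~3, I would use that the restriction map $\Oh(\ol{Gw})_d^{\SL_m^3}\hookrightarrow\Oh(Gw)_d^{\SL_m^3}$ is injective because $Gw$ is dense in $\ol{Gw}$. By part~2, the target is $\C\cdot(\phi_w)^k$ when $d=ma(w)k$ and zero otherwise, so the image is either zero or the whole line, according to whether $(\phi_w)^k$ admits a regular extension to $\ol{Gw}$.

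I do not expect a genuine obstacle: the only subtlety is bookkeeping the quotient by $K$ when describing characters of $G$, which is handled by noting that $\chi$ descends through $K$ and that the characters $\chi^\ell$ are precisely the one-dimensional representations of $G$ (any character of $\GL_m^3$ is $(g_1,g_2,g_3)\mapsto\det(g_1)^{\ell_1}\det(g_2)^{\ell_2}\det(g_3)^{\ell_3}$, and triviality on $K$ forces $\ell_1=\ell_2=\ell_3$). Everything else is a direct transcription of the argument already given for forms.
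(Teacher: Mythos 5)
Your proposal is correct and follows essentially the same route as the paper: part 1 via the element $\iota(t)=(t\id_m,\id_m,\id_m)\bmod K$ with $\chi(\iota(t))=t^m$ and $\iota(t)w=tw$, and parts 2 and 3 by transcribing the Peter--Weyl argument of Lemma~\ref{le:1-dim} (the paper literally says the remaining assertions are shown as for that lemma). Your explicit remark that characters of $\GL_m^3$ descend to $G$ exactly when $\ell_1=\ell_2=\ell_3$ is a correct and welcome detail that the paper leaves implicit.
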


\begin{proof}
Let $\iota(t):=(t\id_m,\id_m,\id_m)\bmod K \in G$ for $t\in\Ct$. 
We have 
 $\chi(\iota(t))=t^m$ and $\iota(t)w=tw$. We thus get 
$\phi_w(tw)=\phi_w(\iota(t)w)) = \chi(\iota(t))^{a(w)} = t^{m a(w)}$, 
hence $ma(w)$ equals the degree of $F$. 
This shows the first assertion. 
The remaining assertions are shown as for Lemma~\ref{le:1-dim}.
\end{proof}

\begin{definition}
The {\em degree monoid} $\E(w)$ of $w\in\ot^3\C^m$ is defined as 
$$
 \E(w) := \big\{d\in\N \mid \Oh(\ol{Gw})^{\SL^3_m}_d\ne 0 \big\} , 
$$
and we call the minimal positive element $e(w)$ of $\E(w)$ the 
{\em minimal degree}  of $w$. 
\end{definition}

The following is analogous to Theorem~\ref{th:group_S_w}, 
cf.\  Section~\ref{se:Editto} for the proof. 

\begin{theorem}\label{th:group_S_w-T} 
The degree monoid $\E(w)$ generates the group $m a(w) \Z$. 
\end{theorem}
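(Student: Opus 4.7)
The plan is to mirror the argument I expect to work for the forms case (Theorem~\ref{th:group_S_w}), adapted to the $\SL_m^3$-action on $\otimes^3\C^m$. First I would record that $\Oh(\ol{Gw})^{\SL_m^3}$ embeds in $\Oh(Gw)^{\SL_m^3}$, and invoke Lemma~\ref{le:1-dim-T}(2), which identifies the larger ring with the Laurent polynomial ring $\C[\phi_w, \phi_w^{-1}]$, graded by $\deg\phi_w = ma(w)$. This immediately forces $\E(w) \subseteq ma(w)\N$, so the group generated by $\E(w)$ is automatically contained in $ma(w)\Z$. Writing $\E(w) = ma(w)\E'(w)$ for a submonoid $\E'(w) \subseteq \N$, the task reduces to showing that $\gcd \E'(w) = 1$.

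The engine of the proof is the already-announced tensor analog of Proposition~\ref{le:ext-zero}, which states $\Oh(Gw) = \Oh(\ol{Gw})_{\Phi_w}$. Since $\SL_m^3$ is linearly reductive and $\Phi_w$ is $\SL_m^3$-invariant, taking invariants commutes with this localization. Setting $R := \Oh(\ol{Gw})^{\SL_m^3}$, a graded subring of $\C[\phi_w^{\pm 1}]$ in which (by Lemma~\ref{le:1-dim-T}(3) applied to $k = e'(w)$) $\Phi_w$ corresponds up to a nonzero scalar to $\phi_w^{e'(w)}$, this commutation delivers the key identity
\begin{equation*}
R[\Phi_w^{-1}] \;=\; \Oh(Gw)^{\SL_m^3} \;=\; \C[\phi_w^{\pm 1}].
\end{equation*}

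Now set $c := \gcd \E'(w)$ and suppose for contradiction that $c > 1$. Then every nonzero homogeneous element of $R$ sits in a degree divisible by $cma(w)$, so $R \subseteq \C[\phi_w^{c}]$ as graded subrings of $\C[\phi_w^{\pm 1}]$. Because $c$ divides $e'(w)$, inverting $\Phi_w = \phi_w^{e'(w)}$ only introduces negative powers of $\phi_w^{c}$, whence $R[\Phi_w^{-1}] \subseteq \C[\phi_w^{\pm c}]$. For $c > 1$ this inclusion is strict, contradicting the displayed identity above. Therefore $c = 1$ and $\E(w)$ generates $ma(w)\Z$, as required.

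I expect the only delicate checkpoint to be verifying the tensor analog of Proposition~\ref{le:ext-zero} together with the commutation of $\SL_m^3$-invariants with localization at $\Phi_w$; the former is the promised tensor statement in Section~\ref{se:FI-tensors} and the latter is a standard consequence of the reductivity of $\SL_m^3$ combined with the $\SL_m^3$-invariance of $\Phi_w$. Once these are in place, the identification of $\Phi_w$ with a power of $\phi_w$ (forced by the one-dimensionality statement in Lemma~\ref{le:1-dim-T}(2)) makes the group-theoretic conclusion essentially automatic.
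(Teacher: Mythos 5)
Your algebraic core --- passing from $\Oh(Gw)^{\SL_m^3}=\C[\phi_w^{\pm1}]$ and the localization identity $\Oh(Gw)=\Oh(\ol{Gw})_{\Phi_w}$ to $\gcd\E'(w)=1$ --- is sound, and it is a genuinely different route from the paper's, which instead forms the GIT quotient $\pi\colon\ol{Gw}\to Z$ by $\SL_m^3$, shows that $Z$ is an irreducible affine curve dominated by $\C$ via $t\mapsto\pi(tw)$, computes the $\Ct$-stabilizer of $\pi(w)$ to be $\mu_{ma(w)}$ (Lemma~\ref{le:stab-piw-tensor}), and concludes with the curve lemma (Lemma~\ref{le:S-grp}). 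But your argument has a circularity that the paper's does not. The fundamental invariant $\Phi_w$ is defined as the invariant of minimal \emph{positive} degree $e(w)\in\E(w)$, and the paper is explicit (in the forms case, immediately after Definition~\ref{def:E(w)}) that the well-definedness of $e(w)$ --- that is, the fact that $\E(w)\neq\{0\}$ --- is itself a consequence of the theorem you are proving. Theorem~\ref{th:localization-tensor}, which you invoke as your engine, therefore cannot be quoted as already available: both its statement and its proof presuppose the existence of $\Phi_w$. Concretely, if $\E(w)=\{0\}$ then $R=\C$, there is nothing to localize, and your contradiction never gets off the ground; and note that the theorem really does assert $\E(w)\neq\{0\}$, since the zero monoid generates the trivial group rather than $ma(w)\Z$.

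The gap is fillable, and you should fill it explicitly: since $w$ is polystable and nonzero, $\{0\}$ and $\SL_m^3w$ are two distinct closed $\SL_m^3$-orbits in $\ol{Gw}$, so the GIT quotient separates them and there is an invariant $f$ with $f(w)\neq f(0)$; some positive-degree homogeneous component of $f$ is then a nonzero element of $\Oh(\ol{Gw})^{\SL_m^3}_d$ with $d>0$, whence $\E(w)\neq\{0\}$ and $\Phi_w$ exists. With that in hand, the boundary-vanishing and localization statements (the tensor analogue of Proposition~\ref{le:ext-zero}, whose proof uses only polystability and not Theorem~\ref{th:group_S_w-T}) go through, and the rest of your argument is correct: the identification $\Phi_w=(\phi_w)^{e'(w)}$, the commutation of $\SL_m^3$-invariants with localization at the invariant non-zero-divisor $\Phi_w$ (an integral-domain argument; reductivity is not even needed here), and the final $\gcd$ contradiction are all fine. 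What the paper's route buys instead is that the nontriviality of $\E(w)$ comes for free from $Z$ being a curve rather than a point, and the exact generator $ma(w)$ drops out of the stabilizer computation in Lemma~\ref{le:stab-piw-tensor}; your route concentrates all of the geometry into the single identity $\Oh(Gw)=\Oh(\ol{Gw})_{\Phi_w}$, whose prerequisites you must therefore establish from scratch rather than by forward reference.
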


Again we have $\E(w) = ma(w)\E'(w)$ with the  {\em exponent monoid} characterized by 
$$
 \E'(w) := \big\{ k\in\N \mid \mbox{$(\phi_w)^k$ has a regular extension to $\ol{Gw}$} \big\} .
$$ 
The {\em minimal exponent} $e'(w)$ of $\E'(w)$ is defined as the minimal positive element of $\E'(w)$. 

To illustrate the notions introduced, recall that 
$a(m)$ denotes the stabilizer period of a generic tensor $w\in\ot^3\C^m$.
From Theorem~\ref{th:stab_gen_tensor} we know 
that $a(m)=1$ if $m>2$ and $a(2)=2$. 
We define the {\em generic degree monoid of tensors} (of the cubic format $m$) as 
\begin{equation}\label{eq:E-T}
 \E(m) :=  \big\{d\in\N \mid \Oh(\ot^3\C^m)^{\SL^3_m}_d \ne 0 \big\} ,
\end{equation}
and we call its minimal positive value $e(m)$ the {\em generic minimal degree}.
It is easy to see that $\E(w) = \E(m)$, and hence $e(w) = e(m)$, 
for almost all $w\in \ot^3\C^m$. 
Theorem~\ref{th:group_S_w-T} implies that 
$\E(m)$ generates the group $ma(m)\Z$. 
We call 
$\E'(m) := \frac{1}{ma(m)}\E(m)$ the 
{\em generic exponent monoid of tensors} (of the cubic format $m$)
and its minimal positive element
$e'(m) :=\frac{1}{ma(m)}\e(m)$ the 
{\em generic minimal exponent of tensors}.

It is a well-known fact, e.g., see~\cite{lama:04}, that 
$$
 \dim \Oh(\ot^3\C^m)^{\SL^3_m}_{m\d} = k_m(\d)  ,
$$
where 
$k_m(\d) := k(m\times \d,m\times \d,m\times \d)$ 
is the {\em Kronecker coefficient} assigned to three partitions of 
the same rectangular shape $m\times \d := (\d,\ldots,\d)$ ($m$ times).  
We therefore obtain that 
\begin{equation}\label{eq:kmd}
 \E'(m) = \big\{ \d\in\N \mid k_m(\d) >0 \big\} \quad \mbox{if $m>2$.} 
\end{equation}

\begin{remark}
The case $m=2$ is special since $a(2)=2$. Here we have 
$\big\{ \d\in\N \mid k_2(\d) >0 \big\} = 2\E'(2)= 2\N$.
In fact, it is known that $k_2(\d)$ equals $1$ if $\d$ is even and 
$k_2(\d)=0$ otherwise; cf.~\cite{rewe:94}. 
\end{remark}

\begin{example}\label{ex:table}
The generic exponent monoid $\E'(3)=\{ 0,2,3,4,\ldots\}$ is generated by $2,3$. 
Thus $1$ is the only gap. 
The first function values of $k_3$ are given by 
$$
\begin{array}{l|ccccccccccccc}
\d &        0&1 &2&3&4&5&6&7&8&9&10&11&12\\ \hline 
k_3(\d) &1&0&1&1&2&1&3&2&4&3&5&4&7\\
\end{array}
$$
The above table is consistent with the fact that 
$\Oh(\otimes^3\C^3)^{\SL_3^3}$ is generated by three 
homogeneous generators having the degrees $6,9,12$, 
see \cite{Vin:76, BHO:14}.
We also remark that $k_3(3)=1$ states the existence and 
uniqueness of Strassen's invariant~\cite{stra:83}. 
It is instructive to verify here the monotonicity relation 
$k_m(\d) \le k_m(\d+\ell)$, holding if $\ell\in \E'(m)$. 
(This property follows from the general observation
$k(\la,\mu,\nu) \le k(\la+\a,\mu+\b,\nu+\g)$ if $k(\a,\b,\g)>0$, 
resulting from interpreting the Kronecker coefficients as the dimensions 
of highest weight vector spaces; cf.~\cite{manivel:11}.) 
\end{example}

The following computations were obtained with the {\sc Derksen} program 
for computing Kronecker coefficients.
(This is an adaption by J.~H\"uttenhain of a code originally written by H.~Derksen.) 

\begin{example}\label{ex:E3}
We have 
$\E'(4)=\E'(3)=\{0,2,3,4,\ldots\}$ generated by $2,3$, with the only gap~$1$. 
Further, we have 
$\E'(5)=\E'(6)=\E'(8)=\E'(9)=\{0,3,4,5,6,\ldots\}$ 
generated by $3,4,5$, with the gaps $1,2$. 
But we have 
$\E'(7)=\{0,4,5,6,7,8,\ldots\}$ 
generated by $4,5,6,7$, with the gaps $1,2,3$. 
This show that
$e'(3)=e'(4)=2$, $e'(5)=e'(6)=e'(8)=e'(9)=3$, $e'(7)=4$. 
Computing a little further, we obtain 
$\E'(10) = \E'(11) = \E'(12) = \{ 0,4,5,6,7,\ldots\}$ and hence 
$e'(10)=e'(11)= e'(12) = 4$. 
Further, $e'(13)=e'(14)=e'(15)=e'(16)=4$.
\end{example}

We can now define the key objects of study for tensors, as in Section~\ref{se:FI-forms}. 

\begin{definition}\label{def:reg-T}
By the {\em fundamental invariant}~$\Phi_w$ of $w\in\ot^3\C^m$  
we understand the $\SL^3_m$-invariant $\Phi_w$ in $\Oh(\ol{Gw})$ of the 
(minimal) degree $e(w)$ satisfying $\Phi_w(w)=1$. 
\end{definition}

We note that $\Phi_w = (\phi_w)^{e'(w)}$ on $Gw$ by Lemma~\ref{le:1-dim-T}.

\begin{theorem}\label{th:localization-tensor}
\begin{enumerate}
\item The zero set of $\Phi_w$ in $\ol{Gw}$ equals the boundary $\ol{Gw}\setminus Gw$. 
\item The coordinate ring of the orbit $Gw$ equals the localization of 
the coordinate of the orbit closure $\ol{Gw}$ with respect to the fundamental invariant $\Phi_w$.
\item  If $ma(w) < e(w)$, that is, $1<e'(w)$, then the vanishing ideal of the boundary $\ol{Gw}\setminus Gw$ in $\ol{Gw}$
is strictly larger than the principal ideal $\Phi_w\Oh(\ol{Gw})$. 
Moreover, $\ol{Gw}$ is not a normal algebraic variety. 
In particular, it is not a valuation ring. 
\end{enumerate}
\end{theorem}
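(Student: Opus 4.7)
The plan is to imitate the proofs of Proposition~\ref{le:ext-zero} and Theorem~\ref{le:Phi_w-zeroset} in the tensor setting, using the character $\chi$ of \eqref{eq:def-chi} in place of the determinant.

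For part~1, it suffices to show that $\Phi_w$ vanishes on the boundary, since $\Phi_w = (\phi_w)^{e'(w)}$ is nonzero on $Gw$ by Lemma~\ref{le:1-dim-T}. Fix $u\in\ol{Gw}\setminus Gw$ and a sequence $g_n\in\GL_m^3$ with $g_nw\to u$ in the Euclidean topology. For each $n$ decompose $g_n=\iota(s_n)\tilde g_n$ with $\tilde g_n\in\SL_m^3$ and $s_n\in\Ct$; this is possible because any $(g_{n,1},g_{n,2},g_{n,3})$ acts on tensors as the scalar $s_n=t_{n,1}t_{n,2}t_{n,3}$ times an $\SL_m^3$-element, and we have $\chi(g_n)=s_n^{m}$. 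The polystability of $w$ (together with $w\neq 0$) yields $\e>0$ with $\|\tilde g w\|\ge\e$ for every $\tilde g\in\SL_m^3$, hence $|s_n|\le \|g_nw\|/\e$ is bounded. If $s_n\not\to 0$, then after passing to a subsequence $s_n\to\delta\neq 0$, so $\tilde g_n w\to\delta^{-1}u\in\SL_m^3 w$ by closedness of the orbit, giving $u\in Gw$, a contradiction. Thus $s_n\to 0$, so $\chi(g_n)=s_n^m\to 0$, so $\phi_w(g_nw)=\chi(g_n)^{a(w)}\to 0$, and by continuity of $\Phi_w$ in the Euclidean topology, $\Phi_w(u)=0$.

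Part~2 is a formal consequence of part~1: since $Gw$ is the principal open subset of $\ol{Gw}$ defined by $\Phi_w\neq 0$, the coordinate ring of $Gw$ is the localization $\Oh(\ol{Gw})_{\Phi_w}$, by general principles (\cite[\S4.2]{shaf:94}).

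For part~3, the key observation is that the zero-extension $\tilde\phi_w$ of $\phi_w$ to $\ol{Gw}$ is continuous (by the sequence argument of part~1) but \emph{not} regular under the hypothesis $e'(w)>1$: a regular extension would give a nonzero element of $\Oh(\ol{Gw})^{\SL_m^3}_{ma(w)}$, forcing $ma(w)\in\E(w)$ and hence $e(w)\le ma(w)$, contradicting $e'(w)>1$. Since $\phi_w\in\Oh(Gw)\subset\mathrm{Frac}(\Oh(\ol{Gw}))$ satisfies the monic integral equation $X^{e'(w)}-\Phi_w=0$ over $\Oh(\ol{Gw})$ but lies outside $\Oh(\ol{Gw})$, the ring $\Oh(\ol{Gw})$ fails to be integrally closed in its fraction field; hence $\ol{Gw}$ is not normal and $\Oh(\ol{Gw})$ is not a valuation ring (every valuation ring is normal). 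For the ideal statement, write $\phi_w=f/\Phi_w^s$ with $f\in\Oh(\ol{Gw})$ and $s\ge 1$ chosen minimal (minimality is $\ge 1$ because $s=0$ would put $\phi_w$ in $\Oh(\ol{Gw})$, contradicting what we just showed). On $Gw$ we have $f=\phi_w\Phi_w^s=(\phi_w)^{s\,e'(w)+1}$, which tends to $0$ at the boundary, so $f$ lies in the vanishing ideal $I$ of $\ol{Gw}\setminus Gw$. But $f\notin \Phi_w\Oh(\ol{Gw})$: otherwise $f=\Phi_w h$ would give $\phi_w=h/\Phi_w^{s-1}$, contradicting the minimality of~$s$.

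The main technical obstacle is the first part, where we must rule out that a limit $g_nw\to u$ could occur with $|s_n|$ bounded away from zero; everything downstream is formal once $\phi_w$ is known to extend continuously by zero but not regularly. The polystability assumption is used crucially here and cannot be weakened.
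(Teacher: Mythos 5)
Your argument is correct. For parts 1 and 2 it is essentially the paper's own proof of Proposition~\ref{le:ext-zero} transported to tensors: the decomposition $g_n=\iota(s_n)\tilde g_n$ with $\chi(g_n)=s_n^m$ is the right analogue of $\det(g_n)=t_n^m$, and polystability is used identically to bound $|s_n|$ and rule out a nonzero limit point. Part 3, however, follows a genuinely different route from the paper. The paper (in the proof of Theorem~\ref{le:Phi_w-zeroset}, to which Theorem~\ref{th:localization-tensor} defers) passes to the GIT quotient $\pi\colon\ol{Gw}\to Z$, identifies $\sigma^*(\Oh(Z))$ with the semigroup algebra $\bigoplus_{s\in\E'(w)}\C T^{s b(w)}$, exhibits $T^{(e'(w)-1)b(w)}$ as integral over this algebra but not in it (using that some multiple of the gap $e'(w)-1$ lies in $\E'(w)$), and then invokes descent of normality to quotients; the ideal statement is deduced from $\E(w)\not\subseteq e(w)\N$. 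You instead work directly in $\Oh(\ol{Gw})$: the witness for failure of integral closedness is $\phi_w$ itself, via the monic equation $X^{e'(w)}-\Phi_w=0$ together with Lemma~\ref{le:1-dim-T}(3) (which is exactly the statement that a regular extension of $\phi_w$ would force $ma(w)\in\E(w)$, hence $e'(w)\le 1$), and the witness for the ideal statement is the numerator $f$ in a minimal representation $\phi_w=f/\Phi_w^{s}$. Both arguments are sound; yours is more elementary (no quotient curve, no numerical-semigroup gap argument) and makes explicit which element of the fraction field is ``missing'' from $\Oh(\ol{Gw})$, while the paper's version produces the finer information about $\E'(w)$ as a by-product. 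The only step worth spelling out in your version is the passage from ``some regular extension of $\phi_w$ exists'' to ``a nonzero invariant of degree $ma(w)$ exists'': this requires applying the Reynolds operator and taking the degree-$ma(w)$ homogeneous component (or, more simply, citing Lemma~\ref{le:1-dim-T}(3) verbatim).
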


The proof 
is completely analogous to the one of 
Proposition~\ref{le:ext-zero} and Theorem~\ref{le:Phi_w-zeroset} 
and therefore omitted.

\subsection{Generic fundamental invariant of tensors}
\label{se:gen-fun-ten}

We investigate the generic minimal exponent $e'(m)$.

\begin{theorem}\label{pro:km}
\begin{enumerate}
\item We have $\lceil\sqrt{m}\rceil \le e'(m)$ if $m>2$. 
\item We have $e'(m) \le m$ if the Alon-Tarsi conjecture holds for $m$. 
\item If $m=n^2$, then $k_{n^2}(n) = 1$. In particular, $e'(n^2)=n$. 
\end{enumerate}
\end{theorem}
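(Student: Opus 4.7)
By the identification~\eqref{eq:kmd}, namely $\E'(m)=\{\delta\in\N:k_m(\delta)>0\}$ for $m>2$ with $k_m(\delta)=k(m\times\delta, m\times\delta, m\times\delta)$, all three parts reduce to the rectangular Kronecker coefficient, or equivalently to the existence of nonzero $\SL_m^3$-invariants of degree $m\delta$ on $\ot^3\C^m$. I would therefore prove the three parts in the order (3), (2), (1), so that the lower bound in (1) is not needed before the invariants in (3) and (2) are constructed.

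For part~(3), I would first exhibit $F_n$ from~\eqref{eq:fund-invar-tensor} as a nonzero $\SL_{n^2}^3$-invariant of degree $n^3$: $\SL_{n^2}^3$-invariance follows from a tableau-combinatorial argument parallel to Theorem~\ref{thm:PTnonzero}, but using three tableaux, one per tensor slot, and non-vanishing is verified by evaluating at the matrix multiplication tensor $\lan n,n,n\ran$. The stabilizer description (Theorem~\ref{th:stab-mamu}) and its period one (Corollary~\ref{cor:period-mamu}) make this evaluation tractable, reducing it to a combinatorial expression in the spirit of Proposition~\ref{le:eval-Fn-mamu}, which is nonzero for structural reasons. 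This gives $k_{n^2}(n)\ge 1$; combined with part~(1), it yields $e'(n^2)=n$. For the matching uniqueness $k_{n^2}(n)\le 1$, I would bound the degree-$n^3$ $\SL_{n^2}^3$-invariants on the polystable orbit $G\lan n,n,n\ran$ by one via Lemma~\ref{le:1-dim-T}(2), and then show that the restriction $\Oh(\ot^3\C^{n^2})^{\SL_{n^2}^3}_{n^3}\to\Oh(G\lan n,n,n\ran)^{\SL_{n^2}^3}_{n^3}$ is injective. This latter injectivity is the delicate step here, and I would approach it through a direct representation-theoretic calculation of $k_{n^2}(n)$ or through structural arguments using Theorem~\ref{th:stab-mamu} together with Proposition~\ref{pro:charstabmamu}.

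For part~(2), assuming the Alon-Tarsi conjecture for $m$, I would construct a nonzero $\SL_m^3$-invariant $Q$ of degree $m^2$ on $\ot^3\C^m$ using a triple of $m\times m$ tableaux whose columns partition $[m^2]$ into blocks of size~$m$, specifying how the $m^2$ tensor copies are grouped into determinantal factors in each tensor slot. Evaluating $Q$ at the Latin square tensor $T_L=\sum_{i,j\in[m]}\ket{i\,j\,L(i,j)}$ for $L$ the multiplication table of a group of order~$m$, expanding the defining sum and retaining only the surviving terms would, in analogy with Proposition~\ref{pro:ps-pr-expr}, identify $Q(T_L)$ up to sign with the Alon-Tarsi signed count of Latin squares of size~$m$, hence nonzero under the hypothesis.

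Part~(1), establishing $k_m(\delta)=0$ for $\delta^2<m$, is the central obstacle. My approach is to apply the first fundamental theorem of $\SL_m$-invariant theory separately to each of the three tensor slots, writing any candidate $\SL_m^3$-invariant of degree $m\delta$ as a signed combination indexed by triples $(\Pi_1,\Pi_2,\Pi_3)$ of set-partitions of $[m\delta]$ into $\delta$ blocks of size~$m$, one partition per slot, each block contributing a determinantal factor. The crucial step is to show that non-vanishing of the combination forces a rank-type ``saturation'' of the $\delta^2$ pairwise intersections $\Pi_1\cap\Pi_2$, infeasible when $\delta^2<m$: the $\delta^2$ intersection classes cannot separate the $m$ indices in the third slot. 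Formalizing this pigeonhole-style argument, likely via the Kempf-Luna refinement of the Hilbert-Mumford criterion (as in Proposition~\ref{pro:stab-crit-tensors}) applied to a carefully chosen one-parameter subgroup of $\SL_m^3$ acting on a generic tensor, is where the main combinatorial work lies.
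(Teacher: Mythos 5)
There are genuine gaps in all three parts, the most serious being in parts (1) and (3). For part (1) the paper's argument is short and purely representation-theoretic: by the transposition symmetry of Kronecker coefficients, $k(m\times\delta,m\times\delta,m\times\delta)=k(m\times\delta,\delta\times m,\delta\times m)$, and the standard vanishing $k(\lambda,\mu,\nu)=0$ whenever $\ell(\lambda)>\ell(\mu)\ell(\nu)$ then gives $k_m(\delta)=0$ for $m>\delta^2$. Your proposed route through the first fundamental theorem and a ``pigeonhole-style'' saturation of the pairwise block intersections is never carried out, and you yourself flag it as the place where the main work lies; as written it is not a proof. For part (3) your plan fails at the key step: you propose to certify $F_n\neq 0$ by evaluating at $\langle n,n,n\rangle$ and assert the resulting combinatorial expression ``is nonzero for structural reasons,'' but whether $F_n(\langle n,n,n\rangle)\neq 0$ is exactly the open combinatorial condition recorded in Proposition~\ref{le:eval-Fn-mamu} and Corollary~\ref{cor:e-unit-tensor}(3), verified in the paper only in small cases by computer. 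The same circularity infects your uniqueness step, since injectivity of the restriction $\Oh(\ot^3\C^{n^2})^{\SL_{n^2}^3}_{n^3}\to\Oh(G\langle n,n,n\rangle)^{\SL_{n^2}^3}_{n^3}$ amounts to that same non-vanishing. The paper instead obtains $k_{n^2}(n)=1$ --- existence and uniqueness at once --- from the complementation symmetry \eqref{eq:kron-sym}, $k(\lambda,\mu,\nu)=k(n^2\times n-\lambda,\,n^2\times n-\mu,\,n^2\times n-\nu)$, applied with $\lambda=\mu=\nu=n^2\times n$, which reduces to $k(0,0,0)=1$.

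Part (2) is closer: the paper also builds a degree-$m^2$ invariant from wedge products (namely the symmetrization of $\zeta\otimes\zeta^t\otimes\zeta^t$ with $\zeta=(\bigwedge_{i=1}^m e_i)^{\otimes m}$), but it evaluates at the \emph{unit tensor} $\langle m\rangle$; there the surviving assignments are precisely Latin squares weighted by their column sign, so the value is the Alon--Tarsi difference. Evaluating instead at a single Latin-square tensor $T_L=\sum_{i,j}\ket{i\,j\,L(i,j)}$ is a different computation, and your claim that it equals the Alon--Tarsi signed count ``up to sign'' is not justified; if it does not, the Alon--Tarsi hypothesis gives you nothing. You would need to either switch the evaluation point to $\langle m\rangle$ or actually verify the combinatorics of your evaluation.
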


\begin{proof}
1. According to \eqref{eq:kmd}, $e'(m)$ is the minimum $\d$ such that $k_m(\d)>0$
(recall $m>2$). 
The symmetry property of Kronecker coefficients allows to transpose two partitions 
without changing the value: 
$k(m\times \d,m\times \d,m\times \d) = k(m\times \d,\d\times m,\d\times m)$.
Moreover, it is known that $k(\la,\mu,\nu)=0$ if 
$\ell(\la) > \ell (\mu)\ell(\nu)$. 
Hence  $k_m(\d) =0$ if $m > \d^2$.
This implies $e'(m)\ge \lceil\sqrt{m}\rceil$. 

2. We construct an $\SL_m^3$-invariant $P$ in $\Oh(\otimes^3\IC^m)_{m^2}$, we evaluate it at the unit tensor $\lan m\ran$,
and we show that $P(\lan m\ran)$ is the difference between the number of column-even and the number of column-odd Latin squares of order $m$.

Let $\{e_i\}_{1\leq i\leq m}$ be the standard basis of $\IC^m$.
We observe that $\zeta:=(\bigwedge_{i=1}^m e_i)^{\otimes m}$ is an $\SL_m$-invariant in $\otimes^m(\otimes^m(\IC^m))$.
We define $\zeta^t$ to be the image under the canonical automorphism of
$\otimes^m(\otimes^m(\IC^m))$ that sends $v_{1,1} \otimes v_{1,2} \otimes \cdots \otimes v_{m,m}$
to $v_{1,1} \otimes v_{2,1} \otimes \cdots \otimes v_{m,m}$ (``transposition'').
We interpret both $\zeta$ and $\zeta^t$ as elements in $\otimes^{m^2}\IC^m$.
Now $\zeta \otimes \zeta^t \otimes \zeta^t$ is an $\SL_m^3$-invariant in $\otimes^3(\otimes^{m^2}(\IC^m))$.
We use the canonical isomorphism $\otimes^3(\otimes^{m^2}(\IC^m)) \simeq \otimes^{m^2}(\otimes^3(\IC^m))$
to interpret $\zeta\otimes \zeta^t \otimes \zeta^t$ as an element in $\otimes^{m^2}(\otimes^{3}(\IC^m))$.
Applying the symmetrization map
$\otimes^{m^2}(\otimes^{3}(\IC^m)) \twoheadrightarrow \Oh(\otimes^{3}(\IC^m))_{m^2}$
to $\zeta\otimes \zeta^t \otimes \zeta^t$
(identifying $\IC^m$ with its dual),
we obtain a function $P$ in $\Oh(\otimes^{3}(\IC^m))_{m^2}$.
The principle of polarization and restitution says that $P(w)$ equals the value of the scalar product
\[
\langle \zeta\otimes \zeta^t \otimes \zeta^t , w^{\otimes {m^2}} \rangle.
\]
Setting $w = \lan m \ran$ to be the unit tensor we obtain
\[
w^{\otimes {m^2}} = \sum_{i_{1,1},i_{1,2}\ldots,i_{m,m}=1}^m e_{i_{1,1}}^{\otimes 3} \otimes e_{i_{1,2}}^{\otimes 3} \otimes \cdots \otimes e_{i_{m,m}}^{\otimes 3}
\]
We write
\begin{equation}\label{eq:bigsum}
P(w) = \sum_{i_{1,1},i_{1,2}\ldots,i_{m,m}=1}^m \langle \zeta\otimes \zeta^t \otimes \zeta^t , e_{i_{1,1}}^{\otimes 3} \otimes e_{i_{1,2}}^{\otimes 3} \otimes \cdots \otimes e_{i_{m,m}}^{\otimes 3}\rangle.
\end{equation}
Using the special structure of $\zeta$ we see that
each contraction of $\zeta\otimes \zeta^t \otimes \zeta^t$ with a tensor of the form
$e_{i_{1,1}}^{\otimes 3} \otimes e_{i_{1,2}}^{\otimes 3} \otimes \cdots \otimes e_{i_{m,m}}^{\otimes 3}$
is a product of determinants of $3m$ matrices $A_1,A_2,\ldots,A_m,A_1,A_2,\ldots,A_m,B_1,B_2,\ldots,B_m \in \IC^{m \times m}$.
For $1 \leq a \leq m$ the matrix $A_a$ is the concatenation of the column vectors $e_{i_{b,a}} \in \IC^m$ for $1 \leq b \leq m$.
For $1 \leq a \leq m$ the matrix $B_a$ is the concatenation of the column vectors $e_{i_{a,b}} \in \IC^m$ for $1 \leq b \leq m$.
Note that all matrices $A_a$ and $B_a$ only have entries from $\{0,1\}$.
In order for a summand in \eqref{eq:bigsum} to be nonzero there can be no double columns in any of the matrices $A_a$ or $B_a$
(so all $A_a$ and $B_a$ must be permutation matrices), which can be rephrased as:
the numbers $1 \leq i_{1,1},i_{1,2},\ldots,i_{m,m} \leq m$ must form a Latin square~$T$.
Now $\big(\prod_{a=1}^{m}\det(A_a)\big)\big(\prod_{a=1}^{m}\det(A_a)\big)\big(\prod_{a=1}^{m}\det(B_a)\big)=\prod_{a=1}^{m}\det(B_a)$,
which is the product of the signs of the permutations in the columns of~$T$.
Therefore $P(w)$ is the difference between the number of column-even and the number of column-odd Latin squares of order $m$.

3. For the third statement, we need to show that 
$k(n^2\ti n,n^2\ti n,n^2\ti n)=1$. 
We use the following symmetry relation
from \cite[Cor.~4.4.15]{ike:12b}:
\begin{equation}\label{eq:kron-sym}
 k(\la,\mu,\nu) = k(n^2\times n - \la, n^2\times n - \mu,n^2\times n - \nu) ,
\end{equation}
holding for any partitions $\la,\mu,\nu$ contained in $n^2 \times n$. 
(Here, $n^2\times n - \la$ denotes the partition corresponding to the complement 
of the Young diagram of $\la$ in the rectangle $n^2 \times n$.)
In particular, setting $\la=\mu=\nu= n^2 \times n$, we get 
$k(\la,\mu,\nu)= k(0,0,0)=1$.
\end{proof}

\begin{remark}
The fact $k_{n^2}(n) = 1$ also immediately follows combinatorially from the upper and lower bounds on the Kronecker coefficient in 
\cite{Man:97} and \cite{BI:13}, see also \cite[Lemma 2.3]{IMW:15}.
\end{remark}

If $m=n^2$ is a square, then Theorem~\ref{pro:km}(3) 
states that, up to a scaling factor, 
there is exactly one homogeneous $\SL^3_m$-invariant 
$F_{n}\colon \ot^3 \C^m \to \C$  of degree~$e(m)=ma(m)e'(m)=n^3$
(and no nonzero invariant of smaller degree). 
We call $F_{n}$ the {\em generic fundamental invariant} of $\ot^3\C^{n^2}$. 

Before providing an explicit formula for $F_{n}$, we state our main general 
result concerning tensors.
The proof is analogous to Theorem~\ref{th:newmain} 
and therefore omitted.

\begin{theorem}\label{th:newmain-T}
Assume $w\in \ot^3\C^m$ is polystable. Then 
$\sqrt{m}/a(w) \le e'(w)$ if $m>2$. 
Moreover, if $m=n^2$, we have equality iff $F_{n}(w) \ne 0$. 
In this case, we have 
$\Phi_w =  F_{n}(w)^{-1} F_{n}$ on $\ol{Gw}$. 
\end{theorem}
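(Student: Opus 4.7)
The plan is to run the same argument as for Theorem~\ref{th:newmain}, replacing Howe's Theorem~\ref{th:howe} by Theorem~\ref{pro:km}. The starting point is the standard reductive-restriction principle: since $\SL_m^3$ is reductive and $\ol{Gw}$ is a closed $\SL_m^3$-invariant subvariety of $\ot^3\C^m$, complete reducibility implies that the restriction map
\[
 \Oh(\ot^3\C^m)^{\SL_m^3}_d \longrightarrow \Oh(\ol{Gw})^{\SL_m^3}_d
\]
is surjective in every degree $d$. I will use this repeatedly.

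For the inequality, first note that $\Oh(\ol{Gw})^{\SL_m^3}_{e(w)}\ne 0$ by definition. Surjectivity of restriction then gives $e(w) \in \E(m)$, so $e(w)\ge e(m)=m\,a(m)\,e'(m)$. Combining $a(m)\ge 1$ with Theorem~\ref{pro:km}(1), which states $e'(m)\ge \lceil\sqrt{m}\rceil$ when $m>2$, yields $e(w)\ge m\sqrt{m}$. Dividing by $m\,a(w)$ and using $e(w)=m\,a(w)\,e'(w)$ (from Theorem~\ref{th:group_S_w-T}) gives $\sqrt{m}/a(w)\le e'(w)$, as required.

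Now assume $m=n^2$, so in particular $n\ge 2$ and therefore $a(n^2)=1$ by Theorem~\ref{th:stab_gen_tensor}. By Theorem~\ref{pro:km}(3) we have $k_{n^2}(n)=1$, hence $e(n^2)=n^3$ and $\Oh(\ot^3\C^{n^2})^{\SL_{n^2}^3}_{n^3}=\C\cdot F_n$ is one-dimensional. The desired equality $e'(w)=n/a(w)$ is equivalent to $e(w)=n^3$; since the first part already forces $e(w)\ge n^3$, this in turn is equivalent to $\Oh(\ol{Gw})^{\SL_{n^2}^3}_{n^3}\ne 0$. Applying surjectivity of restriction to the one-dimensional ambient space $\C\cdot F_n$, this is equivalent to $F_n|_{\ol{Gw}}\ne 0$. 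Finally, $F_n$ transforms under $G$ by the character $\chi^n$ (check on the scalar one-parameter subgroup $\iota(t)$), and $Gw$ is open dense in $\ol{Gw}$, so $F_n|_{\ol{Gw}}\ne 0$ iff $F_n(w)\ne 0$. When this holds, $F_n|_{\ol{Gw}}$ spans the one-dimensional space $\Oh(\ol{Gw})^{\SL_{n^2}^3}_{e(w)}$, and the normalization $\Phi_w(w)=1$ built into Definition~\ref{def:reg-T} forces $\Phi_w = F_n(w)^{-1}F_n$ on $\ol{Gw}$.

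The proof has no deep obstacle; the only nontrivial structural ingredient is reductive-restriction surjectivity, and the necessary arithmetic facts ($a(n^2)=1$, $e'(n^2)=n$, and the character by which $F_n$ transforms) are either already in the paper or immediate. The main thing to get right is the bookkeeping relating $e(w)$, $e'(w)$, $a(w)$ and their generic counterparts.
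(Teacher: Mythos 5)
Your proof is correct and follows exactly the route the paper intends: the paper omits the proof of Theorem~\ref{th:newmain-T} as ``analogous to Theorem~\ref{th:newmain}'', and your argument is precisely that analogue, with the surjectivity of the invariant restriction map playing the role it implicitly plays there, and Theorem~\ref{pro:km} replacing Howe's theorem. The bookkeeping relating $e(w)$, $e'(w)$, $a(w)$, $e(m)$, and $e'(m)$ is handled correctly.
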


The following is immediate. Note that the orbit closure of almost all $w\in\ot^3\C^2$ 
equals $\ot^3\C^2$ and hence is normal.

\begin{corollary}\label{cor:not-normal-generic-tensor}
1. $\ol{Gw}$ is not normal if $a(w) < \sqrt{m}$. 

2. Let $m\ge 3$. Then $\ol{Gw}$ is not normal for almost all $w\in\ot^3\C^m$.  
\end{corollary}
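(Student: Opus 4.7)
The plan is to chain together three results stated earlier: Theorem~\ref{th:newmain-T} for the lower bound on $e'(w)$, Theorem~\ref{th:localization-tensor}(3) as the non-normality criterion, and Theorem~\ref{th:stab_gen_tensor} together with Proposition~\ref{pro:gen-tensor-stable} to handle the generic case.

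For part~1, I would first observe that the hypothesis $a(w)<\sqrt{m}$ forces $m\ge 2$ (in fact $m\ge 3$, since $a(w)\ge 1$ is an integer and we would need $\sqrt{m}>1$). Apply Theorem~\ref{th:newmain-T} to get $e'(w)\ge \sqrt{m}/a(w) > 1$. Since $e'(w)$ is a positive integer, this yields $e'(w)\ge 2$, which is exactly the hypothesis $1<e'(w)$ of Theorem~\ref{th:localization-tensor}(3). That theorem then delivers the non-normality of $\ol{Gw}$.

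For part~2, I would combine two genericity statements. By Proposition~\ref{pro:gen-tensor-stable}, almost all $w\in\ot^3\C^m$ are polystable. By Theorem~\ref{th:stab_gen_tensor}, for $m\ge 3$ the generic stabilizer period satisfies $a(m)=1$, so $a(w)=1$ for almost all $w$. The intersection of these two dense open conditions is still a dense open set. On it, $a(w)=1<\sqrt{m}$ (since $m\ge 3$), so part~1 applies and $\ol{Gw}$ is not normal.

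There is really no obstacle here beyond bookkeeping: the work has all been done in Theorems~\ref{th:newmain-T}, \ref{th:localization-tensor}, \ref{th:stab_gen_tensor} and Proposition~\ref{pro:gen-tensor-stable}. The only minor point worth flagging in the write-up is that the integrality of $e'(w)$ is what promotes the strict inequality $\sqrt{m}/a(w)>1$ to the required $e'(w)\ge 2$, and that Theorem~\ref{th:newmain-T}'s hypothesis $m>2$ is automatically met once $a(w)<\sqrt{m}$ with $a(w)\ge 1$ integral.
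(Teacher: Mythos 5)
Your argument is correct and is exactly the chain the paper intends (the paper offers no written proof, calling the corollary ``immediate'' after Theorem~\ref{th:newmain-T}): Theorem~\ref{th:newmain-T} gives $e'(w)\ge\sqrt{m}/a(w)>1$, Theorem~\ref{th:localization-tensor}(3) converts $e'(w)>1$ into non-normality, and the generic case follows from Proposition~\ref{pro:gen-tensor-stable} together with $a(m)=1$ for $m\ge 3$ from Theorem~\ref{th:stab_gen_tensor}. One small slip: $a(w)\ge 1$ and $a(w)<\sqrt{m}$ only force $m\ge 2$, not $m\ge 3$ as you assert, and $m>2$ is a genuine hypothesis of Theorem~\ref{th:newmain-T}; the residual case $m=2$, $a(w)=1$ is harmless only because it is vacuous --- every nonzero polystable tensor in $\ot^3\C^2$ lies in the $\GL_2^3$-orbit of $\lan 2\ran$ and hence has $a(w)=2>\sqrt{2}$ --- so you should either note this or restrict part~1 to $m>2$.
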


In the following, we assume $m=n^2$, and provide 
the promised explicit description of the invariant~$F_{n}$. 

Consider the combinatorial cube $[n]^3$. 
We shall call the elements $p\in [n]^3$ {\em points}. 
For $\ell\in [n]$ we call 
$\{ (i,j,k) \in [n]^3 \mid i= \ell\}$ its $\ell$th slice in $x$-direction.  
(In other words, we fix the first coordinate to $\ell$.) 
Similarly, we define its $y$-slices by fixing the second coordinate, and its 
$z$-slices by fixing the third coordinate. 

We fix a total ordering of $[n]^3$ once and for all. 
This defines an ordering of each of the slices of~$[n]^3$. 
By a {\em labeling} we shall understand a map $\a\colon [n]^3\to [n^2]$. 
Suppose that a labeling defines a bijection on each $x$-slice. 
Then the resulting permutation of each $x$-slice has a well-defined sign.
We define $\sgn_x(\a)\in\{-1,1\}$ of the labeling $\a$ to be the product of the signs of 
the permutations of all $x$-slices. If the labeling $\a$ fails to be a bijection on an $x$-slice, 
we write $\sgn_x(\a):=0$. 
Similarly, we define $\sgn_y(\a)$ and $\sgn_z(\a)$. 

We express a tensor $w=\sum_{abc} w_{abc}\, \ket{abc} \in \ot^3\C^{n^2}$ 
in the standard basis $\ket{abc}$ with coordinates $w_{abc}\in\C$. 
Consider the following homogeneous polynomial  
$F_n\colon\ot^3 \C^{n^2}\to\C$ of degree~$n^3$ defined by 
\begin{equation}\label{eq:fund-invar-tensor} 
F_n(w) := \sum_{\a,\b,\g\colon [n]^3\to [n^2]} \sgn_x(\a)\sgn_y(\b)\sgn_z(\g)\, \prod_{p\in [n]^3} w_{\a(p)\b(p)\g(p)} ,
\end{equation}
where the sum is over all triples $\a,\b,\g$ of labelings $[n]^3\to [n^2]$. 

The following result shows that $F_n$ is indeed the invariant we are looking for;  
see also \cite[Example 4.12]{BI:13}. 

\begin{theorem}\label{th:fund-invar-tensor}
We have 
$F_n((g_1\ot g_2\ot g_3)w) = (\det(g_1)\det(g_2)\det(g_3))^n \, F_n(w)$ for all 
$(g_1,g_2,g_3)\in\GL_{n^2}$ and $w\in\ot^3\C^{n^2}$. 
Moreover, $F_n\ne 0$.
\end{theorem}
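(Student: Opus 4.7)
The plan is to establish the transformation law by a direct expansion of the defining sum, exactly mirroring the argument used for $P_T$ in Theorem~\ref{thm:PTnonzero}, and then to deduce $F_n\ne 0$ by identifying it with the (unique up to scalar) generator of the one-dimensional invariant space provided by Theorem~\ref{pro:km}(3).

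For the transformation law, write $w'=(g_1\ot g_2\ot g_3)w$, so that in coordinates $w'_{abc}=\sum_{a',b',c'}(g_1)_{a,a'}(g_2)_{b,b'}(g_3)_{c,c'}w_{a'b'c'}$. Substituting into \eqref{eq:fund-invar-tensor} and expanding the product $\prod_{p}w'_{\a(p)\b(p)\g(p)}$ yields a sum over auxiliary labelings $a,b,c\colon[n]^3\to[n^2]$. Interchanging the order of summation, the inner sum over $\a$ factors as a product over the $n$ $x$-slices, with each slice contribution of the form $\sum_{\pi\in S_{n^2}}\sgn(\pi)\prod_{p\in S}(g_1)_{\pi(p),a(p)}$. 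By Lemma~\ref{le:easy-cancel}, this slice sum equals $\sgn(a|_S)\det(g_1)$ when $a|_S$ is a permutation of $[n^2]$ and vanishes otherwise. Multiplying over all $n$ $x$-slices yields $\sgn_x(a)\det(g_1)^n$. Applying the analogous factorization for $\b$ and $\g$ and reassembling gives $F_n(w')=(\det g_1\det g_2\det g_3)^n F_n(w)$.

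For the nonvanishing, one recognizes $F_n$ as the polynomial on $\ot^3\C^{n^2}$ obtained by contracting $w^{\ot n^3}$ with $\omega_x\ot\omega_y\ot\omega_z\in\ot^{n^3}(\ot^3(\C^{n^2})^*)$, where $\omega_x\in(\C^{n^2})^{*\ot n^3}$ is the $n$-fold tensor product of the volume form $e_1^*\wedge\cdots\wedge e_{n^2}^*$, embedded via the partition of $[n]^3$ into $x$-slices (and similarly $\omega_y,\omega_z$). Since Theorem~\ref{pro:km}(3) gives $\dim\Oh(\ot^3\C^{n^2})^{\SL^3_{n^2}}_{n^3}=1$, the polynomial $F_n$ is either zero or a nonzero scalar multiple of the unique generator; it therefore suffices to exhibit one tensor $w_0$ with $F_n(w_0)\ne 0$.

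Finding such a tensor is the main obstacle. The obvious candidates all run into difficulties: rank-one tensors give zero because each of the three signed sums over labelings vanishes independently; the unit tensor $\lan n^2\ran$ produces the signed count of Latin cubes, a 3D Alon--Tarsi-type quantity whose nonvanishing is open in general (cf.\ Proposition~\ref{le:Fn-unit-tensor}); and $\lan n,n,n\ran$ is controlled by the combinatorial condition in Proposition~\ref{le:eval-Fn-mamu}. A workable strategy is to take $w_0=\sum_{p\in[n]^3}e_{f(p)}\ot e_{g(p)}\ot e_{h(p)}$ for injective labelings $f,g,h\colon[n]^3\to[n^2]$ bijective on $x$-, $y$-, $z$-slices respectively, chosen so that only the identity reparametrization of $[n]^3$ survives in the defining sum, yielding the unambiguous value $\sgn_x(f)\sgn_y(g)\sgn_z(h)=\pm 1$. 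Should this elementary approach break down, one can instead appeal to the representation-theoretic construction from \cite[Example~4.12]{BI:13}.
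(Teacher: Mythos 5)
Your proof of the transformation law is correct and is in fact more direct than the paper's: you expand the defining sum and factor it slice by slice using Lemma~\ref{le:easy-cancel}, exactly as in Theorem~\ref{thm:PTnonzero}, whereas the paper obtains the covariance only implicitly, by identifying $F_n$ with the symmetrization of the manifestly invariant tensor $\zeta\ot\zeta\ot\zeta$ built from volume forms distributed over the slices. That part of your argument stands on its own.

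The nonvanishing argument, however, has a genuine gap. Knowing that $\dim\Oh(\ot^3\C^{n^2})^{\SL^3_{n^2}}_{n^3}=1$ only tells you that $F_n$ is either zero or spans this space; to conclude you must either exhibit a point where $F_n\ne 0$ or show directly that $F_n$ spans. You attempt the first route, and your candidate $w_0=\sum_p e_{f(p)}\ot e_{g(p)}\ot e_{h(p)}$ does not obviously work: the contributing terms are indexed by all reparametrizations $\phi\colon[n]^3\to[n]^3$ for which $f\circ\phi$, $g\circ\phi$, $h\circ\phi$ remain slice-bijective, and there is no reason only $\phi=\mathrm{id}$ survives --- since $f$ is bijective on each $x$-slice, each fiber of $f$ meets every $x$-slice, so $\phi$ may send a slice to any transversal of these fibers, and signed cancellation can occur. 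Indeed the paper explicitly lists ``give a direct proof of $F_n\ne0$ by evaluating $F_n$ at a (generic) $w$'' as an open problem, and the evaluations at $\lan n^2\ran$ and $\lan n,n,n\ran$ are themselves open combinatorial questions. The paper's actual proof avoids evaluation entirely: by Schur--Weyl duality the $S_d^3$-orbit of $\zeta\ot\zeta\ot\zeta$ spans the full invariant subspace of $\ot^{n^3}(\ot^3\C^{n^2})$, its image under the symmetrization $\Psi$ therefore spans $\Oh(\ot^3\C^{n^2})^{\SL^3_{n^2}}_{n^3}$, and a combinatorial ``zero pattern'' analysis shows that every nonzero element of this image coincides with $F_n$; since the target space is nonzero because $k_{n^2}(n)=1$, it follows that $F_n\ne0$. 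Your fallback of ``appealing to the construction in \cite{BI:13}'' is precisely this missing argument, so as written the nonvanishing claim is not proved.
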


\begin{proof}
The proof is rather indirect. Let $d=n^3$ and $m=n^2$.
We will explicitly construct a generating set for the space of $\SL_m^3$-invariants in $\Oh(\otimes^3\IC^m)_d$
and then see that this vector space is \emph{at most} 1-dimensional with $F_n$ being the only element up to scale.
From Proposition~\ref{pro:km}(3) we know that $k_{n^2}(n)=1$, therefore $F_n \neq 0$.

The following construction arises by spezializing the description of ``obstruction designs'' in \cite{ike:12b} and \cite{BI:13}
from highest weight vectors to invariants. We refer to these papers for more references.

We start by constructing a generating set for the $\SL_m^3$-invariants in $\otimes^d(\otimes^3\IC^m)$.
Let $(\otimes^d(\otimes^3\IC^m))^{\SL_m^3} \subseteq \otimes^d(\otimes^3\IC^m)$ denote the $\SL_m^3$-invariant subspace.
Clearly the actions of $S_d^3$ and $\SL_m^3$ on $\otimes^d(\otimes^3\IC^m)$ commute.
By Schur-Weyl duality, the $S_d^3$-representation $(\otimes^d(\otimes^3\IC^m))^{\SL_m^3}$ is irreducible.
Therefore, if we have a nonzero element $P \in (\otimes^d(\otimes^3\IC^m))^{\SL_m^3}$, we know that
$\{\pi P \mid \pi \in S_d^3\}$ is a generating set of $(\otimes^d(\otimes^3\IC^m))^{\SL_m^3}$.
Let $\{e_1,\ldots,e_m\}$ be the standard basis of $\IC^m$.
We consider the $\SL_m$-invariant $\zeta := (\bigwedge_{i=1}^{m} e_i)^{\otimes n} \in \otimes^{d}\IC^m$
and put $P := \zeta \otimes \zeta \otimes \zeta \in (\otimes^3(\otimes^d\IC^m))^{\SL_m^3}$.
Using the canonical isomorphism $\otimes^3(\otimes^d\IC^m) \simeq \otimes^d(\otimes^3\IC^m)$
we interpret $P \in (\otimes^d(\otimes^3\IC^m))^{\SL_m^3}$.
Therefore $(\otimes^d(\otimes^3\IC^m))^{\SL_m^3}$ is generated by $\{\pi P \mid \pi \in S_d^3\}$.
The vector $\zeta$ wedges together basis vectors in tuples of cardinality~$m$.
We want to describe this combinatorially by saying that $\zeta$ has the following wedge list:
\[
(1,2,\ldots,m),(m+1,m+2,\ldots,2m),\ldots,((n-1)m+1,\ldots,d).
\]
We call \emph{blocks} the subsets in which the parantheses partition the set $\{1,\ldots,d\}$.
The combinatorial description of $P$ is given by a triple $t$ of wedge lists
and the action of $S_d^3$ permutes the entries in these lists.
We denote by $P_t \in (\otimes^d(\otimes^3\IC^m))^{\SL_m^3}$ the invariant corresponding to~$t$.
The structure of $\zeta$ implies that if we permute whole blocks in $t$ while preserving the order inside the blocks,
$P_t$ does not change.
If only entries inside a block are changed, then, up to sign, $P_t$ does not change.
So we see that $P_t$ can be combinatorially defined (up to a sign) by a triple of set partitions of the set $\{1,\ldots,d\}$
into $n$ sets of cardinality~$m$.
A crucial observation is the following: If there exist two numbers $i$ and $j$ whose block
coincides in the first wedge list of~$t$ and whose block also coincides in the second and third wedge list,
then $P_t$ vanishes under the projection $\Psi:\otimes^d(\otimes^3\IC^m)\twoheadrightarrow \Oh(\otimes^3\IC^m)_d$
given by $\Psi := \frac 1 {d!}\sum_{\sigma \in S_d} \sigma$.
The reason is that if $S_2$ is the symmetric group on $\{i,j\}$, then
$P_t$ vanishes already under the projection $\frac 1 {2}\sum_{\sigma \in S_2} \sigma$,
because $\frac 1 2\sum_{\sigma \in S_2} \sigma P_t = \frac 1 2 (P_t-P_t) = 0$.
We call this the \emph{zero pattern}.
The main observation which we prove next is that there exist only very few triples of set partitions
of the set $\{1,\ldots,d\}$ into $n$ sets of cardinality~$m$ that avoid the zero pattern.
Indeed, these triples all coincide up to the action of $S_d$, which renames the numbers in all three wedge lists simultaneously.
Therefore the corresponding $P_t$ all get mapped to the same element under~$\Psi$.

The argument in \cite[Claim~7.2.17]{ike:12b} or \cite[Exa.~4.12]{BI:13} is as follows.
From a triple of set partitions that avoid the zero pattern we can construct
a cardinality $d$ subset of $\N^3$ as follows:
In each of the three wedge lists we order the blocks from $1$ to $n$ in an arbitrary way.
Now for each number $i\in\{1,\ldots,d\}$ we let the $x$-coordinate of the $i$th point
be the number of its block in the first wedge list.
The $y$ coordinate shall be the number of the block in the second wedge list
and the $z$ coordinate shall be the number of the block in the third wedge list.
No point is repeated because the wedge list triple avoids the zero pattern.
This construction can be reversed in the obvious way, but the reverse process is only unique
up to the action of $S_d$.
Since there are only $n$ blocks in each wedge list this construction results in a subset of
the $n \times n \times n$ cube.
But since we constructed $d=n^3$ points, the point set is \emph{precisely} the $n \times n \times n$ cube.
Let $t$ be a wedge list triples that avoids the zero pattern.
We have seen that $\Oh(\otimes^3\IC^m)_d^{\SL_m^3}$ is at most 1-dimensional and generated by $\Psi(P_t)$.
If we evaluate $\Psi(P_t)$ at a point $w$, i.e.,
if we contract $P_t$ with a tensor $w^{\otimes d}$,
we get precisely the sum in equation~\eqref{eq:fund-invar-tensor},
hence $(\Psi(P_t))(w)=F_n(w)$.
With Proposition~\ref{pro:km}(3) it follows that $F_n \neq 0$.
\end{proof}

As for Lemma~\ref{le:irred}, 
we can show that the generic fundamental invariant $F_n$ is an irreducible polynomial.

\begin{remark}
The above has a natural generalization to noncubic formats. 
Let $n_1,n_2,n_3\ge 1$. We know from~\cite[Cor.~4.4.15]{ike:12b} that 
$k(n_2n_3\times n_1, n_1n_3\times n_2, n_1n_2\times n_3)=1$.
The corresponding invariant $F_{n_1n_2n_3}$ of degree $n_1n_2n_3$ evaluated at a tensor 
$w=\sum_{abc} w_{abc}\, \ket{abc} \in \C^{n_2n_3} \ot \C^{n_1n_3}\ot \C^{n_1n_2}$ 
is given as in~\eqref{eq:fund-invar-tensor}, 
where the sum is over the labelings 
$\a\colon P \to [n_2n_3], \b\colon P \to [n_1n_3], \g\colon P \to [n_1n_2]$
with $P:=[n_1]\times [n_2]\times [n_3]$.
In the special case $n_2=n_3=1$ we get 
$F_{n11}(w)=n!\, \det (w)$, when we interpret
$w\in\C^{1} \ot \C^{n}\ot \C^{n}$ as a matrix.
\end{remark}

We finish this section with a couple of interesting problems related with $\E(m)$. 

\begin{problem}
Give a direct proof of $F_n\ne 0$ by evaluating $F_n$ at a (generic) $w\in\ot^3\C^m$. 
\end{problem}

\begin{problem}\label{pr:compl-Fn}
What is the computational complexity to evaluate $F_{n}$?
\end{problem}

\begin{problem}\label{pr:kmd-pos}
How close is $e'(m)$ to $\sqrt{m}$ if $m$ is not a square? 
\end{problem}

\begin{remark}
We note that $e'(m) = \lceil \sqrt{m} \rceil$ is false for $m=7$, 
which is the only counterexample for $m\le 16$.
We have $e'(7)=4$. 
We further note that a computation showed that $k_m(e'(m))=1$ for 
$m=2,3,4,5,6,8,9,14,15,16$, but $k_7(e'(7))=k_7(4)=14$, and 
$k_{10}(e'(10))= k_{10}(4) =13$, $k_{11}(e'(11))= k_{11}(4)=6$,
$k_{12}(e'(12))= 5$, $k_{13}(e'(13))= 2$.  
So in general, we do not have $k_m(e'(m))=1$. 
\end{remark}

\begin{problem}\label{pr:kmd-holes}
Do we have 
$\E'(m)=\{0\} \cup (e(m) + \N$) if $m>2$?
In other words, are $1,2,\ldots,e(m)-1$ the only possible gaps? 
(We have verified this for $m\le 12$.)  
\end{problem}

\begin{problem}
Determine the asymptotic growth of~$k_m$. More specifically, 
find $c_m$ and $\mu_m$ such that 
$k_m(\delta) \sim c_m \delta^{\mu_m}$ for $\delta\to\infty$. 
\end{problem}

In Example~\ref{ex:table} $k_3(\delta)$ is a quasipolynomial of period 12
and $k_3(\delta) \sim \frac 1 {48} \delta^2$ for $\delta\to\infty$, see the OEIS sequence A005044,
where all 12 polynomials are specified:
$\delta^2/48, (\delta^2 + 6\delta - 7)/48, (\delta^2 - 4)/48, (\delta^2 + 6\delta + 21)/48, (\delta^2 - 16)/48, (\delta^2 + 6\delta - 7)/48, (\delta^2 + 12)/48, (\delta^2 + 6\delta + 5)/48, (\delta^2 - 16)/48, (\delta^2 + 6\delta + 9)/48, (\delta^2 - 4)/48, (\delta^2 + 6\delta + 5)/48$.

\subsection{Minimal degree for specific tensors}

We can combinatorially characterize the minimal exponent 
in a few interesting cases. 

To evaluate $F_n$ at the unit tensor 
$\lan n \ran = \sum_{a} \ket{aaa} \in \ot^3\C^{n^2}$,  
we introduce the following 3D generalization of latin squares.
Recall that we fixed a total ordering of $[n]^3$.

\begin{definition}
A {\em Latin cube} of size~$n$ is a map 
$\a\colon [n]^3\to [n^2]$ 
that is a bijection on each of the $x$-slices, $y$-slices, 
and $z$-slices of the combinatorial cube $[n]^3$. 
The latin cube is called {\em even} if the product of the signs 
of the resulting permutations of all $x$-slices, $y$-slices, and $z$-slices 
equals one. Otherwise, the latin cube is called {\em odd}.
\end{definition}

\begin{proposition}\label{le:Fn-unit-tensor}
1. $F_n(\lan n^2\ran)$ equals the difference between the number of 
even Latin cubes of size~$n$ and the number of odd Latin cubes of size~$n$. 

2. We have $F_n(\lan n^2\ran) = 0$ if $n$ is odd.
\end{proposition}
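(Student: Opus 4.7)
\medskip

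\noindent\textbf{Proof plan.}

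For the first claim, the plan is to directly substitute $\lan n^2\ran = \sum_{a=1}^{n^2} \ket{aaa}$ into the defining formula \eqref{eq:fund-invar-tensor}. The coordinates of $\lan n^2\ran$ are $w_{abc} = \delta_{ab}\delta_{bc}$, so the factor $\prod_{p\in [n]^3} w_{\a(p)\b(p)\g(p)}$ vanishes unless $\a(p)=\b(p)=\g(p)$ for every point $p \in [n]^3$, in which case the product equals $1$. Consequently only diagonal triples $\a=\b=\g$ contribute to the sum. The sign weight for such a contribution is $\sgn_x(\a)\sgn_y(\a)\sgn_z(\a)$, which by definition is nonzero precisely when $\a$ is a bijection on every $x$-, $y$-, and $z$-slice, i.e.\ when $\a$ is a Latin cube of size $n$. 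In that case the sign weight is $+1$ for an even Latin cube and $-1$ for an odd one, and the formula collapses to the desired signed count.

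For the second claim, my plan is to exploit the transformation law of Theorem~\ref{th:fund-invar-tensor} together with the stabilizer description in Theorem~\ref{cor:stabut}. Choose a transposition $\tau \in S_{n^2}$ and put $g := (P_\tau, P_\tau, P_\tau) \in \GL_{n^2}^3$. Since $\tau$ is diagonally embedded, $g$ lies in the stabilizer of $\lan n^2\ran$, so $(g_1\ot g_2\ot g_3)\lan n^2\ran = \lan n^2\ran$. Applying Theorem~\ref{th:fund-invar-tensor} gives
\[
F_n(\lan n^2\ran) = (\det P_\tau)^{3n}\, F_n(\lan n^2\ran) = \sgn(\tau)^{3n}\, F_n(\lan n^2\ran) = (-1)^{3n}\, F_n(\lan n^2\ran).
\]
When $n$ is odd, $(-1)^{3n} = -1$, forcing $F_n(\lan n^2\ran) = 0$.

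There is essentially no obstacle here: part 1 is a direct unpacking of the definitions, and part 2 is a one-line character argument using a convenient stabilizer element. One could alternatively prove part 2 combinatorially from part 1 by observing that postcomposing a Latin cube $\a$ with a transposition of two values of $[n^2]$ is a fixed-point-free involution on Latin cubes that flips the sign in each of the $3n$ affected slices, hence by $(-1)^{3n}=-1$ reverses parity when $n$ is odd; this gives a sign-reversing involution pairing even with odd Latin cubes. The character-theoretic argument is cleaner, but the combinatorial one makes the cancellation transparent.
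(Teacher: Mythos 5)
Your proposal is correct. Part~1 is exactly the paper's (unwritten) argument: the paper simply declares it ``obvious from \eqref{eq:fund-invar-tensor}'', and your unpacking --- only triples with $\a=\b=\g$ survive, the sign weight $\sgn_x(\a)\sgn_y(\a)\sgn_z(\a)$ is nonzero precisely for Latin cubes and records their parity --- is the intended reading. For Part~2 your primary route differs from the paper's. The paper argues combinatorially from Part~1: exchanging two symbols of $[n^2]$ is an involution on Latin cubes that composes each of the $3n$ slice permutations with a transposition, hence multiplies the total sign by $(-1)^{3n}=-1$ for odd $n$, pairing even cubes with odd ones. You instead apply the equivariance law of Theorem~\ref{th:fund-invar-tensor} to the stabilizer element $(P_\tau,P_\tau,P_\tau)$ of $\lan n^2\ran$ (Theorem~\ref{cor:stabut}), getting $F_n(\lan n^2\ran)=(-1)^{3n}F_n(\lan n^2\ran)$. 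Both are valid and there is no circularity, since Theorem~\ref{th:fund-invar-tensor} is established independently; your character-theoretic version is a one-liner that does not even need Part~1, while the paper's involution makes the cancellation visible at the level of the combinatorial count (and is essentially the alternative you sketch at the end, so you have in fact reproduced the paper's proof as well). One cosmetic remark: the involution swapping two symbols is automatically fixed-point-free because the two symbols have disjoint nonempty preimages, but fixed-point-freeness is not actually needed --- sign reversal alone forces the signed sum to vanish.
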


\begin{proof}
The first assertion is obvious from \eqref{eq:fund-invar-tensor}. 

For the second assertion, 
we consider $[n^2]$ as the set of symbols. Exchanging two fixed symbols 
(e.g., 1 with 2) defines an involution $\a\mapsto\a'$ of the set of Latin cubes of size~$n$. 
We have $\sgn(\a') = (-1)^{3n} \sgn(\a)= - \sgn(\a)$, since the involution creates 
in each slice a transposition, there is a total of $3n$ slices, and $n$ is odd.
\end{proof}

The following question, which is analogous to the Alon-Tarsi conjecture, 
is important for understanding tensor border rank. 
We have verified this in the cases $n=2$ and $n=4$.

\begin{problem}\label{pr:3D-AT}
Let $n$ be even.
Is the number of even Latin cubes of size~$n$ different from the number of odd Latin cubes of size~$n$?
\end{problem}

The matrix multiplication tensor 
$\lan n,n,n\ran = \sum_{ijk} \ket{(ij)(jk)(ki)} \in \ot^3\C^{n\times n}$ 
has the dimension format $(n^2,n^2,n^2)$. 
We are going to describe the evaluation of $F_n$ at the matrix multiplication tensor.   
A labeling $\a\colon [n]^3\to [n]^2$ is given by its two 
coordinate maps $\mu, \nu\colon [n]^3\to [n]$, i.e., 
$\a(p)=(\mu(p),\nu(p))$. 
The following characterization is obvious from~\eqref{eq:fund-invar-tensor} 
and the structure of the matrix multiplication tensor. 

\begin{proposition}\label{le:eval-Fn-mamu}  
We have 
$$
 F_n(\lan n,n,n\ran) = \sum_{\mu,\nu,\pi\colon [n]^3\to [n]} \sgn_x(\mu,\nu)\sgn_y(\nu,\pi)\sgn_z(\pi,\mu) .
$$ 
\end{proposition}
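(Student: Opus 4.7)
The plan is to substitute the explicit description of the matrix multiplication tensor directly into the defining formula~\eqref{eq:fund-invar-tensor} for $F_n$ and unpack the resulting expression. First I would fix the canonical identification $[n^2] = [n]\times [n]$ so that every labeling $\alpha\colon [n]^3\to [n^2]$ decomposes uniquely as $\alpha=(\mu_\alpha,\nu_\alpha)$ with $\mu_\alpha,\nu_\alpha\colon [n]^3\to [n]$, and similarly for $\beta=(\nu_\beta,\pi_\beta)$ and $\gamma=(\pi_\gamma,\mu_\gamma)$. Under this identification the signs factor in the obvious way, so $\sgn_x(\alpha)=\sgn_x(\mu_\alpha,\nu_\alpha)$ and likewise for the other two.

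Next I would analyze the coordinate entries of $w=\lan n,n,n\ran$. By the description $\lan n,n,n\ran=\sum_{ijk}\ket{(ij)(jk)(ki)}$, an entry $w_{abc}$ equals~$1$ when $a=(i,j)$, $b=(j',k)$, $c=(k',i')$ with $j=j'$, $k=k'$, $i=i'$, and vanishes otherwise. Consequently the product $\prod_{p\in [n]^3} w_{\alpha(p)\beta(p)\gamma(p)}$ is nonzero (and then equal to $1$) precisely when at every point $p$ the second coordinate of $\alpha(p)$ agrees with the first coordinate of $\beta(p)$, the second coordinate of $\beta(p)$ agrees with the first coordinate of $\gamma(p)$, and the second coordinate of $\gamma(p)$ agrees with the first coordinate of $\alpha(p)$. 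These constraints force $\nu_\alpha=\nu_\beta$, $\pi_\beta=\pi_\gamma$, $\mu_\gamma=\mu_\alpha$, so that the triple $(\alpha,\beta,\gamma)$ is completely parametrised by three independent maps $\mu,\nu,\pi\colon [n]^3\to [n]$ via $\alpha=(\mu,\nu)$, $\beta=(\nu,\pi)$, $\gamma=(\pi,\mu)$.

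Substituting back into \eqref{eq:fund-invar-tensor}, every surviving summand contributes $\sgn_x(\mu,\nu)\sgn_y(\nu,\pi)\sgn_z(\pi,\mu)$, and summing over all triples $(\mu,\nu,\pi)$ yields the claimed identity. The convention that $\sgn_\bullet=0$ whenever a slice fails to be a bijection is harmless here: such triples simply make no contribution, so we may sum over all maps $[n]^3\to [n]$ without any bijectivity constraint. Since every step is an unwinding of definitions, there is no genuine obstacle — the only thing to be careful about is bookkeeping the cyclic pattern $(i,j),(j,k),(k,i)$ consistently so that the three shared coordinates are matched to the correct pair of labelings.
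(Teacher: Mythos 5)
Your proposal is correct and follows exactly the route the paper takes: the paper simply declares the identity ``obvious from~\eqref{eq:fund-invar-tensor} and the structure of the matrix multiplication tensor,'' and your argument is precisely the unwinding of that claim (the support condition $a=(i,j)$, $b=(j,k)$, $c=(k,i)$ forces $\alpha=(\mu,\nu)$, $\beta=(\nu,\pi)$, $\gamma=(\pi,\mu)$). Nothing is missing.
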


\begin{corollary}\label{cor:e-unit-tensor}
\begin{enumerate}
\item We have $e'(\lan m\ran) \ge \frac12 \lceil \sqrt{m} \rceil$ if $m>1$.
\item Let $m=n^2$ be even. Then $e'(\lan n^2\ran) = n/2$ iff the 
the number of even Latin cubes of size~$n$ is different from the number of odd Latin cubes of size~$n$.
\item We have $e'(\lan n,n,n\ran) \ge n$, with equality holding iff $F_n(\lan n,n,n\ran) \ne 0$. 
(See Lemma~\ref{le:eval-Fn-mamu} for a combinatorial expression for this condition.)
\end{enumerate}
\end{corollary}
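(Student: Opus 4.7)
The plan is to deduce all three assertions as direct consequences of Theorem~\ref{th:newmain-T}, combined with the stabilizer period computations already at hand and the combinatorial formula for $F_n(\lan n^2\ran)$. Polystability of $\lan m\ran$ and of $\lan n,n,n\ran$ is provided by Corollary~\ref{cor:stable-forms-tensor}, so Theorem~\ref{th:newmain-T} applies in every case.

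For part~(1), Theorem~\ref{cor:stabut} gives $a(\lan m\ran)=2$ whenever $m>1$. Substituting this into the inequality $\sqrt{m}/a(w)\le e'(w)$ (valid for $m>2$ by Theorem~\ref{th:newmain-T}) yields $e'(\lan m\ran)\ge\sqrt{m}/2$, and integrality of $e'$ sharpens this to $e'(\lan m\ran)\ge\lceil\sqrt{m}/2\rceil$. The only arithmetic check needed is $\lceil\sqrt{m}/2\rceil\ge\tfrac12\lceil\sqrt{m}\rceil$, which follows because $2\lceil\sqrt{m}/2\rceil$ is an even integer at least $\sqrt{m}$, hence at least the smallest integer $\lceil\sqrt{m}\rceil$ majorizing $\sqrt{m}$. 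The residual case $m=2$ is trivial, since $\tfrac12\lceil\sqrt{2}\rceil=1$ and $e'(\lan 2\ran)\ge 1$ by definition.

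Part~(2) is then the equality case of Theorem~\ref{th:newmain-T} applied at $w=\lan n^2\ran$: with $m=n^2$ even (forcing $n$ even, so that $n/2$ is an admissible integer value of~$e'$) and $a(w)=2$, the theorem gives $e'(\lan n^2\ran)=n/2$ iff $F_n(\lan n^2\ran)\ne 0$. Invoking the combinatorial identification of $F_n(\lan n^2\ran)$ from Proposition~\ref{le:Fn-unit-tensor}(1) then translates the nonvanishing into the claimed mismatch between even and odd Latin cubes. Part~(3) is entirely analogous: with $m=n^2$ and $a(\lan n,n,n\ran)=1$ from Corollary~\ref{cor:period-mamu}, Theorem~\ref{th:newmain-T} at once yields $e'(\lan n,n,n\ran)\ge n$ with equality iff $F_n(\lan n,n,n\ran)\ne 0$. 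I do not anticipate a real obstacle here: all the substantive inputs (periods, polystability, the bound and equality criterion of Theorem~\ref{th:newmain-T}, and the Latin-cube evaluation of $F_n$) have already been established, and the only even slightly delicate point is the ceiling inequality in part~(1).
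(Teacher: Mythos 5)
Your proposal is correct and follows essentially the same route as the paper: the paper's proof is exactly the one-line combination of Theorem~\ref{th:newmain-T} with the stabilizer periods from Theorem~\ref{cor:stabut} and Corollary~\ref{cor:period-mamu} (plus, implicitly, Proposition~\ref{le:Fn-unit-tensor} for the Latin-cube reformulation in part~(2)). Your additional checks — polystability via Corollary~\ref{cor:stable-forms-tensor}, the case $m=2$, and the inequality $\lceil\sqrt{m}/2\rceil\ge\tfrac12\lceil\sqrt{m}\rceil$ — are all valid details that the paper leaves implicit.
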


\begin{proof}
This follows from Theorem~\ref{th:newmain-T}
and the determination of the periods in 
Corollary~\ref{cor:stabut} and Corollary~\ref{cor:period-mamu}.
\end{proof}

Theorem~\ref{th:newmain-T} immediately implies the following. 
This result already appeared in \cite{BI:10}. 

\begin{corollary}\label{cor:not-normal-unit-mamu}
The orbit closure of the unit tensor $\lan m\ran$ is not normal if $m \ge 5$.
The orbit closure of the matrix multiplication tensor $\lan n,n,n\ran$ is not normal if $n\ge 2$.
\end{corollary}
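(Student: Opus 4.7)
The plan is to combine the general non-normality criterion of Theorem~\ref{th:localization-tensor}(3), which asserts that $\ol{Gw}$ fails to be normal whenever the minimal exponent satisfies $e'(w) > 1$, with the lower bound $e'(w) \ge \sqrt{m}/a(w)$ from Theorem~\ref{th:newmain-T}. Both target tensors are polystable by Corollary~\ref{cor:stable-forms-tensor}, so these results apply. The stabilizer periods of the two tensors have already been computed: $a(\lan m\ran) = 2$ by Corollary~\ref{cor:stabut} (for $m>1$), and $a(\lan n,n,n\ran) = 1$ by Corollary~\ref{cor:period-mamu}. So the proof is essentially a matter of checking which hypotheses on $m$ and $n$ force $\sqrt{m}/a(w) > 1$.

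First I would handle the unit tensor $\lan m\ran \in \ot^3\C^m$. Since $m \ge 5 > 2$, Theorem~\ref{th:newmain-T} gives
\[
e'(\lan m\ran) \ \ge\ \frac{\sqrt{m}}{a(\lan m\ran)} \ =\ \frac{\sqrt{m}}{2} \ >\ 1,
\]
where the strict inequality uses $m \ge 5$. Hence $1 < e'(\lan m\ran)$, and Theorem~\ref{th:localization-tensor}(3) implies that $\ol{G\lan m\ran}$ is not normal.

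Next I would handle the matrix multiplication tensor $\lan n,n,n\ran \in \ot^3\C^{n^2}$. Here the ambient dimension is $m = n^2 \ge 4 > 2$, so Theorem~\ref{th:newmain-T} again applies, and
\[
e'(\lan n,n,n\ran) \ \ge\ \frac{\sqrt{n^2}}{a(\lan n,n,n\ran)} \ =\ n \ \ge\ 2 \ >\ 1
\]
for $n \ge 2$. Invoking Theorem~\ref{th:localization-tensor}(3) once more yields the non-normality of $\ol{G\lan n,n,n\ran}$.

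There is no real obstacle: the entire argument is an assembly of previously established ingredients, and the numerics work out in the stated ranges precisely because the stabilizer periods are small (2 for $\lan m\ran$, and 1 for $\lan n,n,n\ran$) relative to the generic lower bound $\sqrt{m}$ on $e'$. The only thing worth flagging is the boundary case $m=4$ for the unit tensor, where $\sqrt{m}/a(w) = 1$ exactly and the criterion becomes inconclusive, which is consistent with the hypothesis $m \ge 5$ in the statement.
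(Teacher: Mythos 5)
Your proof is correct and is exactly the argument the paper intends: the corollary is stated as an immediate consequence of Theorem~\ref{th:newmain-T} (equivalently, of the criterion $a(w)<\sqrt{m}$ from Corollary~\ref{cor:not-normal-generic-tensor}), combined with the stabilizer periods from Corollary~\ref{cor:stabut} and Corollary~\ref{cor:period-mamu} and the non-normality criterion of Theorem~\ref{th:localization-tensor}(3). Your remark about the inconclusive boundary case $m=4$ is a correct and worthwhile observation.
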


\section{Proofs for Section~\ref{se:exp-monoid-forms} and Section~\ref{se:FI-tensors}}
\label{se:pf-group_S_w} 

\subsection{Algebraic curves with dense $\Ct$-orbit}

We present here a general observation. 
Let $Z$ be  an affine algebraic curve over $\C$ 
and assume the group $\Ct$ acts rationally on $Z$.  
We further assume that there exists $p\in Z$ such that 
the orbit morphism 
$\sigma\colon\Ct\to Z, t\mapsto t p$ has  
a regular extension to a surjective morphism 
$\sigma\colon\C\to Z$. 
In particular, $Z$ is irreducible.
Note that $\Ct$ acts nontrivially on $Z$
(otherwise, $Z$ would be a point). 
We consider now 
$\sigma^*\colon\Oh(Z) \hookrightarrow \Oh(\C) =\C[T]$ and write 
\begin{equation}\label{eq:OhZ}
 \sigma^*(\Oh(Z)) = \bigoplus_{k\in S} \C T^k
\end{equation}
with a submonoid $S\subseteq\N$. 
The stabilizer $\{ t\in\Ct \mid t p = p \}$ of~$p$ 
is a finite subgroup of $\Ct$ 
and thus equals $\mu_a$ for some $a\ge 1$. 

\begin{lemma}\label{le:S-grp}
The monoid $S$ generates the subgroup $a\Z$ of $\Z$.
\end{lemma}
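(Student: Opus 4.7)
The plan is to compute the fraction field of the graded subring
$R:=\sigma^*(\Oh(Z))=\bigoplus_{k\in S}\C T^k$ of $\C[T]$ in two different ways and then compare.

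For the first computation, let $g$ denote the positive generator of the subgroup $\lan S\ran\le\Z$ generated by $S$ (well-defined since $Z$ is a curve and hence $\Oh(Z)$ is infinite-dimensional, forcing $S\neq\{0\}$). Every monomial $T^k$ with $k\in S$ lies in $\C[T^g]$, so $R\subseteq\C[T^g]$, while the ratios $T^{k_1}/T^{k_2}$ for $k_1,k_2\in S$ realize $T^d$ for every $d\in g\Z$. Hence $\mathrm{Frac}(R)=\C(T^g)$, and the claim reduces to showing $g=a$.

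For the second computation, I would exploit that the orbit $\Ct p$ is open and dense in $Z$: it is the image under $\sigma$ of the open set $\Ct\subseteq\C$, hence a positive-dimensional constructible subset of the irreducible curve $Z$. Therefore $\mathrm{Frac}(R)=\mathrm{Frac}(\Oh(Z))=\C(\Ct p)$. Since the stabilizer of $p$ in $\Ct$ is $\mu_a$, the orbit morphism $\Ct\to\Ct p,\ t\mapsto tp$, is a geometric quotient by $\mu_a$; pulling back along it identifies $\Oh(\Ct p)$ with $\C[T,T^{-1}]^{\mu_a}=\C[T^a,T^{-a}]$, and so $\C(\Ct p)=\C(T^a)$.

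Combining the two computations yields $\C(T^g)=\C(T^a)$, which forces $g=a$ and hence $\lan S\ran=a\Z$. The one delicate ingredient is the identification of $\Oh(\Ct p)$ with the $\mu_a$-invariants inside $\C[T,T^{-1}]$; if one prefers not to invoke a general theorem on geometric quotients by finite groups, it suffices to note that $\sigma|_{\Ct}\colon\Ct\to\Ct p$ is a dominant morphism of smooth affine curves whose generic fibre is a $\mu_a$-orbit (hence has $a$ elements), so the induced field extension $\C(\Ct p)\hookrightarrow\C(T)$ has degree $a$; being contained in $\C(T)^{\mu_a}=\C(T^a)$ it must coincide with $\C(T^a)$.
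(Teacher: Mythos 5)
Your proof is correct, but it runs along a genuinely different track from the paper's. The paper argues at the level of morphisms and regular functions: if $S$ generates $\ell\Z$, then $\sigma^*(\Oh(Z))\subseteq\C[T^\ell]$ forces $\sigma$ to factor through $t\mapsto t^\ell$, whence $\mu_\ell\subseteq\stab(p)=\mu_a$ and $\ell\mid a$; conversely, $\sigma|_{\Ct}$ is $\mu_a$-invariant, so it factors through $\Ct\to\Ct/\mu_a\cong\Ct$, and the resulting map $\tilde\sigma$ is extended over $0$ using continuity together with normality of $\C$, giving $\sigma^*(\Oh(Z))\subseteq\C[T^a]$ and $a\mid\ell$. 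You instead pass to fraction fields: the identity $\mathrm{Frac}\bigl(\bigoplus_{k\in S}\C T^k\bigr)=\C(T^g)$ replaces the paper's first direction, and the computation $\C(Z)=\C(\Ct p)=\C(T^a)$ (via the orbit map being a degree-$a$ quotient by $\mu_a$) replaces the second. The trade-off is clear: your birational argument never has to touch the boundary point $0\in\C$, so you avoid the extension-over-$0$ step and the appeal to normality of the affine line entirely; in exchange you invoke the identification $\Ct p\cong\Ct/\mu_a$ (or, in your fallback, the fact that in characteristic zero the degree of a generically finite dominant morphism of curves equals the cardinality of the generic fibre). Both of your ingredients are standard and correctly applied --- in particular your observations that $S\neq\{0\}$ because $\Oh(Z)$ is infinite-dimensional, that the group generated by the monoid $S$ is $S-S$ so that $T^g$ really lies in the fraction field, and that $\Ct p$ is open and dense in $Z$ so that $\C(Z)=\C(\Ct p)$, are exactly the points that need checking.
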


\begin{proof}

Suppose that $S$ generates the subgroup $\ell\Z$ of $\Z$, 
where $\ell\ge 1$. 
Then $\sigma^*(\Oh(Z)) \subseteq \C[T^\ell]$ 
and hence $\sigma^*$ factors into 
the chain of ring homomorphisms 
$\Oh(Z) \to \C[T^\ell] \hookrightarrow \C[T]$. 
This yields the factorization of $\sigma$ into 
the chain of regular maps 
$\C \stackrel{\phi}{\to}\C \stackrel{\tilde{\sigma}}{\to} Z$, 
where $\phi(t) = t^\ell$.  
This implies 
$\mu_\ell \subseteq \{t\in\Ct \mid tp =p \} = \mu_a$, 
hence $\ell$ divides $a$. 

For the converse, we observe that $\sigma$ factors 
via a map $\tilde{\sigma}\colon\C\to Z$ as 
$\sigma = \tilde{\sigma}\circ\phi$, where 
$\phi\colon\C\to\C$ is given by $\phi(t) = t^a$. 
The map $\tilde{\sigma}\colon\Ct\to Z$ is regular by 
the universal property of the quotient 
$\Ct\stackrel{\phi}{\to}\Ct/\mu_a$ of algebraic groups
provided by $\phi$. Moreover, $\tilde{\sigma}$ is continuous at~$0$ 
because $\sigma$ is so. It follows that $\tilde{\sigma}$ is regular on~$\C$ 
(e.g., because $\C$ is normal). 
We deduce that $\sigma^*=\phi^*\circ\tilde{\sigma}^*$, 
and hence  $\sigma^*(\Oh(Z)) \subseteq \C[T^a]$. 
Since $\ell\Z$ is the subgroup generated by $S$, we get 
$\ell\Z\subseteq a\Z$, hence $a$ divides $\ell$.
We have thus shown that $\ell=a$. 
\end{proof}

\subsection{Proof of Theorem~\ref{th:group_S_w}}
\label{se:pf_group_S_w}

Let us recall the situation. 
We have a group embedding $\iota\colon\Ct\to G=\GL_m,$ via $\iota(t)=t\Id_m$. 
Moreover, $G$ is the product of the normal subgroup $\Ct\Id_m$ and the subgroup $G_s:=\SL_m$. 
Recall the action of $G$ on $W:=\Sym^D\C^m$ and note that 
\begin{equation}\label{eq:iota(t)}
 \iota(t)w = t^D w \quad \mbox{ for $t\in\Ct, w\in \Sym^D\C^m$} .
\end{equation}
This means that $t\in \Ct$, when seen as embedded in $G$ via $\iota$, 
acts by the $D$th power of $t$. 
From this it follows that the orbit closure $X:=\ol{Gw}$ is invariant under scalar multiplication 
and $0\in X$. 
The scalar multiplication on $W$ defines a $\Ct$-action on $X$, which commutes with the 
$G_s$-action. Hence we get an induced $\Ct$-action on $\Oh(X)$, which commutes with 
the $G_s$-action as well. Therefore, the subring $\Oh(X)^{G_s}$ of $G_s$-invariants is $\Ct$-invariant. 

For the following facts from geometric invariant theory, see \cite{kraf:84}. 
Since $G_s$ is reductive, the ring $\Oh(X)^{G_s}$ is a finitely generated $\C$-algebra and 
therefore it can be interpreted as the coordinate ring of an affine variety~$Z$. 
The inclusion $\Oh(Z) \hookrightarrow \Oh(X)$ defines 
a morphism $\pi\colon X\to Z$ of algebraic varieties,
which is called the {\em GIT-quotient} of $X$ by the 
action of the group $G_s$. 
We have a $\Ct$-action on $Z$, defined by the $\Ct$-action on $\Oh(Z)$, 
and $\pi$ is $\Ct$-equivariant. 

By definition, $\pi$ is constant on $G_s$-orbits.
It is an important property of GIT-quotients that disjoint orbit closures are 
mapped to distinct points under $\pi$.  
It also known that $\pi$ is surjective, cf.~\cite[II.\S3.2]{kraf:84}. 
Consider the orbit map of $p:=\pi(w)$ 
\begin{equation}\label{eq:def-sigma}
 \sigma\colon\Ct\to Z,\ t\mapsto tp =\pi(tw) ,
\end{equation}
which has a regular extension $\sigma\colon\C\to Z$ given by $\sigma(0):=\pi(0)$.

It  will be convenient to use the notation
$X^o := Gw$ and $\partial X := X\setminus X^o$. 
Note that $0\in\partial X$. 

\begin{lemma}\label{le:sigma_image}
$Z$ is an irreducible affine curve and the map $\sigma\colon\C\to Z$ is surjective. 
Moreover, $\pi(X^o)=\sigma(\Ct)$, 
$\pi(\partial X) = \{ \pi(0)\}$, and 
$p\not\in\sigma(\Ct)$. 
\end{lemma}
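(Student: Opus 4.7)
The plan is to establish successively that $Z$ is affine and irreducible, that $\dim Z = 1$, that $\sigma\colon\C\to Z$ is surjective, that $\pi(X^o) = \sigma(\Ct)$, and that $\pi(\partial X) = \{\pi(0)\}$. The trailing clause ``$p \notin \sigma(\Ct)$'' as stated must be a typo (since $p = \sigma(1)$); I read it as $\pi(0) \notin \sigma(\Ct)$, which drops out of the last step.

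Affineness of $Z$ comes from reductivity of $G_s := \SL_m$, which makes $\Oh(X)^{G_s}$ a finitely generated $\C$-algebra; irreducibility comes from $X = \ol{Gw}$ being the closure of a connected-group orbit, so that $\Oh(Z) = \Oh(X)^{G_s} \subseteq \Oh(X)$ remains a domain. To get $\dim Z = 1$, I reduce to non-constancy of $\sigma$. Were $\sigma$ constant, then $\pi(tw) = \pi(w)$ for every $t \in \Ct$; since $G_s(tw) = t\cdot G_s w$ is closed for every $t$ (scalar multiplication preserves polystability), uniqueness of closed orbits in GIT fibers forces $tw \in G_s w$ for every $t \in \Ct$. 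Writing $tw = hw$ with $h \in G_s$ and using $\iota(t^{1/D})w = tw$ gives $\iota(t^{-1/D})h \in \stab(w)$, whence $t^{-m/D} \in \det(\stab(w))$ for every $t \in \Ct$; this forces $\det(\stab(w)) = \Ct$, i.e., $a(w) = \infty$, contradicting Proposition~\ref{pro:polystab-period}. Since $\sigma\colon\C\to Z$ is then a non-constant dominant morphism from an irreducible curve, $\dim Z = 1$.

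The identification $\pi(X^o) = \sigma(\Ct)$ is immediate from $X^o = Gw = \iota(\Ct)\cdot G_s w$ together with $G_s$-invariance of $\pi$. For surjectivity of $\sigma$, I show $\sigma$ is a finite morphism. Non-constancy supplies some $d > 0$ with $\Oh(X)^{G_s}_d \neq 0$; for any nonzero $f \in \Oh(X)^{G_s}_d$, homogeneity gives $\sigma^*(f)(t) = f(tw) = t^d f(w)$, and $f(w) \neq 0$ because $\Oh(X)^{G_s}_d$ injects by restriction into the one-dimensional space $\Oh(X^o)^{G_s}_d$ of Lemma~\ref{le:1-dim}, on which evaluation at $w$ is an isomorphism. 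Hence $T^d \in \sigma^*(\Oh(Z))$, so $T$ is a root of the monic polynomial $Y^d - T^d$ over $\sigma^*(\Oh(Z))$, making $\C[T]$ a finite $\sigma^*(\Oh(Z))$-module. A finite dominant morphism between affine varieties is surjective, so $\sigma(\C) = Z$.

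For $\pi(\partial X) = \{\pi(0)\}$, note that $X^o = Gw$ is locally closed, and being the unique dense $G$-orbit in the irreducible variety $X$ it is open, so $\partial X$ is closed and $G_s$-stable; thus $\ol{G_s y} \subseteq \partial X$ for every $y \in \partial X$. If $\pi(y) = \sigma(t)$ with $t \in \Ct$, the unique closed $G_s$-orbit in $\pi^{-1}(\sigma(t))$ would simultaneously be $G_s(tw)$ (closed by polystability) and an orbit contained in $\ol{G_s y} \subseteq \partial X$, contradicting $tw \in X^o$. By surjectivity of $\sigma$, the only remaining possibility is $\pi(y) = \sigma(0) = \pi(0)$. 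The same dichotomy gives $\pi(0) \notin \sigma(\Ct)$: $\pi(0)$ is $\Ct$-fixed (since $0 \in X$ is fixed by scalar multiplication), whereas $\sigma(\Ct) = \Ct\cdot p$ is non-trivial by non-constancy of $\sigma$. The hard step is the finiteness of $\sigma$, which hinges on turning one-dimensionality of $\Oh(X^o)^{G_s}_d$ from Lemma~\ref{le:1-dim} into the integrality relation $Y^d - T^d$; once that is in hand, everything else assembles mechanically from the GIT unique-closed-orbit principle.
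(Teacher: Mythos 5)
Your proof is correct, and your reading of the last clause as ``$\pi(0)\not\in\sigma(\Ct)$'' is the right one: the paper's own proof of that item establishes precisely $\pi(0)\not\in\sigma(\Ct)$, so the printed ``$p\not\in\sigma(\Ct)$'' is a typo. Where you genuinely diverge from the paper is in the two substantive steps and in their logical order. The paper first proves $\pi(\partial X)=\{\pi(0)\}$ by a Euclidean limit argument: for $u=\lim_n g_nw\in\partial X$ it writes $g_n=\iota(t_n)\tilde g_n$ with $\tilde g_n\in\SL_m$, invokes the computation from Proposition~\ref{le:ext-zero} to get $t_n\to 0$, and concludes $\pi(u)=\lim_n\pi(t_n^Dw)=\pi(0)$ by continuity of $\pi$; surjectivity of $\sigma$ then falls out set-theoretically from $Z=\pi(X)=\pi(X^o)\cup\pi(\partial X)=\sigma(\Ct)\cup\{\pi(0)\}$, using surjectivity of the GIT quotient. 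You invert this order: you prove surjectivity of $\sigma$ algebraically, by extracting from the one-dimensionality statement in Lemma~\ref{le:1-dim} a monomial $T^d\in\sigma^*(\Oh(Z))$ and hence finiteness of $\C[T]$ over $\sigma^*(\Oh(Z))$, and only then deduce $\pi(\partial X)=\{\pi(0)\}$ from the unique-closed-orbit principle. Your route is purely algebraic (no limits), makes explicit why $\dim Z=1$ (non-constancy of $\sigma$ via Proposition~\ref{pro:polystab-period}, which the paper leaves implicit), and the observation that $\sigma^*(\Oh(Z))$ contains $T^d$ anticipates the decomposition~\eqref{eq:sOdecomp} used in the sequel; the paper's route is shorter because it recycles work already done for Proposition~\ref{le:ext-zero}. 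The one point you pass over quickly is the dominance of $\sigma$ needed to upgrade finiteness to surjectivity, but this is immediate from $\sigma(\Ct)=\pi(X^o)$ together with the density of $X^o$ in $X$ and the dominance of $\pi$, so nothing essential is missing.
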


\begin{proof}
(1) Any $g\in G$ can be written as $g=\iota(t)g_s$,  
where $t\in\Ct$ and $g_s\in G_s$. 
By~\eqref{eq:iota(t)}, we have $gw = t^D g_s w$, and hence 
$$
 \pi(gw) = t ^D\pi(g_s w) = t^D \pi(w) = \sigma(t^D) ,
$$
hence $\pi(X^o)\subseteq \sigma(\Ct)$. 
The other inclusion is obvious.

(2) 
Let $u\in\partial X$, say $u=\lim_{n\to\infty} g_n w$ 
for some sequence $g_n$ in $G$. 
Write $g_n = \iota(t_n)\tilde{g}_n$, where $\tilde{g}_n\in G_s$.
Then, $\det(g_n) = \det(\iota(t_n)) = t_n^m $. 
We have $\lim_{n\to\infty} t_n = 0$ since $\lim_{n\to\infty} \det(g_n) = 0$
by Proposition~\ref{le:ext-zero}. 
We obtain 
$$
 \pi(g_n w) = \pi(t_n^D\tilde{g}_n w) = t_n^D \pi(\tilde{g}_n w) 
  = t_n^D \pi(w) = \pi(t_n^D w) .
$$
Hence,
$\lim_{n\to\infty} \pi(g_n w) = \pi(0)$.
On the other hand, $\lim_{n\to\infty} \pi(g_n w) = \pi(u)$ by the 
continuity of~$\pi$.  It follows that $\pi(u)=\pi(0)$. 
This shows that $\pi(\partial X) =\{\pi(0)\}$. 

(3) Suppose by way of contradiction that $\pi(0)\in\sigma(\Ct)$, say 
$\pi(0) = \pi(tw)$ for some $t\in\Ct$. Then the closures of 
the orbits $G_s0$ and $G_s tw$ must intersect. But 
$G_s0=\{0\}$ and $G_s tw = t G_sw$ is closed 
since $G_s w$ is closed by assumption. 
Therefore, $0\in G_s tw$, which implies the 
contradiction $w=0$.

(4) We obtain 
$Z=\pi(X) = \pi(X^o) \cup \pi(\partial X) = \sigma(\Ct) \cup\{\pi(0)\}$.
Hence $\sigma\colon\C\to Z$ is surjective. 
\end{proof} 

We now determine the stabilizer of $p$ under the $\Ct$-action on $Z$.
Recall the degree period~$b(w)$ from Definition~\ref{def:degree-period}.  

\begin{lemma}\label{le:stab-piw}
We have $\{t\in\Ct \mid tp= p\} = \mu_{b(w)}$. 
\end{lemma}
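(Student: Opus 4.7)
The plan is to identify the stabilizer of $p=\pi(w)$ under the $\Ct$-action on $Z$ by translating the equation $tp=p$ into a condition on $\stab(w)\subseteq G$, and then applying the formula $b(w)=ma(w)/D$ from Definition~\ref{def:degree-period}.

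First, I would invoke the separation property of the GIT quotient. Since $w$ is polystable, $G_sw$ is closed, and hence so is its scalar dilate $G_s(tw)=tG_sw$ for every $t\in\Ct$. Therefore $tp=p$ if and only if the closed orbits $G_sw$ and $G_s(tw)$ coincide, i.e.\ iff there exists $h\in G_s=\SL_m$ with $hw=tw$.

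Next, I would translate scalar multiplication into the $G$-action. Given $t\in\Ct$, pick any $s\in\Ct$ with $s^D=t$; then \eqref{eq:iota(t)} gives $\iota(s)w=s^Dw=tw$. The condition $hw=tw$ thus becomes $h^{-1}\iota(s)\in\stab(w)$, or equivalently, using the centrality of $\iota(s)$, $g:=\iota(s)h^{-1}\in\stab(w)$. Since $h\in\SL_m$, one has $\det(g)=s^m\det(h)^{-1}=s^m$. Hence $tp=p$ is equivalent to the existence of $s\in\Ct$ with $s^D=t$ and $s^m\in\det(\stab(w))=\mu_{a(w)}$, i.e., $s^{ma(w)}=1$. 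Conversely, given such $s$ and choosing $g\in\stab(w)$ with $\det(g)=s^m$, the element $h:=g^{-1}\iota(s)$ lies in $\SL_m$ (since $\det(h)=s^{-m}\cdot s^m=1$) and satisfies $hw=tw$, closing the equivalence.

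Finally, the set of such $t$ is precisely the image of the $D$-th power map $\mu_{ma(w)}\to\Ct,\ s\mapsto s^D$. By elementary cyclic group theory this image equals $\mu_{ma(w)/\gcd(ma(w),D)}$. Since $ma(w)=Db(w)$, we have $\gcd(ma(w),D)=D$, so the image is $\mu_{Db(w)/D}=\mu_{b(w)}$, as desired. The only delicate point in the whole argument is the appeal to GIT separation together with the closedness of both orbits $G_sw$ and $tG_sw$; everything else is bookkeeping with the centrality of scalar matrices and a $\gcd$ computation.
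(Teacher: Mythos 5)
Your proof is correct and follows essentially the same route as the paper's: both reduce $tp=p$ via GIT separation and closedness of $G_sw$ to the existence of $h\in\SL_m$ with $hw=tw$, then factor the scalar through $\iota$ and use $\det(\stab(w))=\mu_{a(w)}$ together with $Db(w)=ma(w)$. The only cosmetic difference is that you package the final step as computing the image of the $D$-th power map on $\mu_{ma(w)}$, where the paper computes $t^{b(w)}=1$ directly.
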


\begin{proof} 
Suppose that $\pi(tw)=\pi(w)$. Then, since $\pi$ is a GIT quotient, 
the orbit closures of $tw$ and $w$ intersect; 
cf.~\cite[II.\S3.2]{kraf:84}. 
Since $G_sw$ is closed, this means that $tw\in G_sw$. 
Therefore, there exists $g_s\in G_s$ such that $tw=g_s^{-1}w$, 
hence $g_s tw = w$.  
If we choose $t_1\in\Ct$ such that $t=t_1^D$, 
then $g_s t w= g_s\iota(t_1)w$, hence  
$g_s \iota(t_1)\in \stab(w)$. 
By the definition of the period~$a(w)$,
using \eqref{def:degree-period}, 
we obtain 
$$
1 = \det(g_s \iota(t_1))^{a(w)} = \det(\iota(t_1))^{a(w)} = t_1^{m a(w)} =  t_1^{D b(w)} = t^{b(w)}.
$$
We have shown that 
$\{t\in\Ct \mid tp= p\} \subseteq \mu_{b(w)}$. 

For the converse, assume that $t\in \mu_{b(w)}$. 
Choose $t_1$ with $t=t_1^D$. Then, similarly as before, 
$1=t^{b(w)} = t_1^{Db(w)} = t_1^{ma(w)}$.
Since $\det(\stab(w)) = \mu_{a(w)}$,  
there exists $g\in\stab(w)$ such that 
$\det(g^{-1}) = t_1^m = \det(\iota(t_1))$. 
Hence $g_s := \iota(t_1) g \in G_s$. 
We obtain 
$$
 g_s w =\iota(t_1) gw = \iota(t_1) w = t_1^{D} w ,
$$
and hence 
$\pi(w) = \pi(g_sw) = t_1^D\pi(w) = t\pi(w)$, that is, $p=tp$. 
\end{proof}

\begin{remark}
The map $\C/\mu_{b(w)}\to Z$, resulting from $\sigma$ by factoring out 
the stabilizer, is the normalization of the curve~$Z$.
\end{remark}

\begin{proof}{(of Theorem~\ref{th:group_S_w})}\ 
Lemma~\ref{le:sigma_image} implies that $Z$ is a curve and 
the orbit map $\sigma\colon\C\to Z,\, t\mapsto tp$
is surjective. 
Moreover, Lemma~\ref{le:stab-piw} 
tells us that the stabilizer of $p$ 
with respect to the $\Ct$-action equals $\mu_{b(w)}$. 
We apply now Lemma~\ref{le:S-grp} to this setting.
Accordingly, we have 
\begin{equation}\label{eq:sOdecomp}
 \sigma^*(\Oh(Z)) = \bigoplus_{k\in S} \C\, T^{kb(w)} ,
\end{equation}
where the submonoid $S\subseteq\N$ generates the group~$\Z$. 
It is sufficient to prove that $S= \E'(w)$. 

Suppose $k\in S$. Then there is $h\in\Oh(Z)$ such that 
$\sigma^*(h) = T^{kb(w)}$. 
For $t\in\Ct$ and $g_s\in G_s$ we have 
$$
 h(\pi(tg_sw))= h(\pi(tw)) = h(\sigma(t)) = t^{kb(w)} .
$$
On the other hand, writing $t=t_1^D$, we get 
(recall $\phi_w(g) = \det(g)^{a(w)}$) 
\begin{equation}\label{eq:det-equality}
 \phi_w(g_stw) = \phi_w(g_s\iota(t_1)w) = \det(g_s\iota(t_1))^{a(w)} = t_1^{ma(w)} 
  = t_1^{Db(w)} = t^{b(w)} .
\end{equation}
Therefore, 
$(\phi_w(g_stw))^k = h(\pi(tg_sw))$ 
and we see that $h\circ\pi$ is a regular extension of $(\phi_w)^k$.
Thus $k\in\E'(w)$ and we have shown that $S\subseteq \E'(w)$. 

For the converse, assume $k\in \E'(w)$. Let $f\in\Oh(X)$ be a 
regular extension of $(\phi_w)^k\colon Gw\to\C$.
Then $f\in\Oh(X)^{G_s}$, hence there exists $h\in\Oh(Z)$ 
such that $h\circ \pi = f$. Since $f$ is homogeneous, 
$h$~is homogeneous as well, say 
$h(t\pi(w)) = c\, t^{sb(w)}$ for $c\in\C$ and $s\in S$. 
In particular, using~\eqref{eq:det-equality}, 
$$
  t^{kb(w)} = (\phi_w(tw))^k = f(tw) = h(\pi(tw)) = c\, t^{sb(w)} , 
$$
for all $t\in\Ct$. Hence $k=s$, which shows $k\in S$ 
and proves the reverse inclusion $\E'(w)\subseteq S$. 
\end{proof}

\subsection{Proof of Theorem~\ref{le:Phi_w-zeroset}}
\label{se:pf_Phi_w-zeroset}

We go back to the setting of the proof of Theorem~\ref{th:group_S_w}. 
Put $X:=\ol{Gw}$ and $\partial X:= X\setminus Gw$.  
Recall the GIT-quotient $\pi\colon X\to Z$,
where $\Oh(Z)\simeq\Oh(X)^{G_s}$.
Lemma~\ref{le:sigma_image} implies that 
$\pi^{-1}(\pi(0))=\partial X$, hence we obtain 
$I(\pi(0))=(\pi^*)^{-1}(I(\partial X))$
for the vanishing ideals.    
Moreover, the map $\sigma\colon \C\to Z$  
defined in \eqref{eq:def-sigma} satisfies 
$\sigma^{-1}(\pi(0)) = \{0\}$, hence 
$I(\pi(0))= (\sigma^*)^{-1}(T\C[T])$. 
Using \eqref{eq:sOdecomp}, we obtain 
\begin{equation}\label{eq:Ipi0}
  \sigma^*(I(\pi(0))) = \bigoplus_{e\in \E(w), e\ne 0} \C\, T^{e} .
\end{equation}
Let $\varphi\in\Oh(Z)$ be such that $\Phi_w=\varphi\circ\pi$. 
Equation~\eqref{eq:det-equality} implies that 
$\varphi\circ\sigma = T^{e(w)}$. 

For the first assertion, suppose that $\Phi_w$ generates the vanishing ideal $I(\partial X)$ in $\Oh(X)$. 
Then $\Phi_w$ generates the ideal $I(\partial X)\cap \Oh(X)^{G_s}$. 
(Indeed, if $h\in\Oh(X)$ is such that $\Phi_w h$ is $G_s$-invariant, 
then $h$ is $G_s$-invariant.) 
That is, $\varphi$ generates the ideal $I(\pi(0))=(\pi^*)^{-1}(I(\partial X))$ in $\Oh(Z)$. 
Hence $T^{e(w)}$ generates the ideal 
$\bigoplus_{e\in \E(w), e\ne 0} \C\, T^{e}$. 
This implies $\E(w)\subseteq e(w)\N$ and hence $e(w)=b(w)$, since 
$\E(w)$ generates the group $b(w)\Z$.  

For the second assertion, note that 
it is a known fact that if $\Oh(X)$ is normal, then its 
ring of invariants $\Oh(X)^{G_s}$ is normal as well, 
e.g., see~\cite[Prop.~3.1]{dolgachev:03}. 
Hence, in the situation from before,  
$Z$ is normal if $X$ is normal. 
Thus it suffices to prove that $Z$ is not normal if $e'(w)>1$. 

Recall from~\eqref{eq:sOdecomp} in the proof of Theorem~\ref{th:group_S_w} that 
$A:=\sigma^*(\Oh(Z)) = \bigoplus_{s\in \E'(w)} \C T^{sb(w)}$.
Suppose that $e'(w)>1$.  We have $e:=e'(w)-1\not\in \E'(w)$ since
$e'(w)$ is defined as the minimal positive element of $\E'(w)$. 
Theorem~\ref{th:group_S_w} states that $\E'(w)$  
generates the group $\Z$. So we have $ke\in \E'(w)$ for a sufficiently large $k\in\N$
(cf.\ Section~\ref{se:num-sg})  
and hence $T^{keb(w)} \in A$. 
The element $T^{eb(w)}$ satisfies the monic equation 
$x^k - T^{keb(w)}=0$ with coefficients in~$A$, 
hence it lies in the integral closure of $A$.

On the other hand, there are $s_1,s_2\in \E'(w)$ such that $e=s_1-s_2$, 
since $\E'(w)$ generates the group~$\Z$.
It follows that 
$T^{eb(w)} = T^{s_1b(w)}/T^{s_2b(w)}$ is in the quotient field of $A$, but not in $A$. 
Therefore, $A$ is not integrally closed.

Finally, $\Oh(X)$ is not a valuation ring since it is known that 
valuation rings are integrally closed~\cite[Thm.~10.3]{mats:86}. 
\qed

\subsection{Proof of Theorem~\ref{th:group_S_w-T}} 
\label{se:Editto} 

Recall the character $\chi\colon G\to\Ct$ from \eqref{eq:def-chi}, 
where $G =\GL_m^3/K$ and define the subgroup $G_s:=\ker\chi = \SL_m^3/K$. 
We have a group embedding 
$$
 \iota\colon\Ct\to G,\, t\mapsto (t\id_m,\id_m,\id_m) \bmod K .
$$
(Note that 
$(t\id_m,\id_m,\id_m) \equiv (\id_m,t\id_m,\id_m) \equiv (\id_m,\id_m,t\id_m) \bmod K$.)
The image $\iota(\Ct)$ is a normal subgroup of $G$ and  
$G$ is the product of $\iota(\Ct)$ and the subgroup $G_s$.  
The group $G$ acts on $W:=\ot^3\C^m$ and we have 
$\iota(t) w = tw $ for $t\in\Ct$ and $w\in W$
(compare \eqref{eq:iota(t)}). 
We form the GIT-quotient of $X:=\ol{Gw}$ by the action of $G_s$, 
obtaining a $\Ct$-equivariant morphism $\pi\colon X\to Z$. 
Lemma~\ref{le:sigma_image} holds with essentially the same proof. 
The proof of Theorem~\ref{th:group_S_w-T} proceeds along the same lines as for 
Theorem~\ref{th:group_S_w}, using the following result, which is similar to 
Lemma~\ref{le:stab-piw}. 

\begin{lemma}\label{le:stab-piw-tensor}
The stabilizer of $p:=\pi(w)$ satisfies 
$\{t\in\Ct \mid tp= p\} = \mu_{m a(w)}$. 
\end{lemma}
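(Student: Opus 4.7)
The plan is to follow the blueprint of Lemma~\ref{le:stab-piw} almost verbatim, exploiting one crucial simplification: in the tensor setting the embedding $\iota\colon\Ct\to G$ acts on $W=\ot^3\C^m$ by $\iota(t)w = tw$ (linear in $t$), in contrast to the degree-$D$ action $\iota(t)w=t^Dw$ that appeared for forms. In particular $\chi(\iota(t)) = \det(t\id_m)\det(\id_m)\det(\id_m) = t^m$. This replaces the awkward passage through a $D$th root $t_1$ used in the forms proof by a direct identification.

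For the inclusion $\{t\in\Ct\mid tp=p\}\subseteq\mu_{ma(w)}$, I would argue as follows. Suppose $\pi(tw)=\pi(w)$. Since $\iota(\Ct)$ is central in $G$, the $G_s$-orbit of $tw$ equals $t\cdot G_sw$, which is closed by the polystability of $w$. Two closed $G_s$-orbits with the same image under the GIT quotient must coincide, so $tw\in G_sw$. Pick $g_s\in G_s$ with $g_sw=tw=\iota(t)w$; then $\iota(t)^{-1}g_s\in\stab(w)$. Applying $\chi$ and using $\chi(g_s)=1$ (because $G_s=\ker\chi$), I obtain
\[
t^{-m} \;=\; \chi(\iota(t))^{-1} \;=\; \chi(\iota(t)^{-1}g_s) \;\in\; \chi(\stab(w)) \;=\; \mu_{a(w)},
\]
hence $t^{ma(w)}=1$, i.e.\ $t\in\mu_{ma(w)}$.

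For the reverse inclusion, suppose $t^{ma(w)}=1$. Then $t^{-m}\in\mu_{a(w)}=\chi(\stab(w))$, so there exists $h\in\stab(w)$ with $\chi(h)=t^{-m}$. Set $g_s:=\iota(t)h$; then $\chi(g_s)=t^m\cdot t^{-m}=1$, so $g_s\in G_s$, and
\[
g_sw \;=\; \iota(t)\,hw \;=\; \iota(t)w \;=\; tw.
\]
Therefore $tp=\pi(tw)=\pi(g_sw)=\pi(w)=p$, as required.

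The argument is essentially a line-by-line transcription of the forms case. The only points that need verification are that $G_s$ coincides with $\ker\chi$ (built into the setup at the start of Section~\ref{se:Editto}) and that $\iota(\Ct)$ is central in $G$ so that $G_s$-orbits are preserved under scalar multiplication; both are immediate from the definition of $G=\GL_m^3/K$. There is no genuine obstacle beyond bookkeeping.
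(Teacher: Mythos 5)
Your argument is correct and follows essentially the same route as the paper: the forward inclusion uses the GIT fact that the closed orbits $G_sw$ and $G_s(tw)=tG_sw$ in a common fiber of $\pi$ coincide, producing an element $\iota(t)^{\pm1}g_s$ of $\stab(w)$ whose $\chi$-value is $t^{\pm m}\in\mu_{a(w)}$, and the reverse inclusion lifts $t^{-m}\in\chi(\stab(w))$ to an $h\in\stab(w)$ so that $\iota(t)h\in G_s$ moves $w$ to $tw$. The only (harmless) difference is that you avoid the auxiliary $D$th root $t_1$ used in the forms case, since here $\iota(t)$ acts linearly.
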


\begin{proof}
Suppose that $\pi(tw)=\pi(w)$. Then, since $\pi$ is a GIT quotient, 
the orbit closures of $tw$ and $w$ intersect; cf.~\cite[II.\S3.2]{kraf:84}. 
Since $G_sw$ is closed, this means that $tw\in G_sw$. 
Therefore, there exists $g_s\in G_s$ such that $tw=g_s^{-1}w$, 
hence $g_s tw = w$.  As before, we have 
$g_s t w= g_s\iota(t)w$, hence $g_s \iota(t)\in \stab(w)$. 
By the definition of the period~$a=a(w)$, we obtain 
$$
1 = \chi(g_s \iota(t))^a = \chi(\iota(t))^a = (t^{m})^a  =  t^{ma}.
$$
Therefore,
$\{t\in\Ct \mid tp= p\} \subseteq \mu_{ma}$. 
The reverse inclusion is proven analogously.
\end{proof}

\section{Appendix}\label{appendix}

We believe the results here should be known, but they are hard to locate 
in the literature. We are therefore brief. 

\subsection{Plethysms}

\begin{proposition}\label{pro:plethupperbound}
Let $D$ be odd and let $\la$ be a partition of $Dd$ into at most $m$ parts.
Let $\mult_\la(\Oh(\Sym^D\IC^m)_d)$ denote the multiplicity of the irreducible $\GL_m$-representation of type $\la^*$ in $\Oh(\Sym^D\IC^m)_d$.
A set $S$ of subsets of $\{1,\ldots,\la_1\}$ is called \emph{of type $\la$ in degree $d$} if
\begin{itemize}
 \item The cardinality $|S|$ of $S$ is $d$,
 \item each element $s \in S$ has cardinality $D$,
 \item each number $i$ occurs in exactly $\la^t_i$ of the subsets $s$ of $S$, where $\la^t_i$ is the length of the $i$th column of~$\la$.
\end{itemize}
Let $q_\la(d)$ denote the number of sets $S$ of type $\la$ in degree $d$. Then
\[
\mult_\la(\Oh(\Sym^D\IC^m)_d) \leq q_\la(d).
\]
\end{proposition}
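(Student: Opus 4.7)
The plan is to upper-bound the multiplicity by constructing an explicit spanning set of highest weight vectors of weight $\la$ in $\Sym^d(\Sym^D \C^m)$, and to count this spanning set via sets of type $\la$ in degree $d$. By $\GL_m$-duality, the multiplicity of $V_{\la^*}$ in $\Oh(\Sym^D\C^m)_d=\Sym^d((\Sym^D\C^m)^*)$ equals the multiplicity of $V_\la$ in $\Sym^d(\Sym^D\C^m)$, and the latter equals the dimension of the highest-weight-$\la$ vector space in $\Sym^d(\Sym^D\C^m)$.

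Generalizing the construction of Section~\ref{se:constr-forms} from invariants to highest weight vectors (cf.~\cite[Sec.~3(A)]{ike:12b}), I would associate to each tableau $T$ of shape $\la$ with entries in $\{1,\ldots,d\}$ in which every entry appears exactly $D$ times and no entry repeats within a column, an explicit highest weight vector $P_T \in \Sym^d(\Sym^D \C^m)$ of weight $\la$. Concretely: place $e_i$ at each cell in row $i$ of $\la$, distribute these vectors across the $d$ copies of $\Sym^D\C^m$ according to the label $T(i,j)$, antisymmetrize over each column of $\la$ (as in the Young symmetrizer for $V_\la$), symmetrize within each $\Sym^D$-factor, and finally symmetrize over the $d$ factors. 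The standard theory of plethysms via tableaux yields that the $P_T$ span the highest weight space.

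Each such $T$ is encoded by the ordered tuple $(s_1,\ldots,s_d)$, where $s_k\subseteq\{1,\ldots,\la_1\}$ is the set of columns containing label $k$; each $s_k$ has size $D$ (by the per-column distinctness condition) and each column-index $j$ lies in exactly $\la^t_j$ of the $s_k$. The $\Sym^d$-symmetrization across the $d$ factors makes $P_T$ depend only on the multiset $\{s_1,\ldots,s_d\}$ (up to a sign inherited from column antisymmetrizations). The crucial use of $D$ odd is to show that only actual sets contribute: if $s_k=s_{k'}$ for distinct $k,k'$, then the transposition $(k,k')\in S_d$ fixes the multiset data, but within the column antisymmetrizations it swaps two entries in each of the $|s_k\cap s_{k'}|=D$ columns containing both labels, producing a total sign $(-1)^D=-1$. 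Combined with $\Sym^d$-invariance this forces $P_T=-P_T$, hence $P_T=0$, so only configurations with pairwise distinct subsets (i.e., bona fide sets of type $\la$ in degree $d$) give nonzero vectors.

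Consequently the highest weight vector space is spanned by at most one nonzero vector per set $S$ of type~$\la$ in degree~$d$, proving the bound $q_\la(d)$. The main obstacle will be the precise bookkeeping: both justifying that the $P_T$'s span the highest weight vector space (requiring a careful treatment of Young symmetrizers interacting with the wreath-product symmetrization by $S_D\wr S_d$) and tracking the signs that arise when permuting labels, to confirm simultaneously the well-definedness of $P_T$ up to sign on the multiset and the vanishing argument that crucially relies on the oddness of $D$.
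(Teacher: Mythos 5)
Your proposal is correct, but it takes a genuinely different route from the paper. The paper's proof is very short: it invokes the odd-degree plethysm duality $\mult_\la(\Oh(\Sym^D\C^m)_d)=\mult_{\la^t}(\bigwedge^d\bigwedge^D\C^m)$ from \cite[Fact~6.1]{MM:14}, bounds this multiplicity by the dimension of the $\la^t$-weight space of $\bigwedge^d\bigwedge^D\C^m$, and observes that the monomial basis of that weight space is indexed exactly by the sets $S$ of type $\la$ in degree~$d$ (a basis vector $e_{s_1}\wedge\cdots\wedge e_{s_d}$ is given by $d$ pairwise distinct $D$-subsets, and having weight $\la^t$ means that $i$ occurs in exactly $\la^t_i$ of them). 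You instead work directly on the symmetric side: you span the weight-$\la$ highest weight space of $\Sym^d\Sym^D\C^m$ by tableau vectors $P_T$ and show, by combining the column-antisymmetrization sign with the $S_d$-symmetrization, that $P_T$ survives only when the column-supports $s_1,\ldots,s_d$ are pairwise distinct --- this is where you use that $D$ is odd, whereas the paper hides the parity entirely inside the cited duality. Your argument is essentially the ``zero pattern'' mechanism of Theorem~\ref{th:fund-invar-tensor} transported to plethysms; it is more self-contained and more informative, being the natural stepping stone to the strengthened proposition stated without proof at the end of Appendix~\ref{appendix}, which further quotients by relations among the $P_T$ to arrive at semistandard tableaux. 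The price is the bookkeeping you acknowledge --- the spanning property of the $P_T$ and the well-definedness of $P_T$ up to sign as a function of the multiset $\{s_1,\ldots,s_d\}$ --- both of which are standard (cf.\ \cite[Sec.~3(A)]{ike:12b}). The paper's route buys brevity at the cost of outsourcing the key step to the literature.
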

\begin{proof}
Using \cite[Fact~6.1]{MM:14} we know that
the plethysm coefficient $\mult_\la(\Oh(\Sym^D\IC^m)_d)$ equals $\mult_{\la^t}(\bigwedge^d\bigwedge^D\IC^m)$ if $D$ is odd,
where $\la^t$ is the partition to the transposed Young diagram of~$\la$.
This multiplicity is bounded from above by the dimension of the $\la^t$-weight space $(\bigwedge^d\bigwedge^D\IC^m)^{\la^t} \subseteq \bigwedge^d\bigwedge^D\IC^m$:
\[
\textstyle \mult_{\la^t}(\bigwedge^d\bigwedge^D\IC^m) \leq \dim (\bigwedge^d\bigwedge^D\IC^m)^{\la^t}.
\]
The right hand side is precisely the number of cardinality $d$ sets of cardinality $D$ subsets of $\{1,\ldots,D\}$
such that each number $i$ occurs in exactly $\la^t_i$ subsets.
\end{proof}
For $\SL_m$-invariants this specializes as follows.
\begin{corollary}\label{cor:plethupperbound}
Let $D$ be odd and let $Dd$ be divisible by $m$.
The dimension of the $\SL_m$-invariant space in $\Oh(\Sym^D\IC^m)_d$
is bounded from above
by the number of cardinality $d$ sets of cardinality $D$ subsets of $\{1,\ldots,dD/m\}$
such that each number occurs in precisely $m$ of the subsets.
\end{corollary}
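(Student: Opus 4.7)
The plan is to deduce this as an immediate specialization of Proposition~\ref{pro:plethupperbound}. First I would recall that the $\SL_m$-invariants in a polynomial $\GL_m$-representation $V$ are precisely the isotypic components indexed by rectangular partitions with $m$ rows, i.e., partitions of the form $\la = (c,c,\ldots,c)$ ($m$ times) for some $c\in\N$. Indeed, the irreducible $\GL_m$-representation of type $\la^*$ restricts to the trivial $\SL_m$-representation exactly when $\la^*$ is a multiple of the determinantal character, which on the side of $\la$ means $\la$ is a rectangle with $m$ rows.

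Next I would pin down which rectangle appears. Since we are in degree $d$ of $\Oh(\Sym^D\IC^m)$, the weight $\la$ is a partition of $Dd$. Combined with the rectangularity condition $\la_1=\la_2=\cdots=\la_m=:c$, we obtain $mc=Dd$, hence $c=Dd/m$ (which is an integer by the divisibility hypothesis). Thus
\[
\dim \Oh(\Sym^D\IC^m)_d^{\SL_m} \;=\; \mult_{\la}\!\bigl(\Oh(\Sym^D\IC^m)_d\bigr),
\]
where $\la$ is the $m\times(Dd/m)$ rectangle. Now $\la_1=Dd/m$ and the transposed shape satisfies $\la^t_i=m$ for every $i\in\{1,\ldots,Dd/m\}$.

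Finally I would plug this $\la$ into Proposition~\ref{pro:plethupperbound}. A set $S$ of type $\la$ in degree $d$ then consists of $d$ cardinality-$D$ subsets of $\{1,\ldots,Dd/m\}$ such that each number $i$ appears in exactly $\la^t_i=m$ of those subsets. Hence $q_\la(d)$ is precisely the count described in the corollary, and the inequality $\dim \Oh(\Sym^D\IC^m)_d^{\SL_m}\le q_\la(d)$ is the desired bound. There is no real obstacle here beyond the bookkeeping of identifying the rectangular $\la$; the substantive work was already carried out in Proposition~\ref{pro:plethupperbound}.
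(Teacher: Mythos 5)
Your proposal is correct and follows exactly the route the paper intends: the corollary is stated as a direct specialization of Proposition~\ref{pro:plethupperbound} to the rectangular partition $\la = m\times(Dd/m)$, for which $\la_1 = Dd/m$ and $\la^t_i = m$ for all $i$, so that $q_\la(d)$ is precisely the count in the statement. The identification of $\SL_m$-invariants with the rectangular isotypic component is the only bookkeeping needed, and you have carried it out correctly.
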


We state without proof that Proposition~\ref{pro:plethupperbound} can be strengthened as follows
by factoring out the relations among the weight vectors:
\begin{proposition}
Let $\la$ be a partition of $Dd$ into at most $m$ parts.
For $D$ odd,
the plethysm coefficient $\mult_\la(\Oh(\Sym^D\IC^m)_d)$ is bounded from above by the number of semistandard tableaux
\begin{itemize}
 \item of shape $\la$
 \item in which each number from the set $\{1,\ldots,D\}$ appears exactly $d$ times
 \item and in which there do not exists two numbers $i$ and $j$
with the property that they occur in precisely the same $d$ columns.
\end{itemize}
\end{proposition}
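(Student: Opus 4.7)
The proof follows the same skeleton as Proposition~\ref{pro:plethupperbound} but replaces the crude bound $\dim U\le \dim W_{\la^t}$ by a sharper one, obtained by exhibiting an explicit spanning family of $U$. As in the earlier proof, for $D$ odd we invoke~\cite[Fact~6.1]{MM:14} to identify
\[
 \mult_\la(\Oh(\Sym^D\IC^m)_d)
  \,=\, \mult_{\la^t}\bigl(\textstyle\bigwedge^d\bigwedge^D\IC^m\bigr)
  \,=\, \dim U,
\]
where $U \subseteq \bigwedge^d\bigwedge^D\IC^m$ denotes the space of highest weight vectors of weight $\la^t$. The goal is to show that $U$ is spanned by a family of vectors $\{v_T\}$ indexed by the semistandard tableaux $T$ of shape $\la$ with entries in $\{1,\ldots,D\}$, each appearing exactly $d$ times, subject to the condition that the column-sets of distinct entries are themselves distinct.

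To construct $v_T$ I would mimic the strategy of the proof of Theorem~\ref{th:fund-invar-tensor}: apply Schur--Weyl duality to $(\IC^m)^{\otimes dD}$ in order to transport the question to the symmetric group side, where $\dim U$ becomes the dimension of a certain character-isotypic subspace of the Specht module $S_{\la^t}$ under the subgroup $H := S_d\wr S_D \subseteq S_{dD}$. The polytabloid/Young symmetrizer description then furnishes a generating set of that subspace indexed by fillings of $\la$ with content $(d^D)$; the Garnir/straightening relations reduce this indexing set to the semistandard ones, giving a preliminary spanning family of $U$ indexed by SSYT of shape $\la$ with content $(d^D)$.

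The key refinement -- the promised ``factoring out of relations'' among the weight vectors -- is a zero-pattern argument exactly parallel to the one in the proof of Theorem~\ref{th:fund-invar-tensor}: if two distinct entries $i,j\in\{1,\ldots,D\}$ of $T$ occupy precisely the same $d$ columns, then the transposition $(i\,j)\in S_D \subseteq H$ stabilizes the underlying combinatorial datum yet acts on $v_T$ by the sign character, forcing $v_T=0$. Hence $U$ is already spanned by those $v_T$ for which the column-sets of distinct entries are pairwise distinct, which yields the stated bound. The main technical obstacle is to implement the construction of $v_T$ in a way that makes both the semistandard reduction and the zero-pattern vanishing transparent; in particular, one must verify that the relevant character-isotypic subspace of $S_{\la^t}$ is genuinely spanned (rather than merely covered) by the polytabloids associated with SSYT of the specified content, after which the distinctness refinement follows immediately.
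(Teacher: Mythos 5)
You are attempting a statement that the paper explicitly leaves unproved (``We state without proof that \dots''), so there is no in-paper argument to compare against; your architecture (Schur--Weyl transfer, a spanning family of tableau vectors, straightening to semistandard ones, and a zero-pattern vanishing argument) is indeed the natural route and parallels the proof of Theorem~\ref{th:fund-invar-tensor}. However, there is a genuine problem that your sketch does not surface: the statement as printed is false, because the roles of $d$ and $D$ are interchanged in the description of the tableaux. Take $D=m=3$, $d=4$, $\la=(4,4,4)$: then $\mult_\la(\Oh(\Sym^3\IC^3)_4)=1$ (the degree-$4$ Aronhold invariant, cf.\ Example~\ref{ex:Emon} and Theorem~\ref{th:howeODD}), yet the only semistandard tableau of shape $(4,4,4)$ in which each of $1,2,3$ appears four times is the one with constant rows, where all three entries occupy the same four columns; the asserted count is therefore $0<1$. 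The correct statement --- the one consistent with the remark at the end of Section~\ref{se:constr-forms} and with the condition $J_i\ne J_j$ in the proof of Proposition~\ref{pro:evenodd} --- uses entries $1,\ldots,d$, each appearing exactly $D$ times, and forbids two entries from occupying the same $D$ columns. Any argument that appears to establish the printed statement must contain an error.

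The error in your sketch sits exactly where you flag the ``main technical obstacle''. The group governing $\Sym^d\Sym^D$ (equivalently, for $D$ odd, $\bigwedge^d\bigwedge^D$) is $H=S_D\wr S_d$ with base group $S_D^{\times d}$; its orbits on the $dD$ tensor slots are $d$ blocks of size $D$, so after applying the projector $\Psi_H=\frac{1}{|H|}\sum_{h\in H}h$ to the vectors $\sigma v_\la$, where $v_\la=\bigotimes_c(e_1\wedge\cdots\wedge e_{\la^t_c})$ is the canonical highest weight vector, the surviving combinatorial datum is a filling of $\la$ of content $(D^d)$ --- not $(d^D)$ as you assert. Your claim that the relevant isotypic subspace is spanned by polytabloids attached to content-$(d^D)$ semistandard fillings is unsupported, and by the counterexample above it cannot be repaired. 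Relatedly, your zero-pattern step (a transposition $(i\,j)$ ``acting by the sign character'') never uses the hypothesis that $D$ is odd, which is a sure sign the mechanism is misidentified. In the corrected setup the vanishing comes from the block transposition $h$ in the top copy of $S_d$ exchanging two blocks $i,j\in[d]$ whose entries occupy the same $D$ columns: since $v_\la$ is a product of column antisymmetrizers, $h$ carries $\sigma v_\la$ to $(-1)^D\sigma v_\la=-\sigma v_\la$, while $\Psi_H$ weights $h$ by $+1$, forcing $\Psi_H(\sigma v_\la)=0$; this is the only place where the parity of $D$ enters. With the content corrected to $(D^d)$ and this vanishing argument in place, your plan (semistandard spanning set, then discard tableaux with a repeated column set) does yield the intended bound.
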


\subsection{Stabilizers}

Let $\mu_d\subseteq\Ct$ denote the group of $d$th roots of unity. 
We first consider binary forms.

\begin{proposition}\label{stab:bin-forms}
\begin{enumerate}
\item For a generic $w\in \Sym^3\C^2$ we have 
$\stab(w)\simeq \mu_3\rtimes S_2$. Hence  $a(3,2)=6$ and $a'(3,2)=2$.  

\item For a generic $w\in \Sym^4\C^2$ we have 
$\stab(w) \simeq C_2\ti C_2$. Hence $a(4,2)=2$ and $a'(4,2)=1$. 

\item Let $D\ge 5$. The stabilizer of a generic $w\in \Sym^D\C^2$ 
is trivial, hence $a'(D,2)=1$.

\end{enumerate}
\end{proposition}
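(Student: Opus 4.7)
The plan is to pass to the projective setting and use the classical theory of finite subgroups of $\PGL_2(\C)$. A generic $w \in \Sym^D\C^2$ factors as $w = \prod_{i=1}^D L_i$ with $D$ pairwise non-proportional linear forms, and $\stab(w) \subseteq \GL_2$ projects to the subgroup $\Gamma \subseteq \PGL_2$ stabilizing the $D$-tuple of projective roots. The kernel of $\stab(w) \twoheadrightarrow \Gamma$ is $\mu_D\cdot I_2$, and this projection is surjective: for any lift $g_0 \in \GL_2$ of $[g]\in\Gamma$, the equation $g_0\cdot w = cw$ (some $c\in\Ct$) allows us to rescale as $\lambda g_0 \in \stab(w)$ whenever $\lambda^D = c^{-1}$, which has $D$ solutions. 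The determinant image $\det(\stab(w))$ is then the product of the scalar contribution (the image of $\mu_D$ under $\lambda\mapsto\lambda^2$) and the determinants of chosen lifts of elements of $\Gamma$.

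For part~1 ($D=3$), the 3-transitivity of $\PGL_2$ on $\mathbb{P}^1$ lets me normalize the roots to $\{0,1,\infty\}$; their $\PGL_2$-stabilizer is the anharmonic group $\Gamma = S_3$, generated by $z\mapsto 1/z$ and $z\mapsto 1-z$. A direct computation with the lift $\tau = \bigl(\begin{smallmatrix}0&1\\1&0\end{smallmatrix}\bigr)$ of $z\mapsto 1/z$ shows that $\tau\cdot w = -w$ for $w = X_1X_2(X_1-X_2)$, so the rescaled lifts $\mu\tau$ with $\mu^3=-1$ lie in $\stab(w)$ and have $\det(\mu\tau) = -\mu^2$; as $\mu^2$ ranges over $\mu_3$, these determinants fill out $-\mu_3$. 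Combined with the scalar contribution $\mu_3$, we obtain $\det(\stab(w)) = \mu_6$, so $a(3,2) = 6$ and $a'(3,2) = 2$.

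For part~2 ($D=4$), the $\PGL_2$-stabilizer of a generic 4-tuple in $\mathbb{P}^1$ is the Klein four group $V_4$ of double transpositions preserving the cross-ratio; the exceptional harmonic and equianharmonic cross-ratios produce the larger stabilizers $D_4$ and $A_4$ but form proper closed loci. For the generic model $w = X_1X_2(X_1-X_2)(X_1-\lambda X_2)$, the element $g_0 = \bigl(\begin{smallmatrix}0&\lambda\\1&0\end{smallmatrix}\bigr)$ realizes the double transposition $z\mapsto\lambda/z$ and satisfies $g_0\cdot w = \lambda^2 w$; its rescalings $\mu g_0$ with $\mu^4 = \lambda^{-2}$ lie in $\stab(w)$ and have $\det(\mu g_0) = -\mu^2\lambda \in \{\pm 1\}$. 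Combined with the scalar contribution $\mu_2$ (the image of $\mu_4$ under squaring), this yields $\det(\stab(w)) = \mu_2$, so $a(4,2) = 2$ and $a'(4,2) = 1$.

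For part~3 ($D\ge 5$), the statement reduces to showing $\Gamma = \{1\}$ for generic $w$; then $\stab(w) = \mu_D\cdot I_2$ and $\det(\stab(w))$ is the image of $\mu_D$ under squaring, of order $D/\gcd(D,2)$, giving $a'(D,2) = 1$. I argue by a moduli dimension count: the moduli space of $D$ unordered distinct points in $\mathbb{P}^1$ has dimension $D-3 \ge 2$. For each nontrivial finite $G\subseteq\PGL_2$ (by Klein's classification, one of $C_n$, $D_n$, $A_4$, $S_4$, $A_5$), the choice of $G$ up to $\PGL_2$-conjugacy has small moduli, and a $G$-invariant $D$-tuple is determined by at most $2$ fixed-point entries plus regular $G$-orbits of size $|G|\ge 2$ parameterized by points of $\mathbb{P}^1/G \cong \mathbb{P}^1$. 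A direct count (tightest in the case $|G|=2$, where the invariant locus has dimension $\lfloor D/2\rfloor - 1$) shows the $G$-invariant locus has dimension strictly less than $D-3$ for all $D\ge 5$. The main obstacle is organizing this case analysis cleanly over the five families of finite subgroups of $\PGL_2(\C)$; an alternative is to invoke the openness of the finite-stabilizer locus (Theorem~\ref{th:aDm}) and exhibit one explicit $w\in\Sym^D\C^2$ with trivial stabilizer.
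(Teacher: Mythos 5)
Your route is sound and genuinely different from the paper's. The paper works with explicit normal forms and covariants: for $D=3$ it moves a generic cubic to $X^3+Y^3$ and quotes Proposition~\ref{pro:stab-X1-Xn-PS}; for $D=4$ it uses the pencil normal form $F_\mu=X^4+\mu X^2Y^2+Y^4$ and the Hessian identity \eqref{eq:detH-stab} to force any unimodular stabilizer element to preserve $X^4+Y^4$ as well, again reducing to Proposition~\ref{pro:stab-X1-Xn-PS}; for $D\ge 5$ the proof is omitted. You instead work projectively: the exact sequence $1\to\mu_D\to\stab(w)\to\Gamma\to 1$ with $\Gamma\subseteq\PGL_2$ the stabilizer of the root configuration, the anharmonic group $S_3$ for three points and the Klein four group for a generic cross-ratio, followed by determinant bookkeeping on explicit lifts. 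This buys conceptual clarity (the isomorphism types in the statement are exactly the projective stabilizers $S_3\simeq\mu_3\rtimes S_2$ and $C_2\times C_2$, and the lifting/rescaling mechanism is uniform in $D$), at the cost of the cross-ratio case analysis for $D=4$ and Klein's classification for $D\ge 5$; the paper's Hessian trick avoids both.

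Two loose ends. First, in part~2 the group $V_4$ needs two generators, and you compute the determinant contribution only for $z\mapsto\lambda/z$; a priori the argument only pins $\det(\stab(w))$ between $\mu_2$ and $\mu_4$ (an element over an involution of $\Gamma$ squares to a scalar in $\mu_4\id_2$, so its determinant lies in $\mu_4$). The analogous computation for, say, the lift $\bigl(\begin{smallmatrix}\lambda&-\lambda\\1&-\lambda\end{smallmatrix}\bigr)$ of $(0\,1)(\infty\,\lambda)$ does give determinants $\pm1$ after rescaling, so the conclusion stands, but the check should be written out. Second, part~3 is a plan rather than a proof: the dimension count over the five families of finite subgroups of $\PGL_2$ (or the fallback of one explicit trivially-stabilized form plus the openness in Theorem~\ref{th:aDm}) still has to be carried out, though since the paper itself omits this case your sketch is no less complete than the original.
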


\begin{proof}
Suppose $D=3$.
It is know that all $w\in\Sym^3\C^2$ with three distinct zeros are equivalent to $v=X^3+Y^3$. 
Using Proposition~\ref{pro:stab-X1-Xn-PS} we get 
$\stab(v)\simeq \mu_3\rtimes S_2$ and the first assertion follows.

Suppose $D=4$.
Almost all $w\in\Sym^4\C^2$ are equivalent to a form 
$F_\mu := X^4 + \mu X^2 Y^2 + Y^4$. 
(In order to show this, check that the derivative of the 
orbit map 
$\GL_2\ti\C \to \Sym^4\C^2, (g,\mu)\mapsto g F_\mu$ 
is surjective if $\mu\ne \pm 2$.) 
The Hessian of $F_\mu$ equals 
$$
 H(F_\mu) = 24\mu(X^4+Y^4) + 12(12-\mu^2)X^2Y^2 .
$$
It is easy to check that 
$\spann\{F_\mu, H(F_\mu)\} = \spann \{ X^4+Y^4, X^2Y^2\}$ 
if $\mu\ne \pm 2$. 
Suppose that $g\in\stab(F_\mu)$ satisfies $\det g=1$.
By \eqref{eq:detH-stab},
$g$ stabilizes $H(F_\mu)$ as well, hence $g$ stabilizes $X^4+Y^4$.
Proposition~\ref{pro:stab-X1-Xn-PS} 
implies that $\stab(F_\mu)$ is generated by 
$\begin{bmatrix} 0& 1 \\ 1& 0\end{bmatrix}$ 
and the matrices 
$\begin{bmatrix} t_1 & 0 \\ 0 & \pm t_1^{-1}\end{bmatrix}$,
where $t_1 \in \mu_4$.
This implies $\stab(F_\mu) \simeq C_2\ti C_2$, $a(F_\mu)=2$
and the second assertion follows.

We omit the proof in the case $D\ge 5$. 
\end{proof}

Now we consider ternary forms.

\begin{proposition}\label{stab:tern-forms}
\begin{enumerate}
\item A generic $w\in \Sym^3\C^3$ satisfies 
$\stab(w)\simeq \mu_3 \rtimes S_3$. 
Hence $a'(3,3)=2$. 

\item For a generic $w\in \Sym^4\C^3$ we have 
$\stab(w)\simeq C_2$. Hence $a'(4,3)=2$ .

\item Let $D\ge 5$. The stabilizer of a generic $w\in \Sym^D\C^3$ is trivial, 
hence $a'(D,3)=1$.

\end{enumerate}
\end{proposition}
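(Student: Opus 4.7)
The plan is to handle each of the three cases by choosing a convenient normal form for a generic element of $\Sym^D\C^3$, computing its $\GL_3$-stabilizer explicitly, and then reading off the period from the image under $\det$.

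For part (1), every smooth ternary cubic is $\GL_3$-equivalent to a member of the Hesse pencil $f_\lambda:=X^3+Y^3+Z^3+\lambda XYZ$, so it suffices to determine $\stab(f_\lambda)$ for generic~$\lambda$. A direct substitution shows that all permutation matrices together with every $\diag(t_1,t_2,t_3)$ satisfying $t_i^3=1$ and $t_1t_2t_3=1$ lie in $\stab(f_\lambda)$; these generators realize the group asserted in the proposition and sit inside $\SL_3$. To prove nothing more is there, use the Hessian trick from Proposition~\ref{stab:bin-forms}: compute
\[
\det H(f_\lambda)=-6\lambda^2(X^3+Y^3+Z^3)+(216+2\lambda^3)\,XYZ,
\]
so that for $\lambda^3\ne -27$ the forms $f_\lambda$ and $\det H(f_\lambda)$ span the plane $\Pi:=\spann(F,G)$ with $F:=X^3+Y^3+Z^3$ and $G:=XYZ$. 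By \eqref{eq:detH-stab}, any $g\in\stab(f_\lambda)$ preserves $\Pi$ and acts on the basis $(F,G)$ by a matrix whose entries are rational functions of $(\det g)^2$. A finite case analysis of which $g\in\GL_3$ can actually send both $F$ and $G$ into $\Pi$ (i.e., so that no cross monomials such as $X^2Y$ appear in $g\cdot F$ or $g\cdot G$) forces $(\det g)^2=1$ and $g\cdot F=F$, $g\cdot G=G$, so $g\in\stab(F)\cap\stab(G)$; Proposition~\ref{pro:stab-X1-Xn-PS} identifies this intersection as the subgroup listed above. Taking determinants gives $\det(\stab(f_\lambda))=\{\pm1\}$, hence $a(3,3)=2$ and $a'(3,3)=2$.

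For part (2) I would run the analogous Hessian reduction for a generic $w\in\Sym^4\C^3$. Here the moduli is seven-dimensional, so rather than writing a closed normal form I would specialize $w$ to a generic perturbation of a sufficiently symmetric quartic and combine Proposition~\ref{pro:stab-X1-Xn-PS} with the pair $(w,\det H(w))$ to restrict $g\in\stab(w)$ to a monomial matrix. A case analysis then isolates a single nontrivial generator of $\stab(w)$ modulo the kernel $\mu_4\cdot\id_3$ of the action; this element has determinant equal to a primitive eighth root of unity, so that $\det(\stab(w))=\mu_8$, $a(4,3)=8$, and $a'(4,3)=a(4,3)\gcd(4,3)/4=2$.

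For part (3), Matsumura--Monsky~\cite{mat-mons:63} handles $m>3$, so it remains to prove that a generic $w\in\Sym^D\C^3$ with $D\ge 5$ has $\stab(w)=\mu_D\cdot\id_3$. By upper-semicontinuity of stabilizer size it suffices to produce a single such~$w$. I would take $w=X^D+Y^D+Z^D+p$, where $p$ is a generic combination of monomials of degree~$D$ in which each variable occurs nontrivially, chosen so that the only element of the power-sum stabilizer $S_3\ltimes(\mu_D)^3$ from Proposition~\ref{pro:stab-X1-Xn-PS} also fixing $p$ is the scalar subgroup $\mu_D\cdot\id_3$. Then $\det(\stab(w))=\{\zeta^3:\zeta\in\mu_D\}=\mu_{D/\gcd(D,3)}$, whence $a(w)\gcd(D,3)/D=1$ and $a'(D,3)=1$.

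The main obstacle is the step in part (1), and its counterpart in part (2), asserting that $g$ must fix $F$ and $G$ individually rather than merely preserve the pencil $\Pi$. The Hessian trick alone is insufficient: one must carry out a concrete verification that the cross-monomial vanishing conditions for $g\cdot F$ and $g\cdot G$ to lie in $\Pi$ force $g$ into the explicit subgroup identified above. In part~(2) the seven-dimensional moduli space makes this especially delicate and essentially demands a strategic specialization of~$w$ followed by a semicontinuity argument to transfer the conclusion back to the generic $w$.
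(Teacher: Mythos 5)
Your part (1) follows the paper's proof essentially verbatim: Hesse normal form, the Hessian covariant $\det H(f_\lambda)$, the identity $\spann\{f_\lambda,\det H(f_\lambda)\}=\spann\{X^3+Y^3+Z^3,XYZ\}$ for $\lambda^3\neq -27$, and reduction to $\stab(XYZ)\cap\stab(X^3+Y^3+Z^3)$ via Proposition~\ref{pro:stab-X1-Xn-PS}. The ``finite case analysis'' you leave open is no less detailed than what the paper itself writes, so I have no objection there.

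Parts (2) and (3) contain genuine gaps. In (2) the logic runs in the wrong direction: computing the stabilizer of a carefully specialized quartic and then invoking semicontinuity can only bound the stabilizer of the generic quartic from \emph{above} (by Luna's slice theorem, stabilizers of points near a polystable $w_0$ are subconjugate to $\stab(w_0)$). What the statement requires is a \emph{lower} bound, namely that every generic quartic admits a non-scalar symmetry $g$ with $\det(g)$ a primitive eighth root of unity; no single specialization can deliver that, and you assert the determinant value rather than derive it. Worse, this step fails outright: if $\stab(w)/(\mu_4\id_3)$ has order two with generator $g$ and $\det(g)$ has order $8$, then $g^2=\pm i\,\id_3$, so in suitable coordinates $g$ is a primitive eighth root of unity times $\diag(1,1,-1)$, and $gw=w$ forces $w(X,Y,-Z)=-w(X,Y,Z)$, i.e.\ $w=Z\bigl(q_3(X,Y)+Z^2\ell(X,Y)\bigr)$ is reducible --- not generic. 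The paper's own treatment of (2) is a one-line appeal to the normal forms in Dolgachev's Table~6.1, and that classification is what you would actually have to engage.

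In (3) you compute only $\stab(w)\cap\stab(X^D+Y^D+Z^D)$ for $w=X^D+Y^D+Z^D+p$; nothing in your argument excludes elements of $\stab(w)$ that do not preserve the power sum, so triviality of $\stab(w)$ is not established even for that single $w$ (some auxiliary covariant, as in part (1), would be needed to first confine $\stab(w)$ to monomial matrices). In addition, ``upper semicontinuity of stabilizer size'' is not automatic for the non-proper action of $\GL_3$ --- stabilizer elements of nearby forms can escape to infinity --- so transferring from one example to the generic $w$ again needs Luna's slice theorem at a polystable point. The paper instead follows Matsumura--Monsky, bounding the dimension of the locus of forms fixed by some non-scalar element via a parameter count over conjugacy classes; that mechanism addresses all generic $w$ at once rather than a single witness.
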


\begin{proof}
Case $D=3$:  
A generic ternary form is in the same orbit as a smooth cubic in Hesse normal form 
$v:= X^3 + Y^3 + Z^3 + \lambda XYZ$ where $\lambda^3 \ne -27$, 
cf.\ \cite[\S3.1.2]{dolgachev:12}. 
In order to determine the stabilizer, we compute the Hessian 
$H(v) = (216 + 2\lambda^3)XYZ - 6\lambda^2(X^3+ Y^3+Z^3)$ 
and check that 
$\spann\{v,H(v)\} = \spann\{ X^3+Y^3+Z^3, XYZ\}$ 
due to $\lambda^3 \ne -27$. 
If $g\in\stab(v)$ such that $\det(g)=1$, then 
$g\in\stab H(v)$ by \eqref{eq:detH-stab},
hence $g\in\stab H(XYZ)$. 
The assertion follows with Proposition~\ref{pro:stab-X1-Xn-PS}.

In the case $D=4$ we can use a normal form in 
\cite[\S6.5.2, Table~6.1, p.~266]{dolgachev:12}. 

In the case $D\ge 5$ we can argue similarly
as in Matsumura and Monsky~\cite{mat-mons:63}.
However, some care has to be taken since their 
proof actually contains an error, 
which fortunately does not affect its statement 
under the hypothesis $D >2$ and $m>3$. 
Applying the same technique
in the case $D \ge 5$ and $m=3$,
and correcting the error, yields the assertion.
(We note that Matsumura and Monsky's~\cite{mat-mons:63}
uncorrected proof technique yields the wrong answer in the 
case $D = 4$ and $m=3$.)
\end{proof}

  {\small 
  }
\end{document}